\newtheorem{thm}{Theorem}[section]
\newtheorem{cor}[thm]{Corollary}
\newtheorem{lem}[thm]{Lemma}
\newtheorem{prop}[thm]{Proposition}
\theoremstyle{definition}
\theoremstyle{remark}
\newtheorem{rem}[thm]{Remark}
\numberwithin{equation}{section}
\newcommand{\mbf}{\mathbf}
\newcommand{\ra}{\rightarrow}
\newcommand{\pa}{\partial}
\newcommand{\ov}{\overline}
\newcommand{\ep}{\epsilon}
\newcommand{\no}{\noindent}
\newcommand{\Om}{\Omega}
\newcommand{\cal}{\mathcal}
\begin{document}
\title{Some aspects of the Kobayashi and Carath\'{e}odory metrics on pseudoconvex domains}
%\keywords{}
\thanks{The second author was supported in part by a grant from UGC under DSA-SAP, Phase IV}
\subjclass{Primary: 32F45 ; Secondary : 32Q45}
\author{Prachi Mittal and Kaushal Verma}
\address{Department of Mathematics,
Indian Institute of Science, Bangalore 560 012, India}
\email{mittal@math.iisc.ernet.in, kverma@math.iisc.ernet.in}

\begin{abstract}
The purpose of this article is to consider two themes both of
which emanate from and involve the Kobayashi and the
Carath\'{e}odory metric. First we study the biholomorphic
invariant introduced by B. Fridman on strongly pseudoconvex
domains, on weakly pseudoconvex domains of finite type in $\mbf
C^2$ and on convex finite type domains in $\mbf C^n$ using the
scaling method. Applications include an alternate proof of the
Wong-Rosay theorem, a characterization of analytic polyhedra with
noncompact automorphism group when the orbit accumulates at a
singular boundary point and a description of the Kobayashi balls
on weakly pseudoconvex domains of finite type in $ \mathbf{C}^2$
and convex finite type domains in $ \mathbf{C}^n $ in terms of
Euclidean parameters. Second a version of Vitushkin's theorem
about the uniform extendability of a compact subgroup of
automorphisms of a real analytic strongly pseudoconvex domain is
proved for $C^1$-isometries of the Kobayashi and
Carath\'{e}odory metrics on a smoothly bounded strongly
pseudoconvex domain.
\end{abstract}

\maketitle

\section{Introduction}

\no Let $X$ be a Kobayashi hyperbolic complex manifold of
dimension $n$ and $\cal H \subset \mbf C^n$ a bounded homogeneous
domain. B. Fridman in \cite{Fridman-1983} constructed an
interesting non-negative continuous function on $X$ whose value at
a given $p \in X$ essentially measures the largest Kobayashi ball
around $p$ that is comparable with $\cal H$. To be more specific
take $\cal H = \mbf B^n$ the unit ball in $\mbf C^n$ and let
$B_X(p, r)$ denote the ball in the Kobayashi metric with radius $r
> 0$ around $p \in X$. Since $X$ is hyperbolic the topology
induced by the Kobayashi metric is equivalent to the intrinsic
topology on it. Thus for small $r > 0$, $B_X(p, r)$ is contained
in a coordinate chart around $p$ and hence there is a
biholomorphic imbedding $f : \mbf B^n \ra X$ such that $f(\mbf
B^n) \supset B_X(p, r)$. Let $\cal R$ denote the set of all $r >
0$ such that there exists a biholomorphic imbedding $f : \mbf B^n
\ra X$ with $f(\mbf B^n) \supset B_X(p, r)$. $\cal R$ is evidently
non-empty as explained before. Define
\begin{equation}
h_X(p, \mbf B^n) = \inf_{r \in \cal R} \frac{1}{r}
\end{equation}
which is a non-negative, real valued function on $X$. If $Y$ is
another such hyperbolic manifold and $g : X \ra Y$ a
biholomorphism then $g$ preserves the balls in the Kobayashi
metrics on $X, Y$ and hence $h_X(p, \mbf B^n) = h_Y(g(p), \mbf
B^n)$ which says that $h_X(p, \mbf B^n)$ is a biholomorphic
invariant. The function $h_X(p, \cal H)$ will henceforth be
referred to as Fridman's invariant and while the same construction
holds for any invariant metric (provided of course when the
intrinsic topology on $X$ coincides with that induced by the
invariant metric), the Kobayashi and the Carath\'{e}odory metric
shall be exclusively dealt with here. In fact, Fridman's invariant
defined using the Carath\'{e}odory metric appears only in two
places one at the end of the section $4$ which considers strongly
pseudoconvex domains where this metric is most tractable and
second in section $6$ where convex domains are considered on which
the Carath\'{e}odory metric equals the Kobayashi metric by
Lempert's work. Barring these exceptions, the Kobayashi metric is
the one that is considered. The choice of metric will be clarified
often enough to avoid ambiguities for the same notation will be
used for this invariant; the choice of $\cal H$ will be made
explicit though. Having defined this let us recall some of its
basic properties proved in \cite{Fridman-1983}. Among other
things, it was shown that when $X$ is Kobayashi hyperbolic then $p
\mapsto h_X(p, \cal H)$ is continuous and if there exists $p^0 \in
X$ such that $h_X(p^0, \cal H) = 0$ then $h_X(p, \cal H) \equiv 0$
for all $p \in X$ and moreover $X$ is biholomorphic to $\cal H$.
This is reminiscent of the $ C/K $ invariant, i.e., the ratio of
the Carath\'{e}odory and the Eisenman-Kobayashi volume form, used
in \cite{Wong-1977}. Indeed, this ratio is at most one for a
bounded domain and equals one at some point if and only if the
domain is equivalent to $ \mathbf{B}^n$. It was also shown that if
$D \subset C^n$ is a $C^3$-smooth strongly pseudoconvex domain
equipped with the Kobayashi metric such that $D$ is not
biholomorphic to $\mbf B^n$, then $h_D(p, \mbf B^n) \ra 0$ as $p
\ra \pa D$. The proof of this crucially used the fact that the
model domain at strongly pseudoconvex boundary points is the ball.
As a consequence if $p^0 \in \pa D$ and $U$ is an open
neighbourhood of $p^0$ then using the automorphism group of $\mbf
B^n$ it is possible to find a sequence of holomorphic mappings
$F^j : U \cap D \ra \mbf B^n$ that are biholomorphic onto their
images which exhaust $\mbf B^n$, i.e., for every compact $K
\subset \mbf B^n$ there is some $m=m(K)$ such that $K \subset
F_m(U \cap D)$.

\medskip

The first goal of this work is to understand Fridman's invariant on a broader class of domains, more specifically the weakly pseudoconvex finite type
domains in $\mbf C^2$ and convex domains of finite type in $\mbf C^n$. The problem is of course to determine the behaviour of $h_D(p, \cal H)$ as $p \ra
\pa D$ and it is natural to try and apply the scaling method to understand this question on more general domains. Any attempt
to do this must take into account the fact that model domains at boundary points are not unique unlike the strongly pseudoconvex case and hence the
boundary limits of $h_D(p, \cal H)$ will depend on the nature of approach to the boundary point. Indeed, in general the model domains for a smooth
weakly pseudoconvex finite type domain in $\mbf C^2$ and convex finite type domains in $\mbf C^n$ are given by
\[
\rho(z) = 2 \Re z_2 + P_{2m}(z_1, \ov z_1)
\]
where $P_{2m}(z_1, \ov z_1)$ is a homogeneous subharmonic polynomial of degree $2m$, and
\[
\rho(z) = 2 \Re z_n + Q_{2m}('z, '\ov z)
\]
where $'z = (z_1, z_2, \ldots, z_{n - 1})$ and $Q_{2m}('z, '\ov
z)$ is a convex polynomial of degree at most $2m$ respectively.
More can be said about the polynomials $P_{2m}(z_1, \ov z_1)$ and
$Q_{2m}('z, '\ov z)$ in case the approach is non-tangential. Since
the definition of $h_D(p, \cal H)$ involves the
Kobayashi/Carath\'{e}odory balls (which are global objects
vis-a-vis the infinitesimal metric), this approach quickly leads
to considerations that involve the Kobayashi/Carath\'{e}odory
balls on the scaled domains and their convergence to the
corresponding balls in the limit domain. Viewed differently, this
is a stability problem for the integrated distance and not its
infinitesimal version about which much is known -- see
\cite{Greene&Krantz-1984} and \cite{Yu-1995} for instance.

\begin{thm}
Let $D \subset \mbf C^n$ be a bounded domain, $p^0 \in \pa D$ and
let $\{p^j\} \subset D$ be a sequence that converges to $p^0$.
Then the boundary behaviour of $h_D(p, \mbf B^n)$ along $\{p^j\}$
can be described in the following cases:

\begin{enumerate}

\item[(i)] If $ D$ is $C^2$-smooth strongly pseudoconvex
equipped with either the Kobayashi or the Carath\'{e}odory metric,
then $h_D(p^j, \mbf B^n) \ra 0$.

\item[(ii)] If $ D$ is $C^{\infty}$-smooth weakly pseudoconvex
of finite type in $ \mathbf{C}^2$ equipped with the Kobayashi
metric, then $h_D(p^j, \mbf B^2) \ra h_{D_{\infty}}((0, -1),
\mathbf B^2)$ where $D_{\infty}$ is the model domain defined by
\[
D_{\infty} = \{ z = (z_1, z_2) \in \mbf C^2 : 2 \Re z_2 +
P_{2m}(z_1, \ov{ z}_1) < 0\},
\]
and $P_{2m}(z_1, \ov {z}_1)$ is a subharmonic polynomial of degree
at most $2m$ ($m \ge 1$) without harmonic terms, $2m$ being the
$1$-type of $\pa D$ near $p^0$.

\item[(iii)] If $D$ is $C^{\infty}$-smooth convex of finite
type equipped with the Kobayashi metric (which equals the
Carath\'{e}odory metric), then $h_D(p^j, \mbf B^n) \ra
h_{D_{\infty}}(('0, -1), \mbf B^n)$ where $D_{\infty}$ is the
model domain defined by
\[
D_{\infty} = \{ z = ('z, z_n) \in \mbf C^n : 2 \Re z_n +
Q_{2m}('z, '\ov z) < 0\},
\]
and $Q_{2m}('z, '\ov z)$ is a non-degenerate convex polynomial of
degree at most $2m$ ($m \ge 1$), $2m$ being the $1$-type of $\pa
D$ near $p^0$.

\end{enumerate}

\end{thm}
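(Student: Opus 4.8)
The plan is to treat the three parts uniformly via the scaling method, the only genuine new content being parts (ii) and (iii); part (i) is already due to Fridman as recalled in the introduction, so I would dispose of it in a line by citing that result (noting the $C^2$ hypothesis suffices since the ball is the only strongly pseudoconvex model domain and a normal-family/stability argument for the Kobayashi and Carath\'{e}odory distances goes through under $C^2$ regularity). For (ii), fix the sequence $\{p^j\}\to p^0\in\pa D$ and carry out the standard Pinchuk-type scaling adapted to finite type in $\mbf C^2$: choose coordinates centered near $p^0$, pick foot points $q^j\in\pa D$ nearest to $p^j$, and apply the polynomial automorphisms $T^j$ (a translation to $q^j$, a rotation, and a polynomial shear of degree $2m$ killing the pure terms) followed by the anisotropic dilations $\Lambda^j$ that rescale the $z_1$-direction by $\delta_j^{-1/2m}$ and the $z_2$-direction by $\delta_j^{-1}$, where $\delta_j = \mathrm{dist}(p^j,\pa D)$. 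The composite $\Phi^j=\Lambda^j\circ T^j$ sends $D$ to domains $D^j$ converging (in the local Hausdorff sense, with the usual normal-families control on the defining functions) to $D_\infty$, while $\Phi^j(p^j)\to(0,-1)$. One must invoke the finite-type machinery (Catlin, McNeal) guaranteeing that the degree-$2m$ polynomial part survives the limit as a subharmonic polynomial without harmonic terms, and that $D_\infty$ is Kobayashi complete hyperbolic and taut.

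The heart of the argument is then the stability statement: $h_{D^j}(\Phi^j(p^j),\mbf B^2)\to h_{D_\infty}((0,-1),\mbf B^2)$, which combined with the biholomorphic invariance $h_{D^j}(\Phi^j(p^j),\mbf B^2)=h_D(p^j,\mbf B^2)$ gives the theorem. This requires showing that the Kobayashi \emph{balls} $B_{D^j}(\Phi^j(p^j),r)$ converge to $B_{D_\infty}((0,-1),r)$ — not merely the infinitesimal metrics, but the integrated distance. I would prove this by a two-sided estimate: lower semicontinuity of $k_{D_\infty}$ as a decreasing-domain/increasing-sequence limit of $k_{D^j}$ (since the $D^j$ eventually contain any compact subset of $D_\infty$, monotonicity gives $\limsup k_{D^j}\le k_{D_\infty}$ on compacta), and the reverse inequality from the localization of the Kobayashi metric near finite-type boundary points (Berteloit's, or Diederich--Fornaess--Herbort-type localization) together with the fact that geodesics/near-geodesics stay in compact parts of $D_\infty$. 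Given ball convergence, an $\ep$-argument shows that if $f:\mbf B^2\to D_\infty$ is a biholomorphic imbedding with $f(\mbf B^2)\supset B_{D_\infty}((0,-1),r)$, then slightly shrinking $f$ and pushing forward by $(\Phi^j)^{-1}$ produces imbeddings of $\mbf B^2$ into $D$ covering $B_D(p^j,r-\ep)$, hence $\limsup h_D(p^j,\mbf B^2)\le h_{D_\infty}((0,-1),\mbf B^2)$; the matching lower bound comes from running the same construction in the reverse direction using normal limits of the imbedding maps. Part (iii) follows the identical outline, now with the Frankel/Gaussier--Kim--Krantz scaling for convex finite type domains in $\mbf C^n$, the anisotropic dilations governed by McNeal's multitype, the fact that $\pa D$ being convex of finite type forces $Q_{2m}$ to be a non-degenerate convex polynomial and $D_\infty$ to be biholomorphic to a bounded convex domain (hence complete hyperbolic and with $k=c$ by Lempert), and the same ball-stability input.

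The main obstacle I expect is precisely the ball (integrated-distance) convergence $B_{D^j}\to B_{D_\infty}$: scaling arguments in the literature typically deliver convergence of the infinitesimal Kobayashi/Carath\'{e}odory metrics uniformly on compacta, but passing to the integrated distance requires ruling out that near-geodesics for $k_{D^j}$ wander toward the (unbounded) ends of the scaled domains or escape to the boundary, which is where the finite-type/convexity localization estimates and the completeness and tautness of $D_\infty$ must be used carefully. A secondary technical point, needed to make the $\ep$-perturbation of imbeddings legitimate, is that $D_\infty$ admits imbedded balls of radius slightly larger than any prescribed $r\in\cal R$ — i.e. that $\cal R$ is relatively open at its infimum for $D_\infty$ — which again follows from tautness and an open-mapping argument but should be stated explicitly.
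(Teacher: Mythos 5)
Your overall route coincides with the paper's: scale at $p^0$, prove stability of the infinitesimal Kobayashi metric on the scaled domains, upgrade this to convergence of the Kobayashi balls, and then squeeze $h$ from both sides using slightly shrunk imbeddings pushed through the scaling maps and normal limits of the near-extremal imbeddings. (The paper additionally splits off the case $h_D(p^j)\to 0$, where only the easy semicontinuity $\limsup_j d_{D^j}(z^j,\cdot)\le d_{D_\infty}(z^0,\cdot)$ is needed and the limit domain is shown outright to be biholomorphic to the ball; a uniform treatment like yours is workable, but it places the entire weight on the hard direction.) The genuine gap sits exactly at the point you yourself flag as ``the main obstacle'': you give no mechanism for that hard direction, i.e. for $\liminf_j d_{D^j}(z^j,\cdot)\ge d_{D_\infty}(z^0,\cdot)$, equivalently for the assertion that $B_{D^j}(z^j,R)$ stays in a fixed compact subset of $D_\infty$. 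Localization of the infinitesimal metric cannot supply this: the scaled domains are unbounded, are not contained in $D_\infty$ nor in any common taut domain (so Yu-type stability theorems do not apply, as the paper remarks), and the issue is precisely to rule out that competing curves for $d_{D^j}$ drift to infinity in the scaled coordinates or cluster at finite boundary points --- so appealing to ``near-geodesics stay in compact parts of $D_\infty$'' is circular. (Also, the easy direction is not pure ``monotonicity'': since $D^j\not\subset D_\infty$, one must run near-optimal curves of $D_\infty$ and use the infinitesimal stability along them, which you do have.)

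What the paper actually uses in case (ii), and what is absent from your plan, are three concrete inputs: (a) Herbort's two-sided global estimate for the Kobayashi \emph{distance} between two points of a finite type domain in $\mathbf{C}^2$, which, transported through the scaling maps and combined with Catlin's estimates on $\tau(\zeta^j,\epsilon_j)$ and $d^l(\zeta^j)$, bounds the Euclidean size of $B_{D^j}(z^j,R)$ and rules out escape to infinity; (b) the Coupet--Sukhov uniform lower bound $F^K_{D^j}(z,v)\gtrsim |v|\,d(z,\partial D^j)^{-1/2m}$ near finite boundary points of the scaled domains, which rules out clustering there; and (c) the Kim--Krantz/Kim--Ma comparison lemma (if $B_D(q,b)\subset D'\subset D$ then $d_{D'}\le d_D/\tanh(b-a)$), which converts the compact containment $B_{D^j}(z^j,R')\Subset D_\infty$ into the reverse distance inequality, thereby replacing the Lempert-disc argument that is only available in the convex and strongly pseudoconvex settings used for parts (i) and (iii). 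Without (a)--(c), or explicit substitutes, the step ``hence $B_{D^j}\to B_{D_\infty}$'' does not follow from what you have written, and with them your two-sided $\epsilon$-argument does go through essentially as in the paper. Your remaining points (relative openness of $\mathcal{R}$ at its infimum, and the $C^2$ version of (i)) are fine and are handled in the paper by tautness/normal-family arguments and by the scaling proof with Lempert discs in a strictly convex localization, respectively.
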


\noindent The proof of this theorem is contained in sections 4, 5
and 6. While this is a global result, it turns out that $
h_D(\cdot, \mathbf{B}^n) $ can be localised much like the
Kobayashi or the Carath\'{e}odory metric near peak points and this
is done in proposition \ref{A1}. Therefore there is a completely
natural formulation of this result that is of a local nature and
this will be evident from the proof of this theorem.

\medskip

\no Studying this invariant using the scaling procedure unifies
several disparate questions and applications include an alternate
proof of the Wong-Rosay theorem -- it must be mentioned that this
was also done by B. Fridman in \cite{Fridman-1983} using the fact
that $h_D(p, \mbf B^n) \ra 0$ as $p \ra \pa D$ for $D$ a strongly
pseudoconvex domain, the emphasis here being a different approach,
a characterization of normal analytic polyhedra with a noncompact
automorphism group with one orbit accumulating at a singular
boundary point (and this recovers part of the main theorem in
\cite{Kim&Pagano-2001}), and theorems of Coupet-Pinchuk-Sukhov
(\cite{CoupetPinchuk&Sukhov-1996}) and Pinchuk
(\cite{Pinchuk-1980}) about the inequivalence of two given domains
with different Levi geometry.

\medskip

\noindent As another byproduct, it is also possible to describe
the Kobayashi balls on a weakly pseudoconvex finite type domains
in $ \mathbf{C}^2$ and on convex finite type domains in $
\mathbf{C}^n$ in Euclidean terms. More precisely, it is known that
on a strongly pseudoconvex domain $D$ in $ \mathbf{C}^n$, $ B_D(p,
R) $ contains and is contained in ellipsoids $ E^{\pm}_p $ each of
whose major and minor axis are of the order of $ C(R) \big(
\mbox{dist} (p, \partial D ) \big)^{1/2} $ and $ c(R) \mbox{dist}
(p, \partial D) $ respectively where the positive constants $ C(R)
$ and $c(R) $ are independent of $p$. It is possible to obtain
analogues of this result for weakly pseudoconvex and convex finite
type domains without integrating the infinitesimal metric. Such
estimates were obtained by Aladro in \cite{Aladro-1989} for weakly
pseudoconvex finite type domains in $ \mathbf{C}^2$ using a
suitable metric on the horizontal subbundle on $ \partial D$ due
to Nagel-Stein-Wainger \cite{NagelStein&Wainger-1985}. The
arguments used here avoid any reference to such considerations and
instead rely only on scaling. These estimates, as is known (see
for example \cite{Krantz-1991}), are useful in verifying the
generalized sub-mean value property for plurisubharmonic functions
- a property that is needed in proving analogues of Fatou's
theorem on the boundary behaviour of $ H^p$ functions.

\medskip

\no The second theme explored in this article is the rigidity of
continuous isometries of these metrics. More precisely if $D, D'$
are $C^2$-smooth strongly pseudoconvex domains in $\mbf C^n$
and $f : D \ra D'$ is a continuous isometry of the Kobayashi
metrics on $D, D'$, it is not known whether $f$ must necessarily
be holomorphic or conjugate holomorphic. The same question can be
asked for the Carath\'{e}odory metric or for that matter any
invariant metric as well. An affirmative answer for the Bergman
metric was given in \cite{Greene&Krantz-1982} and this required
knowledge of the limiting behaviour of the holomorphic sectional
curvatures of the Bergman metric near strongly pseudoconvex
points. In general, the Kobayashi metric is just upper
semicontinuous and therefore a different approach will be needed
for this question. The case of continuous isometries when $D$ is
smooth strongly convex and $D'$ is the unit ball was dealt with in
\cite{Seshadri&Verma-2006} and this was improved upon in
\cite{Kim&Krantz-2008} to handle the case when $D$ is a $C^{2,
\ep}$-smooth strongly pseudoconvex domain $ (\epsilon > 0) $,
and a common ingredient in both proofs was the use of Lempert
discs. Motivated by such considerations it seemed natural to
determine the extent to which isometries behave like holomorphic
mappings and one example is provided by the following:

\begin{thm}
Let $D_1, D_2$ be two bounded strongly pseudoconvex domains in
$\mbf C^n$ with $C^2$-smooth boundaries. Let $D_1^k, D_2^k $
for $ k \geq 1 $ be two sequences of domains that converge to
$D_1, D_2$ respectively in the $C^2$ topology and let $d_{D^k_1},
d_{D^k_2}$ denote the Kobayashi metrics on these domains.
Similarly let $d_{D_1}, d_{D_2}$ denote the Kobayashi metrics on
$D_1, D_2$ respectively. Suppose that $f^k : (D^k_1, d_{D^k_1})
\ra (D^k_2, d_{D^k_2})$ is a $C^1$-smooth isometry for each $k
\ge 1$ and that there is a point $p^1 \in D_1$ such that some
subsequence $\{f^{k_j}(p^1)\}$ converges to a point $p^2 \in D_2$.
Then there is a uniform constant $C > 0$ with the property that:
\[
\vert f^{k_j}(p) - f^{k_j}(q) \vert \le C \vert p - q \vert^{1/2}
\]
for all $p, q \in D_1$.
\end{thm}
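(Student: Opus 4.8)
The statement is a uniform (in $k$) Hölder-$1/2$ estimate for a sequence of $C^1$-isometries between domains converging in $C^2$. The key insight is that strongly pseudoconvex domains, and in particular small $C^2$-perturbations of them, admit two-sided estimates on the Kobayashi distance comparing it to a model expression involving the boundary distance. Specifically, near the boundary one has estimates of the form $d_D(p,q) \gtrsim \log(1 + |p-q|/\sqrt{\max(\delta(p),\delta(q))})$ type lower bounds and matching upper bounds of the order $\log(1/\delta)$, where $\delta$ denotes distance to the boundary. I need these estimates to hold with constants that are uniform across the family $\{D_1^k\}$ and $\{D_2^k\}$ — this is where the $C^2$ convergence is essential. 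Let me sketch the steps.

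**Step 1: Uniform metric estimates on the converging families.** First I would establish that since $D_1^k \to D_1$ and $D_2^k \to D_2$ in $C^2$, there are uniform constants (independent of $k$) in the standard Forstnerič–Rosay / Balogh–Bonk type estimates for the Kobayashi distance on strongly pseudoconvex domains. Concretely, for $k$ large the domains $D_i^k$ are all strongly pseudoconvex with Levi form bounded below and $C^2$ data bounded, so one gets: a lower bound $d_{D_i^k}(p,q) \ge \frac{1}{2}\big|\log \delta_i^k(p) - \log \delta_i^k(q)\big| - C_0$ and, more importantly, a bound of the form $d_{D_i^k}(p,q) \ge \log\!\big(1 + c_0 |p-q|/\sqrt{\delta_i^k(p)}\big) - C_0$, together with the Euclidean-comparison upper bound $d_{D_i^k}(p,q) \le C_1 + C_1\log\frac{1}{\min(\delta_i^k(p),\delta_i^k(q))}$ and, on compact subsets of the interior, a Lipschitz-type bound $d_{D_i^k}(p,q) \le C|p-q|$. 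The uniformity of $c_0, C_0, C_1$ is the technical heart and follows by inspecting the proofs of these estimates, which depend only on the Levi geometry and $C^2$ norms of the defining functions.

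**Step 2: Localization near the boundary and the interior case.** Because $f^{k_j}(p^1) \to p^2$ in the interior of $D_2$ and $f^{k_j}$ is a $d$-isometry, for any fixed compact $L \subset D_1$ the images $f^{k_j}(L)$ stay in a fixed compact $L' \subset D_2$ (using that $d_{D_1^{k_j}} \to d_{D_1}$ locally uniformly, so $d$-balls around $p^1$ are eventually contained in a fixed interior compact, and the isometry maps them to $d_{D_2^{k_j}}$-balls of the same radius, which are eventually in a fixed interior compact of $D_2$). On such compacta the distances are uniformly bi-Lipschitz-comparable to the Euclidean distance (Step 1, interior part), so on $L$ the map $f^{k_j}$ is uniformly Lipschitz, hence certainly uniformly Hölder-$1/2$ there. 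The remaining task is points $p,q$ with at least one of them near $\partial D_1$.

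**Step 3: Transferring boundary distance and concluding.** For $p$ near $\partial D_1$, compare $\delta_1^{k_j}(p)$ with $\delta_2^{k_j}(f^{k_j}(p))$: choosing $q_0$ a fixed interior reference point, the lower and upper bounds of Step 1 give $\big|\log\delta_1^{k_j}(p) - \log\delta_2^{k_j}(f^{k_j}(p))\big| \le C_2$ uniformly, since $d_{D_1^{k_j}}(p,q_0) = d_{D_2^{k_j}}(f^{k_j}(p), f^{k_j}(q_0))$ and $f^{k_j}(q_0)$ stays in a fixed compact. Hence $\delta$ is comparable under $f^{k_j}$ up to a uniform multiplicative constant. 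Now take any $p,q \in D_1$; WLOG $\delta_1^{k_j}(p) \le \delta_1^{k_j}(q)$. Apply the lower bound on $D_2^{k_j}$ to the pair $f^{k_j}(p), f^{k_j}(q)$ and the upper bound on $D_1^{k_j}$ to $p,q$, use $d_{D_1^{k_j}}(p,q) = d_{D_2^{k_j}}(f^{k_j}(p),f^{k_j}(q))$ and the $\delta$-comparison, to get
\[
\log\!\Big(1 + c_0 \frac{|f^{k_j}(p) - f^{k_j}(q)|}{\sqrt{\delta_1^{k_j}(p)}}\Big) \le C_3 + C_3 \log\frac{1}{\delta_1^{k_j}(p)},
\]
which rearranges to $|f^{k_j}(p) - f^{k_j}(q)| \le C_4\, \delta_1^{k_j}(p)^{-1/2} \cdot \delta_1^{k_j}(p)^{-C_3} \cdot \delta_1^{k_j}(p)$ — this is not yet the right power, so I would instead feed in the \emph{Hölder} form of the upper bound for $d_{D_1^{k_j}}$ (valid when $|p-q|$ is small relative to the scales involved, which is the only regime not covered by Step 2), namely $d_{D_1^{k_j}}(p,q) \le C_5 |p-q|^{1/2}/\sqrt{\delta_1^{k_j}(p)} + C_5$; combined with the lower bound $d_{D_2^{k_j}}(f(p),f(q)) \ge c_6 |f(p)-f(q)|/\sqrt{\delta_2^{k_j}(f(p))} - C_6$ and $\delta_2^{k_j}(f(p)) \asymp \delta_1^{k_j}(p)$, the $\sqrt{\delta}$ factors cancel and I obtain $|f^{k_j}(p) - f^{k_j}(q)| \le C |p-q|^{1/2}$ uniformly.

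**Main obstacle.** The crux is Step 1 — making \emph{every} constant in the Balogh–Bonk / Forstnerič–Rosay distance estimates explicitly uniform over the $C^2$-convergent family, rather than just over a single domain. I would handle this either by a normal-families/contradiction argument (if some constant degenerated along a subsequence of domains, the limiting domain would be strongly pseudoconvex and the estimate would fail there, a contradiction) or by tracking the dependence of the constants through the proofs and bounding them in terms of $\sup_k \|\rho_k\|_{C^2}$ and the lower bound of the Levi form, which are controlled by $C^2$ convergence. A secondary subtlety is patching the three regimes (both points interior, one point near the boundary with $|p-q|$ moderate, one point near the boundary with $|p-q|$ small) so that the Hölder-$1/2$ estimate is seamless — but each regime is routine once the uniform estimates of Step 1 are in hand.
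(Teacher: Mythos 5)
Your Steps 1 and 2, and the boundary-distance comparison at the start of Step 3, match the paper's strategy (these are Propositions \ref{F2}, \ref{F3} and Steps I--V, VII of the paper's proof, where uniformity over the $C^2$-convergent family is obtained exactly as you suggest, by tracking constants through the Forstneric--Rosay argument). The problem is the final deduction in Step 3, which has a genuine gap.

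The lower bound you ultimately feed into the isometry identity, $d_{D^k_2}(f(p),f(q)) \ge c_6\,|f(p)-f(q)|/\sqrt{\delta(f(p))} - C_6$, is false as a statement about the \emph{integrated} distance: taking $a,b$ at depth $\delta$ on opposite sides of the domain gives $|a-b|\asymp 1$, so the right side is of order $\delta^{-1/2}$ while $d_D(a,b)$ is only of order $\log(1/\delta)$. A linear-in-$|a-b|$ lower bound holds only when $d_D(a,b)$ stays bounded (equivalently $|a-b|\lesssim \sqrt{\delta}$). If you instead use the correct logarithmic lower bound $d\ge \log\bigl(1+c\,|f(p)-f(q)|/\sqrt{\delta}\bigr)-C$ together with your upper bound $d\le C_5\sqrt{|p-q|/\delta}+C_5$, the algebra does not close: you obtain $|f(p)-f(q)|\lesssim \sqrt{\delta}\,\exp\bigl(C\sqrt{|p-q|/\delta}\bigr)$, which is far weaker than $C|p-q|^{1/2}$ in the regime $|p-q|\gg \delta(p)$ (e.g.\ two points at depth $\delta$ separated tangentially by $\sqrt{\delta}$). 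A log on one side against a square root on the other cannot produce a H\"older estimate.

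This is precisely why the paper works at the \emph{infinitesimal} level and why the $C^1$ hypothesis --- which your argument never uses --- is essential there. The linear lower bound $F^K_{D^k_2}(w,v)\ge C|v|/\sqrt{\delta(w)}$ \emph{is} valid for the infinitesimal metric (paper's Step VI, via a plurisubharmonic defining function and the Cauchy integral formula, made uniform in $k$); combined with the equality $F^K_{D^k_2}(f^k(x),df^k(x)v)=F^K_{D^k_1}(x,v)$, the trivial bound $F^K_{D^k_1}(x,v)\le |v|/\delta(x)$, and the boundary-distance comparison, this yields $|df^k(x)v|\le C|v|/\delta(x)^{1/2}$, which is then integrated along a polygonal path (down the normals to depth $|p-q|$ and across). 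To salvage a purely metric argument one would need the genuinely anisotropic two-sided estimates in terms of the Carnot--Carath\'eodory metric on the boundary (Balogh--Bonk), not the Euclidean comparisons you wrote down, and one would additionally have to make those uniform over the family; that is a different and substantially harder route than what your proposal contains.
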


\no Several remarks are in order here. First, by a $ {C}^0$-isometry 
we mean a distance preserving bijection between the
metric spaces $ ( D_1, d_{D_1} )$ and $ ( D_2, d_{D_2} )$. For $ k
\geq 1$, a $ {C}^k$-isometry is a $ {C}^k$-diffeomorphism
$f$ from $ D_1$ onto $ D_2$ with $ f^*( F_{D_1}) = F_{D_2}$.
Second, a given pair of points $p, q \in D_1$ are evidently
contained in $D^k_1$ for all large $k$ and hence $f^{k_j}(p),
f^{k_j}(q)$ are well defined. Thirdly, the estimate shows that the
family $\{f^{k_j}\}$ is `uniformly equicontinuous' on $\ov D_1$,
and this may be regarded as a version of Vitushkin's theorem about
the uniform extendability of a compact subgroup of holomorphic
automorphisms of a smooth real analytic strongly pseudoconvex
domain for isometries. The hypothesis about the existence of $p^1
\in D_1$ such that $f^{k_j}(p^1) \ra p^2 \in D_2$ is to be
understood as saying that $\{f^{k_j}\}$ is a compact family.
Without this the theorem is false even in the holomorphic category
as the example of the unit ball shows. It must be mentioned that
other relevant theorems of this nature for holomorphic
automorphisms were proved by Coupet (\cite{Coupet-1992}) and
Coupet-Sukhov (\cite{Coupet&Sukhov-1996}). When no such $p^1, p^2$
exist for any subsequence of $f^k$, that corresponds to the
non-compact situation and this has been studied by Kim-Krantz in
\cite{Kim&Krantz-2008} where they prove that $D_1, D_2$ are both
biholomorphic to the ball provided their boundaries are at least
$C^{2, \ep}$-smooth where $\ep > 0$. This statement can be
deduced from their arguments in \cite{Kim&Krantz-2008} without any
additional difficulties. The main step in the proof of the above
theorem is to show that
\begin{equation} \label{t8}
\big| df^k (z) v \big| \lesssim \frac{|v|}{ \big( \mbox{dist}(z,
\partial D^k_1) \big)^{1/2}}
\end{equation}
uniformly for all $k$, vectors $ v \in \mathbf{C}^n $ and $ z \in
D^k_1 $. This in turn relies on knowing that $ f^k $ uniformly
preserves the distance to the boundary, i.e.,
\[
\mbox{dist} \big( f^k(z), \partial D^k_2  \big) \approx
\mbox{dist}( z,\partial D^k_1)
\]
for $ z \in D^k_1 $. In the holomorphic case, this would be done
by pulling back a strongly plurisubharmonic defining function for
$ D^k_2 $ by $ f^k $ and applying the Hopf lemma to this
composition. This cannot be apriori applied in the case of
isometries and to circumvent the difficulty, the variation of the
integrated Kobayashi distance in the domains $ D^k_1, D^k_2 $ must
be studied. This is done in lemma \ref{F8} and lemma \ref{F18} and
these yield the $ C^2 $-stability of the estimates for the
integrated Kobayashi distance (cf proposition \ref{F2} and
\ref{F3}) obtained by Forstneric and Rosay in
\cite{Forstneric&Rosay-1987}. The proof concludes by integrating
(\ref{t8}) exactly as in the case of holomorphic mappings.

\medskip

\noindent Some of the material presented here has benefitted from 
conversations that the first author had with Kang-Tae Kim and Rasul Shafikov.
We would like to thank them for their valuable comments, suggestions and 
encouragement.

%%%%%%%%%%%%%%%%%%%%%%%%%%%%%%%%%%%%%%%%%%%%%%%%%%%%%%%%%%%

\section{Notation and Terminology}

\noindent Let $\Delta$ denote the open unit disc in the complex
plane and let $d_{hyp}(a, b)$ denote the distance between two
points $a, b \in \Delta $ with respect to the hyperbolic metric.
For $ r > 0$, $ \Delta(0, r) \subset \mathbf{C} $ will be the disc
of radius $r$ around the origin and $B(z, \delta) \subset
\mathbf{C}^n$ will be the Euclidean ball of radius $\delta > 0$
around $ z $. Let $ X $ be a complex manifold of dimension $n$.
The Kobayashi and the Carath\'{e}odory distances on $X$, denoted
by $d_X$ and $c_X$ respectively, are defined as follows:

\medskip

\no Let $z \in X$ and fix $\xi$ a holomorphic tangent vector at $z$.
Define the associated infinitesimal Carath\'{e}odory and Kobayashi
metrics as
\begin{eqnarray*} F^C_X(z,\xi) & = & \sup \{ | df(z) \xi |
: f \in \mathcal{O}(X, \Delta) \} \\
\mbox{and} \qquad F^K_X(z,\xi) & = & \inf \left \lbrace
\frac{1}{\alpha} : \alpha > 0, f \in \mathcal{O}(\Delta, X) \
\mbox{with} \ f(0) = z, f'(0) = \alpha \xi \right \rbrace
\end{eqnarray*}
respectively. The Kobayashi length of a piecewise $\mathcal{C}^1$-curve $\gamma
: [0, 1] \rightarrow X$ is given by
\[
L_X(\gamma) = \int _0^1 F^K_X( \gamma(t),\dot{\gamma}(t) ) \; dt,
\]
and finally the Kobayashi distance between $p, q \in X$ is defined as
\[
d_X(p,q) = \inf L_X(\gamma)
\]
where the infimum is taken over all piecewise differentiable curves $\gamma$ in $X$ joining
$p$ to $q$. Recall that $X$ is taut if $\cal O(\Delta, X)$ is a normal family.

\medskip

\no The Carath\'{e}odory distance $c_X$ between $p, q \in X$ is defined by setting
\[
c_X (p,q) = \sup_f d_{hyp}\big( (f(p),f(q) \big)
\]
where the supremum is taken over the family of
all holomorphic mappings $f : X \ra \Delta$.

\medskip

\noindent The notion of finite type for a real-analytic
hypersurface $ M \subset \mathbf{C}^n$ will be in the sense of
D'Angelo, i.e., there is no germ of a positive dimensional
subvariety in $M$.

%%%%%%%%%%%%%%%%%%%%%%%%%%%%%%%%%%%%%%%%%%%%%%%%%%%%%%%%%%%

\section{Some remarks on Fridman's invariant}

\no In this section we gather and prove several basic properties of $h_X(p, \cal H)$ that will be used in the sequel. This will be done in a slightly more
general setting by replacing the homogeneous domain $\cal H$ by a Kobayashi hyperbolic domain $\Om \subset \mbf C^n$ such that the quotient $\Om / {\rm
Aut}(\Om)$ is compact, where as usual ${\rm Aut}(\Om)$ denotes the group of holomorphic automorphisms of $\Om$ and as before $X$ will be Kobayashi
hyperbolic. An analogue of (1.1) can thus be defined as
\begin{equation}
h_X(p, \Om) = \inf_{r \in \cal R} \frac{1}{r}
\end{equation}
where $\cal R$ denotes the set of all $r > 0$ with the property
that there is a biholomorphic imbedding $f : \Om \ra X$ with
$f(\Om) \supset B_X(p, r)$.

\medskip

\noindent We begin with the following version of lemma 1.1 of
\cite{Fridman-1983} which will be crucial for our purposes:

\begin{lem} \label{0} Let $X$ be a Kobayashi hyperbolic manifold of
complex dimension $n$ and let $ D $ be a taut domain in $
\mathbf{C}^n $. Suppose that there exist two relatively compact
sets $ K_1 \subset D$ and $K_2 \subset X$ and a sequence $ \{F^k
\} $ of mappings $ F^k : D \rightarrow F^k(D) \subset X $
satisfying the following conditions:

\begin{enumerate}

\item[(i)] for each $ k \geq 1 $, $ F^k : D \rightarrow F^k(D) $
is a biholomorphism,

\item[(ii)] for each $ k \geq 1 $, there exists a point $ z^k \in
K_1 $ such that $ F^k(z^k) \in K_2$,

\item[(iii)] for any compact $ L \subset X$ there exists a number
$ s = s(L) $ such that $ L \subset F^s (D)$.

\end{enumerate}
Then $ X$ is biholomorphically equivalent to $D$.

\end{lem}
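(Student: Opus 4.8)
The plan is to extract a limiting biholomorphism from the sequence $\{F^k\}$ by a normal families argument applied in both directions, exactly in the spirit of Fridman's original lemma 1.1. First I would observe that since $D$ is taut, the family $\{F^k : D \to X\}$ is normal: indeed condition (iii) forces the $F^k$ not to degenerate to the boundary of $X$ in a way that would prevent convergence, while condition (ii) pins down at least one point $z^k \in K_1$ whose image stays in the compact set $K_2$. After passing to a subsequence I may assume $z^k \to z^0 \in \ov{K_1} \subset D$ and $F^k(z^k) \to w^0 \in \ov{K_2} \subset X$. Tautness of $D$ (or equivalently, since $X$ is hyperbolic, a normal-families argument on $\mathcal{O}(D, X)$) then yields a subsequence along which $F^k \to F$ uniformly on compacta, with $F : D \to X$ holomorphic and $F(z^0) = w^0 \in X$, so $F$ is non-constant and in fact a non-degenerate holomorphic map.

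Next I would produce the inverse maps and their limit. For any fixed compact $L \subset X$, condition (iii) gives $s(L)$ with $L \subset F^s(D)$, and in fact for all large $k$ one has $L \subset F^k(D)$ once one exhausts $X$ by an increasing sequence of compacta $L_1 \subset L_2 \subset \cdots$; so the maps $G^k := (F^k)^{-1}$ are defined on larger and larger subsets of $X$ and map into $D$, which is bounded. Hence $\{G^k\}$ restricted to any fixed $L_j$ is a bounded, and therefore normal, family; a diagonal argument over $j$ extracts a further subsequence along which $G^k \to G$ locally uniformly on all of $X$, with $G : X \to D$ holomorphic (a priori $G$ maps into $\ov D$, but a standard argument — using that $G(w^0) = z^0 \in D$ together with the maximum principle, or Hurwitz — shows $G(X) \subset D$).

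Finally I would check that $F$ and $G$ are mutually inverse. On compacta, $G^k \circ F^k = \mathrm{id}_D$ and $F^k \circ G^k = \mathrm{id}$ on the relevant shrinking/growing domains; passing to the limit with the locally uniform convergence of both sequences gives $G \circ F = \mathrm{id}_D$ on $D$ and $F \circ G = \mathrm{id}_X$ on $X$, so $F : D \to X$ is a biholomorphism. The main obstacle — and the point requiring the most care — is the control of the inverse maps $G^k$: one must argue that condition (iii), which only asserts exhaustion for \emph{some} index $s(L)$ per compact $L$, actually delivers a \emph{single} subsequence along which the domains of the $G^k$ eventually contain every compact subset of $X$, and that the limit $G$ genuinely lands inside $D$ rather than on $\pa D$. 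Both points are handled by the diagonal/exhaustion argument together with the fact that $F(z^0) = w^0$ lies in the \emph{interior} $X$, which forces the limiting maps to be open onto their images and hence non-degenerate; this is precisely where hyperbolicity of $X$ and tautness of $D$ are used.
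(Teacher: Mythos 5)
Your overall architecture (limit of the forward maps, limit of the inverse maps, compose to get the identity) is reasonable, and your treatment of the inverses $G^k=(F^k)^{-1}$ — normality because they map into the taut/bounded domain $D$, anchoring via (ii), exhaustion of $X$ by compacta, and the maximum principle to keep the limit off $\pa D$ — matches the paper. But there is a genuine gap at the very first step: you assert that $\{F^k\}\subset\mathcal O(D,X)$ is a normal family ``since $D$ is taut, or equivalently since $X$ is hyperbolic.'' Neither justification works. Tautness of the \emph{source} $D$ says nothing about normality of families of maps \emph{out of} $D$; tautness is a property of the target (it is normality of $\mathcal O(\Delta, D)$, and more generally of $\mathcal O(Y,D)$). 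And hyperbolicity of $X$ does not imply tautness of $X$: for a hyperbolic but non-taut target there are sequences of holomorphic maps into $X$ with no locally uniformly convergent and no compactly divergent subsequence, and conditions (ii)--(iii) only rule out compact divergence, not the failure of convergence. So the existence of the limit $F=\lim F^k$ is not established, and everything downstream (non-degeneracy of $F$, $F\circ G=\mathrm{id}$) rests on it.

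The paper's proof is ordered precisely to avoid this. It first takes the limit $\phi$ of the inverses $\phi^k=(F^k)^{-1}$ (legitimate, since the target $D$ is taut), then proves $\phi$ is \emph{injective} by the distance-decreasing property: $d_X(x^1,x^2)\le d_D(\phi(x^1),\phi(x^2))$ in the limit, so hyperbolicity of $X$ forces injectivity. This realizes $X$ biholomorphically as the subdomain $\phi(X)\subset D$, and only \emph{then} are the forward maps $F^k$ re-read as elements of $\mathcal O(D,D)$ with a taut target, which yields the convergent subsequence $F^k\to F$; Hurwitz applied to $\mathrm{Jac}\,F^k$ together with $F(D)\supset B_X(x^0,\ep)$ shows $F$ is non-degenerate and lands in $X$, and $\phi\circ F=\mathrm{id}_D$ gives surjectivity. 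To repair your argument you need to insert this intermediate step (injectivity of the limit of the inverses and the resulting embedding $X\hookrightarrow D$) before extracting $F$; as written, the forward normal-families claim is unsupported. A secondary, smaller point: $F(z^0)=w^0\in X$ alone does not make $F$ ``non-degenerate'' — a constant map satisfies this — so the openness of $F(D)$ must come from the exhaustion property plus Hurwitz, as in the paper, not merely from the value at one point.
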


\begin{proof} Consider $ \phi^k = (F^k)^{-1} : F^k(D) \rightarrow D$ and
let $ \{ U_n \} $ be an exhaustion of $X$ by relatively compact
submanifolds such that $ U_n $ is compactly contained in $ U_{n+1}
$ for all $n$. In view of the tautness of $D$ and (ii), we may
assume that some subsequence of $ \{ \phi^k \}$ (which we continue
to denote by the same symbols) satisfies the following condition:
for each $ n \geq 1$, there exists a $ N $ such that $ \{ \phi^k
\}_{k \geq N} $ is defined on $ U_n$ and converges uniformly on
compact sets of $ U_n$ to $ \phi: U_n \rightarrow D $. Indeed, $
\phi : X \rightarrow D $. We show that $ \phi $ is a
biholomorphism from $ X$ onto $D$.

\medskip

\noindent To prove the injectivity of $ \phi $ consider $ x^1 $
and $ x^2$ any two points in $X$. Then for each $k$,
\begin{equation}
d_X(x^1,x^2) = d_X \big( F^k \circ \phi^k (x^1), F^k \circ \phi^k
(x^2) \big). \label{0.1}
\end{equation}
It follows from the distance non-increasing property of the
holomorphic mappings that
\begin{equation}
d_X \big( F^k \circ \phi^k (x^1), F^k \circ \phi^k (x^2) \big)
\leq d_D \big( \phi^k(x^1), \phi^k(x^2) \big) \label{0.2}
\end{equation}
and by using the triangle inequality, we get
\begin{equation}
d_D \big( \phi^k(x^1), \phi^k(x^2) \big) \leq d_D \big(
\phi^k(x^1), \phi(x^1) \big) + d_D \big( \phi(x^1), \phi(x^2)
\big) + d_D \big( \phi (x^2), \phi^k(x^2) \big). \label{0.3}
\end{equation}
Combining (\ref{0.1}), (\ref{0.2}) and (\ref{0.3}) and letting $ k
\rightarrow \infty$ gives
\[
d_X(x^1,x^2) \leq d_D \big( \phi(x^1), \phi(x^2) \big).
\]
Since $X$ is hyperbolic, we must have $ x^1 = x^2 $ whenever $
\phi(x^1) = \phi(x^2) $.

\medskip

It remains to show that $ \phi $ is surjective. For this note that
we have already proved that $X$ is biholomorphically equivalent to
a domain $ \phi(X) \subset D$. Hence we can consider as $X$ as a
subdomain of a taut domain $ D$. This together with (ii) implies
that $ \{ F^k\} $ admits a subsequence that converges uniformly on
compacts of $ D$ and since $ F^k( D) $ exhaust $X$, the limit
mapping $ F: D \rightarrow \overline{X} $. In view of relative
compactness of $ K_1$ and $ K_2$, we may assume that (after
passing to a subsequence if necessary) that $ \{ F^k(z^k) \}$
converges to $ x^0 \in \overline{K_2}$ and $ \{ z^k \} $ to $ z^0
\in \overline{K_1}$. It is immediate that $ F(z^0)= x^0$. Now
using the hyperbolicity of $X$, choose $ \epsilon > 0 $ small
enough so that $ B_X( x^0, \epsilon) $ is compactly contained in $
X$. Then it follows from (iii) that $ B_X( x^0, \epsilon) \subset
F^k(D) $ for all $k$ large. This implies that $ B_X( x^0,
\epsilon) \subset F(D)$.

\medskip

Let Jac$ F^k $ be the Jacobian of $ F^k$. Applying Hurwitz's
theorem to the sequence $ \{ \mbox{Jac}F^k \} $, we deduce that
either the Jacobian of $F$ is never zero at any point of $D$ or it
is identically zero on $D$. In the latter case, $ F(D) $ cannot
contain any open set. Since $ F(D) \supset B_X( x^0, \epsilon) $,
we conclude that $ F(D) $ is open. In particular, $ F(D) \subset X
$. For the mapping $ \phi \circ F : D \rightarrow D$ and any $ z
\in D$, it follows that
\[
\phi \circ F(z) = \displaystyle\lim_{k \rightarrow \infty} \phi^k
\circ F^k (z) = z
\]
This shows that $ D \subset \phi(X) $. This completes the proof of
the lemma.
\end{proof}

\begin{lem}
Let $\Omega$ be a hyperbolic domain in $ \mathbf{C}^n$ and assume that $ \Omega/ \mbox{Aut} (\Omega) $ is
compact. Then $\Omega$ is complete hyperbolic and hence taut.
\end{lem}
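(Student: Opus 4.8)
The plan is to follow the classical route (going back to Kobayashi) showing that a hyperbolic manifold with cocompact automorphism group is complete hyperbolic, specialised to the present situation. Cocompactness of $\Omega/\mathrm{Aut}(\Omega)$ means there is a compact $K \subset \Omega$ with $\bigcup_{g \in \mathrm{Aut}(\Omega)} g(K) = \Omega$. The key first step is to produce a \emph{uniform} radius $\epsilon_0 > 0$ such that the Kobayashi ball $B_\Omega(q, \epsilon_0)$ is relatively compact in $\Omega$ for \emph{every} $q \in \Omega$. Since $\Omega$ is hyperbolic, the $d_\Omega$-topology coincides with the Euclidean one, so the balls $B_\Omega(p,\epsilon)$ form a neighbourhood basis at $p$; choosing a Euclidean ball $B(p,\rho) \Subset \Omega$ we obtain, for each $p \in \Omega$, some $\delta_p > 0$ with $B_\Omega(p, 2\delta_p) \subset B(p,\rho) \Subset \Omega$. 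For any $p' \in B_\Omega(p, \delta_p)$ the triangle inequality gives $B_\Omega(p', \delta_p) \subset B_\Omega(p, 2\delta_p) \Subset \Omega$. Thus $\{B_\Omega(p,\delta_p)\}_{p \in K}$ is an open cover of the compact set $K$; extracting a finite subcover and letting $\epsilon_0$ be the minimum of the finitely many $\delta_p$'s yields $B_\Omega(q,\epsilon_0) \Subset \Omega$ for all $q \in K$. For an arbitrary $q \in \Omega$, write $q = g(p)$ with $p \in K$, $g \in \mathrm{Aut}(\Omega)$; since $g$ is a biholomorphism it is a $d_\Omega$-isometry, so $B_\Omega(q, \epsilon_0) = g\big(B_\Omega(p, \epsilon_0)\big)$ is relatively compact in $\Omega$ as well.

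Next I would upgrade this to the compactness of every closed ball $\overline{B}_\Omega(p^0, R)$ by induction on $R$ in steps of $\epsilon_0/2$, using that $d_\Omega$ is by construction a length metric. For $R \le \epsilon_0/2$ the closed ball $\overline{B}_\Omega(p^0,R)$ is a closed subset of the compact set $\overline{B_\Omega(p^0,\epsilon_0)}$, hence compact. For the inductive step, assume $\overline{B}_\Omega(p^0,R)$ is compact and let $\{z^j\}$ be a sequence in $\overline{B}_\Omega(p^0, R+\epsilon_0/2)$. For each $j$ pick a piecewise $C^1$ curve from $p^0$ to $z^j$ of length less than $R + \epsilon_0/2 + \epsilon_0/8$, reparametrise by arc length, and let $w^j$ be the point along it at length $R$ (if the curve is shorter than $R$, then $z^j$ already lies in $\overline{B}_\Omega(p^0,R)$). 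Then $w^j \in \overline{B}_\Omega(p^0,R)$ and $d_\Omega(w^j, z^j) < \epsilon_0/2 + \epsilon_0/8$; passing to a subsequence, $w^j \to w \in \overline{B}_\Omega(p^0,R)$, and for large $j$ one has $d_\Omega(w, z^j) < \epsilon_0$, so $z^j \in B_\Omega(w,\epsilon_0) \Subset \Omega$. Hence a further subsequence of $\{z^j\}$ converges to some $z \in \Omega$, which lies in $\overline{B}_\Omega(p^0, R+\epsilon_0/2)$ by continuity of $d_\Omega$. Therefore every closed $d_\Omega$-ball is sequentially compact, hence compact; a Cauchy sequence is bounded, so it lies in a compact closed ball and converges, i.e. $(\Omega, d_\Omega)$ is complete. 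Tautness then follows from the standard implication that a complete hyperbolic manifold is taut (Kobayashi, Kiernan), which I would simply quote.

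As for difficulty: there is no genuine obstacle, the statement being classical, but the step deserving the most care is the production of the \emph{uniform} radius $\epsilon_0$. Relative compactness of small Kobayashi balls at a single point is little more than a restatement of hyperbolicity (via the equivalence of topologies), whereas making the radius uniform over all of $\Omega$ really uses that $\mathrm{Aut}(\Omega)$ carries one fixed compact set onto everything, together with the $\mathrm{Aut}(\Omega)$-invariance of $d_\Omega$. Once $\epsilon_0$ is available, the ball-doubling induction is routine, the only technical point being to use that the Kobayashi distance as defined here — an infimum of lengths of piecewise $C^1$ curves — is an inner length metric, so that intermediate points at prescribed arc length along near-geodesic curves are available.
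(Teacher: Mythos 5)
Your argument is correct, but it takes a longer and genuinely different route from the paper's. You prove finite compactness: first a uniform $\epsilon_0>0$ with $B_\Omega(q,\epsilon_0)\Subset\Omega$ for every $q$ (compactness of $K$ plus transport by $\mathrm{Aut}(\Omega)$), then compactness of every closed ball $\overline{B}_\Omega(p^0,R)$ by chaining balls of radius $\epsilon_0$ along near-extremal curves, and finally completeness as a corollary. The paper instead attacks Cauchy-completeness head on: given a Cauchy sequence $\{p^k\}$, it fixes $k_0$ with $d_\Omega(p^k,p^{k_0})<\epsilon$ for $k\ge k_0$, applies a single automorphism $g$ carrying $p^{k_0}$ into $K$, observes that the translated tail $\{g(p^k)\}$ then lies in the fixed relatively compact set $L=\{z: d_\Omega(z,K)<\epsilon\}$, extracts a limit there using the equivalence of the $d_\Omega$- and Euclidean topologies, and pulls back by $g^{-1}$. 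Both proofs pivot on the same fact — that a sufficiently small $d_\Omega$-neighbourhood of the compact fundamental set $K$ is relatively compact in $\Omega$, which is exactly your uniform-$\epsilon_0$ step restricted to $K$ — and both then quote complete hyperbolic $\Rightarrow$ taut. What your version buys is the explicit statement that all closed Kobayashi balls are compact (useful elsewhere in the paper, e.g.\ for Hausdorff convergence of balls), at the cost of the ball-doubling induction, which the paper's translation trick renders unnecessary; the only point you should make explicit in that induction is that $d_\Omega$ as defined here is an inner metric, so that the intermediate point $w^j$ at arc length $R$ exists, which you do note.
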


\begin{proof}
By hypothesis there exists a compact set $K
\subset \Omega$ such that for any point $ x \in \Omega$ there
exists a $y \in K$ and a $ g \in \mbox{Aut}(\Omega)$ with $ g(y)=
x$. Let $\epsilon > 0$ be sufficiently small so that
\[
L = \{ z \in \Omega : d_{\Omega} (z, K) < \epsilon \}
\]
is compactly contained in $\Omega$. Let $ \{ p^k \}$ be a Cauchy
sequence in $ \Omega$ in the Kobayashi metric. Then there exists a
positive integer $ k_0 $ such that $ d_{\Omega}(p^k, p^{k_0}) <
\epsilon $ for $k \geq k_0$. Let $ g \in \mbox{Aut}(\Omega) $ be
such that $ g(p^{k_0}) \in K $. Since biholomorphisms are
isometries for the Kobayashi metric, it follows that
\[
d_{\Om} \big(g(p^k), g(p^{k_0}) \big) = d_{\Om}(p^k, p^{k_0}) <
\ep
\]
which shows that $g(p^k) \in L$ for all $ k \geq k_0 $ and that
$\{g(p^k)\}$ is also Cauchy in the Kobayashi metric. Since $ L $
is compactly contained in $ \Omega$ and $ \Omega $ is hyperbolic,
there exists a $ q \in L $ such that $ g(p^k) \rightarrow q $ as $
k \rightarrow \infty $.  It follows that $ p^k \rightarrow g^{-1}
(q)$ as $ k \rightarrow \infty $. This shows that $ \Omega$ is
complete.
\end{proof}

\begin{prop} \label{A0} Let $X$ be a Kobayashi hyperbolic manifold of
complex dimension $n$ and let $ \Omega $ be a hyperbolic domain in
$ \mathbf{C}^n $ such that $ \Omega/ \mbox{Aut} (\Omega) $ is
compact. Then

\begin{enumerate}

\item [(i)] if there is an $ x^0 \in X $ such that $ h_X(x^0 ,
\Omega) = 0 $, then $ h_X(x^0, \Omega) \equiv 0 $ and $ X$ is
biholomorphically equivalent to $ \Omega $.

\item [(ii)] if there is an $ x^0 \in X $ such that $ h_X(x^0 ,
\Omega)
> 0 $ and $ X$ is taut, then there exists a biholomorphic
imbedding $ F : \Omega \rightarrow X, F( \Omega ) \supseteq
B_X(x^0,r) $ such that $ h_X (x^0 , \Omega) = 1/r $ (i.e., the
variational problem has an extremal).

\item [(iii)] $ h_X ( \cdot, \Omega) $ is continuous on $X$.

\end{enumerate}

\end{prop}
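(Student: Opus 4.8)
The plan is to treat the three parts in order, using Lemma \ref{0} as the main engine for part (i), a normal-families compactness argument for part (ii), and a combination of (ii) with a diagonal/stability argument for part (iii).

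For part (i), suppose $h_X(x^0,\Omega)=0$. By definition of the infimum over $\cal R$, this means there exist $r_k\to\infty$ and biholomorphic imbeddings $F^k:\Omega\to F^k(\Omega)\subset X$ with $F^k(\Omega)\supset B_X(x^0,r_k)$. Since $\Omega/\mathrm{Aut}(\Omega)$ is compact, by the preceding lemma $\Omega$ is complete hyperbolic and taut, so Lemma \ref{0} applies once I produce the required data. I would set $K_2=\{x^0\}$ (a single point, trivially compact in $X$); condition (iii) of Lemma \ref{0} holds because the $F^k(\Omega)$ exhaust any compact $L\subset X$: indeed $L\subset B_X(x^0,R)$ for some $R$ since $X$ is hyperbolic hence the Kobayashi balls are a neighbourhood basis, and $B_X(x^0,R)\subset F^k(\Omega)$ as soon as $r_k>R$. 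For condition (ii), I need a point $z^k\in K_1$ (a fixed relatively compact subset of $\Omega$) with $F^k(z^k)\in K_2=\{x^0\}$; since $x^0\in B_X(x^0,r_k)\subset F^k(\Omega)$, I may take $z^k=(F^k)^{-1}(x^0)$, but I must check these preimages stay in a fixed compact subset of $\Omega$. This uses that $\Omega$ is homogeneous up to a compact set: pick $g_k\in\mathrm{Aut}(\Omega)$ moving $z^k$ into the compact fundamental domain $K$, and replace $F^k$ by $F^k\circ g_k^{-1}$; this keeps all three hypotheses and now $z^k\in K=:K_1$. Lemma \ref{0} then gives $X\cong\Omega$ biholomorphically, and since $h$ is a biholomorphic invariant and $\Omega$ is homogeneous, $h_X(\cdot,\Omega)\equiv h_\Omega(\cdot,\Omega)\equiv 0$.

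For part (ii), assume $h_X(x^0,\Omega)=1/r_0>0$ with $X$ taut. Choose biholomorphic imbeddings $F^k:\Omega\to X$ with $F^k(\Omega)\supset B_X(x^0,r_k)$ and $r_k\uparrow r_0$. As in part (i), precompose with automorphisms of $\Omega$ so that $(F^k)^{-1}(x^0)=:z^k$ lies in the fixed compact fundamental domain $K\subset\Omega$. Passing to a subsequence, $z^k\to z^0\in K$. Since $X$ is taut, $\{F^k\}$ is a normal family, so after a further subsequence $F^k\to F$ locally uniformly on $\Omega$, with $F(z^0)=x^0$. By the Hurwitz-type argument already used in the proof of Lemma \ref{0} (applied to the Jacobians), either $F$ is a biholomorphism onto its image or $F$ is degenerate; the degenerate case is excluded because $F(\Omega)\supset B_X(x^0,r)$ for every $r<r_0$ (any fixed such ball lies in $F^k(\Omega)$ for large $k$, and local uniform convergence together with the inclusion $B_X(x^0,r')\subset F^k(\Omega)$ for $r'<r$ close to $r$ forces $B_X(x^0,r)\subset F(\Omega)$ — here one should argue carefully using that $F^k$ restricted to $(F^k)^{-1}(\overline{B_X(x^0,r')})$ converges), so $F$ is a biholomorphic imbedding with $F(\Omega)\supset B_X(x^0,r_0)$, giving the extremal; that $F(\Omega)$ cannot contain any larger ball is immediate from $h_X(x^0,\Omega)=1/r_0$.

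For part (iii), continuity, I would argue as follows. First, lower semicontinuity: if $x^j\to x^0$ in $X$, then for any $r<$ (the extremal radius at $x^0$ given by part (ii), or any $r$ with $1/r>h_X(x^0,\Omega)$) there is $F:\Omega\to X$ with $F(\Omega)\supset B_X(x^0,r)$; since $B_X(x^0,r)$ is open and contains $x^0$, it contains $B_X(x^j,r-\epsilon_j)$ with $\epsilon_j\to 0$ (using continuity of the Kobayashi distance), hence $h_X(x^j,\Omega)\le 1/(r-\epsilon_j)$, giving $\limsup_j h_X(x^j,\Omega)\le 1/r$; letting $r$ increase to the extremal value yields $\limsup_j h_X(x^j,\Omega)\le h_X(x^0,\Omega)$. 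For the reverse (upper semicontinuity of $h$, i.e. $\liminf_j h_X(x^j,\Omega)\ge h_X(x^0,\Omega)$): suppose not, so along a subsequence there are imbeddings $F^j:\Omega\to X$ with $F^j(\Omega)\supset B_X(x^j,r_j)$ and $\liminf r_j=:r_*>r_0$, the extremal radius at $x^0$. Normalize by automorphisms of $\Omega$ so $(F^j)^{-1}(x^j)$ lies in the compact fundamental domain; by tautness of $X$ (if $X$ is taut; if $h_X(x^0,\Omega)=0$ we are in case (i) and $h\equiv 0$ is continuous, so assume $h_X(x^0,\Omega)>0$ and invoke... ) — here I must be careful: part (iii) is stated without a tautness hypothesis. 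When $h_X(x^0,\Omega)=0$, part (i) already gives $h\equiv 0$, which is continuous, so it suffices to prove continuity at points where $h_X(x^0,\Omega)>0$; at such points one can still extract a limit because the inverse maps $\phi^j=(F^j)^{-1}$ are defined on the balls $B_X(x^j,r_j)$ which eventually contain a fixed ball $B_X(x^0,\rho)$ with $\rho<r_*$, and $\{\phi^j|_{B_X(x^0,\rho)}\}$ is a normal family by tautness of $\Omega$ (and the normalization keeping $\phi^j(x^j)$ in $K$). A Hurwitz argument as above shows the limit $\phi$ is a biholomorphism of $B_X(x^0,\rho)$ onto an open subset of $\Omega$; running the argument of Lemma \ref{0} on the exhausting family shows that $\phi$ extends and its inverse gives a biholomorphic imbedding $F:\Omega\to X$ with $F(\Omega)\supset B_X(x^0,\rho)$ for every $\rho<r_*$, hence $h_X(x^0,\Omega)\le 1/r_*<1/r_0$, contradicting the definition of $r_0$ as the extremal radius. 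This completes continuity.

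The main obstacle I anticipate is the normalization step that keeps the maps (and their inverses) in a compact family: without exploiting $\mathrm{Aut}(\Omega)$-homogeneity modulo a compact set to slide the relevant base points into a fixed fundamental domain, neither the application of Lemma \ref{0} in (i) nor the normal-family extractions in (ii) and (iii) would go through. The second delicate point is the open-mapping/exhaustion bookkeeping needed to pass the inclusions $B_X(x^\bullet,r')\subset F^k(\Omega)$ to the limit map — this requires the Hurwitz alternative for the Jacobians exactly as in the proof of Lemma \ref{0}, together with careful attention to convergence of $F^k$ on compact subsets that are themselves only known to lie eventually in $F^k(\Omega)$.
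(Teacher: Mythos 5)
Parts (i) and (ii) of your proposal follow the paper's proof essentially step for step: normalize by $\mathrm{Aut}(\Omega)$ so the preimages of $x^0$ land in a fixed compact fundamental set, feed the data into Lemma \ref{0} for (i), and for (ii) use tautness of $X$ to extract a limit $F$, a Hurwitz-type argument for injectivity, and the inverse maps on a slightly smaller ball to get $F(\Omega)\supset B_X\big(x^0,(h_X(x^0,\Omega))^{-1}\big)$. The one imprecision is your injectivity step: nonvanishing of the limit Jacobian only gives local injectivity, so you need the Cartan--Hurwitz statement that a locally uniform limit of injective holomorphic maps between equidimensional domains is either injective or has identically vanishing Jacobian (the paper invokes the same fact via the maps $\tilde F^k(\cdot)-\tilde F^k(c)$); since $F(\Omega)$ contains an open set, this closes the argument.

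The genuine gap is in part (iii), in the direction $\liminf_j h_X(x^j,\Omega)\ge h_X(x^0,\Omega)$. Part (iii) carries no tautness hypothesis on $X$, so the family $\{F^j\}$ need not be normal, and your workaround --- taking limits of the inverses $\phi^j=(F^j)^{-1}$ on a fixed ball $B_X(x^0,\rho)$ and then ``running the argument of Lemma \ref{0}'' --- does not produce what you need: the balls $B_X(x^j,r_j)$ do not exhaust $X$, so Lemma \ref{0} is inapplicable, and a limit $\phi$ of the inverses is at best a biholomorphism of $B_X(x^0,\rho)$ onto \emph{some open subset} of $\Omega$; inverting it gives an imbedding of that subset into $X$, not an imbedding of all of $\Omega$, so you cannot conclude $h_X(x^0,\Omega)\le 1/r_*$ this way. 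None of this machinery is needed: the ball-inclusion trick you used for the $\limsup$ direction is symmetric. If $F^j(\Omega)\supset B_X(x^j,r_j)$, the triangle inequality gives $B_X\big(x^0,\,r_j-d_X(x^0,x^j)-\epsilon\big)\subset B_X(x^j,r_j)\subset F^j(\Omega)$, hence $h_X(x^0,\Omega)\le \big(r_j-d_X(x^0,x^j)-\epsilon\big)^{-1}$, and letting $j\to\infty$ and $\epsilon\to 0$ yields $h_X(x^0,\Omega)\le 1/r_*$, the desired contradiction. This is exactly the paper's proof of (iii): it shows directly that $H_X:=1/h_X$ satisfies $\big|H_X(x^1,\Omega)-H_X(x^2,\Omega)\big|\le d_X(x^1,x^2)$, with no normal-families argument and no appeal to the extremal from (ii).
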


\begin{proof} To prove (i) let $x^0 \in X$ be such that $ h_X(x^0
, \Omega) = 0 $. By definition, there exists a sequence $ \{\phi^
k \} $ of biholomorphic imbeddings, $ \phi^k : \Omega \rightarrow
X $ such that $ B_X (x^0, k) \subset \phi^k (\Omega) $. Let $K$ be
a compact subset of $ \Omega $ such that for every $y$ in $ \Omega
$ there exists a $x \in K $ and a $ g \in \mbox{Aut}( \Omega)$
satisfying $ g(y) = x $. Composing with an appropriate
automorphism of $\Omega$, if necessary, we may assume that $ (\phi
^k) ^{-1}(x^0) = q^k \in K $. We now claim that $ \{
\phi^k(\Omega) \} $ is a sequence of subdomains of $ X $ that
exhausts $X$. Indeed, for any compact $ L \subset X$; since $ d_X
( \cdot, \cdot ) $ is continuous, there exists a constant $ c = c
(L) > 0 $ such that $ d_X ( x^0, x ) \leq c $ for all $ x \in L $.
It follows that $ L \subset B_X (x^0, k) \subset \phi^k (\Omega)\
\mbox{for all} \ k \geq c $. Now applying lemma \ref{0} to the
sequence $ \{ \phi^k \}$ with $ K_1 = \{ q^k \} $ and $ K_2 =
\{x^0 \}$, we obtain that $ X $ is biholomorphically equivalent to
$ \Omega$. Let $ f : X \rightarrow \Omega $ be a biholomorphism
from $X$ onto $\Omega$. Then for any $ x \in X$, $ h_X(x, \Omega)
= h_{\Omega} (f(x), \Omega)= 0 $. Hence, $ h_X ( \cdot, \Omega )
\equiv 0$.

\medskip

\no For (ii) observe that by definition, there exists a sequence
of biholomorphic imbeddings, $ F^k : \Omega \rightarrow X $ and
$R_k > 0$ satisfying $ B_X (x^0, R_k) \subset F^k (\Omega) $ and $
(R_k)^{-1} \rightarrow h_X(x^0 , \Omega)$ as $ k \rightarrow
\infty$. Let $K \subset \Omega $ be as in part (i). Then for each
$k$, there exists a $ f^k \in \mbox{Aut} (\Omega)$ such that $ f^k
\big( (F^k)^{-1}(x^0) \big) = q^k \in K $. Set $ \tilde{F}^k :=
F^k \circ (f^k)^{-1} : \Om \ra X $. Since $ X$ is taut, it follows
that $ \{ \tilde{F}^k \} $ is a normal family. Hence, $ {
\tilde{F}^k } $ admits a subsequence that converges uniformly on
compact sets of $ \Omega$ to a holomorphic mapping $ F: \Omega
\rightarrow X $ or $ F \equiv \infty $. The latter cannot be true
since $ \tilde{F}^k (q^k) = x^0 $ which implies that $ F(q) = x^0
$ where $ q = \displaystyle\lim_{k \rightarrow \infty} q^k  \in K
$. We must therefore have a holomorphic mapping $ F : \Omega
\rightarrow X $.

\medskip

To establish the injectivity of $ F$ fix $ \epsilon > 0 $
arbitrarily small. Then $ B_X \big(x^0, (h_X(x^0, \Omega ) )^{-1}
- \epsilon \big) \subset B_X(x^0, R_k) \subset \tilde { F}^k
(\Omega)$ for all $k$ large. It follows that $ B_X \big(x^0,
(h_X(x^0, \Omega ) )^{-1}  - \epsilon \big) \subset F( \Omega)$.
Since $ F( \Omega ) $ contains a non-empty open set, $F$ must be
non-constant. Consider any point $ c \in \Omega$. Each mapping $
\tilde{F}^k ( \cdot) - \tilde {F}^k (c) $ never vanishes in $
\Omega \setminus \{c\} $ because of the injectivity of $ \tilde
{F}^k $ in $ \Omega$. Applying Hurwitz's theorem to the sequence $
\{ \tilde{F}^k ( \cdot) - \tilde {F}^k (c) \} \subset \mathcal{O}
( \Omega \setminus \{c\}, \mathbf{C}^n ) $, we get that $ F(z)
\neq F(c) $ for all $ z \in \Omega \setminus \{c\} $. Since $c$ is
any arbitrary point of $ \Omega$, this is just the assertion that
$F$ is injective on $ \Omega$.

\medskip

It remains to show that $ F ( \Omega ) \supset B_X \big(x^0,
(h_X(x^0, \Omega) )^{-1} \big)$. For this, consider
\[
G^k := (\tilde F^k) ^{-1} : B_X \big(x^0, (h_X(x^0, \Omega ))^{-1}
- \epsilon \big) \rightarrow \Omega
\]
that are defined for all large $k$. Since $ \Omega $ is taut, $ \{
G^k \} $ forms a normal family. Also, $ G^k (x^0) = q^k $ implies
that $ \{ G^k \} $ admits a convergent subsequence that converges
uniformly on compact sets of $ B_X \big(x^0, (h_X(x^0, \Omega
))^{-1} - \epsilon \big)$ to a holomorphic mapping $ G : B_X
\big(x^0, (h_X(x^0, \Omega ))^{-1} - \epsilon \big) \rightarrow
\Omega $ and $ G(x^0) = q$. Now for any $ p \in B_X \big(x^0,
(h_X(x^0, \Omega ))^{-1} \big)$, there exists a compact set $ L
\subset \Omega $ such that $ G^k(p) \subset L $ for all $k$ large.
Consequently, $ p = \tilde{F}^k \circ G^k (p) \subset \tilde{F}^k
(L) $ for all $k $ large. Let $ \{ w^k \} \subset L $ such that $
\tilde{F}^k (w^k ) = p $. In view of compactness of $L$, there
exists a $ w^0 \in L $ such that $ w^k \rightarrow w^0$ as $ k
\rightarrow \infty$. Since $ \tilde F^k \rightarrow F $ uniformly
on compacts of $ \Omega$, we have $ \tilde{F}^k (w^k ) \rightarrow
F(w^0)$. Hence, $ p = F(w^0) \subset F( \Omega ) $. Therefore, $ F
( \Omega ) \supset B_X \big(x^0, (h_X(x^0, \Omega))^{-1} \big) $.

\medskip

\noindent To prove (iii) let $ x^0 \in X $ be such that $ h_X
(x^0, \Omega)
> 0 $ and consider $ H_X( x^0, \Omega ) := (h_X(x^0,
\Omega))^{-1} $. It is enough to show that $ H_X( \cdot, \Omega) $
is continuous. Let $ x^1, x^2 \in X $ such that $ 2 d_X ( x^1,x^2)
< H_X ( x^1, \Omega ) $. Let $ \epsilon > 0 $ be such that
\[
H_X( x^1, \Om) - 2 d_X (x^1, x^2 ) - 2 \epsilon > 0.
\]
Using the triangle inequality for $ d_X$, we get
\[
B_X \big( x^2, H_X (x^1, \Omega)  - \epsilon - d_X(x^1, x^2) \big)
\subset B_X \big( x^1, H_X( x^1, \Omega ) - \epsilon \big)
\]
which means that
\[
H_X (x^1, \Omega)  - \epsilon - d_X(x^1, x^2) \leq H_X( x^2,
\Omega)
\]
or equivalently that
\begin{eqnarray}
H_X (x^1, \Omega) - H_X(x^2, \Omega) \leq d_X(x^1, x^2) +
\epsilon. \label{1.1}
\end{eqnarray}
But by the choice of $\ep > 0$ we know that
\[
H_X(x^2, \Omega) - \epsilon - d_X(x^1,x^2) > 0
\]
and similarly the triangle inequality applied once again gives
\[
B_X \big( x^1, H_X (x^2, \Omega)  - \epsilon - d_X(x^1, x^2) \big)
\subset B_X \big( x^2, H_X( x^2, \Omega ) - \epsilon \big)
\]
or equivalently that
\begin{eqnarray}
H_X (x^2, \Omega) - H_X(x^1, \Omega) \leq d_X(x^1, x^2) +
\epsilon. \label{1.2}
\end{eqnarray}
Combining (\ref{1.1}) and (\ref{1.2}) yields
\[
\big |H_X (x^2, \Omega) - H_X(x^1, \Omega) \big| \leq d_X(x^1,
x^2)
\]
and this shows that $ H_X( \cdot, \Omega) $ is continuous
in the topology induced by the Kobayashi metric. Since $X$ is
hyperbolic, the topology induced by the Kobayashi metric coincides
with its intrinsic topology and hence the result follows.
\end{proof}

\no It must be mentioned that corresponding statements for $h_X(p,
\cal H)$ where $\cal H$ is a bounded homogeneous domain were
proved in \cite{Fridman-1983}. A similar situation that dealt with
a hyperbolic domain $\Om$ with $\Om / {\rm Aut}(\Om)$ compact was
also considered in \cite{Fornaess&Sibony-1981}.

\medskip

\noindent We shall henceforth be concerned primarily with
Fridman's invariant as defined in (1.1) using the Kobayashi
metric. To analyse its boundary behaviour the first step is to
show that it can be localised near peak points.

\begin{prop} \label{A1}
Let  $ D \subset \mathbf{C}^n $ be a bounded domain and let $ z^0
\in \partial D $ be a local holomorphic peak point. Then for every
neighbourhood $U$ of $ z^0$ there exists a neighbourhood $ V
\subset U $ of $ z^0$ with $V$ relatively compact in $U$ such that
for all $ z \in V \cap D$, we have
\[
c \ h_D (z, \mathbf{B}^n ) \leq  h_{U \cap D} (z, \mathbf{B}^n )
\leq h_D (z, \mathbf{B}^n )
\]
where $ c > 0 $ is a constant independent of $z \in V \cap D $.
\end{prop}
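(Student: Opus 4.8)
The plan is to transplant to $h_D(\cdot,\mathbf{B}^n)$ the standard localization of the Kobayashi distance at a local holomorphic peak point. After shrinking $U$ I may assume that $z^0$ carries a holomorphic peak function $\psi$ on $\overline{U\cap D}$; fix neighbourhoods $V\Subset V_1\Subset U$ of $z^0$. Two ingredients are needed. First, the Forstneri\v{c}--Rosay estimate: there is a constant $C\ge 0$ with
\[
d_D(z,w)\ \le\ d_{U\cap D}(z,w)\ \le\ d_D(z,w)+C\qquad(z,w\in V_1\cap D),
\]
the left inequality being the distance--decreasing property of the inclusion $U\cap D\hookrightarrow D$ and the right one being built from $\psi$; equivalently $B_{U\cap D}(z,r)\subset B_D(z,r)$ and $B_D(z,r-C)\cap V_1\subset B_{U\cap D}(z,r)$ for $z\in V\cap D$. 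Second, a soft remark: since $D$ is bounded and $\overline V\Subset V_1$ there is $\delta_0>0$ with $B_D(z,\delta_0)\subset V_1\cap D$ for all $z\in V\cap D$, and since $\psi$ forces $d_D(z,w)\to\infty$ as $w\to z^0$, the closed Kobayashi balls $\overline{B_D(z,r)}$ and $\overline{B_{U\cap D}(z,r)}$ are, for $z$ near $z^0$ and $r$ bounded, compact subsets of $V_1\cap D$ that contract towards $z^0$ as $z\to z^0$.

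For the upper estimate $h_{U\cap D}(z,\mathbf{B}^n)\le h_D(z,\mathbf{B}^n)$ I would transfer a near--extremal imbedding for $D$ to one for $U\cap D$. Given $r$ smaller than $h_D(z,\mathbf{B}^n)^{-1}$, pick a biholomorphic imbedding $\phi:\mathbf{B}^n\to D$ with $\phi(\mathbf{B}^n)\supset B_D(z,r)$. For $r'<r$ the set $\overline{B_D(z,r')}$ is a compact subset of $\phi(\mathbf{B}^n)$ lying in $U$; placing $\phi^{-1}(z)$ at the origin by an automorphism of $\mathbf{B}^n$ and restricting $\phi$ to a slightly smaller concentric ball, one aims to arrange that the resulting biholomorphic imbedding has image inside $U\cap D$ while still containing $B_D(z,r')\supset B_{U\cap D}(z,r')$. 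Granting this, $r'\in\mathcal{R}_{U\cap D}(z)$, and letting $r'\uparrow r$ and then $r\uparrow h_D(z,\mathbf{B}^n)^{-1}$ gives the inequality.

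For the lower estimate $c\,h_D(z,\mathbf{B}^n)\le h_{U\cap D}(z,\mathbf{B}^n)$ the direction is the easy one: if $g:\mathbf{B}^n\to U\cap D$ is a biholomorphic imbedding with $g(\mathbf{B}^n)\supset B_{U\cap D}(z,r)$, then $g$ is also a biholomorphic imbedding into $D$ and, by the localization estimate together with the second remark,
\[
g(\mathbf{B}^n)\ \supset\ B_{U\cap D}(z,r)\ \supset\ B_D(z,r-C)\cap V_1\ \supset\ B_D\bigl(z,\min(r-C,\delta_0)\bigr).
\]
Thus $\min(r-C,\delta_0)\in\mathcal{R}_D(z)$ whenever $r>C$, so $h_D(z,\mathbf{B}^n)\le\bigl(\min(r-C,\delta_0)\bigr)^{-1}$; optimizing over admissible $r$ and using that the admissible radii for $U\cap D$ stay bounded below on $V\cap D$, this yields $c\,h_D(z,\mathbf{B}^n)\le h_{U\cap D}(z,\mathbf{B}^n)$ with $c=c(V,V_1,U)>0$.

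The main obstacle is the transfer step in the upper estimate: converting a biholomorphic imbedding $\phi:\mathbf{B}^n\to D$ that merely covers $B_D(z,r)$ into one whose image actually lies in $U\cap D$ while losing only a negligible amount of covering radius. The difficulty is that $\phi(\mathbf{B}^n)$ need not be small in $D$; what should save the argument is that the covering condition only pins down the relatively compact piece $\overline{B_D(z,r')}\subset U$, so that after reparametrizing $\phi$ and passing to a concentric subball the image is forced back into $U$ — this is where the peak hypothesis is genuinely used. The remaining steps — the localization inequality, the blow--up $d_D(z,w)\to\infty$ as $w\to z^0$, and the reciprocal bookkeeping — are routine.
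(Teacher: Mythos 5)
The decisive step of your upper estimate is exactly the one you defer with ``granting this'', and it is the heart of the matter, so as written there is a genuine gap. What is needed is an attraction lemma at the local peak point: for every $s\in(0,1)$ there is a neighbourhood $U_2$ of $z^0$ such that \emph{every} holomorphic map $\phi:\mathbf{B}^n\rightarrow D$ with $\phi(0)=z\in U_2\cap D$ satisfies $\phi(s\mathbf{B}^n)\subset U\cap D$, uniformly in $\phi$. This has nothing to do with the covering condition $\phi(\mathbf{B}^n)\supset B_D(z,r)$ ``pinning down'' a compact piece: a priori you do not even know $\overline{B_D(z,r')}\subset U$ (that is the same localization problem over again), and no reparametrization or passage to concentric subballs forces the image into $U$ without using the peak function; note also that $\phi$ takes values in all of $D$ while the peak function lives only on $\overline{U\cap D}$, so even quoting the attraction property needs care. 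The paper obtains precisely this from Rudin, Lemma 15.2.2, and proves the analogous statement for polyhedra in Proposition \ref{A3} by a normal-families/maximum-principle argument; once one has it, the rest of your bookkeeping goes through (the passage from $B_D(z,r)\subset\phi(\mathbf{B}^n)$ to $B_D(z,r')\subset\phi(s\mathbf{B}^n)$ with $s<1$ uniform in $\phi$ and $z$ follows from a comparison of $d_{B_D(z,r)}$ with $d_D$ in the spirit of lemma \ref{D2}). Since you flag this transfer step as ``the main obstacle'' and offer only a heuristic pointing at the wrong mechanism, the upper bound is not proved.

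Two further points on the lower estimate. First, the additive localization $d_{U\cap D}(z,w)\le d_D(z,w)+C$ for all $z,w\in V_1\cap D$ is asserted, not proved, and it is not what Forstneri\v{c}--Rosay establish (they localize the infinitesimal metric near plurisubharmonic peak points); for pairs with $d_D(z,w)$ large the near-minimizing $D$-paths may leave $U$, so an upper bound for $d_{U\cap D}$ by $d_D$ requires an argument. The paper proves only the inclusion it needs, $B_D\left(z,R/(2(1+\epsilon))\right)\subset B_{U\cap D}(z,R)$, by combining the Royden--Graham localization of the infinitesimal metric with a first-exit argument, the peak function entering solely through $d_{U\cap D}(z,\cdot)\rightarrow+\infty$ on $(U\cap D)\setminus\overline{U_2\cap D}$. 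Second, your final bound $h_D(z)\le\bigl(\min(r-C,\delta_0)\bigr)^{-1}$ caps the produced radius at the fixed constant $\delta_0$: when the near-optimal admissible radius for $U\cap D$ at $z$ is large (i.e. $h_{U\cap D}(z)$ is small, which is exactly the situation at, say, a strongly pseudoconvex $z^0$), you only get $h_D(z)\le1/\delta_0$, which is not of the form $c^{-1}h_{U\cap D}(z)$; the remark that the admissible radii ``stay bounded below'' is the wrong direction, since the trouble is that they are unbounded above. Working with the multiplicative inclusion above avoids the cap and yields $h_D\le 2(1+\epsilon)\,h_{U\cap D}$ on $V\cap D$ directly.
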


\begin{proof} Let $U$ be a  neighbourhood of $z^0$ and $ g \in
\mathcal{A}(U \cap D)$, the algebra of continuous functions on the
closure of $U \cap D$ that are holomorphic on $ U \cap D$, such
that $ g(z^0) = 1$ and $ | g(p)| < 1 $ for $ p \in \overline{ U
\cap D} \setminus \{z^0 \} $. Fix $ \epsilon > 0$. Then there
exists a neighbourhood $ U_1 \subset U$ of $ z^0$ such that
\begin{equation*}
 F^K_D(z, v ) \leq F^K_{U \cap D} (z,v) \leq ( 1 +
\epsilon) F^K_D(z, v)
\end{equation*}
for $ z \in U_1 \cap D $ and $v$ a tangent vector at $ z$. This is
possible by the localisation property of the Kobayashi metric (see
for example lemma 2 in \cite{Royden-1971} or \cite{Graham-1975}).

\medskip

The first inequality evidently implies that $ B_{U \cap D} (z, r)
\subset B_D(z, r) $ for all $ z  \in U_1 \cap D $ and all $r > 0$.
Let $ R > 0 $ be such that there exists a biholomorphic imbedding
$ f: \mathbf{B}^n \rightarrow D $ satisfying $ B_D (z, R) \subset
f( \mathbf{B}^n )$. Composing with an appropriate automorphism of
$ \mathbf{B}^n$, if necessary, we may assume that $ f(0) = z$. For
any $ \epsilon > 0$, there exists $ r \in (0,1) $ such that $
B_D(z, R - \epsilon) \subset f( \mathbf{B}^n (0, r)) $. Since $
z^0 \in \partial D $ is a local holomorphic peak point it follows
that (see for example lemma 15.2.2 in \cite{Rudin}) there is a
neighbourhood $ U_2 \subset U_1 $ of $ z^0$ such that $ f(
\mathbf{B}^n (0, r)) \subset U \cap D $ whenever $ z \in U_2 \cap
D$. Now define $\tilde{f}(z) = f(rz) : \mathbf{B}^n \rightarrow U
\cap D$ which is a biholomorphic imbedding such that
$\tilde{f}(\mathbf{B}^n) \supset B_{U \cap D}(z, R - \epsilon)$.
Hence $h_{U \cap D}(z, \mathbf{B}^n) \le 1/(R - \epsilon)$
whenever $z \in U_2 \cap D$. By taking the infimum over all such
$R > 0$ we get that $h_{U \cap D}(z, \mathbf{B}^n) \leq h_D(z,
\mathbf{B}^n)$ whenever $z \in U_2 \cap D$.

\medskip

\noindent For the lower estimate the following observation will be
needed. Fix neighbourhoods $ U_2 \subset U $ of $ z^0$ as above.
Then for every $ R > 0$ there is a neighbourhood $ V \subset U_2$
of $ z^0$ with the property that if $ z \in V \cap D $ then $ B_{U
\cap D} (z, R) \subset U_2 \cap D$. For this it suffices to show
that
\[
\displaystyle\lim_{z \rightarrow z^0 } d_{ U \cap D } ( z, ( U
\cap D) \setminus \overline{U_2 \cap D} ) = + \infty.
\]
Indeed for every $ p \in ( U \cap D) \setminus \overline{U_2 \cap
D}$,
\[
d_{ U \cap D} (z, p) \geq d_{\Delta} ( g(z), g(p)) \rightarrow +
\infty
\]
as $ z \rightarrow z^0 $ since $ g(z^0) = 1 $ and $ |g| < 1 $ on $
( U \cap D) \setminus \overline{U_2 \cap D}$. This proves the
claim.

\medskip

\noindent Now for a given $ R > 0 $ let $ V $ be a sufficiently
small neighbourhood of $ z^0$ so that $ B_{ U \cap D} (z, R)
\subset U_2 \cap D $ if $ z \in V \cap D $. Pick $ p \in D$ in the
complement of the closure of $ B_{U \cap D} (z, R) $ and let $
\gamma : [0,1] \rightarrow D $ be a differentiable path with $
\gamma(0) = z $ and $ \gamma (1) = p$. Then there is a $ t_0 \in
(0,1) $ such that $ \gamma( [ 0, t_0 ) ) \subset B_{U \cap D} (z,
R) $ and $ \gamma (t_0 ) \in
\partial B_{U \cap D} (z, R) $. Hence
\begin{eqnarray*}
\int_0^1 { F^K_D ( \gamma(t), \dot{\gamma}(t)) dt } & \geq &
\int_0^{t_0} { F^K_D ( \gamma(t), \dot{\gamma}(t)) dt } \\
& \geq & 1/ ( 1 + \epsilon ) \int_0^{t_0} { F^K_{U \cap D} (
\gamma(t), \dot{\gamma}(t)) dt } \\
& \geq & 1/ ( 1 + \epsilon ) \  d_{ U \cap D} ( z, \gamma(t_0) ) =
R / ( 1 + \epsilon )
\end{eqnarray*}
which implies that $ d_D(z, p) \geq R/ ( 1 + \epsilon ) $. In
other words,
\begin{equation}
B_D \left(z, R/ (2( 1 + \epsilon) ) \right) \subset B_ { U \cap D}
(z, R) \label{Q2}
\end{equation}
if $ z \in V \cap D$. To conclude, suppose that there is a
biholomorphic imbedding $ f : \mathbf{B}^n \rightarrow U \cap D $
with $ B_{U \cap D} (z, R) \subset f( \mathbf{B}^n) $. It follows
from (\ref{Q2}) that $ h_D( z, \mathbf{B}^n) \leq 2( 1 + \epsilon)
/ R $ \ or equivalently that
\[
h_D( z, \mathbf{B}^n) \leq 2 ( 1 + \epsilon ) h_{U \cap D} ( z,
\mathbf{B}^n)
\]
whenever $ z \in V \cap D$. Finally, observe that
\[
1/(2(1 + \epsilon)) h_D(z, \mathbf{B}^n) \leq h_{U \cap D} (z,
\mathbf{B}^n) \leq h_D(z, \mathbf{B}^n) \ \mbox{for all} \ z \in V
\cap D.
\]
\end{proof}

\noindent A bounded domain $P \subset \mathbf{C}^n$ is an
\textit{analytic polyhedron} if there exist holomorphic functions
$ f^1, \ldots, f^l$ defined on an open neighbourhood of the closure of
$P$ such that
\[
P = \{ z \in \mathbf{C}^n : |f^1 (z)| < 1, \ldots, |f^l(z)| < 1 \}.
\]
The generating set $ f^1, \ldots, f^l $ for $P$ will be assumed to be
minimal in the sense that none of the $ f^i$'s can be dropped
without distorting $P$. In addition, if
\[
( d f^{i_1} \wedge  \ldots\wedge d f^{i_l}) \neq 0 \ \mbox{at} \ p
\]
whenever $ |f^{i_1}(p)|= \ldots = |f^{i_k}(p)| = 1 $ for the unrepeated
indices $ i_1, \ldots, i_k \in \{1, \ldots, l \}$, then $P$ is said to be
a \textit{generic} analytic polyhedron.

\medskip

\noindent For an arbitrary bounded domain $ \Omega \subset
\mathbf{C}^n$, let $ A(\Omega) $ denote the algebra of those
continuous complex-valued functions on $ \overline{\Omega} $ which
separates the points of $ \overline{\Omega} $. A \textit{boundary}
of $ \Omega $ for $ A(\Omega) $  is a subset $S$ of
$\overline{\Omega}$ such that for each $f$ in $ A(\Omega) $ there
is a point $z$ in $S$ with $ |f(z)| = \displaystyle\sup |f| $ on $
\overline{\Omega} $. If the class of all boundaries of $ \Omega $
contains a smallest set $M$, the set $M$ is called the
\textit{minimal boundary} of $ \Omega $. The \textit{Shilov
boundary} $S$ is defined to be the smallest closed boundary. It is
already known (see for instance \cite{Alexander&Wermer}) that such
a boundary exists. Moreover, the minimal boundary $M$, if it
exists, must be is contained in the Shilov boundary. $S$ is
evidently closed. For $P$ a bounded analytic polyhedron as above,
there is also a \textit{distinguished boundary} which consists of
those points in $ \overline{P} $ at which at least $n$ of the
defining functions $ f^i$ are of modulus $1$. The following
theorem in \cite{Hoffman-1960} will be useful for our purposes:

\begin{thm} Assume that $P$ is a bounded generic analytic
polyhedron as above and let $ A(P) $ denote the algebra of
continuous functions on $ \overline{P}$ which can be uniformly
approximated on $ \overline{P} $ by functions holomorphic in a
neighbourhood of $ \overline{P}$. If $l=n$, then the Shilov and
the minimal boundaries for $ A(P) $ coincide with the
distinguished boundary of $ P$.
\end{thm}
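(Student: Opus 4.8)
The plan is to use the holomorphic map $F=(f^1,\dots,f^n):\overline P\to\overline{\Delta^n}$ to pull back the classical description of the boundaries of the polydisc algebra. Since $l=n$, the distinguished boundary is
$T=\{z\in\overline P:|f^1(z)|=\cdots=|f^n(z)|=1\}=F^{-1}\big((\partial\Delta)^n\big)\cap\overline P$,
a compact set, and the goal is to prove that $T$ is simultaneously the smallest boundary and the smallest closed boundary for $A(P)$.

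First I would record the structure of $F$. Because $F$ sends $\partial P$ into $\partial(\Delta^n)$, the restriction $F:P\to\Delta^n$ is proper; its fibres are compact analytic subsets of $P\subset\mbf C^n$, hence finite, so $F$ is a finite branched covering of some degree $m$, and (being open and proper onto a connected target) it is onto. The same boundary-to-boundary argument shows $F(T)=(\partial\Delta)^n$, so $T\neq\emptyset$. Moreover, at any $z_0\in T$ all $n$ defining functions have modulus $1$, so the genericity hypothesis forces $dF(z_0)$ to be invertible; hence $F^{-1}(w_0)\cap\overline P$ is discrete, and so finite, for $w_0=F(z_0)$, and all its points lie in $T$.

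Next, that $T$ is a boundary for $A(P)$. By uniform density it suffices to treat $f$ holomorphic on a neighbourhood of $\overline P$. Over $\Delta^n$ the cover $F$ has $m$ local holomorphic sections; the elementary symmetric functions $\sigma_1,\dots,\sigma_m$ of the $m$ values of $f$ along them are well defined, bounded, and holomorphic off the (analytic) branch locus, hence holomorphic on $\Delta^n$, and a standard argument using that $F$ is holomorphic on a full neighbourhood of $\overline P$ shows they extend continuously to $\overline{\Delta^n}$. For $w\in(\partial\Delta)^n$ the $m$ points of the fibre over $w$ lie in $T$, so $|\sigma_j(w)|\le\binom{m}{j}\,M^j$ with $M:=\sup_T|f|$; since the torus is a boundary for the polydisc algebra this bound holds on all of $\overline{\Delta^n}$. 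For $z\in P$, $f(z)$ is a root of $t^m-\sigma_1(F(z))t^{m-1}+\cdots\pm\sigma_m(F(z))$, so the elementary Cauchy estimate for roots of a monic polynomial gives $|f(z)|\le 2\max_j|\sigma_j(F(z))|^{1/j}\le 2mM$, whence $\sup_{\overline P}|f|=\sup_P|f|\le 2m\sup_T|f|$. Replacing $f$ by $f^N$ and letting $N\to\infty$ removes the constant $2m$, so $\sup_{\overline P}|f|=\sup_T|f|$; thus $T$ is a closed boundary for $A(P)$, and therefore $S\subseteq T$.

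Finally --- and this is the step I expect to be the main obstacle, since $F$ is far from injective --- I would show that every $z_0\in T$ is a peak point for $A(P)$. Write $F^{-1}(w_0)\cap\overline P=\{z_0,z_1,\dots,z_r\}\subset T$, finite by the first step. Start from the explicit peak function $g(z)=2^{-n}\prod_i\big(1+\overline{w_{0,i}}\,z_i\big)$ of the polydisc algebra at $w_0$, so that $h:=g\circ F\in A(P)$ satisfies $h(z_0)=1$ and $|h|<1$ off $F^{-1}(w_0)$. Since $A(P)$ contains the coordinate functions it separates points of $\overline P$, so one can choose a linear form $\phi$ injective on the finite set $\{z_0,\dots,z_r\}$ and a polynomial $p$ with $p(\phi(z_0))=1$, $p(\phi(z_j))=0$ for $j\ge1$; set $\psi=p\circ\phi\in A(P)$. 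Using that $dF(z_0)$ is invertible --- so $dh(z_0)\neq0$ and $1-|h(z)|\gtrsim|z-z_0|$ near $z_0$ --- together with the Lipschitz bound $|\psi(z)|\le 1+C|z-z_0|$, one checks that for $N$ large $h^N\psi$ attains its maximum modulus over $\overline P$ only at $z_0$: away from $z_0$ the product is small either because $|h|$ is bounded away from $1$, or because $\psi$ is small near the other $z_j$, or, in a shrinking ball about $z_0$, because the exponential decay of $|h|^N$ beats the linear growth of $|\psi|$. Hence $z_0$ is a peak point and so lies in every boundary of $A(P)$. Therefore $T\subseteq B$ for every boundary $B$; combined with the previous step this says $T$ is the minimal boundary, i.e.\ $M=T$, and since $M\subseteq S\subseteq T$ we conclude $S=M=T$.
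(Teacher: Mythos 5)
This statement is not proved in the paper at all: it is quoted verbatim from Hoffman's 1960 article, so your argument can only be judged on its own terms. Its overall architecture is the right one ($T$ is a closed boundary, hence contains the Shilov boundary; every point of $T$ is a peak point, hence $T$ lies in every boundary), and the first half is essentially sound, but the peak--point step contains a genuine error. The inequality you invoke, $1-|h(z)|\gtrsim |z-z_0|$, is false. Since $g$ is the polydisc peak function, one only has $2-|1+\ov{w}_{0,i}w_i|\ge \tfrac14|w_i-w_{0,i}|^2$ for $|w_i|\le 1$, so the correct rate is quadratic, $1-|h(z)|\approx |z-z_0|^2$, and this is sharp: near $z_0$ the distinguished boundary $T$ is the $F$-preimage of the torus, a totally real $n$-dimensional manifold through $z_0$ contained in $\ov{P}$, and along it $|h|=1-O(\mathrm{dist}^2)$. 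Your $\psi=p\circ\phi$ has $d\psi(z_0)=p'(\phi(z_0))\,\phi\ne 0$, and a nonzero $\mathbf{C}$-linear functional cannot vanish on the (maximally totally real) tangent space of $T$ at $z_0$; so generically $|\psi|\ge 1+c\,t$ along some direction of $T$, while $|h|^N\ge (1-Ct^2)^N$ there. For points of $T$ at distance $t\ll 1/N$ from $z_0$ the product $|h^N\psi|$ then exceeds $1$, for \emph{every} fixed $N$: at small scales the quadratic decay of $|h|^N$ loses to the linear growth of $|\psi|$, i.e.\ the heuristic ``exponential decay beats linear growth'' is exactly backwards. The repair is small but essential: choose $p$ by Hermite interpolation with $p(\phi(z_0))=1$, $p'(\phi(z_0))=0$ and $p(\phi(z_j))=0$ for $j\ge 1$, so that $|\psi(z)|\le 1+C|z-z_0|^2$ near $z_0$; then $(1-c\,t^2)^N(1+Ct^2)<1$ for $t\ne 0$ once $N>C/c$, and taking $N$ also large enough that $|h|^N\|\psi\|_{\ov P}<1$ off small neighbourhoods of the fibre (where $|h|\le 1-\delta$) makes $h^N\psi$ a genuine peak function at $z_0$.

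There is also a soft spot in the first half: you assert that the elementary symmetric functions $\sigma_j$ extend continuously to all of $\ov{\Delta^n}$ by ``a standard argument''. Near the torus this is fine (there the fibre of $F$ in $\ov P$ is finite and $F$ is a local biholomorphism at each of its points), but over boundary points of the polydisc off the torus the genericity hypothesis controls only part of the differential, the fibre of $F$ in $\ov P$ need not be finite, and no argument is given. Fortunately membership in the polydisc algebra is not needed: $\sigma_j$ is bounded and holomorphic on $\Delta^n$, the $\limsup$ bound $\binom{m}{j}M^j$ at every torus point yields, by compactness, the same bound on a full collar $\{1-\delta<|w_i|<1,\ 1\le i\le n\}$, and applying the one-variable maximum principle in each variable in turn propagates it to all of $\Delta^n$; this is all that the root estimate and the $f^N$ trick require. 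With these two repairs your proof goes through, by a route (pushforward via symmetric functions plus an explicit peak-function construction) that is independent of the cited source.
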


\noindent Having identified the Shilov boundary for a generic
analytic polyhedron $P$, it is possible to show that the function
$ h_P( \cdot, \Delta^n) $ can be localised near it.

\begin{prop} \label {A3} Let $ \Omega \subset \mathbf{C}^n$ be a domain and
$ f^i \in \mathcal{O}(\Omega) $ for $ 1 \leq i \leq n$. Let $P$ be
a bounded component of $ \{ z \in \Omega : |f^i(z)| < 1 \
\mbox{for} \ 1 \leq i \leq n \} $. Assume that $P$ is a generic
analytic polyhedron in the sense described above. Let $ z^0 $ be a
point on the Shilov boundary of $P$, i.e., $ |f^i(z^0)| = 1 \
\mbox{for} \ 1 \leq i \leq n $. Then for every neighbourhood $U$
of $ z^0$, there exists a neighbourhood $ V \subset U $ of $ z^0 $
such that for all $ z \in V \cap P$, we have
\[ c \ h_P (z, \Delta^n ) \leq  h_{U \cap P} (z, \Delta^n )
\leq h_P (z, \Delta^n ) \] where $ c > 0 $ is a constant
independent of $z \in V \cap P $ and $\Delta ^n $ is the unit
polydisc in $\mathbf{C}^n $.
\end{prop}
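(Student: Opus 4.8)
The plan is to run the proof of Proposition \ref{A1} essentially verbatim, with the unit ball $\mathbf{B}^n$ replaced by the polydisc $\Delta^n$ and the single local peak function replaced by a suitable linear combination of the defining functions $f^1,\ldots,f^n$. Observe first that $\Delta^n$ fits the framework of Section 3: it is hyperbolic, $\mathrm{Aut}(\Delta^n)$ acts transitively so $\Delta^n/\mathrm{Aut}(\Delta^n)$ is compact, and transitivity is all that was used of $\mathrm{Aut}(\mathbf{B}^n)$ in the proof of Proposition \ref{A1} (namely, to normalise an imbedding so that $f(0)=z$). The genuinely new ingredient is that $z^0$ is a \emph{local holomorphic peak point} of $P$, exhibited as follows. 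Put $\lambda_i=f^i(z^0)$, so $|\lambda_i|=1$ for $1\le i\le n$ since $z^0$ lies on the Shilov boundary, and set
\[
g=\frac{1}{n}\sum_{i=1}^{n}\overline{\lambda_i}\,f^i .
\]
Then $g$ is holomorphic on a neighbourhood of $\overline P$, $g(z^0)=1$, and $|g|\le \tfrac1n\sum_i|f^i|\le 1$ on $\overline P$, with $|g(z)|=1$ exactly when $f^i(z)=\lambda_i$ for all $i$, i.e.\ when $F(z)=F(z^0)$ where $F=(f^1,\ldots,f^n)$.

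The first step is to localise this peak set. Since $P$ is a \emph{generic} analytic polyhedron and $|f^i(z^0)|=1$ for every $i$, the non-degeneracy requirement forces $df^1\wedge\cdots\wedge df^n\ne 0$ at $z^0$, so $F$ is a biholomorphism of some neighbourhood of $z^0$ onto its image. Consequently, after shrinking the given neighbourhood $U$ so that $F$ is injective on $\overline{U\cap P}$, we obtain $|g|<1$ on $\overline{U\cap P}\setminus\{z^0\}$, so that $g\in\mathcal{A}(U\cap P)$ peaks at $z^0$. The upper estimate $h_{U\cap P}(z,\Delta^n)\le h_P(z,\Delta^n)$ in fact holds for \emph{every} neighbourhood $U$: one takes an imbedding $f:\Delta^n\to P$ with $B_P(z,R)\subset f(\Delta^n)$ normalised so that $f(0)=z$, shrinks the polydisc so that $B_P(z,R-\epsilon)\subset f(r\Delta^n)$, and invokes Lemma 15.2.2 of \cite{Rudin} at the local peak point $z^0$ of $P$ to get $f(r\Delta^n)\subset U\cap P$ once $z$ is close to $z^0$; then $w\mapsto f(rw)$ is an imbedding of $\Delta^n$ into $U\cap P$ whose image contains $B_{U\cap P}(z,R-\epsilon)$.

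For the lower estimate one first works with a small $U'\subset U$ on which $F$ is injective on $\overline{U'\cap P}$. The localisation of the Kobayashi metric at the peak point $z^0$ (used already in Proposition \ref{A1}) gives a neighbourhood on which $F^K_P\le F^K_{U'\cap P}\le(1+\epsilon)F^K_P$. The function $g$ yields $d_{U'\cap P}\big(z,(U'\cap P)\setminus\overline{U_2\cap P}\big)\to+\infty$ as $z\to z^0$, because $d_{U'\cap P}(z,p)\ge d_{hyp}(g(z),g(p))$, $g(z)\to 1$, and $\sup|g|<1$ on $(U'\cap P)\setminus\overline{U_2\cap P}$ (its closure is compact, does not contain $z^0$, and $F$ is injective on $\overline{U'\cap P}$); hence $B_{U'\cap P}(z,R)\subset U_2\cap P$ for $z$ near $z^0$, and the curve-length estimate gives $B_P\big(z,R/(2(1+\epsilon))\big)\subset B_{U'\cap P}(z,R)$, so $h_P(z,\Delta^n)\le 2(1+\epsilon)\,h_{U'\cap P}(z,\Delta^n)$ there. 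Passing back to the given $U$ requires only $h_{U'\cap P}(z,\Delta^n)\le h_{U\cap P}(z,\Delta^n)$ near $z^0$: take an imbedding $f:\Delta^n\to U\cap P$ with $B_{U\cap P}(z,R)\subset f(\Delta^n)$, $f(0)=z$, shrink to $B_{U\cap P}(z,R-\epsilon)\subset f(r\Delta^n)$, note that $f(r\Delta^n)$ has $d_P$-diameter at most the $d_{\Delta^n}$-diameter of $r\Delta^n$, and again apply Lemma 15.2.2 of \cite{Rudin} (with $D=P$, prescribed neighbourhood $U'$) to conclude $f(r\Delta^n)\subset U'\cap P$ for $z$ near $z^0$; then $w\mapsto f(rw)$ imbeds $\Delta^n$ into $U'\cap P$ with image containing $B_{U'\cap P}(z,R-\epsilon)$. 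Taking $c=1/(2(1+\epsilon))$ and letting $V$ be the intersection of the finitely many neighbourhoods of $z^0$ produced above finishes the proof.

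The main obstacle is the first step: one must extract from the \emph{generic} hypothesis the fact that $z^0$ is a local holomorphic peak point of $P$ realised by a peak function manufactured solely from the $f^i$'s (the point being that, unlike in the strongly pseudoconvex setting of Proposition \ref{A1}, the natural candidate $g$ peaks only after the neighbourhood is shrunk so that $F$ becomes injective), and — since the proposition is quantified over every neighbourhood $U$ — one must carry out the small bookkeeping that reduces the lower estimate from small $U'$ to arbitrary $U$. Once these points are in place, no analytic input beyond that already present in Proposition \ref{A1} is needed.
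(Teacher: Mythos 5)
Your strategy (exhibit $z^0$ as a local holomorphic peak point of $P$ and then rerun Proposition \ref{A1}) is reasonable, but the peak function you construct does not work, and this is the load-bearing step of your argument. For $g=\frac1n\sum_i\overline{\lambda_i}f^i$ the equality case of the triangle inequality is misidentified: on $\overline P$ one has $|g(z)|=1$ if and only if the numbers $\overline{\lambda_i}f^i(z)$ are \emph{all equal to a common unimodular constant} $c$, i.e.\ $F(z)=c\lambda$ for some $|c|=1$ --- not only for $c=1$. Since $F$ is a local biholomorphism at $z^0$, the locus $F^{-1}\big(\{c\lambda:|c|=1\}\big)$ is a real-analytic curve through $z^0$ lying in the distinguished boundary $M=\{|f^i|=1,\ 1\le i\le n\}$, and for $c$ near $1$ its points are limits of $F^{-1}(tc\lambda)\in U\cap P$ ($t\nearrow1$), hence belong to $\overline{U\cap P}\setminus\{z^0\}$ no matter how much $U$ is shrunk; on this curve $g\equiv c$, so $|g|\equiv1$ there. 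Consequently $g$ is \emph{not} a local peak function in the sense required by Proposition \ref{A1}, and both places where you use the peak property are unjustified as written: the appeal to lemma 15.2.2 of \cite{Rudin} to confine $f(r\Delta^n)$ near $z^0$, and the claim $\sup|g|<1$ on $(U'\cap P)\setminus\overline{U_2\cap P}$ that drives $d_{U'\cap P}(z,\cdot)\to+\infty$. The repair is to take the product $\tilde g=\prod_i\frac{1+\overline{\lambda_i}f^i}{2}$ instead of the average: each factor has modulus at most $1$ on $\overline P$ with equality forcing $\overline{\lambda_i}f^i=1$, so $|\tilde g|=1$ on $\overline{U\cap P}$ only at $z^0$ once $F$ is injective there. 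Even then one should check the global issue that $\{w\in\overline P: F(w)=\lambda\}$ may have components far from $z^0$, which matters when the imbedding $f:\Delta^n\to P$ is not a priori confined to $U\cap P$.

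For comparison, the paper avoids peak functions altogether. The confinement of $f(\Delta^n(0,r))$ in $U\cap P$ (and the metric localisation (\ref{Q15})) is obtained by a normal-families argument: any limit $G$ of maps $g^k:\Delta^n\to P$ with $g^k(0)\to z^0$ satisfies $|f^i\circ G|\equiv1$ by the maximum modulus principle, so $G(\Delta^n)\subset M$, and $G$ is constant because $M$ is totally real (here genericity is used at every point of $M$, not just at $z^0$). The containment $B_{U\cap P}(z,R)\subset U_1\cap P$ for $z$ near $z^0$ is then read off by transporting everything through $F$ to $\Delta^n$ and using the explicit formula $d_{\Delta^n}(a,b)=\max_i d_\Delta(a_i,b_i)$, which shows $B_{\Delta^n}(F(z),R)$ collapses to the corner $(1,\dots,1)$ as $F(z)\to(1,\dots,1)$. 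You would do well to either adopt that route or substitute the product peak function and recheck the two steps flagged above.
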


\begin{proof} Let $U$ be a neighbourhood of $z^0$ and let $
\theta_i, i= 1, \ldots, n $ be real numbers satisfying the following
properties:

\begin{itemize}

\item $ F := ( e^{\iota \theta_1} f^1, \ldots, e^{\iota \theta_n}
f^n) : U \rightarrow \mathbf{C}^n $ is a biholomorphic imbedding;

\item $ F(z^0) = (1, \ldots, 1) \in \mathbf{C}^n ;$

\item $ F( U \cap P ) = F(U) \cap \Delta^n$.

\end{itemize}

\noindent This can be achieved by applying the implicit function
theorem at the point $z^0$. Let $ R > 0 $ such that there exists a
biholomorphic imbedding $ f: \Delta^n \rightarrow P $ satisfying $
B_P (z, R) \subset f( \Delta^n )$. Composing with an appropriate
automorphism of $ \Delta^n$, if necessary, we may assume that $
f(0) = z$. For any $ \epsilon > 0$, there exists $ r \in (0,1) $
such that $ B_P(z, R - \epsilon) \subset f( \Delta^n (0, r)) $. We
claim that there is a neighbourhood $ U_1 $ of $ z^0$ which is
relatively compact in $U $ such that $ f( \Delta^n (0, r)) \subset
U \cap P $ whenever $ z \in U_1 \cap P$. To see this, suppose the
above claim is not true. Then there exist a sequence of points $
\{w^k \}_{k=1}^{\infty} \subset \Delta^n(0,r) $ and a sequence $
\{ g^k \}_{k=1}^{\infty} $ of holomorphic mappings $ g^k :
\Delta^n \rightarrow P $ such that $ g^k (0) \rightarrow z^0 $ as
$ k \rightarrow \infty$ and
\begin{equation}
|g^k(w^k) - z^0 | > \epsilon_0  \label{Q4}
\end{equation}
for some $ \epsilon_0 > 0$. Since $ P $ is bounded, $\{g^k \} $ is
a normal family. Let $G$ be any limit of $ \{g^k \} $. Then $ G(0)
= z^0 $. Consider $ \tilde{f^i} := f^i \circ G $ for $ i = 1,
2,...,n $. Then $ \tilde{f^i} \in \mathcal{O} (\Delta^n),
|\tilde{f^i} (0)| = 1 $ for all $ 1 \leq i \leq n $ and $
|\tilde{f^i}(z)| \leq 1 $ for all $ z \in \Delta^n $ and for all $
1 \leq i \leq n $. By the maximum modulus theorem $ |\tilde{f^i}|
\equiv 1 $ for all $ 1 \leq i \leq n $. Hence, $ G(\Delta^n)
\subset M := \{ |f^i| = 1, \ 1 \leq i \leq n \} $. Since $M$ is a
totally real manifold, $ G$ is identically a constant. Therefore $
G(z) \equiv z^0 $ since $ G(0) = z^0 $. The constant map $ G(z)
\equiv z^0 $ is thus the only limit point of $ \{g^k \} $. This
contradicts (\ref{Q4}) and proves the claim.

\medskip

Moreover, the above argument also shows that for each $ \epsilon >
0$
\begin{equation}
F^K_P(z, v ) \leq F^K_{U \cap P} (z,v) \leq (1 + \epsilon)
F^K_P(z, v) \label{Q15}
\end{equation}
provided  $ z $ is close enough to $ z^0$ and $v$ a tangent vector
at $ z$.

\medskip

Hence, there is a neighbourhood $ U_1 \subset U$ of $ z^0$ such
that $ f \big( \Delta^n (0, r) \big) \subset U \cap P $ whenever $
z \in U_1 \cap P$. It follows that
\[
B_{U \cap P}(z, R - \epsilon) \subset B_P(z, R - \epsilon) \subset
f \big( \Delta^n(0,r) \big)
\]
which implies that
\[
h_{U \cap P}(z, \Delta^n) \le (R - \epsilon)^{-1}
\]
and hence
\[
h_{U \cap P}(z, \Delta^n) \leq h_P(z, \Delta^n)
\]
for all $z \in U_1 \cap P$.

\medskip

To establish the lower estimate, observe that for every $ R > 0 $
there is a neighbourhood $ V \subset U_1 $ of $ z^0$ such that $
B_{ U \cap P } (z, R) \subset U_1 \cap P $ whenever $ z \in V \cap
P $.  It suffices to show that $ F \left( B_{ U \cap P } (z, R)
\right) \subset F( U_1 \cap P ) $ if $z$ is close enough to $
z^0$. Since biholomorphisms are isometries for the Kobayashi
metric, this is the same as finding a neighbourhood $ V $ of $
z^0$ such that $ B_{ F( U \cap P) } ( F(z), R ) \subset F( U_1)
\cap \Delta^n $ if $ z \in V \cap P $. It is already known that $
B_{ F( U \cap P)} ( w, R) \subset B_{\Delta^n} (w, R) $ for all $w
\in F( U \cap P) $. We now claim that
\[
B_{\Delta^n} ( F(z), R) \subset F( U_1) \cap \Delta^n
\]
if $ z$ is sufficiently close to $ z^0$. But this is a straight
forward consequence of the fact that for any two points $ a =
(a_1, a_2, \ldots, a_n)$ and $ b= (b_1, b_2, \ldots, b_n)$ in $ \Delta^n $,
\[
d_{\Delta^n} (a,b) = \max \left( d_{\Delta}(a_1,b_1), d_{\Delta}
(a_2, b_2), \ldots, d_{\Delta} (a_n,b_n) \right).
\]
Now an argument similar to the one in proposition \ref{A1} that
uses (\ref{Q15}) shows that
\[
h_P(z, \Delta^n) \leq 2 ( 1 + \epsilon) h_{ U \cap P} ( z,
\Delta^n).
\]
\end{proof}

%%%%%%%%%%%%%%%%%%%%%%%%%%%%%%%%%%%%%%%%%%%%%%%%%%%%%%%%%%%%%%%%%%%%%%%%%%%%%%%%%%%%%%

\section{Behaviour of $h$ near strongly pseudoconvex boundary
points}

\noindent We will be using $ h_X( \cdot ) $ to denote $ h_X(
\cdot, \mathbf{B}^n )$ in the sequel unless stated otherwise.

\medskip

\noindent The main purpose of this section is to prove the
following which is (i) of theorem 1.1.

\begin{thm} \label{B0} Let $ D \subset \mathbf{C}^n$ be a
bounded strongly pseudoconvex domain with $ C^2$-smooth
boundary. Then $ h_{D} (z) \rightarrow 0 $ as $  z \rightarrow
\partial D$.
\end{thm}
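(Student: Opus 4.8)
The plan is to use the scaling method together with Fridman's original estimate for strongly pseudoconvex domains applied locally. Since strong pseudoconvexity is a local condition and every boundary point of a $C^2$-smooth strongly pseudoconvex domain is a local holomorphic peak point, Proposition \ref{A1} allows us to replace $h_D(z, \mbf B^n)$ by $h_{U \cap D}(z, \mbf B^n)$ up to a fixed multiplicative constant, where $U$ is any small neighbourhood of the boundary point $z^0 = \lim z^j$. Thus it suffices to prove that $h_{U \cap D}(z^j) \to 0$ for a conveniently chosen $U$. The point of localising is that on $U \cap D$ we have the freedom to make $\pa D \cap U$ real-analytic, or at least to work with the osculating model, without changing the asymptotics.

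Next I would run the standard Pinchuk rescaling at $z^0$. Choosing coordinates so that $\pa D$ near $z^0$ is strongly pseudoconvex with $z^0$ at the origin, and letting $z^j \to z^0$ (say nontangentially after composing with the automorphisms of $\mbf B^n$ — but in fact for strongly pseudoconvex domains one can always reduce to the normal approach), one dilates by the anisotropic automorphisms $\Lambda^j$ of $\mbf C^n$ adapted to $\operatorname{dist}(z^j, \pa D)$. The rescaled domains $D^j = \Lambda^j(U \cap D)$ converge in the local Hausdorff sense to the unbounded realisation of the ball $\mbf B^n$, i.e. the Siegel domain $E = \{2\Re z_n + |'z|^2 < 0\}$, which is biholomorphic to $\mbf B^n$. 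One then invokes the stability of the Kobayashi distance under this convergence (Forstneric--Rosay type results, which are cited later in the paper) to conclude that the Kobayashi balls $B_{D^j}(\Lambda^j(z^j), r)$ converge to the corresponding ball $B_E(q, r)$, where $q = \lim \Lambda^j(z^j)$ is a fixed interior point of $E$.

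The key mechanism is then the following. Because $\Lambda^j$ is a \emph{biholomorphism} of $\mbf C^n$, it carries $U \cap D$ onto $D^j$ and hence $h_{U\cap D}(z^j, \mbf B^n) = h_{D^j}(\Lambda^j(z^j), \mbf B^n)$. Now if, contrary to the claim, $h_{U \cap D}(z^j)$ did not tend to $0$, then along a subsequence there would be a uniform $R_0 < \infty$ such that for every $j$ one can find a biholomorphic imbedding $\psi^j : \mbf B^n \to D^j$ with $\psi^j(\mbf B^n) \supset B_{D^j}(\Lambda^j(z^j), 1/R_0)$, and no such imbedding exists for any larger radius $1/r$ with $r$ much smaller than $R_0$. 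Passing to the limit — using that $D^j \to E$, that the base points converge, and that the $\psi^j$ form a normal family after normalising $\psi^j(0) = \Lambda^j(z^j)$ (tautness of $\mbf B^n$ and of the limit) — one extracts a limiting biholomorphic imbedding $\psi : \mbf B^n \to E$ and, from the obstruction side, concludes $h_E(q, \mbf B^n) \le R_0 < \infty$. But $E$ \emph{is} biholomorphic to $\mbf B^n$, so $h_E \equiv 0$ by Proposition \ref{A0}(i), a contradiction. Hence $h_{U\cap D}(z^j) \to 0$, and by Proposition \ref{A1}, $h_D(z^j) \to 0$.

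The main obstacle, and where the argument needs care rather than invocation, is the stability step: showing that the Kobayashi balls (not just the infinitesimal metric) on the scaled domains $D^j$ converge to those on the limit domain $E$ in a way strong enough that the extremal imbeddings defining Fridman's invariant pass to the limit. This requires uniform upper and lower bounds for $F^K_{D^j}$ on compact subsets of $E$ that are independent of $j$ (so that $B_{D^j}(\cdot, r)$ is trapped between fixed Euclidean balls), which is exactly the content of the scaling estimates for strongly pseudoconvex domains; once these are in hand the normal-family extraction and the Hurwitz argument for injectivity of the limit imbedding proceed as in Proposition \ref{A0}. The statement for the Carath\'eodory metric is identical in structure, using that on the ball (and on $E$) the Carath\'eodory and Kobayashi metrics coincide and that the same localisation and scaling stability hold for $c_X$ near peak points.
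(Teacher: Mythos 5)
Your overall skeleton (Pinchuk scaling at $z^0$, Hausdorff convergence of the dilated domains to the Siegel domain $E=\{2\Re z_n+|'z|^2<0\}$, and stability of the integrated Kobayashi distance and of Kobayashi balls under this convergence) is exactly the paper's route, and your identification of the ball-stability statement as the real work is correct. However, the step you call the ``key mechanism'' is logically broken. Assuming $h_{U\cap D}(z^j)\not\to 0$, you extract imbeddings $\psi^j:\mathbf{B}^n\to D^j$, pass to a limit imbedding $\psi:\mathbf{B}^n\to E$, and conclude $h_E(q,\mathbf{B}^n)\le R_0<\infty$, which you then claim contradicts $h_E\equiv 0$. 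There is no contradiction: $h_E(q)=0$ is perfectly consistent with $h_E(q)\le R_0$. The hypothesis to be contradicted is a statement of \emph{non-existence} of imbeddings into $D^j$ with images containing large Kobayashi balls, and no upper bound on $h_E$ obtained by pushing data from $D^j$ into the limit can touch it. The information must flow in the opposite direction: from the limit domain back into the scaled domains.

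Concretely, the argument that works (and is the paper's) is direct, with no contradiction needed. Since $D^j=T^j\circ h^j(D)$ with $T^j\circ h^j$ global biholomorphisms of $\mathbf{C}^n$, one has exactly $h_D(z^j)=h_{D^j}(('0,-1))$ (so Proposition \ref{A1} is not even needed here). Given $R>0$, use $E\simeq\mathbf{B}^n$ to choose a biholomorphism $\theta:\mathbf{B}^n\to B_{E}(('0,-1),R)$ and a $\delta\in(0,1)$ with $B_{E}(('0,-1),R-\epsilon)\subset\theta(\mathbf{B}^n(0,\delta))\subset B_E(('0,-1),R)$; then $\theta(\mathbf{B}^n(0,\delta))$ is compactly contained in $E$, hence contained in $D^j$ for $j$ large by Hausdorff convergence, and by the two ball inclusions of the stability lemma it contains $B_{D^j}(('0,-1),R-2\epsilon)$. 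This exhibits an imbedding of a ball into $D^j$ containing a Kobayashi ball of radius $R-2\epsilon$, so $h_{D^j}(('0,-1))\le 1/(R-2\epsilon)$ for $j$ large, and $R$ arbitrary gives the theorem. Two smaller points: the distance stability for the scaled domains is not simply a citation of Forstneric--Rosay; the paper proves it (lemma \ref{D1}, via localization of the infinitesimal metric, Lempert discs in a strictly convex localization $U\cap D$ and Venturini's comparison), and uniform two-sided bounds on $F^K_{D^j}$ on compacts alone would not suffice — one needs the actual $\limsup/\liminf$ comparison of $d_{D^j}$ with $d_E$ to trap the balls as above. Also, extracting an extremal imbedding realizing $h_{D^j}$ needs tautness (Proposition \ref{A0}(ii)) or an approximation argument, though this becomes moot once the argument is run in the correct direction.
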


\begin{proof} Let $ \{ z^k \} \subset D $ be a sequence converging
to $z^0 \in \partial D$. It suffices to show that $  h_{D}( z^k )
\rightarrow 0 $ as $ k \rightarrow \infty$. Several lemmas will be
needed to complete the proof of this theorem. To start with, the
following lemma in \cite{Pinchuk-1980} will be useful.

\begin{lem} \label{F1} let $ D$ be a strongly pseudoconvex domain,
$\rho$ a defining function for $ \partial D $ and $ p \in
\partial D $. Then there exists a neighbourhood $U$ of $p$
and a family of biholomorphic mappings $ h_{\zeta} : \mathbf{C}^n
\rightarrow \mathbf{C}^n$ depending continuously on $ \zeta \in
\partial D \cap U$ that satisfy the following:

\begin{enumerate}

\item [(i)]$ h_{\zeta}(\zeta) = 0 $.

\item [(ii)] The defining function $ \rho_{\zeta} = \rho \circ
h_{\zeta}^{-1} $ of the domain $ D ^{\zeta} := h_{\zeta} ( D) $
has the form
\[ \rho_{\zeta}(z) = 2 \big(\Re z_n + K_{\zeta}(z)\big) + H_{\zeta}(z) +
\alpha_{\zeta}(z) \] where $ K_{\zeta}(z) =
\displaystyle\sum_{i,j=1} ^n a_{ij} (\zeta) z_i z_j, H_{\zeta}(z)
= \displaystyle\sum_{i,j=1} ^n b_{ij} (\zeta) z_i \bar{z_j}$ and $
\alpha_{\zeta}(z)= o (|z|^2) $ with $ K_{\zeta}('z,0) \equiv 0 $
and $ H_{\zeta}('z,0) \equiv |'z|^2 $.

\item [(iii)] The mapping $ h_{\zeta}$ takes the real normal to $
\partial D $ at $ \zeta$ to the real normal $ \{ 'z =
y_n = 0 \} $ to $ \partial D ^{\zeta}$ at the origin.

\end{enumerate}

\end{lem}

\noindent Here, $ z \in \mathbf{C}^n$ is written as $ z = ( 'z,
z_n) \in \mathbf{C}^{n-1} \times \mathbf{C} $.

\medskip

\noindent To apply this lemma, select $ \zeta^k \in \partial D$,
closest to $ z^k$. For $k$ large, the choice of $\zeta^k$ is
unique since $ \partial D $ is sufficiently smooth. Moreover, $
\zeta^k \rightarrow z^0 $ and $ z^k \rightarrow z^0$ as $ k
\rightarrow \infty$. Let $ h^k := h_{\zeta^k} $ be the
biholomorphisms provided by the lemma above. Let $ T^k :
\mathbf{C}^n \rightarrow \mathbf{C}^n $ be the anisotropic
dilation map given by
\[
T^k ( 'z, z_n ) = \left( \frac{'z}{ \sqrt{\delta_k}} , \frac{z_n}
{ \delta_k} \right)
\]
and let $ D^k = T^k \circ h^k ( D) $. We observe that for $k$
large, $ h^k(z^k) = ( '0, - \delta_k ) $ so that $ T^k \circ h^k (
z^k ) = ( '0, -1) $. Since $h$ is invariant under biholomorphisms,
it follows that
\[
h_{D} ( z^k) = h_{D^k} ( ('0, -1) )
\]
for all $k$ large. We will show that $ h_{ D^k } ( ('0, -1) )
\rightarrow 0 $ as $ k \rightarrow \infty $.

\noindent It has been shown in \cite {Pinchuk-1980} that the
sequence of domains $ \{ D^k \} $ converges in the Hausdorff
metric to the unbounded realization of the unit ball, namely to
\[
 D_{\infty} = \Big \{ z \in \mathbf{C}^n : 2 \ \Re z_n + | 'z |^2 < 0
\Big \}.
\]

\noindent It is natural to investigate the behaviour of $ d_{D ^k}
( z, \cdot )$ as $ k \rightarrow \infty$. To do this, we use ideas
from \cite{Seshadri&Verma-2006}.

\begin{lem} \label{D1} Let $ x^0 \in D_{\infty}$. Then $ d_{D ^k} ( x^0,
\cdot ) \rightarrow d_{D_{\infty}} (x^0, \cdot ) $ uniformly on
compact sets of $ D_{\infty}$.
\end{lem}

\begin{proof} Let $ K \subset D_{\infty} $ be compact and suppose that the
desired convergence does not occur. Then there exists a $ \epsilon
_0 > 0 $ and a sequence of points $ \{p^k \} \subset K $ which is
relatively compact in $D^k $ for $k$ large such that
\[
\big| d_{D ^k} ( x^0, p^k ) - d_{D_{\infty}} (x^0, p^k ) \big|
> \epsilon _0
\]
for all $k$ large. By passing to a subsequence, assume that  $ p^k
\rightarrow p^0 \in K $ as $ k \rightarrow \infty$. Since $
d_{D_{\infty}} ( x^0, \cdot ) $ is continuous, it follows that
\begin{equation}
\big| d_{D ^k} ( x^0, p^k ) - d_{D_{\infty}} (x^0, p^0 ) \big|
> \epsilon _0/2 \label{Q3}
\end{equation}
for all $k$ large. Fix $ \epsilon > 0$ and let $ \gamma : [0,1]
\rightarrow D_{\infty} $ be a path such that $ \gamma(0) = x^0,
\gamma(1) = p^0$ and
\[
\int_0 ^1 { F^K _{D_{\infty}} \big( \gamma(t), \dot{\gamma}(t)
\big)} dt < d_{D_{\infty}} (x^0, p^0 ) + \epsilon/2.
\]
Define $ \gamma ^k : [0,1] \rightarrow \mathbf{C}^n $ by
\[
\gamma^k(t) = \gamma(t) + ( p^k - p^0 ) t.
\]
Since the image $ \gamma([0,1])$ is compactly contained in
$D_{\infty}$ and $ p^k \rightarrow p^0 \in K $ as $ k \rightarrow
\infty$, it follows that $ \gamma ^k : [0,1] \rightarrow D^k $ for
$k$ large. In addition, $ \gamma^k (0) = \gamma (0) = x^0 $ and $
\gamma^k (1) = \gamma(1) + p^k - p^0 = p^k $. It is already known
that $ F^K_{D^k} ( \cdot, \cdot) \rightarrow F^K_{D_{\infty}}
(\cdot, \cdot) $ uniformly on compact sets of $ D_{\infty} \times
\mathbf{C}^n$ (see \cite{Seshadri&Verma-2006}). Also, note that $
\gamma^k \rightarrow \gamma $ and $ \dot{\gamma}^k \rightarrow
\dot{\gamma} $ uniformly on $ [0,1]$. Therefore for $k$ large, we
obtain
\[
\int_0 ^1 { F^K_{D^k} \big( \gamma^k(t), \dot{\gamma}^k(t)  \big)}
dt \leq \int_0 ^1 { F^K _{D_{\infty}} \big( \gamma(t),
\dot{\gamma}(t) \big)} dt + \epsilon/2 < d_{D_{\infty}} (x^0, p^0
) + \epsilon.
\]
By definition of $ d_{D ^k} ( x^0, p^k )$ it follows that
\[
d_{D ^k} ( x^0, p^k ) \leq \int_0 ^1 { F^K_{D^k} \big(
\gamma^k(t), \dot{\gamma}^k(t) \big)} dt \leq d_{D_{\infty}} (x^0,
p^0 ) + \epsilon.
\]
Thus
\begin{eqnarray}
\displaystyle\limsup_{k \rightarrow \infty} d_{D ^k} ( x^0, p^k )
\leq d_{D_{\infty}} (x^0, p^0 ). \label{2.1a}
\end{eqnarray}

\noindent Conversely, since $ K \cup \{ x^0 \}$ is a compact
subset of $ D_{\infty} $, it follows that $ K \cup \{ x^0 \}$  is
compactly contained  $D^k$ for all $k$ large. Fix $ \epsilon > 0$
and let $ V \subset U $ be sufficiently small neighbourhoods of $
z^0 \in \partial D $ with $ V$ compactly contained in $U$ so that
\begin{eqnarray}
F^K_{D}( z, v ) \leq F^K_{U \cap D} (z,v) \leq ( 1 + \epsilon)
F^K_{D}(z, v) \label{2.2a}
\end{eqnarray}

\noindent for $ z \in V \cap D $ and $v$ a tangent vector at $ z$.
If $k$ is sufficiently large, $ { (T^k \circ h^k ) }^{-1} (x^0) $
and $ { (T^k \circ h^k ) }^{-1} (p^k) $ belong to $ V \cap D $. If
$U$ is small enough, $U \cap D $ is strictly convex and it follows
from Lempert's work \cite{Lempert-1981} that there exist $ m_k >
1$ and holomorphic mappings
\[
\phi^k : \Delta(0, m_k) \rightarrow U \cap D
\]
such that $ \phi^k(0) = { (T^k \circ h^k ) }^{-1} (x^0),\ \phi^k
(1) = { (T^k \circ h^k ) }^{-1} (p^k) $ and
\begin{eqnarray}
d_{U \cap D } \left( { (T^k \circ h^k ) }^{-1}(x^0), { (T^k \circ
h^k ) }^{-1} (p^k) \right)  & = &
d_{\Delta(0, m_k)} (0,1) \nonumber  \\
& =  & \int_0 ^1 { F^K _{ U \cap D } \big( \phi^k(t), \dot{\phi}^k
(t) \big) } dt . \label{2.2b}
\end{eqnarray}

\noindent By \cite{Venturini-1989}, it follows that
\[
d_{U \cap D } \left( { (T^k \circ h^k ) }^{-1}(x^0), { (T^k \circ
h^k ) }^{-1} (p^k) \right)  \leq (1 + \epsilon) d_{ D } \left( {
(T^k \circ h^k ) }^{-1}(x^0), { (T^k \circ h^k ) }^{-1} (p^k)
\right)
\]
for all $k$ large. Since $ T^k \circ h^k $ are biholomorphisms and
hence Kobayashi isometries,
\begin{eqnarray}
d_{T^k \circ h^k (U \cap D)} (x^0, p^k) \leq (1 + \epsilon)
d_{D^k}( x^0, p^k). \no
\end{eqnarray}
Now (\ref{2.2b}) shows that
\begin{eqnarray*}
\frac{1}{2} \log \left( \frac{m_k + 1} { m_k - 1} \right) =
d_{\Delta(0, m_k)} (0,1) & =  &
d_{U \cap D } \left( { (T^k \circ h^k ) }^{-1}(x^0), { (T^k \circ h^k ) }^{-1} (p^k)\right) \\
&  = & d_{T^k \circ h^k (U \cap D)} (x^0, p^k) \\
& \leq & (1 + \epsilon) d_{D^k}( x^0, p^k).
\end{eqnarray*}
However from (\ref{2.1a}) we have that
\[
d_{D^k}( x^0, p^k) \leq d_{D_{\infty}} (x^0, p^0) + \epsilon <
\infty
\]
and hence $ m_k > 1 + \delta$ for some uniform $ \delta > 0 $ for
all $ k $ large. Thus the holomorphic mappings $ \sigma^k = T^k
\circ h^k \circ \phi^k : \Delta (0, 1 + \delta) \rightarrow T^k
\circ h^k (U \cap D) \subset D^k $ are well-defined and satisfy $
\sigma^k(0) = x^0 $ and $ \sigma^k(1) = p^k $.

\medskip

We claim that $ \{ \sigma^k \} $ admits a subsequence that
converges uniformly on compact sets of $ \Delta (0, 1 + \delta) $
to a holomorphic mapping $ \sigma: \Delta (0, 1 + \delta)
\rightarrow D_{\infty} $. Indeed consider the disc $ \Delta (0, r)
$ of radius $ r \in (0,1 + \delta) $. Observe that $ (T^k \circ
h^k ) ^{-1} \circ \sigma^k(0) = \phi^k (0)= (T^k \circ h^k
)^{-1}(x^0) \rightarrow z^0 \in \partial D $ as $ k \rightarrow
\infty$. Let $ W $ be a sufficiently small neighbourhood of $
z^0$. Since $ z^0 \in \partial D $ is a local peak point, it
follows that $(T^k \circ h^k )^{-1} \circ \sigma^k (\Delta (0,r))
\subset W \cap D $ for all $k$ large. If $W$ is small enough,
there exists $ R > 1$ such that for all $k$ large
\[
h^k ( W \cap D) \subset \Big \{ z \in \mathbf{C}^n : | z_n + R|^2
+ |'z|^2 < R^2 \Big \} \subset \Omega_0
\]
where
\[
\Omega_0 = \Big \{ z \in \mathbf{C}^n : 2 R (\Re z_n) + |'z |^2 <
0 \Big \}.
\]
Note that $ \Omega_0$ is invariant under $T^k$ and $ \Omega_0 $ is
biholomorphically equivalent to $ \mathbf{B}^n $. Hence $
\sigma^k( \Delta (0, r)) \subset T^k \circ h^k ( W \cap D) \subset
\Omega_0$ for all $k$ large. If $ \sigma^k(z) = ( '\sigma^k(z),
\sigma^k_n(z)) $ for each $k$, this exactly means that
\[
2 R \big( \Re (\sigma^k_n (z)) \big) + | '{\sigma}^k(z)|^2 < 0
\]
whenever $ z \in \Delta(0,r)$. It follows that $ \{ \sigma^k_n(z)
\}_{k=1} ^{\infty} $ and hence each component of $
\{'{\sigma}^k(z)\}$ forms a normal family on $ \Delta(0, r) $.
Since $ r \in (0,1 + \delta) $ was arbitrary, the usual diagonal
subsequence yields a holomorphic mapping $ \sigma: \Delta (0, 1 +
\delta) \rightarrow \mathbf{C}^n $ or $ \sigma \equiv \infty$ on $
\Delta (0, 1 + \delta) $. The latter is not possible since $
\sigma(0) = x^0 $.

\medskip

It remains to show that $ \sigma: \Delta (0, 1 + \delta)
\rightarrow D_{\infty} $. Following \cite{Pinchuk-1980}, note that
$ D^k$ are defined by
\begin{eqnarray*}
\rho^k (z) = 2 \ \Re z_n + | 'z|^2 + A^k (z)
\end{eqnarray*}
where
\[
|A^k(z)| \leq |z|^2 \big( c\sqrt {\delta_k} + \eta( \delta_k
|z|^2)\big).
\]
Thus for $ z \in \Delta(0, r)$ and $ r \in (0,1 + \delta) $,
\begin{equation}
2 R \big( \Re (\sigma^k_n (z)) \big) + |'{\sigma}^k(z)|^2 + A^k(
\sigma^k(z)) < 0 \label{2.2c}
\end{equation}
where
\[
|A^k(\sigma^k(z))| < | \sigma^k(z) |^2 \big(c \sqrt {\delta_k} +
\eta( \delta_k |\sigma^k(z)|^2)\big).
\]
Letting $ k \rightarrow \infty $ in (\ref{2.2c}) yields
\[
2 R \big( \Re (\sigma^k_n (z)) \big ) + |'{\sigma}^k(z)|^2 \leq 0
\]
for $ z \in \Delta (0, r) $ or equivalently that $ \sigma
(\Delta(0, r) ) \subset \overline{D}_{\infty} $. Since $ r \in
(0,1 + \delta) $ was arbitrary, it follows that $ \sigma (
\Delta(0, 1 + \delta)) \subset \overline{D}_{\infty} $. Since $
\sigma(0) = x^0$, the maximum principle shows that $ \sigma (
\Delta(0, 1 + \delta)) \subset D_{\infty} $. Using (\ref{2.2a})
and (\ref{2.2b}), we get
\begin{eqnarray*}
\int_0 ^1 { F^K_{D^k} \big( \sigma^k(t),  \dot{\sigma}^k(t) \big
)} dt & \leq &  \int_0 ^1 { F^K _{D} \big( \phi^k(t),
\dot{\phi}^k(t) \big)}
dt  \\
& \leq & \int_0 ^1 { F^K _{U \cap D} \big( \phi^k(t),
\dot{\phi}^k(t) \big)} dt \\
& = & d_{T^k \circ h^k (U \cap D)} (x^0, p^k) \\
& \leq & (1 + \epsilon) d_{D^k}( x^0, p^k)
\end{eqnarray*}
Since $ \sigma^k \rightarrow \sigma $ and $ \dot{\sigma}^k
\rightarrow \dot{\sigma} $ uniformly on $ [0,1]$, again exploiting
the uniform convergence of $ F^K_{D^k}( \cdot, \cdot) \rightarrow
F^K_{D_{\infty}} (\cdot, \cdot)$ on compact sets of $ D_{\infty}
\times \mathbf{C}^n$, we see that
\begin{eqnarray*}
\int_0 ^1 {F^K_{D_{\infty}} \big(\sigma(t),  \dot{\sigma}(t)\big)
dt} \leq \int_0 ^1{F^K_{D^k} \big(\sigma^k(t), \dot{\sigma}^k(t)
\big) dt } + \epsilon \leq d_{D^k}( x^0, p^k) + C \epsilon
\end{eqnarray*}
for all $k$ large. Finally, observe that $ \sigma |_{[0,1]}$ is a
differentiable path in $D_{\infty}$ joining $ x^0$ and $ p^0 $.
Hence by definition
\begin{eqnarray}
d_{D_{\infty}} (x^0, p^0 ) \leq \int_0 ^1 {F^K_{D_{\infty}}
\big(\sigma(t), \dot{\sigma}(t) \big) dt} \leq d_{D^k}( x^0, p^k)
+ C \epsilon \label{2.2d}
\end{eqnarray}
Combining (\ref{2.1a}) and (\ref{2.2d}) shows that
\[ \displaystyle\lim_{k \rightarrow \infty} d_{D ^k} ( x^0,
p^k ) = d_{D_{\infty}} (x^0, p^0 ) \] which contradicts the
assumption (\ref{Q3}) and proves the required result.
\end{proof}

\begin{lem} \label{B2} Fix $ x^0 \in D_{\infty}$ and $ R > 0$. Then
\[ B_{D^k} (x^0, R) \rightarrow B_{D_{\infty}}(x^0, R) \] in the
Hausdorff sense. Moreover, for any $ \epsilon > 0 $ and for all
$k$ large

\begin{enumerate}

\item [(i)] $ B_{D_{\infty}}(x^0, R) \subset B_{D^k} (x^0, R +
\epsilon),$

\medskip

\item [(ii)] $ B_{D^k} (x^0, R - \epsilon)  \subset
B_{D_{\infty}}(x^0, R).$

\end{enumerate}

\end{lem}

\begin{proof} Let $ K \subset B_{D_{\infty}}(x^0, R)$ be compact. Then $ K
$ is a relatively compact subset of $ D^k$ for all $k$ large and
there exists a positive constant $ c=c(K) \in (0, R) $ such that $
d_{D_{\infty}} (x^0,z) < c$ for all $ z \in K $. Pick $ \tilde{c}
\in (c, R) $. It follows from lemma \ref{D1} that
\[
d_{D^k} (x^0, z) \leq d_{D_{\infty}} (x^0,z) + \tilde{c} - c
\]
for all $z$ in $K$ and for all $k$ large. Therefore
\[
d_{D^k }(x^0, z) \leq \tilde{c} < R
\]
for all $z \in K$ and for all $k$ large. This is just the
assertion that $ K $ is compactly contained in $ B_{D^k} (x^0, R)
$ for all $k$ large. Conversely, let $ K \subset \mathbf{C}^n$ be
a compact set such that $K $ is compactly contained in $ B_{D^k}
(x^0, R)$ for all $ k$ large. Then $ K $ is relatively compact
subset of $D_{\infty} $. Additionally, there exists a positive
constant $ c= c(K) \in (0, R) $ such that $ d_{D^k} (x^0, z) \leq
c $ for all $ z \in K $ and for all $ k$ large. Pick $ \tilde{c}
\in (c, R) $. Again applying lemma \ref{D1}, we see that
\[
d_{D_{\infty}}(x^0, z) < d_{D^k}(x^0,z) + \tilde{c} - c \qquad
\]
for all $ z \in K $ and all $k$ large. Thus for all $ z \in K$, we
obtain
\[
d_{D_{\infty}} (x^0, z) < \tilde{c} < R
\]
or equivalently $ K $ is compactly contained in $
B_{D_{\infty}}(x^0, R)$. This shows that the sequence of domains $
\big \{ B_{D^k}(x^0, R) \big \}$ converges in the Hausdorff metric
to $ B_{D_\infty}(x^0, R)$.

\medskip

\noindent To verify (i), first observe that the closure of $
B_{D_{\infty}}(x^0, R) $ is compact since $ D_{\infty}$ is
Kobayashi complete. Then using lemma \ref{D1}, we get that
\[
d_{D^k} (x^0, z) \leq d_{D_{\infty}} (x^0, z) + \epsilon
\]
for all $ z $ in the closure of $ B_{D_{\infty}}(x^0, R) $ and for
all $ k $ large. Said differently,
\[
B_{D_{\infty}}(x^0, R) \subset B_{D^k} (x^0, R + \epsilon)
\]
for all $k$ large.

\medskip

\noindent For (ii) suppose that the desired result is not true.
Then there exists a $ \epsilon_0 > 0$ and a sequence of points $
\{ a^l \} _{l=1} ^{\infty} \subset \partial B_{D_{\infty}}(x^0,
R)$ such that $ a^l \in B_{D^l} (x^0, R - \epsilon_0) $. In view
of compactness of $\partial B_{D_{\infty}}(x^0, R)$, we may assume
that $ a^l \rightarrow a \in \partial B_{D_{\infty}}(x^0, R)$ as $
l \rightarrow \infty$. It follows from lemma \ref{D1} that
\[
d_{D^l} (a^l, x^0) \rightarrow d_{D_{\infty}} (a, x^0)
\]
as $ l \rightarrow \infty$. Consequently, $ d_{D_{\infty}} (a,
x^0) \leq R - \epsilon_0 $. This violates the fact that $
d_{D_{\infty}} (a, x^0)= R$ thereby proving (ii).
\end{proof}

\medskip

\noindent \textit{Proof of Theorem \ref{B0}}: It suffices to show
that $ h_{ D^k } ( ('0, -1) ) \rightarrow 0$ as $ k \rightarrow
\infty $. For any $ R > 0$, there exists a biholomorphism $ \theta
: \mathbf{B}^n \rightarrow B_{D_{\infty}} ( ('0,-1), R) $ as $
D_{\infty}$ is biholomorphically equivalent to $ \mathbf{B}^n$.
For $ \epsilon > 0 $ given, there exists a $ \delta \in (0,1) $
such that
\begin{eqnarray}
B_{D_{\infty}} ( ('0,-1), R - \epsilon ) \subset \theta \big(
\mathbf{B}^n(0,\delta) \big) \subset B_{D_{\infty}} ( ('0,-1), R).
\label{3.1}
\end{eqnarray}
It follows from lemma \ref{B2} that
\begin{eqnarray}
\theta \big( \mathbf{B}^n(0,\delta) \big) \subset B_{D^k} (
('0,-1), R) \subset D^k \label{3.2}
\end{eqnarray}
and
\begin{eqnarray}
B_{D^k} (('0,-1), R - 2 \epsilon)  \subset B_{D_{\infty}}(('0,-1),
R - \epsilon). \label{3.3}
\end{eqnarray}
for all $k$ large. Finally note that (\ref{3.1}), (\ref{3.2}) and
(\ref{3.3}) altogether yield that there exists a biholomorphic
imbedding $ \theta : \mathbf{B}^n(0, \delta) \rightarrow D^k $
such that
\[
B_{D^k} (x^0, R - 2 \epsilon) \subset \theta \big(
\mathbf{B}^n(0,\delta) \big)
\]
so that
\[
h_{D^k}(( '0, -1)) \leq 1/(R - 2 \epsilon).
\]
Since $ R > 0 $ was arbitrary, we have $ h_{ D^k } ( ('0, -1) )
\rightarrow 0 $ as $ k \rightarrow \infty $. This finishes the
proof.
\end{proof}

\begin{rem} Observe that the above result can be stated as
\[
 h_{D} (z) \rightarrow h_{D_{\infty}} ( ('0,-1)) = 0
\]
as $ z \rightarrow z^0 $ where $ D_{\infty} \simeq \mathbf{B}^n$
is the model domain at the point $ z^0$.
\end{rem}

\medskip

\noindent \textbf{Behaviour of $h$ on a strongly pseudoconvex
domain $(D,c_D)$:}

\medskip

\noindent In this section, we intend to focus on Fridman's
invariant defined using the Carath\'{e}odory metric. To be more
concrete, let $ X  $ be a \textit{c-hyperbolic} complex manifold
of dimension $n$, i.e., the Carath\'{e}odory distance $ c_X$ is a
distance and the topology induced by $ c_X $ coincides with the
Euclidean topology on $X$. Then for $ p \in X$
\begin{equation}
h_X(p, \mathbf{B}^n) = \displaystyle\inf_{r \in \mathcal{R}}
\frac{1}{r} \label{CC1}
\end{equation}
where $ \mathcal{R}$ denotes the set of all $ r> 0 $ with the
property that there is a biholomorphic imbedding $ f :
\mathbf{B}^n \rightarrow X$ with $ f(\mathbf{B}^n) \supset
B^C_X(p, r)$, the ball in the Carath\'{e}odory metric with radius
$r$ around $ p \in X$. Evidently, $ h_X(p, \mathbf{B}^n) $ is a
biholomorphic invariant. The notation $ h_X(p, \mathbf{B}^n) $ is
to interpreted in the sense described above for the rest of this
section. The following result will be needed for our purposes. The
proof is exactly that of proposition \ref{A0} and is hence
omitted.

\begin{prop} \label{CC} Let $X$ be a c-hyperbolic manifold of
complex dimension $n$. Then

\begin{enumerate}

\item [(i)] if there is an $ x^0 \in X $ such that $ h_X(x^0 ,
\Omega) = 0 $, then $ h_X(x^0, \mathbf{B}^n) \equiv 0 $ and $ X$
is biholomorphically equivalent to $ \mathbf{B}^n $.

\item [(ii)] $ h_X ( \cdot, \mathbf{B}^n) $ is continuous on $X$.

\end{enumerate}

\end{prop}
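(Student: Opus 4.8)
The plan is to run, essentially word for word, the arguments proving Lemma~\ref{0} and Proposition~\ref{A0}, with the Carath\'{e}odory distance $c_X$ in place of the Kobayashi distance $d_X$ throughout and with the homogeneous model specialized to $\Omega=\mathbf{B}^n$. The substitution is legitimate because the only features of $d_X$, of $\Omega$, and of hyperbolicity used there are shared by their Carath\'{e}odory counterparts: since $X$ is c-hyperbolic, $c_X$ is a genuine distance whose induced topology is the Euclidean one, so $c_X$ is continuous, small Carath\'{e}odory balls $B^C_X(x,\epsilon)$ are relatively compact in $X$, and $c_X(x^1,x^2)=0$ forces $x^1=x^2$; the ball $\mathbf{B}^n$ is a bounded, homogeneous, taut domain on which $c_{\mathbf{B}^n}$ is a continuous distance; holomorphic maps are distance non-increasing for the Carath\'{e}odory distance; and Hurwitz's and Montel's theorems are available as before. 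Since $\mathbf{B}^n$ is homogeneous, $\mathbf{B}^n/\mathrm{Aut}(\mathbf{B}^n)$ is a point, so the normalization used below requires no separate compact set.

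The first step is the Carath\'{e}odory analogue of Lemma~\ref{0}: if $X$ is c-hyperbolic, $F^k:\mathbf{B}^n\to F^k(\mathbf{B}^n)\subset X$ are biholomorphisms, there are relatively compact sets $K_1\subset\mathbf{B}^n$, $K_2\subset X$ and points $z^k\in K_1$ with $F^k(z^k)\in K_2$, and the images $F^k(\mathbf{B}^n)$ eventually exhaust $X$, then $X$ is biholomorphic to $\mathbf{B}^n$. The proof of Lemma~\ref{0} carries over verbatim: tautness of $\mathbf{B}^n$ together with the condition $F^k(z^k)\in K_2$ (which rules out compact divergence of $\phi^k:=(F^k)^{-1}$) yields a subsequence of $\{\phi^k\}$ converging uniformly on compacta to a holomorphic $\phi:X\to\mathbf{B}^n$; the distance non-increasing property for $c$, the triangle inequality, and passage to the limit give $c_X(x^1,x^2)\le c_{\mathbf{B}^n}(\phi(x^1),\phi(x^2))$, so c-hyperbolicity of $X$ makes $\phi$ injective; regarding $X$ as a subdomain of the bounded domain $\mathbf{B}^n$, the maps $F^k$ form a normal family and subconverge to $F:\mathbf{B}^n\to\overline{\mathbf{B}^n}$ with $F(z^0)=x^0$, where $z^0=\lim z^k$ and $x^0=\lim F^k(z^k)\in\overline{K_2}$; choosing $\epsilon>0$ so that $B^C_X(x^0,\epsilon)$ is relatively compact in $X$, the exhaustion hypothesis together with the uniform convergence of $\{\phi^k\}$ and $\{F^k\}$ shows $B^C_X(x^0,\epsilon)\subset F(\mathbf{B}^n)$; hence Hurwitz's theorem applied to $\{\mathrm{Jac}\,F^k\}$ forces $\mathrm{Jac}\,F$ to be nowhere zero and $F(\mathbf{B}^n)\subset X$ to be open, and the relation $\phi\circ F=\mathrm{id}$ on $\mathbf{B}^n$ (a consequence of $\phi^k\circ F^k=\mathrm{id}$ and uniform convergence) finally gives $\mathbf{B}^n\subset\phi(X)$, so $\phi$ is the required biholomorphism.

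Granting this, part (i) follows as in Proposition~\ref{A0}(i). Given $x^0\in X$ with $h_X(x^0,\mathbf{B}^n)=0$, choose biholomorphic imbeddings $\phi^k:\mathbf{B}^n\to X$ with $B^C_X(x^0,k)\subset\phi^k(\mathbf{B}^n)$, and, composing each with an automorphism of $\mathbf{B}^n$, arrange $(\phi^k)^{-1}(x^0)=0$. Continuity of $c_X$ makes $c_X(x^0,\cdot)$ bounded on every compact $L\subset X$, hence $L\subset B^C_X(x^0,k)\subset\phi^k(\mathbf{B}^n)$ for $k$ large, so the images exhaust $X$; the Carath\'{e}odory version of Lemma~\ref{0} with $K_1=\{0\}$, $K_2=\{x^0\}$ then gives $X\cong\mathbf{B}^n$. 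Since the identity map witnesses $B^C_{\mathbf{B}^n}(p,r)\subset\mathbf{B}^n$ for every $r>0$, we have $h_{\mathbf{B}^n}(\cdot,\mathbf{B}^n)\equiv 0$, and biholomorphic invariance of $h$ transports this to $h_X(\cdot,\mathbf{B}^n)\equiv 0$. For part (ii), if $h_X(\cdot,\mathbf{B}^n)$ vanishes at a point it is identically zero by (i), hence continuous; otherwise put $H_X(\cdot,\mathbf{B}^n)=1/h_X(\cdot,\mathbf{B}^n)$ and repeat the argument of Proposition~\ref{A0}(iii) with $c_X$ replacing $d_X$. Using that the set of $r>0$ for which $B^C_X(x,r)$ is contained in some biholomorphic copy of $\mathbf{B}^n$ in $X$ is an interval with right endpoint $H_X(x,\mathbf{B}^n)$, the triangle inequality gives, for $x^1,x^2$ with $2c_X(x^1,x^2)<H_X(x^1,\mathbf{B}^n)$ and suitable $\epsilon>0$, the inclusion $B^C_X(x^2,H_X(x^1,\mathbf{B}^n)-\epsilon-c_X(x^1,x^2))\subset B^C_X(x^1,H_X(x^1,\mathbf{B}^n)-\epsilon)$ and hence $H_X(x^1,\mathbf{B}^n)-H_X(x^2,\mathbf{B}^n)\le c_X(x^1,x^2)+\epsilon$; the symmetric inequality and $\epsilon\to 0$ give $|H_X(x^1,\mathbf{B}^n)-H_X(x^2,\mathbf{B}^n)|\le c_X(x^1,x^2)$, so $H_X(\cdot,\mathbf{B}^n)$, and therefore $h_X(\cdot,\mathbf{B}^n)$, is continuous for the $c_X$-topology, which coincides with the Euclidean topology by c-hyperbolicity.

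The only point requiring care is the first step of the Lemma~\ref{0} argument: one must be sure that the limit $\phi$ maps $X$ into the \emph{open} ball rather than into $\partial\mathbf{B}^n$, and that the normal-family and Hurwitz steps survive the passage from $d$ to $c$. Both are handled exactly as in the Kobayashi case: tautness of $\mathbf{B}^n$ together with the normalization $(\phi^k)^{-1}(x^0)=0$ keeps the limit interior, and boundedness of $\mathbf{B}^n$ supplies all the required normal families, so no genuinely new difficulty arises — which is precisely why the proof is, as stated, exactly that of Proposition~\ref{A0}.
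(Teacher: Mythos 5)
Your proposal is correct and coincides with the paper's own treatment: the paper omits the proof of Proposition \ref{CC}, stating that it is exactly that of Proposition \ref{A0}, and your argument is precisely that adaptation, with $c_X$ replacing $d_X$, $\mathbf{B}^n$ as the model, and the needed ingredients (distance non-increasing property, tautness of $\mathbf{B}^n$, normal families, Hurwitz, and the topology coincidence from c-hyperbolicity) checked to carry over.
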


\noindent The goal now will be to investigate the boundary
behaviour of $ h_X( \cdot, \mathbf{B}^n) $ as defined in
(\ref{CC1}) for a strongly pseudoconvex domain $X$. More
precisely, the following global statement can be proved:

\begin{thm} \label{CC2} Let $ D \subset
\mathbf{C}^n$ be a bounded strongly pseudoconvex domain with $
C^2$-smooth boundary. Then $ h_{D} (z, \mathbf{B}^n)
\rightarrow 0 $ as $ z \rightarrow \partial D$.
\end{thm}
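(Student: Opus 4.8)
The plan is to mimic the proof of Theorem \ref{B0} almost verbatim, replacing the Kobayashi metric $d_D$ by the Carath\'{e}odory metric $c_D$ throughout and using the fact that on a bounded strongly pseudoconvex domain with $C^2$ boundary the scaled domains $D^k = T^k\circ h^k(D)$ still converge in the Hausdorff sense to $D_\infty = \{2\,\Re z_n + |'z|^2 < 0\}\simeq \mathbf B^n$, on which $c_{D_\infty} = d_{D_\infty}$ by Lempert's theorem. First I would fix a sequence $\{z^k\}\subset D$ with $z^k\to z^0\in\partial D$, choose the closest boundary points $\zeta^k$, apply Lemma \ref{F1} to get the normalizing biholomorphisms $h^k = h_{\zeta^k}$, compose with the anisotropic dilations $T^k$, and record that since $h_D(\cdot,\mathbf B^n)$ (defined via $c_D$) is a biholomorphic invariant, $h_D(z^k,\mathbf B^n) = h_{D^k}(('0,-1),\mathbf B^n)$. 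So it suffices to show $h_{D^k}(('0,-1),\mathbf B^n)\to 0$.

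The core analytic input is the Carath\'{e}odory analogue of Lemma \ref{D1}, namely that for $x^0\in D_\infty$ one has $c_{D^k}(x^0,\cdot)\to c_{D_\infty}(x^0,\cdot)$ uniformly on compact subsets of $D_\infty$. I would prove this in two directions. For the upper bound $\limsup_k c_{D^k}(x^0,p^k)\le c_{D_\infty}(x^0,p^0)$: the Carath\'{e}odory distance is monotone decreasing under inclusions of domains, and since any compact $K\subset D_\infty$ sits inside $D^k$ for $k$ large, one gets $c_{D^k}(x^0,p^k)\le c_{D_\infty\cap D^k}$-type estimates; more cleanly, use the localization of the Carath\'{e}odory metric near the peak point $z^0$ together with Lempert's extremal discs/functions on the (strictly convex, after shrinking) piece $U\cap D$, exactly as the $\phi^k$ were used in Lemma \ref{D1}, pulling these back by the $(T^k\circ h^k)^{-1}$ and noting these land in $V\cap D$ for large $k$. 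For the reverse bound: take the Carath\'{e}odory extremal function $F\in\mathcal O(D_\infty,\Delta)$ for the pair $(x^0,p^0)$; one needs holomorphic functions $F^k\in\mathcal O(D^k,\Delta)$ approximating $F$, which again follows because the $D^k$ Hausdorff-converge to $D_\infty$ and one can restrict/extend suitably, so $c_{D^k}(x^0,p^k)\ge |d_{hyp}(F^k(x^0),F^k(p^k))|\to |d_{hyp}(F(x^0),F(p^0))| = c_{D_\infty}(x^0,p^0)$. With this lemma in hand, the Carath\'{e}odory analogue of Lemma \ref{B2} (Hausdorff convergence of balls $B^C_{D^k}(x^0,R)\to B^C_{D_\infty}(x^0,R)$, with the two-sided $\epsilon$-inclusions) follows by the identical argument, using that $D_\infty$ is complete hyperbolic (hence the Carath\'{e}odory balls have compact closure, since $c_{D_\infty}=d_{D_\infty}$).

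Finally I would conclude exactly as in the proof of Theorem \ref{B0}: for any $R>0$ pick the biholomorphism $\theta:\mathbf B^n\to B^C_{D_\infty}(('0,-1),R)$ (available since $D_\infty\simeq\mathbf B^n$ and $c=d$ there), choose $\delta\in(0,1)$ with $B^C_{D_\infty}(('0,-1),R-\epsilon)\subset\theta(\mathbf B^n(0,\delta))\subset B^C_{D_\infty}(('0,-1),R)$, and use the ball-convergence lemma to squeeze $\theta(\mathbf B^n(0,\delta))\subset B^C_{D^k}(('0,-1),R)\subset D^k$ and $B^C_{D^k}(('0,-1),R-2\epsilon)\subset\theta(\mathbf B^n(0,\delta))$ for $k$ large, giving $h_{D^k}(('0,-1),\mathbf B^n)\le 1/(R-2\epsilon)$; letting $R\to\infty$ finishes the proof. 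The main obstacle I anticipate is the convergence $c_{D^k}(x^0,\cdot)\to c_{D_\infty}(x^0,\cdot)$ — specifically producing the approximating functions $F^k\in\mathcal O(D^k,\Delta)$ for the reverse inequality, since unlike holomorphic mappings into a fixed target, functions on the moving domains $D^k$ must be handled carefully; the cleanest route is probably to exploit that near the peak point the local and global Carath\'{e}odory metrics are comparable and that $U\cap D$ is strictly convex, so Lempert's theory applies on a fixed piece and all estimates transfer under the scalings, paralleling the use of \cite{Venturini-1989} and \cite{Lempert-1981} in Lemma \ref{D1}.
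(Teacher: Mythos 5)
Your outline is essentially the paper's proof: scale with $h^k$, $T^k$, use biholomorphic invariance to reduce to $h_{D^k}(('0,-1),\mathbf{B}^n)\to 0$, prove a Carath\'eodory analogue of Lemma \ref{D1} (this is Lemma \ref{CC7} in the paper), deduce the two-sided $\epsilon$-inclusions for Carath\'eodory balls (Lemma \ref{CC6}, proved as in Lemma \ref{B2} using completeness of $D_\infty$), and finish by the same squeeze as in Theorem \ref{B0}. So the skeleton and the endgame are correct and identical to the paper's.

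Two of your sub-steps for the stability lemma, however, do not work as literally written, and it is worth recording what the paper does there. For the $\limsup$ direction, monotonicity of $c$ under inclusion would require the domain inclusion $D_\infty\subset D^k$, which Hausdorff convergence does not provide (knowing that compacts of $D_\infty$ sit in $D^k$ is not enough for the Carath\'eodory distance, which is a global object); also, the Lempert discs of Lemma \ref{D1} were the tool for the \emph{reverse} inequality there, not for the $\limsup$ bound. The paper instead takes the extremal functions $\phi^k\in\mathcal{O}(D^k,\Delta)$ for the pairs $(x^0,p^k)$, observes that for large $k$ they are defined on any given compact of $D_\infty$, extracts a normal limit $\phi:D_\infty\to\Delta$ (maximum principle), and gets $\limsup_k c_{D^k}(x^0,p^k)\le d_{hyp}(0,\phi(p^0))\le c_{D_\infty}(x^0,p^0)$; alternatively, since $D_\infty\simeq\mathbf{B}^n$ here, one could simply use $c_{D^k}\le d_{D^k}$, Lemma \ref{D1} and $c_{D_\infty}=d_{D_\infty}$. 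For the reverse inequality, your first idea of producing $F^k\in\mathcal{O}(D^k,\Delta)$ by ``restricting/extending'' the extremal function of $D_\infty$ is not viable, precisely because $D^k\not\subset D_\infty$ and there is no extension mechanism across the moving boundaries; you flag this as the obstacle, and your fallback is indeed the paper's argument: $c_{D_\infty}(x^0,p^0)\le d_{D_\infty}(x^0,p^0)\le d_{D^k}(x^0,p^k)+\epsilon$ (from (\ref{2.2d})), then transport back by $(T^k\circ h^k)^{-1}$ to points near $z^0$, use that $U\cap D$ is strictly convex so $c_{U\cap D}=d_{U\cap D}$ by Lempert \cite{Lempert-1981} together with $d_D\le d_{U\cap D}$, and finally the localization of the Carath\'eodory distance (corollary 10.5.3 of \cite{Jarnicki&Pflug}) to return to $c_D=c_{D^k}(x^0,p^k)$ up to a factor $1+\epsilon$. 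With those two repairs your proposal coincides with the paper's proof.
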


\begin{proof} Let $ \{ z^k \} \subset D $ be a sequence converging
to $ z^0 \in \partial D $. It suffices to show that $ h_{D}( z^k )
\rightarrow 0 $ as $ {k \rightarrow \infty}$. Choose $ \zeta^k \in
\partial D$, closest to $ z^k$. Then $ \zeta^k \rightarrow z^0$ as
$ k \rightarrow \infty $.

\medskip

\noindent We now scale the domain $D$ with respect to the base
point $ z^0 \in \partial D$ and the sequence $ \{\zeta^k \} $. Let
$ h^k, T^k, D^k $ and $ D_{\infty}$ be defined as before. The
first step towards proving theorem \ref{CC2} is to control $
c_{D^k}(z, \cdot) $ as $ k \rightarrow \infty $. To do this, one
has to essentially repeat the arguments in
\cite{Seshadri&Verma-2006}. We indicate here the necessary changes
in \cite{Seshadri&Verma-2006} to infer stability of the
Carath\'{e}odory distance.

\begin{lem} \label{CC7} For any $ x^0 \in D_{\infty}$
\[
\displaystyle \lim_{k \rightarrow \infty} c_{D^k}(x^0, \cdot) =
c_{D_{\infty}}(x^0, \cdot).
\]
Moreover, the convergence is uniform on compact sets of $
D_{\infty}$.
\end{lem}

\begin{proof} Let $ K \subset D_{\infty} $ be compact and suppose that the
desired convergence does not occur. Then there exists a $ \epsilon
_0 > 0 $ and a sequence of points $ \{p^k \} \subset K $ which is
relatively compact in $D^k $ for $k$ large such that
\[
\big| c_{D ^k} ( x^0, p^k ) - c_{D_{\infty}} (x^0, p^k ) \big| >
\epsilon _0
\]
for all $k$ large. By passing to a subsequence, assume that  $ p^k
\rightarrow p^0 \in K $ as $ k \rightarrow \infty$. Since $
c_{D_{\infty}} ( x^0, \cdot ) $ is continuous, it follows that
\begin{equation}
\big| c_{D ^k} ( x^0, p^k ) - c_{D_{\infty}} (x^0, p^0 ) \big| >
\epsilon _0/2 \label{CC3}
\end{equation}
for all $k$ large.

\medskip

\noindent Let $ \phi^k : D^k \rightarrow \Delta $ be holomorphic
maps such that $ \phi^k(x^0) = 0 $ and $ d_{hyp} \big( 0,
\phi^k(p^k) \big) = c_{D^k} (x^0, p^k)$. The family $ \{ \phi^k \}
$ is uniformly bounded above. Additionally, since $ \{D^k \}$
converges to $ D_{\infty} $ in the Hausdorff sense, these maps are
defined on an arbitrary compact subset of $ D_{\infty} $ and hence
some subsequence of $ \{ \phi^k \} $, which we will be denoting by
the same symbols, converges to $ \phi : D_{\infty} \rightarrow
\overline{\Delta} $. Evidently, $ \phi(x^0) = 0 $. Then the
maximum principle forces that $ \phi: D_{\infty} \rightarrow
\Delta $.

\medskip

\noindent Note that $ \phi^k(p^k) \rightarrow \phi(p^0) $ as $ k
\rightarrow \infty$. As a consequence
\[
d_{hyp} \big ( 0, \phi^k(p^k) \big) \rightarrow d_{hyp} \big(0,
\phi(p^0) \big)
\]
as $ k \rightarrow \infty$. Fix $ \epsilon > 0 $ arbitrarily
small. Then it follows that
\[
c_{D^k} ( x^0, p^k) = d_{hyp} \big ( 0, \phi^k(p^k) \big) \leq
d_{hyp} \big(0, \phi(p^0) \big) + \epsilon
\]
for all $k$ large. However from the definition we have that
\[
d_{hyp} \big(0, \phi(p^0) \big) \leq c_{D_{\infty}} ( x^0, p^0).
\]
The above argument shows that
\begin{equation}
\displaystyle\limsup_{k \rightarrow \infty} c_{D^k} ( x^0, p^k)
\leq c_{D_{\infty}} ( x^0, p^0). \label{CC4}
\end{equation}
For the converse, fix $ \epsilon > 0 $ small and note that
\[
c_{D_{\infty}}(x^0, p^0) \leq d_{D_{\infty}} (x^0, p^0) \leq
d_{D^k} (x^0, p^k) + \epsilon
\]
for all $k$ large. The first inequality is well-known. The second
inequality follows from (\ref{2.2d}). Moreover, it is already
known that $ { (T^k \circ h^k ) }^{-1} (x^0) $ and $ { (T^k \circ
h^k ) }^{-1} (p^k) $ both approach $ z^0 \in \partial D $ as $ k
\rightarrow \infty$ and that
\[
d_{D^k}(x^0, p^k) = d_D \Big ( \big( T^k \circ h^k \big)^{-1}
(x^0), \big( T^k \circ h^k \big)^{-1}(p^k) \Big)
\]
for all $k$. Let $ U $ be a sufficiently small neighbourhood of $
z^0 \in \partial D$. If $U$ is small enough, $U \cap D $ is
strictly convex and it follows from Lempert's work
\cite{Lempert-1981} that $ c_D = d_D $ on $ U \cap D$. It now
follows that
\begin{eqnarray*}
d_D \Big( \big( T^k \circ h^k \big)^{-1} (x^0), \big( T^k \circ
h^k \big)^{-1}(p^k) \Big) & \leq & d_{U \cap D} \Big( \big( T^k
\circ h^k \big)^{-1} (x^0), \big( T^k \circ h^k \big)^{-1}(p^k)
\Big) \\
& = & c_{U \cap D} \Big( \big( T^k \circ h^k \big)^{-1} (x^0),
\big( T^k \circ h^k \big)^{-1}(p^k) \Big)
\end{eqnarray*}
Applying corollary 10.5.3 of \cite{Jarnicki&Pflug}, which is
essentially a statement about localizing the Carath\'{e}odory
distance near the boundary, we see that
\[
c_{U \cap D} \Big( \big( T^k \circ h^k \big)^{-1} (x^0), \big( T^k
\circ h^k \big)^{-1}(p^k) \Big) \leq (1+ \epsilon) c_{D} \Big(
\big( T^k \circ h^k \big)^{-1} (x^0), \big( T^k \circ h^k
\big)^{-1}(p^k) \Big)
\]
for all $k $ large. Since $ T^k \circ h^k $ are biholomorphisms
and hence Kobayashi isometries,
\[
c_{D} \Big( \big( T^k \circ h^k \big)^{-1} (x^0), \big( T^k \circ
h^k \big)^{-1}(p^k) \Big) = c_{D^k} ( x^0, p^k).
\]
Since $ \big \{ c_{D^k} ( x^0, p^k) \big \} $ is uniformly bounded
by (\ref{CC4}), it follows that
\begin{equation}
c_{D_{\infty}} (x^0 , p^0) \leq c_{D^k} ( x^0, p^k) + C \epsilon
\label{CC5}
\end{equation}
for all $k$ large. Combining (\ref{CC4}) and (\ref{CC5}), we see
that
\[
\displaystyle\lim_{k \rightarrow \infty} c_{D^k} ( x^0, p^k) =
c_{D_{\infty}} (x^0,p^0).
\]
This violates (\ref{CC3}) and hence the result.
\end{proof}

\medskip

\noindent Now, an argument similar to one in lemma \ref{B2} that
uses the fact that $ \big( D_{\infty}, c_{D_{\infty}} \big) $ is
complete and hence closed Carath\'{e}odory metric balls are
compact shows that

\begin{lem} \label{CC6} Fix $ x^0 \in D_{\infty}$ and $ R > 0$. Then
$ \big\{ B^C_{D^k} (x^0, R) \big\} $ converges to $
B^C_{D_{\infty}}(x^0, R)$ in the Hausdorff sense. Moreover, for
any $ \epsilon > 0 $ and for all $k$ large

\begin{enumerate}

\item [(i)] $ B^C_{D_{\infty}}(x^0, R) \subset B^C_{D^k} (x^0, R +
\epsilon),$

\medskip

\item [(ii)] $ B^C_{D^k} (x^0, R - \epsilon)  \subset
B^C_{D_{\infty}}(x^0, R).$

\end{enumerate}

\end{lem}

\medskip

\noindent \textit{ Proof of Theorem \ref{CC2}}: This can be proved
by making the relevant changes in the proof of theorem \ref{B0}
using lemmas \ref{CC7} and \ref{CC6}.
\end{proof}

%%%%%%%%%%%%%%%%%%%%%%%%%%%%%%%%%%%%%%%%%%%%%%%%%%%%%%%%%%%%%%%%%%%%%%%%%%%

\section{Behaviour of $h$ near weakly pseudoconvex points of
finite type in $ \mathbf{C}^2 $}

\noindent The main objective of this section is to establish the
following:

\begin{thm}\label{C0} Let $ D \subset \mathbf{C}^2 $ be a
smoothly bounded weakly pseudoconvex domain of finite type. Let $
\{ p^j \}$ be a sequence of points in $ D$ converging to $p^0 \in
\partial D$. Then
\[
 h_{D} (p^j) \rightarrow h_{D_{\infty}}((0, -1))
\]
as $ j \rightarrow \infty $ where $ D_{\infty}$ is the limiting
domain obtained by scaling $D$ with respect to the base point
$p^0$ and the sequence $ \{ p^j \}$.

\end{thm}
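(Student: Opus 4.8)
The plan is to follow the template already established in Section 4 for the strongly pseudoconvex case (Theorems \ref{B0} and \ref{CC2}), adapting each step to the finite type setting in $\mathbf{C}^2$. First I would recall the scaling construction of Pinchuk/Bedford-Pinchuk for a smoothly bounded weakly pseudoconvex domain of finite type in $\mathbf{C}^2$: choosing $\zeta^j \in \partial D$ nearest to $p^j$, one builds polynomial automorphisms $h^j$ straightening the boundary near $\zeta^j$ and anisotropic dilations $T^j$ (now scaling $z_1$ by $\delta_j^{-1/(2m)}$ and $z_2$ by $\delta_j^{-1}$, where $\delta_j = \mathrm{dist}(p^j,\partial D)$), so that $D^j = T^j \circ h^j(D)$ converges in the Hausdorff (local Hausdorff) sense to the model domain $D_\infty = \{2\,\Re z_2 + P_{2m}(z_1,\bar z_1) < 0\}$, and $T^j \circ h^j(p^j) = (0,-1)$. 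Since $h$ is a biholomorphic invariant, $h_D(p^j) = h_{D^j}((0,-1))$ and the problem reduces to showing $h_{D^j}((0,-1)) \to h_{D_\infty}((0,-1))$.

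The heart of the argument is the stability of the integrated Kobayashi distance and of the Kobayashi balls under this scaling. Concretely I would prove the analogue of Lemma \ref{D1}: for fixed $x^0 \in D_\infty$, $d_{D^j}(x^0,\cdot) \to d_{D_\infty}(x^0,\cdot)$ uniformly on compact sets of $D_\infty$. The $\limsup$ direction is the easy half — perturb an almost-geodesic path for $D_\infty$ to a path in $D^j$, using the known convergence $F^K_{D^j} \to F^K_{D_\infty}$ uniformly on compacta (this is the content of the infinitesimal stability, cf. work of Bedford-Pinchuk, and it is here the finite-type-in-$\mathbf{C}^2$ hypothesis is used). The $\liminf$ direction is more delicate: in the strongly pseudoconvex proof one uses Lempert discs on a locally strictly convex patch $U \cap D$ together with Venturini's localization of the Kobayashi distance; here $D$ is only weakly pseudoconvex, so I would instead invoke localization of the Kobayashi distance near a local peak point — every boundary point of a finite type domain in $\mathbf{C}^2$ is a local holomorphic peak point (Bedford-Fornaess / Fornaess-Sibony) — to get $d_{U \cap D} \le (1+\epsilon)d_D$ near $z^0$, and then a normal families / diagonal argument as in the proof of Lemma \ref{D1} to extract a limiting disc landing in $D_\infty$; the subharmonicity of $P_{2m}$ (rather than strict plurisubharmonicity of $|\,'z|^2$) enters only through the maximum principle step forcing the limit disc into $D_\infty$ rather than merely $\overline{D_\infty}$. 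With Lemma \ref{D1}'s analogue in hand, the analogue of Lemma \ref{B2} — Hausdorff convergence $B_{D^j}(x^0,R) \to B_{D_\infty}(x^0,R)$ together with the two-sided inclusions $B_{D_\infty}(x^0,R) \subset B_{D^j}(x^0,R+\epsilon)$ and $B_{D^j}(x^0,R-\epsilon) \subset B_{D_\infty}(x^0,R)$ for large $j$ — follows exactly as before, using completeness of $D_\infty$ in the Kobayashi metric (finite type models in $\mathbf{C}^2$ are complete hyperbolic) so that closed balls are compact.

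Finally I would assemble the proof of the theorem. Given $\epsilon > 0$ and any $R > 0$, pick a biholomorphic imbedding $\theta : \mathbf{B}^2 \to D_\infty$ whose image sandwiches $B_{D_\infty}((0,-1),R-\epsilon) \subset \theta(\mathbf{B}^2) \subset B_{D_\infty}((0,-1),R)$ — this requires only that $D_\infty$ be Kobayashi hyperbolic so that small Kobayashi balls are holomorphically embedded balls, exactly as in the definition of $h$. By the ball-stability lemma, for all large $j$ we get $\theta'(\mathbf{B}^2) \subset B_{D^j}((0,-1),R) \subset D^j$ for a suitable dilate $\theta'$ and $B_{D^j}((0,-1),R-2\epsilon) \subset \theta'(\mathbf{B}^2)$, whence $h_{D^j}((0,-1)) \le 1/(R-2\epsilon)$; conversely, running the same sandwich with the roles reversed shows $\liminf_j h_{D^j}((0,-1)) \ge$ the corresponding quantity for $D_\infty$. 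Passing to the extremal $R$ (the variational problem for $h_{D_\infty}$ has an extremal by Proposition \ref{A0}(ii), since $D_\infty$ is taut) and using the continuity of $H_{D_\infty} = 1/h_{D_\infty}$ yields $h_{D^j}((0,-1)) \to h_{D_\infty}((0,-1))$; in the degenerate case $h_{D_\infty}((0,-1)) = 0$ one argues directly by letting $R \to \infty$ as in Theorem \ref{B0}. I expect the main obstacle to be the $\liminf$ half of the distance-stability lemma — replacing the Lempert-disc machinery of the strongly convex case by the peak-point localization plus normal-families extraction, and checking carefully that the extracted limit disc stays inside the open model $D_\infty$ — and, secondarily, making sure the (non-smooth, merely local) Hausdorff convergence of the scaled domains $D^j$ is enough to run all the normal-families arguments on the exhausting compacta of $D_\infty$.
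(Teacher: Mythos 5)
Your overall architecture (scale, prove stability of the integrated distance, deduce stability of Kobayashi balls, then sandwich the embeddings) is the right one and matches the paper's, but the step you yourself flag as the main obstacle -- the $\liminf$ half of the distance--stability lemma -- is where your proposal has a genuine gap. You propose to replace the Lempert discs of the strongly convex case by ``peak-point localization plus a normal-families extraction of a limiting disc as in the proof of Lemma \ref{D1}.'' But in Lemma \ref{D1} the objects fed to the normal-families argument \emph{are} the Lempert extremal discs $\phi^k$ realizing $d_{U\cap D}$ on a strictly convex patch; on a weakly pseudoconvex finite type domain in $\mathbf{C}^2$ there is no extremal disc realizing the distance, so there is nothing to extract, and the localization inequality $d_{U\cap D}\le(1+\epsilon)d_D$ by itself only trades one scaled family of domains for another without producing the lower bound $d_{D_\infty}(x^0,p^0)\le\liminf_j d_{D^j}(x^0,p^j)$. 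The paper's mechanism here is entirely different and is the real content of Section 5: one first shows (Lemma \ref{K2}) that the balls $B_{D^j}(z^j,R)$ lie in a \emph{fixed compact subset of} $D_\infty$, using Herbort's two-sided estimates for $d_D$ on finite type domains in $\mathbf{C}^2$ to rule out escape to infinity and the Coupet--Sukhov lower bound $F^K_{D^j}(z,v)\gtrsim |v|\,d(z,\partial D^j)^{-1/2m}$ to rule out clustering at finite boundary points; one then applies the quantitative localization of the Kobayashi distance to large Kobayashi balls (Lemma \ref{D2}, from Kim--Krantz/Kim--Ma), $d_{D_\infty}\le d_{B_{D^j}(z^j,R')}\le d_{D^j}/\tanh(R'/2-\tilde R)$, and lets $R'\to\infty$. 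Neither ingredient appears in your plan, and without something equivalent the $\liminf$ inequality does not follow.

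Two further points. First, your concluding step ``running the same sandwich with the roles reversed'' does not give $\liminf_j h_{D^j}((0,-1))\ge h_{D_\infty}((0,-1))$: the near-extremal embeddings $F^j:\mathbf{B}^2\to D^j$ land in \emph{different} domains, so one must show $\{F^j\}$ is normal (the paper uses the Berteloot--Coeur\'e attraction property, since the $D^j$ are not contained in a common taut domain), pass to a limit $F:\mathbf{B}^2\to D_\infty$, and verify injectivity via Hurwitz and the inclusion $B_{D_\infty}(z^0,R_0-2\epsilon)\subset F(\mathbf{B}^2)$; this is not symmetric with the easy direction. Second, a smaller inaccuracy: the dilation in the $z_1$-direction must be by Catlin's $\tau(\zeta^j,\epsilon_j)^{-1}$ rather than $\delta_j^{-1/2m}$; the latter produces the homogeneous model only for normal approach, whereas for general sequences $\{p^j\}$ the limit polynomial is a possibly non-homogeneous subharmonic polynomial of degree at most $2m$ determined by the $\tau$-normalization.
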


\begin{proof} There are two cases to be considered. After passing to a
subsequence if needed,

\begin{enumerate}

\item[(i)] $ \displaystyle\lim_{ j \rightarrow \infty} h_{D} (p^j)
= 0 $, or

\item[(ii)] $ \displaystyle\lim_{ j \rightarrow \infty} h_{D}
(p^j) > c $ for some positive constant $c$.

\end{enumerate}

\noindent In case (i) the domain $ D_{\infty} $ will turn out to
be biholomorphic to $ \mathbf{B}^n$ while this will not be the
case in (ii).

\medskip

\noindent It will be useful to briefly describe the scaling of $D$
and the corresponding model domain in terms of the base point
$p^0$ and the sequence $ p^j $ converging to $ p^0 $. These will
require some basic facts about the local geometry of a weakly
pseudoconvex domain in $ \mathbf{C}^2$ near a boundary point of
finite type.

\medskip

\noindent \textbf{Scaling the domain $D$:}

\medskip

\noindent Let $ D \subset \mathbf{C}^2$ be a smoothly bounded
pseudoconvex domain of finite type defined by $ \{ \rho(z,\bar{z})
= 0 \} $ for some smooth function $ \rho $. We may assume that $
p^0 = (0,0) $ and $ \nabla \rho (0,0) = (0,1).$ Then there exists
a local coordinate system in a neighbourhood of $(0,0)$ such that
the domain $ D$ can be written as
\[
\Big \{ (z_1,z_2) \in \mathbf{C}^2 : 2 \Re z_2 +
H_{2m}(z_1,\bar{z}_1 ) + o ( |z| ^{2m} + \Im z_2 ) < 0 \Big \}
\]
where $H_{2m}$ is a homogeneous subharmonic polynomial of degree $
2m \geq 2 $ in $ z_1$ and $\bar{z}_1 $ which does not contain any
harmonic terms. Choose $ \zeta^j \in \partial D$ defined by
\[
\zeta^j = p^j + (0, \epsilon_j), \qquad \epsilon_j > 0
\]
From \cite{Catlin-1989} it follows that there exists a sequence $
\{ \phi ^{\zeta^j} \} $ of automorphisms of $ \mathbf{C}^2 $
defined by
\begin{equation*}
\phi ^{\zeta^j}( z_1, z_2) = \Bigg( z_1 - \zeta^j_1, \Big (z_2 -
\zeta^j_2 - \displaystyle\sum_{l=1}^{2m} d^l( \zeta^j) ( z_1 -
\zeta^j_1 )^l \Big) \left (d^0(\zeta^j)\right) ^{-1} \Bigg)
\label{5.1}
\end{equation*}
where $ d^l(\zeta^j)$ are non-zero functions depending smoothly on
$ \zeta^j$ and $ d^0(\zeta^j) \rightarrow 1 $ as $ j \rightarrow
\infty$. Observe that
\[
\phi ^{\zeta^j} (\zeta^j) = (0,0), \ \phi ^{\zeta^j}(p^j) =
\Big(0, - \epsilon_j \big(d^0(\zeta^j)\big)^{-1} \Big)
\]
and the defining function for $ \phi ^{\zeta^j}(\partial D) $
around the origin is
\[
2 \Re z_2 + \displaystyle\sum _{l=2} ^{2m} P_{l,\zeta^j}
(z_1,\bar{z}_1) + R _{\zeta^j} ( \Im z_2, z_1) = 0
\]
where $ P_{l,\zeta^j} (z_1,\bar{z}_1)$ are real-valued homogeneous
polynomials of degree $l$ without any harmonic terms. Also, $
P_{l,\zeta^j} (z_1,\bar{z}_1) \rightarrow 0 $ for $ l < 2m $ and $
P_{2m,\zeta^j} (z_1,\bar{z}_1) \rightarrow H_{2m}(z_1,\bar{z}_1)$
as $ j \rightarrow \infty$. Let $ \| \cdot \| $ be a fixed norm on
the finite dimensional space of all real-valued polynomials on the
complex plane with degree at most $2m$ that do not contain any
harmonic terms. Define
\[
\tau ( \zeta^j, \epsilon_j) = \min_{ 2 \leq l \leq 2m } \left (
\frac{\epsilon_j}{ \| P_{l,\zeta^j} (z_1,\bar{z}_1) \| } \right)
^{1/l}.
\]
Since $ P_{2m, \zeta^j} \rightarrow H $ which is a non-zero
polynomial, it follows that $ \displaystyle\sup_{j}
\big({\epsilon_j}^{-1} \tau ( \zeta^j, \epsilon_j)^{2m} \big) <
\infty $. Let $ \Delta_{ \zeta^j}^{\epsilon_j}: \mathbf{C}^2
\rightarrow \mathbf{C}^2 $ be a sequence of dilations defined by
\begin{equation*}
\Delta_{ \zeta^j}^{\epsilon_j}(z_1,z_2)  = \left( \frac{z_1} {\tau
( \zeta^j, \epsilon_j)}, \frac{z_2}{ \epsilon_j } \right).
\end{equation*}
A useful set for approximating the geometry of $D$ near $ p^0$ is
the Catlin's bidisc $Q (\zeta^j, \epsilon_j )$ determined by the
quantities $ \tau( \zeta^j, \epsilon_j) $ where
\begin{equation*}
Q (\zeta^j, \epsilon_j ) = \Big(\Delta_{ \zeta^j}^{\epsilon_j}
\circ \phi ^{\zeta^j} \Big)^{-1}( \Delta \times \Delta ).
\end{equation*}
Then the domains $ D ^j = \Delta_{ \zeta^j}^{\epsilon_j} \circ
\phi ^{\zeta^j}( D)$ converge in the Hausdorff metric to
\[
D_{\infty} = \Big \{ (z_1, z_2) \in \mathbf{C}^2 : 2 \Re z_2 +
P_{\infty} (z_1, \bar{z}_1) < 0 \Big \}
\]
where
\[
P_{\infty} (z_1, \bar{z}_1)= \displaystyle \lim_{ j \rightarrow
\infty} \frac{1}{ \epsilon_j} \displaystyle\sum _{l=2} ^{2m} \tau
( \zeta^j, \epsilon_j)^l P_{l,\zeta^j} (z_1,\bar{z}_1)
\]
is a real-valued subharmonic polynomial of degree at most $2m$
without harmonic terms. Note that if the sequence $ p^j$ converges
normally to the point $ p^0$, i.e., $ p^j = p^0 - \epsilon_j
n(p^0)$ where $n(p^0)$ denotes the unit outward normal to $
\partial D$ at $p^0$, then it turns out that $ P_{\infty}
(z_1,\bar{z}_1) \equiv H_{2m}(z_1,\bar{z}_1)$. Therefore,
\[
D_{\infty} = \{ (z_1, z_2) \in \mathbf{C}^2 : 2 \Re z_2 + H_{2m}
(z_1, \bar{z}_1) < 0 \}.
\]

\medskip

\noindent \textbf{Stability of the infinitesimal Kobayashi
metric:}

\medskip

\begin{lem} \label{K1} For $(a,v) \in D_{\infty} \times
\mathbf{C}^n$,
\begin{equation*}
\displaystyle\lim_{j \rightarrow \infty} F^K_{D^j} (a,v) =
F^K_{D_{\infty}} (a,v). \label{7.1}
\end{equation*}
Moreover, the convergence is uniform on compact sets of $
D_{\infty} \times \mathbf{C}^n$.
\end{lem}

\begin{proof} Let $ S \subset D_{\infty}$ and $ G \subset \mathbf{C}^n$ be
compact and suppose that the desired convergence does not occur.
Then there is a $ \epsilon_0 > 0$ such that after passing to a
subsequence, if necessary, we may assume that there exists a
sequence of points $ \{ a^j \} \subset S $ which is relatively
compact in $ D^j$ and a sequence $ \{ v^j \} \subset G$ such that
\begin{equation*}
\big| F^K_{ D^j }( a^j, v^j ) - F^K _{D_{\infty}}( a^j, v^j )
\big|
> \epsilon_0
\end{equation*}
for $j$ large. Additionally, $ a^j \rightarrow a \in S $ and $ v^j
\rightarrow v \in G $ as $ j \rightarrow \infty$. Since $
F^K_{D_{\infty}} ( a, \cdot) $ is homogeneous, we may assume that
$ | v^j| = 1 $ for all $j$. Observe that $ D_{\infty} $ is
complete hyperbolic and hence taut. The tautness of $ D_{\infty}$
implies via a normal family argument that $ F^K_{D_{\infty}} (
\cdot, \cdot) $ is jointly continuous, $ 0 < F^K_{D_{\infty}}
(a,v) < \infty $ and there exists a holomorphic extremal disc $ g
: \Delta \rightarrow D_{\infty} $ that by definition satisfies $
g(0) = a, {g}'(0) = \mu v $ where $ \mu
> 0 $ and $ F^K_{D_{\infty}} (a,v)= 1/{\mu} $. Hence
\begin{equation}
\big| F^K_{ D^j }( a^j, v^j ) - F^K_{D_{\infty}} (a,v) \big| >
\epsilon_0 / 2     \label{k1}
\end{equation}
for $j$ sufficiently large. Fix $ \delta \in (0,1) $ and define
the holomorphic mappings $ g^j : \Delta \rightarrow \mathbf{C}^2 $
by
\[
g^j (z) = g \left( (1 - \delta) z \right) + (a^j - a) + \mu ( 1 -
\delta)z( v^j - v ).
\]
Since the image $ g \left(( 1 - \delta)\Delta \right)$ is
compactly contained in $ D_{\infty}$ and $ a^j \rightarrow a, v^j
\rightarrow v$ as $ j \rightarrow \infty$, it follows that $ g^j:
\Delta \rightarrow D^j $ for $j$ large. Also, $ g^j(0) = g(0) +
a^j - a = a^j$ and $ (g^j)'(0) = ( 1 - \delta) g'(0) + \mu (1 -
\delta)(v^j - v) = \mu (1 - \delta) v^j $. By the definition of
the infinitesimal metric it follows that
\begin{equation*}
F^K_{ D^j }( a^j, v^j ) \leq \frac{1}{ \mu (1 - \delta) } =
\frac{F^K_{D_{\infty}} (a,v)}{\mu (1 - \delta) }.
\end{equation*}
Letting $ \delta \rightarrow 0^+ $ yields
\begin{equation}
\displaystyle \limsup_{j \rightarrow \infty}F^K_{ D^j }( a^j, v^j
) \leq F^K_{D_{\infty}} (a,v). \label{4.4}
\end{equation}
Conversely, fix $\epsilon > 0$ arbitrarily small. By definition,
there are holomorphic mappings $ f^j: \Delta \rightarrow D^j$
satisfying $ f^j(0) = a^j$ and $ (f^j)'(0) = \mu^j $ where $ \mu^j
> 0 $ and
\begin{equation}
F^K_{ D^j }( a^j, v^j ) \geq \frac{1}{ \mu^j} - \epsilon
\label{4.5}
\end{equation}
The sequence $ \{ f^j \} $ has a subsequence that converges to a
holomorphic mapping $ f : \Delta \rightarrow D_{\infty}$ uniformly
on compact sets of $ \Delta $. To see this, consider $\Delta(0,r)$
for $r \in (0,1)$. We may assume that $S$ is compactly contained
in $ \Delta(0,C_1^{1/2m}) \times \Delta (0,C_1)$ for some $ C_1 >
1$. As a consequence
\begin{eqnarray*}
 a^j  \in \Delta(0,C_1^{1/2m}) \times \Delta (0,C_1) \subset
\Delta \left( 0, \frac{\tau ( \zeta^j, C_1 \epsilon_j)} { \tau (
\zeta^j, \epsilon_j)} \right) \times \Delta \left( 0, \frac{C_1
\epsilon_j}{ \epsilon_j} \right)
\end{eqnarray*}
for all $j$. In particular, for all $j$
\[
(\Delta_{ \zeta^j}^{\epsilon_j} \circ \phi ^{\zeta^j})^{-1} (a^j)
\in Q (\zeta^j, C_1 \epsilon_j).
\]
Also, note that
\[
( \Delta_{ \zeta^j}^{\epsilon_j} \circ \phi ^{\zeta^j})^{-1} ( a^j
) \rightarrow p^0 \in \partial D
\]
as $ j \rightarrow \infty $. Now, applying proposition $1$ in
\cite{Berteloot&Coeure-1991} to the mappings
\[
(\Delta_{\zeta^j}^{\epsilon_j} \circ \phi ^{\zeta^j})^{-1} \circ
f^j : \Delta \rightarrow D
\]
shows that there exists a uniform positive constant $ C_2= C_2(r)
$ with the property that
\[
\big(\Delta_{ \zeta^j}^{\epsilon_j} \circ \phi ^{\zeta^j}
\big)^{-1} \circ f^j \big(\Delta(0, r)\big) \subset Q \big
(\zeta^j, C_2 C_1 \epsilon_j \big)
\]
or equivalently that
\[
f^j \big( \Delta(0,r)\big ) \subset \Delta \left(0, \sqrt{C_1
C_2}\right) \times \Delta(0, C_1 C_2 ).
\]
Therefore, $ \{ f^j \} $ is a normal family. Hence, the sequence $
\{ f^j \}$ has a subsequence that converges uniformly on compact
sets of $ \Delta $ to a holomorphic mapping $ f : \Delta
\rightarrow \mathbf{C}^2$ or $ f \equiv \infty$. The latter cannot
be true since $ f(0) = a $. It remains to show that $ f : \Delta
\rightarrow D_{\infty}$. For this note that $ D^j $ are defined by
\[
2 \epsilon_j \ \Re z_2 + \displaystyle\sum_{l=2}^m \tau ( \zeta^j,
\epsilon_j)^l P_{l,\zeta^j} (z_1,\bar{z}_1) + R_{\zeta^j } \big(
\epsilon_j \Im z_2, \tau ( \zeta^j, \epsilon_j) z_1 \big) < 0
\]
where $ R_{\zeta^j } \big(\epsilon_j \Im z_2, \tau ( \zeta^j,
\epsilon_j) z_1 \big) = \epsilon_j o(1) $ and the term $ o(1)$ is
uniformly convergent to zero as $ j \rightarrow \infty$. Thus, for
$w \in \Delta(0,r)$ and $ r \in (0,1)$
\[
2 \epsilon_j \ \Re \big( f^j_2 (w)\big) +
\displaystyle\sum_{l=2}^m \tau ( \zeta^j, \epsilon_j)^l
P_{l,\zeta^j} \left(f^j_1(w),\overline{f^j_1(w)} \right) +
R_{\zeta^j } \left( \epsilon_j \ \Im \big( f^j_2(w) \big), \tau (
\zeta^j, \epsilon_j) f^j_1(w) \right) < 0.
\]
Letting $ j \rightarrow \infty$ yields
\[
2 \ \Re \big(f_2(w)\big) + P_{\infty} \big(f_1(w), \overline{ f_1
(w) } \big) \leq 0
\]
or equivalently that $ f ( \Delta (0,r)) \subset \overline{
D}_{\infty}$. Since $ r \in (0,1) $ was arbitrary, it follows that
$ f( \Delta ) \subset \overline{ D}_{\infty} $. Since $ f(0,0) = a
$ the maximum principle forces that $ f : \Delta \rightarrow
D_{\infty} $. Note that
\[
f'(0) = \displaystyle \lim_{j \rightarrow \infty} (f^j)'(0) =
\displaystyle \lim_{j \rightarrow \infty} \mu ^j v^j= \mu v
\]
for some $ \mu > 0$. It follows from the definition of the
infinitesimal metric that
\[
F^K_{D_{\infty}} (a,v) \leq 1/ {\mu}.
\]
The above observation together with (\ref{4.5}) yields
\begin{equation}
\displaystyle \liminf_{j \rightarrow \infty}F^K_{ D^j }( a^j, v^j
) \geq F^K_{D_{\infty}} (a,v). \label{4.6}
\end{equation}
Combining (\ref{4.4}) and (\ref{4.6}) shows that
\begin{equation*}
\displaystyle \lim_{j \rightarrow \infty}F^K_{ D^j }( a^j, v^j ) =
F^K_{D_{\infty}} (a,v)
\end{equation*}
which contradicts the assumption (\ref{k1}) and proves the lemma.
\end{proof}

\begin{rem} This lemma does not directly follow from \cite{Yu-1995}
since there is no taut domain that contains all the scaled domains
$ D^j$.
\end{rem}

\noindent Write $ (0,-1) = z^0 $ and $ \big(0, -1/d^0(\zeta^j)
\big) = z^j$ for brevity.

\medskip

\noindent \textit{ Proof of Theorem \ref{C0}(i):} For each $j$,
let $ 1/R_j$ be a positive number that almost realizes $ h_{D}
(p^j) $, i.e., $ 1/ R_j < h_{D} (p^j) + \epsilon$ for some fixed $
\epsilon > 0$. Evidently, the sequence $ R_j \rightarrow \infty$
and there exists a sequence of biholomorphic imbeddings $ F^j :
\mathbf{B}^2 \rightarrow D $ satisfying $ F^j (0) = p^j $ and $
B_{D}( p^j, R_j) \subset F^j( \mathbf{B}^2)$. Consider the dilated
maps
\[
\psi^j := \Delta_{ \zeta^j}^{\epsilon_j} \circ \phi ^{\zeta^j}
\circ F^j : \mathbf{B}^2 \rightarrow D^j.
\]
Note that $ \Delta_{ \zeta^j}^{\epsilon_j} \circ \phi ^{\zeta^j}
\circ F^j (0,0) = \big(0, -1/d^0(\zeta^j) \big) \rightarrow
(0,-1)$ as $ j \rightarrow \infty$. In this setting, theorem 2 of
\cite {Berteloot&Coeure-1991} (see proposition 2.2 in \cite
{Berteloot-1994} also) shows that the sequence $ \{ \Delta_{
\zeta^j}^{\epsilon_j} \circ \phi ^{\zeta^j} \circ F^j \} $ admits
a subsequence that will still be denoted by the same indices, that
converges uniformly on compact sets of $ \mathbf{B}^2 $ to a
holomorphic mapping $ \psi : \mathbf{B}^2 \rightarrow \mathbf{C}^2
$. Now an argument similar to the one in the proof of lemma
\ref{K1} shows that $ \psi : \mathbf{B}^2 \rightarrow D_{\infty}
$.

\medskip

\noindent Then $ \psi $ is a biholomorphism. To establish this, it
will suffice to show that for each $ \epsilon > 0 $,
\begin{equation}
B_{D_{\infty}} ( z^0, R - \epsilon ) \subset B_{D^j} ( z^j, R)
\label{k8}
\end{equation}
for all $ R > 0 $ and all $j$ large and this will follow from
\[
\displaystyle\limsup_{j \rightarrow \infty} d_{D^j}( z^j, \cdot)
\leq d_{D_{\infty}}(z^0, \cdot).
\]
For this fix $ q \in D_{\infty}$ and let $ \gamma : [0,1]
\rightarrow D_{\infty} $ be a piecewise $C^1$-smooth path in $
D_{\infty} $ such that $ \gamma(0) = z^0, \gamma(1) = q$ and
\[
\int_0^1 { F^K_{D_{\infty}} \big( \gamma(t), \dot{\gamma}(t) \big)
dt} \leq d_{D_{\infty}} ( z^0, q) + \epsilon/2.
\]
Define $ \gamma^j: [0,1] \rightarrow \mathbf{C}^2 $ by
\[
\gamma^j(t) = \gamma(t) + (z^j - z^0)(1-t).
\]
Since the trace of $ \gamma $ is relatively compact in $
D_{\infty}$ and $ z^j \rightarrow z^0 $, it follows that the trace
of $\gamma $ is contained uniformly relatively compactly in $ D^j
$ for all large $j$. Note that $ \gamma^j(0) = \gamma(0) + z^j -
z^0 = z^j $ and $ \gamma^j(1) = q $. In addition, $ \gamma^j
\rightarrow \gamma $ and $ \dot{\gamma}^j \rightarrow \dot{\gamma}
$ uniformly on $ [0,1]$. It follows from lemma \ref{K1} that
\[
\int_0^1 {F^K_{D^j} \big( \gamma^j(t), \dot{\gamma}^j(t) \big) dt}
\leq \int_0^1 { F^K_{D_{\infty}} \big( \gamma(t), \dot{\gamma}(t)
\big) dt} + \epsilon/2 \leq d_{D_{\infty}} ( z^0, q) + \epsilon.
\]
Consequently,
\[
d_{D^j} (z^j, q) \leq \int_0^1 {F^K_{D^j} \big( \gamma^j(t),
\dot{\gamma}^j(t) \big) dt} \leq d_{D_{\infty}} (z^0, q) +
\epsilon
\]
which implies that
\[
\displaystyle\limsup_{j \rightarrow \infty} d_{D^j}( z^j, \cdot)
\leq d_{D_{\infty}}(z^0, \cdot).
\]

\noindent Note that $ B_{ D } ( p^j, R_j) \subset F^j(
\mathbf{B}^2)$. Since $ \Delta_{ \zeta^j}^{\epsilon_j} \circ \phi
^{\zeta^j}$ are biholomorphisms and hence Kobayashi isometries, it
follows that
\begin{eqnarray*}
B_{ D^j} ( z^j, R_j) \subset \Delta_{ \zeta^j}^{\epsilon_j} \circ
\phi ^{\zeta^j} \circ F^j (\mathbf{B}^2). \label{4.1}
\end{eqnarray*}
Since $ ( D_{\infty}, d_{D_{\infty}}) $ is complete, it is
possible to write
\begin{eqnarray*}
D_{\infty} = \displaystyle \bigcup_{ \nu = 1} ^{\infty}
B_{D_{\infty}} \big( (0,-1), \nu \big) \label{4.3}
\end{eqnarray*}
which is an exhaustion of $ D_{\infty}$ by an increasing union of
relatively compact domains. Consider
\[
\theta^j := \big(\Delta_{ \zeta^j}^{\epsilon_j} \circ \phi
^{\zeta^j} \circ F^j \big) ^{-1}: \Delta_{ \zeta^j}^{\epsilon_j}
\circ \phi ^{\zeta^j} \circ F^j (\mathbf{B}^2) \rightarrow
\mathbf{B}^2.
\]
These mappings are evidently defined on an arbitrary compact
subset of $ D_{\infty} $ for large $j$ and hence some subsequence
of $ \{ \theta^j \}$ converges to $ \theta: D_{\infty} \rightarrow
\overline{\mathbf{B}}^2$. Moreover, $ \theta (0,-1) = (0,0) $
together with the maximum principle shows that $ \theta :
D_{\infty} \rightarrow \mathbf{B}^2 $. Finally observe that for
$w$ in a fixed compact set in $ D_{\infty} $,
\begin{eqnarray*}
| \psi \circ \theta (w) - w | & = & | \psi \circ \theta (w) -
\psi^j \circ \theta^j(w) | \\
& = & | \psi \circ \theta (w) - \psi \circ \theta^j (w) | + |
\psi \circ \theta^j(w) - \psi^j \circ \theta^j | \\
& \rightarrow & 0 \ \mbox{as } j \rightarrow \infty
\end{eqnarray*}
This shows that $ \psi \circ \theta = id $. Similarly, it can be
proved that $ \theta \circ \psi = id$. This shows that $
D_{\infty} $ is biholomorphically equivalent to $ \mathbf{B}^2$.
In particular, $ h_{D_{\infty}} (\cdot) \equiv 0 $ so that $ h_{D}
(p^j) \rightarrow h_{D_{\infty}}((0, -1))$ as $ j \rightarrow
\infty $. This completes the proof of case (i). \qed

\medskip

\noindent Case (ii) differs from (i) in one important way. To
prove that the integrated Kobayashi distance is stable under
scaling in the strongly pseodoconvex case, Lempert's theorem that
guarantees the existence of complex geodesics in strongly convex
domains was used. This approach will evidently not work for weakly
pseudoconvex domains. To control the integrated Kobayashi distance
in weakly pseudoconvex domain under scaling, the following two
ingredients will be required that serve to circumvent the need for
complex geodesics.

\medskip

\begin{lem} \label{D2} Let $ D$ be a Kobayashi hyperbolic
domain in $ \mathbf{C}^n$ with a subdomain $ D' \subset D$. Let $
p,q \in D'$, $ d_{D}(p,q) = a$ and $ b > a$. If $ D'$ satisfies
the condition $ B_{D} (q,b) \subset D'$, then the following two
inequalities hold:
\begin{eqnarray*}
d_{D'} (p,q) \leq \frac{1}{\tanh (b-a) } d_{D}(p,q), \\
F^K_{D'} (p, v) \leq \frac{1}{\tanh (b-a) } F^K_{D}(p,v).
\end{eqnarray*}
\end{lem}

\noindent The reader is referred to \cite{Kim&Krantz-2008} (or
\cite{Kim&Ma-2003}) for a proof, but it should be noted that this
statement emphasizes an upper bound for $ d_{D'} $ in terms of $
d_D$. An estimate with the inequality reversed is an immediate
consequence of the definition of the Kobayashi metric.

\medskip

\noindent The second ingredient is an estimate for the Kobayashi
metric between two points in a weakly pseudoconvex finite type
domain $D$ in $ \mathbf{C}^2$ due to Herbort
(\cite{Herbort-2005}). To state this, let $ d(\cdot, \partial D)$
be the Euclidean distance to the boundary and $ \rho $ a smooth
defining function for $ \partial D$. For $ a, b \in D$, define
\begin{eqnarray*}
\rho^*(a,b) & = & \log \left ( 1 + \frac{ d(a,b)}{d(a, \partial
D)} + \frac{| \langle L(a), a -b \rangle|}{ \tau ( a, d(a,
\partial
D)) } \right) \\
L(a) & = & \left( - \frac{ \partial \rho} {\partial z_2}(a),
\frac{ \partial \rho} {\partial z_1}(a) \right) \\
d(a,b) & = & \min \big \{ d'(a,b), |a-b| \big \} \\
d'(a,b) & = & \inf \big \{ \delta > 0 : a \in Q ( b, \delta) \big
\},
\end{eqnarray*}
where $ \langle \cdot, \cdot \rangle $ denotes the standard
hermitian inner product in $ \mathbf{C}^2 $.

\medskip

\noindent The main result of \cite{Herbort-2005} that is needed
is:

\begin{thm}
Assume that $ D = \{ \rho < 0 \} \subset \mathbf{C}^2$ be a
bounded pseudoconvex domain with smooth boundary such that all
boundary points are of finite type. Then there exists a positive
constant $ C_* $ such that for any two points $ a, b \in D$
\[
C_* \big ( \rho^*(a,b) + \rho^*(b,a) \big) \leq d_D(a,b) \leq
1/C_* \big ( \rho^*(a,b) + \rho^*(b,a) \big).
\]
\end{thm}

\medskip

\noindent Observe that in case (ii) the largest radii admissible
in the definition of the Fridman's invariant function $ h_{D}
(p^j)$ is at most $\nu_0$ where $ \nu_0 = 1/c$. Several lemmas
will be needed to complete the proof in this case. We first note
the following:

\begin{lem} \label{K2} For all $ R > 0 $ and for all $j$ large,
$B_{D^j} ( z^j, R ) $ is compactly contained in $ D_{\infty}$.
\end{lem}

\begin{proof} The proof divides into two parts. In the first part we show
that the sets $ B_{D^j} ( z^j, R) $ cannot accumulate at the point
at infinity in $ \partial D_{\infty} $ and in the second part we
show that the sets $ B_{D^j} ( z^j, R) $ do not cluster at any
finite boundary point. First note that
\[
B_{D^j} ( z^j, R ) = \Delta_{ \zeta^j}^{\epsilon_j} \circ \phi
^{\zeta^j} \big(B_{D}( p^j, R) \big)
\]
Assume that $ q \in B_{D}( p^j, R)$. Using Herbort's lower
estimate for the Kobayashi metric gives us
\begin{equation*}
C_* \left( \rho^* (p^j, q) + \rho^* (q, p^j)\right) \leq d_{D}
(p^j,q).  \label{4.7}
\end{equation*}
As a consequence
\[
d(p^j,q) < \exp( R/ { C_*} )d(p^j, \partial D)
\]
which in turn implies that

\begin{itemize}

\item either $ |p^j - q| < d( p^j, \partial D) \exp( R/{C_*}) $ or

\medskip

\item for each $j$, there exists a $ \delta_j \in ( 0, d( p^j,
\partial D)\exp( R/{C_*}) ) $ such that $ p^j \in Q (q, \delta_j) $.

\end{itemize}
It follows from proposition 1.7 in \cite{Catlin-1989} that there
exists a uniform positive constant $C$ such that for each $j$, the
following holds: if $ p^j \in Q (q, \delta_j)$, then $ q \in Q
\big( p^j, C \delta_j \big) $. Hence, the second statement above
can be rewritten as: there exists a positive constant $C$ such
that for each $j$, there exists a $ \delta_j \in ( 0, d( p^j,
\partial D)\exp( R/{C_*}) ) $ with the property that
\[
q \in (\phi^{p^j})^{-1} \Big ( \Delta(0, \tau( p^j, C \delta_j) )
\times \Delta(0, C \delta_j ) \Big).
\]
Said differently, $ B_{D^j} (p^j, R)$ is contained in the union
\begin{eqnarray*}
B_{D^j} (p^j, R) \subset  B \Big( p^j, d(p^j, \partial D) \exp{(
R/ C_*)} \Big) \cup (\phi^{p^j})^{-1} \Big ( \Delta(0, \tau( p^j,
C \delta_j) ) \times \Delta(0, C \delta_j ) \Big)
\end{eqnarray*}
with $ \delta_j $ as described above. Now, using the explicit
expression for $ \phi^{\zeta^j} $ we get
%%%% FIRST PART
\[
\phi^{\zeta^j} \left \lbrace (z_1,z_2) \in \mathbf{C}^2: | z_1 -
p^j_1 | ^2 + | z_2 - p^j_2 | ^2 < \big(d( p^j, \partial D)\big)^2
\exp(2R/{C_*}) \right \rbrace =
\]
\begin{equation*}
\left \lbrace (w_1,w_2): |w_1|^2 + \left| d^0(\zeta^j) w_2 +
\epsilon_j + \displaystyle\sum_{l=1}^{2m} d^l(\zeta^j) w_1^l
\right|^2 < \big (d( p^j, \partial D) \big)^2 \exp(2R/{C_*})
\right \rbrace.
\end{equation*}
Hence
\[
\Delta_{\zeta^j}^{\epsilon_j} \circ \phi^{\zeta^j} \left \lbrace
(z_1,z_2) \in \mathbf{C}^2: | z_1 - p^j_1 | ^2 + | z_2 - p^j_2 |
^2 < \big( d( p^j, \partial D) \big) ^2 \exp(2R/{C_*}) \right
\rbrace =
\]
\begin{equation}
\left \lbrace w: |w_1|^2 + \left(\frac{\epsilon_j}{\tau(\zeta^j,
\epsilon_j)} \right) ^2 \left| d^0(\zeta^j) w_2 + 1 +
{\epsilon_j}^{-1} \Big( \displaystyle \sum_{l=1}^{2m} \alpha^{j,l}
w_1^l \Big) \right|^2 < \frac{ \big( d( p^j,\partial D) \big) ^2
\exp(2R/{C_*})}{{\tau(\zeta^j, \epsilon_j)}^2} \right \rbrace
\label{5.3}
\end{equation}
where
\[
\alpha^{j,l} = d^l(\zeta^j) {\tau(\zeta^j, \epsilon_j)}^l.
\]

\medskip

\noindent If $ w= (w_1,w_2) $ belongs to the set described by
(\ref{5.3}) above, then
\begin{eqnarray}
%\left\{
%\begin{array}{lll}
|w_1|  & \leq &  \frac{ d( p^j,
\partial D)\exp(R/{C_*})}{\tau(\zeta^j, \epsilon_j)} \lesssim
\frac{
\epsilon_j \exp(R/{C_*})}{\tau(\zeta^j, \epsilon_j)} \qquad \mbox{and} \label{W1} \\
\left| d^0(\zeta^j) w_2 + 1 + {\epsilon_j}^{-1} \Big(
\displaystyle \sum_{l=1}^{2m} \alpha^{j,l} w_1^l \Big) \right| &
\leq  & \frac{ d( p^j, \partial D) \exp(R/{C_*)} } { \epsilon_j }
\lesssim \exp(R/{C_*)}. \label{W2}
%\end{array}
%\right.
\end{eqnarray}
Moreover, for $\delta_j \in \big( 0, d( p^j, \partial D) \exp(
R/{C_*}) \big)$,
\[
(\phi^{p^j})^{-1} \Big \lbrace (z_1,z_2) \in \mathbf{C}^2 : |z_1|
< \tau( p^j, C \delta_j) ,|z_2| <  C \delta_j  \Big \rbrace =
\]
\begin{equation*}
\left \lbrace (w_1,w_2): |w_1 - p^j_1| < \tau( p^j, C \delta_j),
\left| w_2 - p^j_2 - \displaystyle\sum_{l=1}^{2m} d^l(p^j) ( w_1 -
p^j_1)^l \right| < C \delta_j d^0(p^j) \right \rbrace
\end{equation*}
so that
\[
\Delta_{\zeta^j}^{\epsilon_j} \circ \phi^{\zeta^j} \circ
(\phi^{p^j})^{-1} \Big (\Delta(0, \tau( p^j, C \delta_j) ) \times
\Delta(0, C \delta_j ) \Big) =
\]
\begin{equation}
\left \lbrace w: |w_1| < \frac{ \tau( p^j, C \delta_j)}{\tau(
\zeta^j, \epsilon_j)}, \left| d^0(\zeta^j) w_2 + 1 +
{\epsilon_j}^{-1}  \Big( \displaystyle\sum_{l=1}^{2m}  \beta^{j,l}
w_1^l \Big) \right| < \frac{C \delta_j d^0(p^j)} { \epsilon_j}
\right \rbrace \label{W3}
\end{equation}
where
\[
\beta^{j,l} = \left (d^l(\zeta^j) - d^l(p^j) \right){\tau(\zeta^j,
\epsilon_j)}^l.
\]

\medskip

\noindent If $ w = (w_1,w_2) $ belongs to the set given by
(\ref{W3}), then
\begin{eqnarray}
|w_1| & < & \frac{ \tau( p^j, C \delta_j)}{\tau( \zeta^j,
\epsilon_j)} \qquad \mbox{and} \label{W4} \\
\left| d^0(\zeta^j) w_2 + 1 + {\epsilon_j}^{-1}  \Big(
\displaystyle\sum_{l=1}^{2m} \beta^{j,l} w_1^l \Big) \right| & < &
\frac{C \delta_j d^0(p^j)} { \epsilon_j } \nonumber \\
& < & \frac{C d( p^j,\partial D) \exp(R/{C_*}) d^0(p^j)} {
\epsilon_j} \nonumber \\
& \lesssim & \exp(R/{C_*}) d^0(p^j). \label{W5}
\end{eqnarray}

\noindent It follows from Catlin's work that

\begin{itemize}

\item $ {\epsilon_j}^{1/2} \lesssim \tau( \zeta^j, \epsilon_j)
\lesssim {\epsilon_j}^{1/2m}$

\medskip

\item $ \tau(p^j, C \epsilon_j) \approx \tau( \zeta^j,\epsilon_j)$

\medskip

\item $ | d^l(\zeta^j)| \lesssim \epsilon_j ( \tau(\zeta^j,
\epsilon_j) )^{-l} $ for all $ 1 \leq l \leq 2m $

\medskip

\item $ | d^l(p^j)| \lesssim \epsilon_j ( \tau(p^j,
\epsilon_j))^{-l} $ for all $ 1 \leq l \leq 2m$

\medskip

\item $ d^0(\zeta^j) \approx 1 $ and $ d^0(p^j) \approx 1 $.

\end{itemize}

\noindent These estimates together with (\ref{W1}), (\ref{W2}),
(\ref{W4}) and (\ref{W5}) show that if $ w = (w_1, w_2) $ belongs
either of (\ref{5.3}) or (\ref{W3}), then $ |w| $ is uniformly
bounded. In other words, the sets
\[
\Delta_{\zeta^j}^{\epsilon_j} \circ \phi^{\zeta^j} \left( B( p^j,
d(p^j, \partial D) \exp(R/ C_*)) \right) \bigcup
\Delta_{\zeta^j}^{\epsilon_j} \circ \phi^{\zeta^j} \left(
(\phi^{p^j})^{-1} \left ( \Delta(0, \tau( p^j, C \delta_j) )
\times \Delta(0, C \delta_j ) \right) \right)
\]
are uniformly bounded. Therefore, $B_{D^j} ( z^j, R)$ as a set
cannot cluster at the point at infinity on $ \partial D_{\infty}$.

\medskip

It remains to show that $B_{D^j} ( z^j, R )$ do not cluster on the
finite part of $ \partial D$. Suppose there exists a sequence of
points $ \{ q^j\}, q^j \in B_{D^j} ( z^j, R )$ such that $ q^j
\rightarrow q^0$ as $j \rightarrow \infty$ for some $q^0$ a finite
boundary point of $ D_{\infty}$. Then proposition 4.1 of
\cite{Coupet&Sukhov-1997} shows that there exists a neighbourhood
$V$ of $q^0$ in $ \mathbf{C}^2$ and a uniform positive constant
$C$ such that for all $z \in V \cap D^j$ and $v$ a tangent vector
at $z$,
\begin{equation}
 F^K_{D^j}(z, v) \geq  C \frac{|v|}{d(z, \partial D^j)^{1/2m}} \label{5.5}
\end{equation}
for all $j$ large. Choose a neighbourhood $ \tilde{V} $ of $ z^0$
which is compactly contained in $ D$ and disjoint from $V$. We may
assume that $ \{ z^j \} \subset \tilde{V} $ for all $ j $ large.
Let $ \gamma^j$ be a arbitrary piecewise $ C^1$ curve in $ D^j$
joining $q^j$ and $ z^j$. As we travel along $ \gamma^j$, there is
a first point $ \alpha^j$ on the curve with $ \alpha^j \in
\partial V \cap D^j$. Let $ \sigma^j$ be the subcurve of $
\gamma^j$ with end-points $ q^j$ and $ \alpha^j $. Then $ \sigma^j
$ is contained in an $ \epsilon $ - neighbourhood of $
\partial D^j$ for $ \epsilon > 0 $ small and for all $j$ large.
Using (\ref{5.5}) we get after integration,
\begin{eqnarray*}
\int_0^1 F^K_{D^j} \big( \gamma^j(t), \dot{\gamma}^j(t) \big) dt
\geq \int_0^1 F^K_{D^j} \big( \sigma^j(t), \dot{\sigma}^j(t) \big)
dt \geq C \int_0^1 \frac{ | \dot{\sigma}^j(t) |} {d(\sigma^j(t),
\partial D^j)^{1/2m}} \gtrsim \frac{1}{{\epsilon}^{1/2m}}
\end{eqnarray*}
Taking infimum over all admissible curve $ \gamma^j$ yields
\[
d_{D^j} (q^j, z^j) \gtrsim {\epsilon}^{- 1/2m}.
\]
which violates the fact that $ q^j \in B_{D^j} ( z^j, R )$ for $
\epsilon $ small enough. This completes the proof of the lemma.
\end{proof}

\medskip

\begin{lem} \label{K3}
\begin{equation*}
\displaystyle\lim_{j \rightarrow \infty} d_{D ^j} ( z^j, \cdot )=
d_{D_{\infty}} (z^0, \cdot ). \label{5.6}
\end{equation*}
Moreover, the convergence is uniform on compact sets of $
D_{\infty}$.
\end{lem}

\begin{proof} Let $ K $ be a compact subdomain of $D_{\infty}$ and suppose
that the desired convergence does not occur. Then there exists a $
\epsilon_0 > 0 $ and a sequence of points $ \{q^j \} \subset K $
which is relatively compact in $ D^j $ for all $j$ large such that
\[
\big| d_{D ^j} ( z^j, q^j ) - d_{D_{\infty}} (z^0, q^j ) \big|
> \epsilon _0.
\]
By passing to a subsequence, we may assume that $ q^j \rightarrow
q^0 \in K$ as $ j \rightarrow \infty $. Then using the continuity
of $ d_{D_{\infty}}(z^0, \cdot) $ we have
\[
\big| d_{D ^j} ( z^j, q^j ) - d_{D_{\infty}} (z^0, q^0 ) \big|
> \epsilon _0/2
\]
for all $j$ large. Fix $ \epsilon > 0$ arbitrarily small. It is
easy to see that
\begin{equation}
d_{D^j} ( z^0, q^0 ) \leq d_{D_{\infty}} ( z^0, q^0) + \epsilon/2
\label{k9}
\end{equation}
for all $j$ large. Let $ B(z^0, \delta_1) $ and $ B(q^0, \delta_2)
$ be sufficiently small neighbourhoods of $ z^0 $ and $ q^0$
respectively which are compactly contained in $ D^j $ for all
large $j$. It now follows that
\begin{eqnarray}
d_{D^j} (z^j, q^j) & \leq & d_{D^j} ( z^j, z^0) + d_{D^j} (z^0,
q^0) + d_{D^j} ( q^0,q^j) \nonumber \\
& \leq & d_{B(z^0, \delta_1)} ( z^j, z^0) + d_{D^j}
(z^0, q^0) + d_{ B(q^0, \delta_2)} ( q^0,q^j) \nonumber \\
& \leq & d_{D_{\infty}} (z^0, q^0) + \epsilon. \label{k10}
\end{eqnarray}
for all $j$ large. The second inequality uses the distance
decreasing property of the Kobayashi metric. The third inequality
follows from (\ref{k9}) and the following observation: since $ z^j
\rightarrow z^0 $ and the domains $ D^j$ converge to $ D_{\infty}
$, it follows that the ball $ B(z^0, \delta_1) $ contains $ z^j$
for large $j$ and is contained in $ D^j$ for all large $j$. Thus
\[
d_{D^j} ( z^j, z^0) \leq d_{B(z^0, \delta_1)} ( z^j, z^0) \lesssim
| z^j - z^0 |.
\]
The same argument works for showing that $ d_{D^j} ( q^0,q^j) $ is
small. Hence
\begin{equation}
d_{D^j} (z^j, q^j) \leq d_{D_{\infty}} (z^0, q^0 ) + \epsilon
\label{k13}
\end{equation}
for all $j$ large. For the converse, we intend to use lemma
\ref{D2}. First recall from (\ref{k8}) that for each $ \epsilon >
0$
\[
B_{ D_{\infty} } ( z^0, R - \epsilon ) \subset B_{D^j} ( z^j, R )
\]
for all $ R > 0$ and for all $j$ large. The Kobayashi completeness
of $ D_{\infty} $ implies that
\[
D_{\infty} = \displaystyle \bigcup _{\nu=1}^{\infty}
B_{D_{\infty}} (z^0,\nu),
\]
i.e., $ D_{\infty} $ can be exhausted by an increasing union of
relatively compact domains $B_{D_{\infty}}(z^0, \nu) $. As a
result, there exist uniform positive constants $ \nu^0 $ and $
\tilde{R} $ depending only on $K$ such that
\[
K \subset  B_{D_{\infty}} (z^0, \nu^0) \subset B_{D^j} ( z^j,
\tilde{R} )
\]
for all $j$ large. By lemma \ref{K2}
\begin{equation*}
d_{D_{\infty}}(z^j,q^j) \leq d_{B_{D^j} (z^j, R')} (z^j, q^j)
\end{equation*}
where  $ R' > 0 $ is chosen such that $ R' \gg 2 \tilde{R} $. Now,
apply lemma \ref{D2} to the domain $ D^j$. Let the Kobayashi
metric ball $ B_{D^j} (z^j, R')$ play the role of the subdomain $
D'$. Then
\begin{equation*}
d_{B_{D^j} (z^j, R')} (z^j,q^j) \leq \frac{d_{D^j} ( z^j, q^j)} {
\tanh \big( R'/2 - d_{D^j} (z^j, q^j) \big)}.
\end{equation*}
Since $ q^j \in B_{D^j} (z^j, \tilde{R} ) $ for all $j$ large and
$\tanh$ is increasing on $ [0,\infty)$, it follows that
\begin{equation*}
d_{D_{\infty}} (z^j,q^j) \leq \frac{d_{D^j} ( z^j, q^j)} { \tanh
\left( R'/2 - \tilde{R} \right) }
\end{equation*}
Letting $ R' \rightarrow \infty$ yields
\begin{equation*}
d_{D_{\infty}}(z^j,q^j) \leq \frac {d_{D^j} ( z^j, q^j)} { 1 -
\epsilon }
\end{equation*}
for all $j$ large. Again exploiting the continuity of $
d_{D_{\infty}}(\cdot, \cdot)$ and (\ref{k10}), we see that
\begin{equation}
d_{D_{\infty}}(z^0,q^0) \leq d_{D^j} ( z^j, q^j) + C \epsilon
\label{k11}
\end{equation}
for all $j$ large. Combining the estimates (\ref{k13}) and
(\ref{k11}), we get
\[
\displaystyle\lim_{j \rightarrow \infty} d_{D ^j} ( z^j, q^j )=
d_{D_{\infty}} (z^0, q^0 ).
\]
This is a contradiction and hence the result follows.
\end{proof}

\medskip

\noindent The following is an immediate corollary of the above
result.

\begin{cor} \label{K4}
\begin{equation*}
\displaystyle\lim_{j \rightarrow \infty} d_{D ^j} ( z^0, \cdot )=
d_{D_{\infty}} (z^0, \cdot ).
\end{equation*}
Moreover, the convergence is uniform on compact sets of
$D_{\infty}$.
\end{cor}

\begin{proof} For all $w$ in a fixed compact set $K$ of $ D_{\infty} $, we
have
\begin{eqnarray*}
\big |d_{D ^j} ( z^0, w ) - d_{D_{\infty}} (z^0, w ) \big | & \leq
& \big |d_{D ^j} ( z^0, w ) - d_{D ^j} ( z^j, w ) \big| + \big|
d_{D ^j} ( z^j, w ) - d_{D_{\infty}} (z^0, w ) \big|
\end{eqnarray*}
It follows from lemma \ref{K3} that
\[
\big| d_{D ^j} ( z^j, w ) - d_{D_{\infty}} (z^0, w ) \big|
\rightarrow 0
\]
uniformly for all $w \in K$ as $ j \rightarrow \infty$. Now, let $
B(z^0, \delta) $ be a sufficiently small neighbourhood of $ z^0 $.
Then the triangle inequality gives us that
\begin{eqnarray*}
\big |d_{D ^j} ( z^0, w ) - d_{D ^j} ( z^j, w ) \big|  \leq
d_{D^j} (z^0, z^j) \leq  d_{B(z^0, \delta)} (z^0, z^j) \rightarrow
0
\end{eqnarray*}
as $ j \rightarrow \infty$. This finishes the proof.
\end{proof}

\medskip

\noindent  We record the following consequence of corollary
\ref{K4}. The proof is exactly as that of lemma \ref{B2} and is
hence omitted.

\begin{lem} \label{E1} For all $ R > 0$, the sequence of domains
$ \big \{ B_{D^j} (z^0, R) \big \} $ converges in the Hausdorff
metric to $ B_{D_{\infty}}(z^0, R) $. Moreover, for any $ \epsilon
> 0 $ and for all $j$ large

\begin{itemize}

\item $B_{D_{\infty}}(z^0, R) \subset B_{D^j} (z^0, R +
\epsilon)$,

\medskip

\item $ B_{D^j} (z^0, R - \epsilon)  \subset
B_{D_{\infty}}(z^0,R). $

\end{itemize}

\end{lem}

\medskip

\noindent \textit{Proof of Theorem \ref{C0}(ii):} Since $h$ is
invariant under biholomorphisms,
\[
h_D(p^j) = h_{D^j} (z^0)
\]
for all $j$. Hence, it suffices to show that
\[
\displaystyle\lim_{j \rightarrow \infty} h_{D^j} (z^0) =
h_{D_{\infty}}(z^0).
\]
Fix $ \epsilon > 0$ arbitrarily small and let $ R > 0$ be such
that
\[
h_{D_{\infty}}(z^0) > \frac{1}{R} - \epsilon.
\]
There exists a biholomorphic imbedding $ F : \mathbf{B}^2
\rightarrow D_{\infty} $ such that $ F(0) = z^0 $ and $
B_{D_{\infty}}(z^0,R) \subset F(\mathbf{B}^2)$.

\noindent There exists a $ \delta > 0$ such that $ F \big(
(1-\delta) \mathbf{B}^2 \big) \supset B_{D_{\infty}}(z^0,R -
\epsilon)$. Since $ F \big( (1-\delta) \mathbf{B}^2 \big)$ is
compactly contained in $ D_{\infty}$, $ F \big( (1-\delta)
\mathbf{B}^2 \big) \subset D^j $ for all $j$ large. It follows
from lemma \ref{E1} that
\begin{equation*}
B_{D^j} (z^0, R - 2\epsilon)  \subset B_{D_{\infty}}(z^0, R -
\epsilon)
\end{equation*}
for all $j$ large. As a consequence,
\begin{equation*}
B_{D^j} (z^0, R - 2\epsilon)  \subset F \big( (1-\delta)
\mathbf{B}^2 \big) \subset D^j
\end{equation*}
or equivalently
\begin{equation*}
h_{D^j} (z^0) \leq \frac{1}{ R - 2 \epsilon}
\end{equation*}
for all $j$ large. This implies that
\begin{equation}
\displaystyle\limsup_{j \rightarrow \infty} h_{D^j} (z^0) \leq
h_{D_{\infty}}(z^0). \label{5.9}
\end{equation}
Conversely, there exist a sequence of biholomorphic imbeddings $
F^j: \mathbf{B}^2 \rightarrow D^j$ and positive numbers $ R_j $
such that $ F^j(0)= z^0, B_{D^j} (z^0, R_j) \subset F^j
(\mathbf{B}^2)$ and
\begin{equation}
h_{D^j} (z^0) \geq \frac{1}{R_j} - \epsilon. \label{6.0}
\end{equation}
The sequence $ \{ F^j \} $ admits a subsequence that converges
uniformly on compact sets of $ \mathbf{B}^2$ to a holomorphic
mapping $ F: \mathbf{B}^2 \rightarrow D_{\infty}$. Indeed,
consider the mappings
\[
\big(\Delta_{\zeta^j}^{\epsilon_j} \circ \phi^{\zeta^j} \big)^{-1}
\circ F^j : \mathbf{B}^2 \rightarrow D
\]
Observe that
\[
\big(\Delta_{\zeta^j}^{\epsilon_j} \circ \phi^{\zeta^j} \big)^{-1}
\circ F^j(0) = p^j \rightarrow p^0 \qquad \mbox{as} \ j
\rightarrow \infty
\]
and
\[
\big(\Delta_{\zeta^j}^{\epsilon_j} \circ \phi^{\zeta^j} \big)^{-1}
\circ F^j(0) \in Q(\zeta^j, C_1\epsilon_j)
\]
for some constant $ C_1 \geq 1$. For $ r \in (0,1)$ fixed, it
follows from \cite{Berteloot&Coeure-1991} that there exists a
positive constant $ C_2=C_2(r)$ such that
\begin{eqnarray*}
\big(\Delta_{\zeta^j}^{\epsilon_j} \circ \phi^{\zeta^j} \big)^{-1}
\circ F^j \big( \mathbf{B}^2(0,r) \big)  \subset Q( \zeta^j, C_1
C_2 \epsilon_j) \\
\end{eqnarray*}
which exactly means that
\begin{eqnarray*}
 F^j \big( \mathbf{B}^2(0,r) \big)  \subset
\Delta \left(0, \frac{\tau ( \zeta^j, C_1 C_2 \epsilon_j)}{\tau (
\zeta^j, \epsilon_j)} \right) \times \Delta \left( 0, \frac{C_1
C_2 \epsilon_j}{\epsilon_j} \right) \subset \Delta \big(0,
\sqrt{C_1 C_2}\big) \times \Delta(0, C_1 C_2).
\end{eqnarray*}
This shows that the sequence $ \{ F^j \}$ is a normal family.
Hence, $ \{ F^j \}$ admits a subsequence that converges uniformly
on compact sets of $ \mathbf{B}^2$ to a holomorphic mapping $ F:
\mathbf{B}^2 \rightarrow \mathbf{C}^2$ or $ F \equiv \infty$. The
latter is not possible since $ F(0) = \displaystyle \lim_{j
\rightarrow \infty} F^j(0) = z^0 $. As before, we can infer that $
F : \mathbf{B}^2 \rightarrow D_{\infty} $. Note that
\[
\frac{1}{R_j} \leq h_{D^j}(z^0) + \epsilon \leq
h_{D_{\infty}}(z^0) < \infty
\]
for all $j$ large. Hence, we may assume that the sequence $\{ R_j
\}$ converges to some $ R_0 > 0$. It follows that
\begin{equation}
B_{D^j} (z^0, R_0 - \epsilon) \subset B_{D^j} (z^0, R_j) \subset
F^j(\mathbf{B}^2) \label{6.1}
\end{equation}
for all $j$ large. Also, lemma \ref{E1} implies that
\begin{equation}
B_{D_{\infty}}( z^0, R_0 - 2 \epsilon) \subset B_{D^j} (z^0, R_0 -
\epsilon) \label{6.2}
\end{equation}
for all $j$ large. Combining (\ref{6.1}) and (\ref{6.2}), we get
\[
B_{D_{\infty}}( z^0, R_0 - 2 \epsilon) \subset F( \mathbf{B}^2).
\]
Therefore, $ F$ is non-constant. It follows from Hurwitz's theorem
that $F$ is injective on $ \mathbf{B}^2$ and hence
\[
h_{D_{\infty}}(z^0) \leq 1/(R_0 - 2 \epsilon)
\]
which together with (\ref{6.0}) implies that
\begin{equation}
h_{D_{\infty}}(z^0) \leq \displaystyle\liminf_{j \rightarrow
\infty} h_{D^j} (z^0) \label{6.3}
\end{equation}
It follows from (\ref{5.9}) and (\ref{6.3}) that
\begin{equation*}
h_{D^j} (z^0) \rightarrow h_{D_{\infty}}(z^0)
\end{equation*}
as $j \rightarrow \infty$. This completes the proof of theorem
\ref{C0}.
\end{proof}

%%%%%%%%%%%%%%%%%%%%%%%%%%%%%%%%%%%%%%%%%%%%%%%%%%%%%%%%%%%%%%%%%%%%%%%%%%%%%%%%%%%%%%5

\section{Behaviour of $h$ near convex finite type boundary
points}

\noindent The main result is as follows:

\begin{thm}\label{G0} Let $ D \subset \mathbf{C}^n $ be a smoothly bounded convex
domain of finite type. Let $ \{ q^j \}$ be a sequence of points in
$ D$ converging to $ q^0 \in
\partial D$. Then
\[
h_{D} (q^j) \rightarrow h_{D_{\infty}}(('0,-1))
\]
as $ j \rightarrow \infty $ where $ D_{\infty}$ is a biholomorph
of the limiting domain $ D_0 $ obtained by scaling $D$ with
respect to the sequence $ \{ q^j \}$.

\end{thm}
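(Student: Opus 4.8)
The plan is to carry over, essentially verbatim, the argument used for Theorem \ref{C0}, with Catlin's $\mathbf{C}^2$-scaling replaced by the standard scaling construction for convex finite type domains. First I would set up the scaling: after an affine normalization we may assume $q^0 = 0$, that $\{'z = y_n = 0\}$ is the inner normal to $\partial D$ at $0$, and that locally $D = \{ 2\Re z_n + Q(z) + o(|'z|^{2m} + |\Im z_n|) < 0 \}$, where $Q$ is a convex polynomial and $2m$ is the $1$-type of $\partial D$ near $0$. Setting $\zeta^j := q^j + ('0, \epsilon_j) \in \partial D$ (which defines $\epsilon_j \to 0^+$), the convexity and finite type of $D$ produce a sequence $\Sigma^j$ of affine maps followed by anisotropic dilations carrying $\zeta^j$ to $0$ and $q^j$ to $z^j := \Sigma^j(q^j)$, with $z^j \to z^0 := ('0,-1)$, and with $D^j := \Sigma^j(D)$ converging in the local Hausdorff sense to the model $D_0 = \{ 2\Re z_n + Q_{2m}('z, '\ov z) < 0 \}$, $Q_{2m}$ a non-degenerate convex polynomial of degree at most $2m$. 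Each $D^j$ is a bounded convex domain and $D_0$ is a convex domain containing no affine complex line, hence $D_0$ is biholomorphic to a bounded domain $D_\infty$ and is complete hyperbolic and taut; since the $D^j$ are convex, complex geodesics are available in them by Lempert's theorem \cite{Lempert-1981} with $d_{D^j} = c_{D^j}$, which somewhat simplifies the stability arguments below relative to the weakly pseudoconvex case. As in Theorem \ref{C0}, after passing to a subsequence I would split into case (i) $h_D(q^j) \to 0$ and case (ii) $\liminf_j h_D(q^j) > c$ for some $c > 0$.

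Next I would establish the two stability statements that drive the scheme. The first, the analogue of Lemma \ref{K1}, is that $F^K_{D^j}(a,v) \to F^K_{D_0}(a,v)$ uniformly on compact subsets of $D_0 \times \mathbf{C}^n$; the proof is the normal-family argument of Lemma \ref{K1}, where the ``$\limsup$'' half perturbs the extremal disc of $D_0$ into $D^j$, and the ``$\liminf$'' half needs the competing discs $f^j : \Delta \to D^j$ to form a normal family, which follows from a near-boundary lower bound $F^K_{D^j}(z,v) \gtrsim |v| / d(z, \partial D^j)^{1/2m}$ holding uniformly in $j$. The second, the analogue of Lemma \ref{K2}, is that for each fixed $R > 0$ the balls $B_{D^j}(z^j, R)$ are relatively compact in $D_0$ uniformly in $j$: exactly as in Lemma \ref{K2} one shows they cannot cluster at the point at infinity of $\partial D_0$, using the explicit form of the dilations $\Sigma^j$ together with the sublinear-growth bounds for the anisotropic scaling factors, nor at any finite boundary point of $D_0$, where one invokes the lower bound of type (\ref{5.5}) near finite boundary points of the convex domains $D^j$. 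Both statements rest on the estimates for the Kobayashi metric on convex finite type domains being scale invariant, hence uniform across $\{D^j\}$, since convexity and the finite-type data survive the scaling with uniform constants.

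Granting these, the rest is the proof of Theorem \ref{C0} repeated. In case (i): composing the near-extremal imbeddings $F^j : \mathbf{B}^n \to D$ (with $F^j(0) = q^j$, $B_D(q^j, R_j) \subset F^j(\mathbf{B}^n)$ and $R_j \to \infty$) with $\Sigma^j$, the scaling-convergence results (cf.\ \cite{Berteloot&Coeure-1991}) give a limit $\psi : \mathbf{B}^n \to D_0$; the inclusion $B_{D_0}(z^0, R - \epsilon) \subset B_{D^j}(z^j, R)$ --- which comes from the ``easy half'' $\limsup_j d_{D^j}(z^j, \cdot) \le d_{D_0}(z^0, \cdot)$, itself a path-perturbation argument using only the $F^K$-convergence --- shows the domains $\Sigma^j \circ F^j(\mathbf{B}^n)$ exhaust $D_0$, so the inverse maps converge to some $\theta : D_0 \to \mathbf{B}^n$ with $\psi \circ \theta = \theta \circ \psi = id$; hence $D_0 \simeq \mathbf{B}^n$, so $h_{D_0} \equiv 0$ and $h_D(q^j) \to 0 = h_{D_0}(z^0)$. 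In case (ii): the uniform relative compactness of the $B_{D^j}(z^j, R)$, the completeness of $D_0$, and the localization Lemma \ref{D2} together yield $d_{D^j}(z^j, \cdot) \to d_{D_0}(z^0, \cdot)$ and $d_{D^j}(z^0, \cdot) \to d_{D_0}(z^0, \cdot)$ uniformly on compacta (the analogues of Lemma \ref{K3} and Corollary \ref{K4}), hence $B_{D^j}(z^0, R) \to B_{D_0}(z^0, R)$ with the two one-sided inclusions of Lemma \ref{E1}; running the Hurwitz-theorem argument of the proof of Theorem \ref{C0}(ii) then gives $\limsup_j h_{D^j}(z^0) \le h_{D_0}(z^0) \le \liminf_j h_{D^j}(z^0)$. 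Since $h_D(q^j) = h_{D^j}(z^0)$ by biholomorphic invariance and $h_{D_0}(z^0) = h_{D_\infty}(('0,-1))$, the theorem follows.

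I expect the main obstacle to be the second stability statement, namely that the Kobayashi balls $B_{D^j}(z^j, R)$ stay uniformly away from $\partial D_0$, both at infinity and at the finite boundary points. This reduces to having the two relevant families of estimates for the Kobayashi metric on a convex finite type domain --- the near-boundary lower bound $F^K \gtrsim |v|/d^{1/2m}$ and the polydisc comparison that controls the shape of the metric balls --- with constants uniform over the scaled family; such uniformity is precisely what the affine invariance of convexity and the stability of the finite-type data under scaling deliver. Once this is secured, Lemma \ref{D2} supplies the delicate upper estimate for $d_{D_0}$ in terms of $d_{D^j}$, and the argument of Theorem \ref{C0} closes with no further change.
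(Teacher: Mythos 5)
Your top-level scheme (scale at $q^j$, prove stability of $F^K$ and of the integrated distance, deduce Hausdorff convergence of the Kobayashi balls, then rerun the argument of theorem \ref{C0}) is the paper's scheme, and the final bookkeeping in your last paragraph is exactly how the paper concludes via lemmas \ref{G3}, \ref{G4}. But the way you propose to prove the two stability statements diverges from the paper and is where the trouble lies. First, normality of the competing discs $f^j:\Delta\rightarrow D^j$ does \emph{not} follow from a near-boundary lower bound $F^K_{D^j}(z,v)\gtrsim |v|/d(z,\partial D^j)^{1/2m}$: a lower bound on the metric of the targets says nothing about the discs staying in a fixed compact region or having locally bounded derivatives (the scaled domains are bounded but not uniformly bounded, and the metric of $D_0$ degenerates as $\Re z_n\rightarrow -\infty$). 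What the paper uses instead, following \cite{Gaussier-1997}, is that for all large $j$ the domains $D^j$ (hence also $D_0$) are contained in a \emph{fixed} intersection of half spaces $H_1\cap\cdots\cap H_n$ which is biholomorphic to a bounded domain; containment in this single taut domain is what makes the families in lemma \ref{Z1} and lemma \ref{G3} normal. You mention this containment implicitly only for $D_0$, and your substitute mechanism would fail as stated.

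Second, you plan to transplant the whole finite-type machinery of section 5 --- a Herbort-type lower bound and a polydisc description of the balls, uniform over the scaled family, to get the analogue of lemma \ref{K2}, and then lemma \ref{D2} to get the analogue of lemma \ref{K3} --- and you correctly flag this uniformity as the main obstacle; it is asserted (``scale invariance'') rather than proved, and no Herbort-type estimate for convex finite type domains is invoked anywhere in the paper. The paper's proof shows this entire layer is unnecessary: because each $D^j$ is convex, Lempert's theorem \cite{Lempert-1981} applies \emph{in the scaled domains themselves}, so the lower semicontinuity of $d_{D^j}$ is obtained by passing to the limit along complex geodesics $\phi^j:\Delta(0,m_j)\rightarrow D^j$ (normal again by the half-space containment), exactly as in the strongly pseudoconvex section, with no ball-compactness lemma and no use of lemma \ref{D2}. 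A further simplification you miss is that McNeal's coordinates are centered so that the scaled image of $q^j$ is the \emph{fixed} point $z^0=('0,0)\in D_0$, with $h_{D_0}(('0,0))=h_{D_\infty}(('0,-1))$ after an affine change; this removes the moving base points $z^j$ and the case distinction machinery you import from theorem \ref{C0}(i)--(ii), the conclusion being reached by the single argument of theorem \ref{C0}(ii). So your proposal is repairable, but as written the normality step is wrong and the uniform estimates it leans on are left unestablished, precisely at the points where the paper's argument takes a different and simpler route.
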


\noindent As in section 5, there are two cases to be considered,
i.e., after passing to a subsequence if needed,

\begin{enumerate}

\item[(i)] $ \displaystyle\lim_{ j \rightarrow \infty} h_{D} (p^j)
= 0 $, or

\item[(ii)] $ \displaystyle\lim_{ j \rightarrow \infty} h_{D}
(p^j) > c $ for some positive constant $c$.

\end{enumerate}

\noindent Here too the domain $ D_{\infty} $ will be
biholomorphic to $ \mathbf{B}^n$ in case (i) while this will not be the case in
(ii).

\medskip

\noindent In order to be able to prove the above result, we need
to introduce the following special coordinates constructed for
convex finite type domains in \cite{Mcneal-1992} (see
\cite{Mcneal-1994} also).

\medskip

\noindent Let $ D \subset \mathbf{C}^n$ be as stated above. Assume
that $ q^0 = 0 $ without loss of generality. Let $ D = \{ \rho (z,
\bar{z}) < 0\}$ where $ \rho $ is a smooth defining function for $
\partial D$ which has the form
\[
\rho(z, \bar{z} ) = \Re z_n + \psi ('z, \Im z_n)
\]
near the origin with $ \psi $ a smooth convex function (i.e., the
real Hessian of $\psi$ is positive semi-definite). We may assume
that $ \rho $ has the property that all the sets $ \{ z : \rho(z)
< \eta \} $ are convex for some $ \eta $ in some range $ - \eta_0
< \eta < \eta_0, \eta_0
> 0 $. For $ q \in D$ sufficiently close to $ \partial D $, let
\[
D_{q,\epsilon} = \big\{ z : \rho(z) < \rho(q) + \epsilon \big \}.
\]
Working with sufficiently small $ \epsilon > 0 $, there is a
unique point $ p^n_{q,\epsilon} \in \partial D_{q, \epsilon} $
where the distance of $q$ to $ \partial D_{q, \epsilon} $ is
achieved. Denote the complex line containing $q$ and $
p^n_{q,\epsilon} $ by $ L_n $ and let $ \tau_n(q, \epsilon) =
\big| q - p^n_{q,\epsilon} \big| $. Consider $ ( L_n)^{\bot} $ the
orthogonal complement of the complex line $ L_n $ in $
\mathbf{C}^n$. Since $ \partial D$ is of finite type, the distance
from $q$ to $ \partial D_{q, \epsilon} $ along each complex line
in $ ( L_n)^{\bot} $ is uniformly bounded. Let $ \tau_{n-1} (q,
\epsilon) $ be the largest such distance and $
p^{n-1}_{q,\epsilon} \in \partial D_{q, \epsilon} $ be any point
such that $ \big | q - p^{n-1}_{q,\epsilon} \big| = \tau_{n-1}(q,
\epsilon)$. Denote the complex line containing $q$ and $
p^{n-1}_{q,\epsilon} $ by $ L_{n-1} $. Now consider the orthogonal
complement of the $ \mathbf{C}$-subspace spanned by $ L_n $ and $
L_{n-1} $ and find the largest distance from $ q $ to $ \partial
D_{q, \epsilon} $ therein. There exists $  p^{n-2}_{q,\epsilon}
\in \partial D_{q, \epsilon} $ where this distance is achieved.
Let $ \tau_{n-2} (q, \epsilon) = \big | q -  p^{n-2}_{q,\epsilon}
\big| $ and $ L_{n-2} $ denote the complex line containing $q$ and
$  p^{n-2}_{q,\epsilon} $. Repeating this process, we get
orthogonal lines $ L_n, L_{n-1}, \ldots, L_1 $. Let $ T^{q, \epsilon}
$ be the translation sending $ q $ to the origin and $ U^{q,
\epsilon} $ be a unitary mapping of $ \mathbf{C}^n $ sending $
L_i$ to the $ z_i $-axis and $  p^{i}_{q,\epsilon} - q $ to a
point on the $ \Re z_i $-axis. Note that
\begin{eqnarray*}
U^{q, \epsilon} \circ T^{q, \epsilon} (q) & = & 0 \\
\mbox{and} \qquad U^{q, \epsilon} \circ T^{q, \epsilon}
\big(p^i_{q, \epsilon} \big) & = & \big ( 0, \ldots, \tau_i(q, \epsilon), \ldots, 0)
\end{eqnarray*}
for all $ 1 \leq i \leq n$. Denote the new coordinates by
\[
\big( z_1^{q, \epsilon}, \ldots, z_n^{q, \epsilon} \big) = U^{q,
\epsilon} \circ T^{q, \epsilon} (z_1, \ldots, z_n )
\]
and the corresponding defining function is given by
\[
\rho^{q, \epsilon} = \rho \circ \big( U^{q, \epsilon} \circ T^{q,
\epsilon} \big)^{-1}.
\]
Moreover, McNeal defined the polydiscs $ P(q, \epsilon) $ in the
new coordinates $ ( z_1^{q, \epsilon}, \ldots, z_n^{q, \epsilon} ) $
centered at $ q$ as
\begin{equation} \label{t3}
P(q, \epsilon) = \big \{ ( z_1^{q, \epsilon}, \ldots, z_n^{q,
\epsilon} ): | z_1^{q, \epsilon} | < \tau_1(q, \epsilon), \ldots, |z_n^{q, \epsilon} | < \tau_n(q, \epsilon) \big\}.
\end{equation}

\noindent Although we may not write explicitly all the time, the
reader must be aware of the dependence of all the coordinates,
points and numbers on $q$ and $ \epsilon$. Having recalled certain
basic facts about the local geometry of convex domains of finite
type, we briefly describe the scaling of the domain.

\medskip

\noindent \textbf{Scaling the domain $ D $: }

\medskip

\noindent Let $ \{ q^j \} $ be a sequence of points in $D$
accumulating at $ q^0 = ('0,0) \in \partial D $. Set $ \epsilon_j
= - \rho (q^j) $. The positive numbers $ \tau_1(q^j, \epsilon_j
), \ldots, \tau_n(q^j, \epsilon_j ) $ and $ p^{1,j}, \ldots, p^{n,j} $ are
those associated with $ q^j $ and $ \epsilon_j $. Define the
dilations
\[
\Lambda^{\epsilon_j}_{q^j} (z) = \big( \tau_1(q^j, \epsilon_j )
z_1, \ldots, \tau_n(q^j, \epsilon_j ) z_n \big )
\]
and the dilated domains $ D^j = \left( \Lambda^{\epsilon_j}_{q^j}
\right)^{-1} \circ U ^{q^j, \epsilon_j} \circ T^{q^j, \epsilon_j}
(D) $ are defined by
\[
\rho^j(z) = \rho \circ \big( U^{q^j, \epsilon_j} \circ T^{q^j,
\epsilon_j} \big)^{-1} \circ \Lambda^{\epsilon_j}_{q^j} (z)
\]
Note that $ D^j $ is convex and $ ('0,0) \in D^j $ for all $j$.
Among other things, the following two claims were proved in
\cite{Gaussier-1997}. First, that $ D^j$ converges to
\[
D_0 = \big \{ ('z, z_n) \in \mathbf{C}^n : \tilde{\rho}(z) = -1 +
\Re \big ( \displaystyle\sum_{k=1}^n b_k z_k  \big) + P('z) < 0
\big \}
\]
where $ b_k $ are complex numbers and $ P$ is a real convex
polynomial of degree less than or equal to $2m$. Secondly, for all
large $j$, $ D^j$ and hence $ D_0 $ are contained in the
intersection of half spaces $ H_1 \cap H_2 \cap \ldots \cap H_n $,
where
\[
H_n = \big \{ z \in \mathbf{C}^n : \Re \big( ( z_n - 1 ) \frac{
\partial \tilde{\rho}}{ \partial z_n} (e^n) \big) \leq 0 \big \}
\]
and for $ k < n$
\[
H_k = \big \{ z \in \mathbf{C}^n : \Re \big ( ( z_k -1 ) \frac{
\partial \tilde{\rho}}{ \partial z_k} (e^k) +
\displaystyle\sum_{i= k+1} ^n \frac{ \partial \tilde{\rho}}{
\partial z_i} (e^k) z_i \big) \leq 0 \big \}
\]
where
\[
e^i = \big( \Lambda^{\epsilon_j}_{q^j} \big)^{-1} (p^{i,j}) \qquad
\mbox{and} \qquad  \frac{ \partial \tilde{\rho}}{ \partial z_i}
(e^i) \in \mathbf{R} \setminus \{ 0 \} \ \mbox{for all} \ 1 \leq i
\leq n.
\]

\medskip

\noindent As a consequence, there exists a rational biholomorphism
from $ D_0 $ to a bounded domain contained in the polydisc (the
Cayley transformation in each variable). In particular, $ D_0$ is
hyperbolic. Therefore, $ D_0 $ is Brody hyperbolic, i.e., $ D_0 $
contains no nontrivial complex affine line. Then there is no
complex line in $ \partial D_0 $ and according to theorem 1.1 of
\cite{Mcneal-1992}, $ D_0 $ is of finite type and $P$ is
nondegenerate. It follows that $ D_0 $ is complete hyperbolic.

\medskip

\noindent Moreover, it follows from \cite{Mcneal-1992} that the
constant $ b_n$ is different from zero. Then by a
$\mathbf{C}$-affine change of coordinates, $ D_0 $ is equivalent
to the convex domain $ D_{\infty} = \big \{ ('w, w_n) \in
\mathbf{C}^n : 2 \Re w_n + P('w) < 0 \big \}$. As a result,
\[
h_{D_{\infty}} \big( ('0,-1) \big) = h_{D_0} \big( ('0,0) \big).
\]

\medskip

\noindent In the particular case when the sequence $ \{q^j \} $
converges normally to the point $ q^0$. We denote the multitype of
$\partial D$ at the origin by $ \mathcal{M}(\partial D, 0) =
(m_1, \ldots, m_{n-1},1)$; the points $ q^j$ will be $ ('0,
-\epsilon_n) $ in the coordinates defined by Yu in \cite{Yu-1992}.
Thus we may assume that for all $j$, $ \tau(q^j, \epsilon_j) =
\epsilon_j $ and the function $ \rho $ is defined in a fixed
neighbourhood of the origin by
\[
\rho('z, z_n) = 2 \Re z_n + P_0('z) + R(z)
\]
where $ P_0 $ is a nondegenerate weighted homogeneous polynomial
of degree $1$ with respect to the weights $ \mathcal{M}(\partial
D, 0)$ and $ R $ denotes terms of degree at least two. In this
case, it turns out that $ P('z) = P_0 ('z) $ so that the limiting
domain $ D_{\infty} $ is a biholomorph of the domain
\[
\left \lbrace ('z, z_n) \in \mathbf{C}^n : 2 \Re z_n + P_0('z) < 0
\right \rbrace.
\]
\noindent Now, arguments similar to those in lemma \ref{K1} using
the fact that the domains $ D^j$ are all contained in $  H_1 \cap
H_2 \cap ... \cap H_n $ for large $j$, shows that

\begin{lem} \label{Z1} For $(a,v) \in D_{0} \times
\mathbf{C}^n$,
\begin{equation*}
\displaystyle\lim_{j \rightarrow \infty} F^K_{D^j} (a,v) =
F^K_{D_0}(a,v).
\end{equation*}
Moreover, the convergence is uniform on compact sets of $ D_0
\times \mathbf{C}^n$.
\end{lem}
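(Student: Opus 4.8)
The plan is to follow the proof of Lemma \ref{K1} almost verbatim, the one genuinely new ingredient being that the Catlin--bidisc normality argument of that proof must be replaced by one based on the uniform containment $D^j \subset H_1 \cap \cdots \cap H_n$ recorded above. Suppose the asserted convergence fails on some compact set $S \times G \subset D_0 \times \mathbf{C}^n$. Passing to subsequences, we may assume there is an $\epsilon_0 > 0$ and sequences $a^j \in D^j \cap S$ with $a^j \to a \in S$ and $v^j \in G$ with $v^j \to v$, $|v^j| = 1$, such that $|F^K_{D^j}(a^j,v^j) - F^K_{D_0}(a^j,v^j)| > \epsilon_0$. Since $D_0$ is complete hyperbolic and hence taut, a normal family argument shows that $F^K_{D_0}$ is jointly continuous and that $0 < F^K_{D_0}(a,v) < \infty$, and there is an extremal disc $g : \Delta \to D_0$ with $g(0) = a$, $g'(0) = \mu v$, $\mu > 0$, and $F^K_{D_0}(a,v) = 1/\mu$. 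By joint continuity, $|F^K_{D^j}(a^j,v^j) - F^K_{D_0}(a,v)| > \epsilon_0/2$ for all $j$ large.

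For the upper bound, fix $\delta \in (0,1)$ and set $g^j(z) = g((1-\delta)z) + (a^j - a) + \mu(1-\delta)z(v^j - v)$. Since $g((1-\delta)\overline{\Delta})$ is a compact subset of $D_0$, $D^j \to D_0$ in the Hausdorff sense, and $g^j \to g$ uniformly on $\overline{\Delta}$, we obtain $g^j : \Delta \to D^j$ for $j$ large, with $g^j(0) = a^j$ and $(g^j)'(0) = \mu(1-\delta)v^j$. Hence $F^K_{D^j}(a^j,v^j) \le 1/(\mu(1-\delta))$, and letting $\delta \to 0^+$ gives $\limsup_{j} F^K_{D^j}(a^j,v^j) \le F^K_{D_0}(a,v)$.

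For the lower bound, fix $\epsilon > 0$ and choose $f^j : \Delta \to D^j$ with $f^j(0) = a^j$, $(f^j)'(0) = \mu^j v^j$, $\mu^j > 0$, and $F^K_{D^j}(a^j,v^j) \ge 1/\mu^j - \epsilon$. This is the crucial step: for all large $j$ one has $D^j \subset H_1 \cap \cdots \cap H_n$, and the latter intersection of half-spaces is, via the Cayley transformation in each variable, biholomorphic to a bounded domain inside the polydisc; consequently $\{f^j\}$ is uniformly bounded, hence normal, and after passing to a subsequence $f^j \to f : \Delta \to \mathbf{C}^n$ uniformly on compacta, with $f \not\equiv \infty$ since $f(0) = a$. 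Passing to the limit in the defining inequalities $\rho^j(f^j(z)) < 0$, where $\rho^j \to \tilde{\rho}$, the defining function of $D_0$, yields $f(\Delta) \subset \overline{D_0}$, and since $f(0) = a \in D_0$ the maximum principle forces $f(\Delta) \subset D_0$. As $f'(0) = \lim_j \mu^j v^j = \mu v$ for some $\mu > 0$, the definition of the infinitesimal metric gives $F^K_{D_0}(a,v) \le 1/\mu$, whence $\liminf_j F^K_{D^j}(a^j,v^j) \ge F^K_{D_0}(a,v) - \epsilon$; letting $\epsilon \to 0$ and combining with the upper bound contradicts the choice of $\epsilon_0$ and proves the lemma.

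The only step requiring real care is the normality of $\{f^j\}$: in the weakly pseudoconvex setting of Lemma \ref{K1} this rested on Catlin's bidisc estimates together with a theorem of Berteloot--Coeur\'e \cite{Berteloot&Coeure-1991}, whereas here it is supplied more directly by the uniform inclusion of the scaled domains $D^j$ in the fixed half-space intersection $H_1 \cap \cdots \cap H_n$, which is biholomorphic to a bounded domain. Once this boundedness is established, the remaining estimates are identical to those in the proof of Lemma \ref{K1}.
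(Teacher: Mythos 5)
Your proof is correct and follows exactly the route the paper intends: the paper itself only states that Lemma \ref{Z1} follows by "arguments similar to those in lemma \ref{K1} using the fact that the domains $D^j$ are all contained in $H_1 \cap H_2 \cap \ldots \cap H_n$ for large $j$," and you have supplied precisely those arguments, correctly identifying that the uniform half-space containment (biholomorphic to a bounded domain via Cayley transforms) replaces the Catlin-bidisc/Berteloot--Coeur\'e normality step of Lemma \ref{K1}.
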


\noindent \textbf{Stability of the Kobayashi distance:}

\medskip

\noindent The goal now will be to recover the behaviour of the
distance function related to the Kobayashi metric. To do this, we
use ideas from \cite{Seshadri&Verma-2009}. A key ingredient will
be Lempert's theorem (see \cite{Lempert-1981}) on the existence of
complex geodesics in strictly convex domains. Write $ ('0,0) = z^0
$ for brevity.

\begin{lem} \label{G3}
\begin{equation*}
\displaystyle\lim_{j \rightarrow \infty} d_{D ^j} ( z^0, \cdot )=
d_{D_0} (z^0, \cdot ). \label{g4}
\end{equation*}
Moreover, the convergence is uniform on compact sets of $ D_0$.
\end{lem}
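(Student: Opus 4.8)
The plan is to prove the two one-sided estimates
\[
\limsup_{j \to \infty} d_{D^j}(z^0, q) \le d_{D_0}(z^0, q)
\qquad \text{and} \qquad
\liminf_{j \to \infty} d_{D^j}(z^0, q) \ge d_{D_0}(z^0, q),
\]
locally uniformly in $q \in D_0$, and then combine them. Both are proved by contradiction, so it is enough to fix a compact $K \subset D_0$, a sequence $q^j \in K$ with $q^j \to q^0 \in D_0$, and (using the continuity of $d_{D_0}(z^0, \cdot)$, valid since $D_0$ is hyperbolic) to rule out $\big| d_{D^j}(z^0, q^j) - d_{D_0}(z^0, q^0) \big| > \epsilon_0$ for all $j$. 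For the $\limsup$ estimate I would copy the first half of the proof of Lemma \ref{D1}: since $z^0 = ('0,0) \in D^j$ for all $j$ and $D^j \to D_0$ in the Hausdorff sense, a near-geodesic piecewise $C^1$ path $\gamma$ in $D_0$ from $z^0$ to $q^0$ has compact trace, so $\gamma^j(t) := \gamma(t) + (q^j - q^0)t$ maps $[0,1]$ into $D^j$ for $j$ large; using the stability of the infinitesimal metric (Lemma \ref{Z1}) together with $\gamma^j \to \gamma$ and $\dot\gamma^j \to \dot\gamma$ uniformly on $[0,1]$ gives $d_{D^j}(z^0, q^j) \le \int_0^1 F^K_{D^j}(\gamma^j, \dot\gamma^j)\, dt \le d_{D_0}(z^0, q^0) + \epsilon$ for $j$ large.

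For the $\liminf$ estimate I would argue by contradiction. Combining with the bound just obtained, after passing to a subsequence we may assume $d_{D^j}(z^0, q^j) \to L < d_{D_0}(z^0, q^0)$, and clearly $q^0 \ne z^0$. Each $D^j$ is a bounded convex domain (an affine image of $D$), so Lempert's theorem (\cite{Lempert-1981}) provides a complex geodesic $\varphi^j : \Delta \to D^j$ with $\varphi^j(0) = z^0$, $\varphi^j(t_j) = q^j$ for some $t_j \in (0,1)$, and $d_{D^j}(z^0, q^j) = d_{hyp}(0, t_j)$; since $d_{D^j}(z^0, q^j) \to L < \infty$ we have $t_j \to t_0 \in (0,1)$. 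By the scaling construction of \cite{Mcneal-1992} and \cite{Gaussier-1997} recalled above, for all large $j$ the $D^j$ lie in one fixed intersection of half spaces $H_1 \cap \cdots \cap H_n$, which after a complex-affine change of coordinates is a product of half-planes and is therefore biholomorphic to a bounded domain; hence $\{\varphi^j\}$ is a normal family on $\Delta$, and after passing to a further subsequence $\varphi^j \to \varphi$ uniformly on compact sets of $\Delta$, with $\varphi$ nonconstant because $\varphi(0) = z^0 \ne q^0 = \varphi(t_0)$.

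It remains to check that $\varphi(\Delta) \subset D_0$. Writing $D^j = \{\rho^j < 0\}$ with $\rho^j \to \tilde\rho$ locally uniformly (where $\tilde\rho$ defines $D_0$), and using that for each $r \in (0,1)$ the sets $\varphi^j(\overline{\Delta(0,r)})$ lie in a common compact set, one gets $\tilde\rho(\varphi(z)) = \lim_j \rho^j(\varphi^j(z)) \le 0$ for $z \in \Delta(0,r)$, hence $\varphi(\Delta) \subset \overline{D_0}$; since $\varphi(0) = z^0 \in D_0$, the maximum principle forces $\varphi(\Delta) \subset D_0$. Thus $\varphi$ is a holomorphic disc in $D_0$ through $z^0$ and $q^0$, so
\[
d_{D_0}(z^0, q^0) \le d_{hyp}(0, t_0) = \lim_{j \to \infty} d_{hyp}(0, t_j) = \lim_{j \to \infty} d_{D^j}(z^0, q^j) = L,
\]
contradicting $L < d_{D_0}(z^0, q^0)$ and proving the lemma.

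The only delicate point is the second paragraph --- extracting a convergent subsequence of the complex geodesics $\varphi^j$ whose limit genuinely stays inside $D_0$ rather than escaping to $\partial D_0$ or to infinity. This rests on the uniform containment of the scaled domains in a single hyperbolic convex set (the product of half-planes coming from the scaling), together with the local uniform convergence $\rho^j \to \tilde\rho$; both are supplied by the McNeal--Gaussier scaling. I also note that for merely convex, as opposed to strictly convex, $D^j$, Lempert's theorem still yields existence --- though not uniqueness --- of complex geodesics, which is all that is used here.
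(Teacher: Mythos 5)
Your proposal is correct and follows essentially the same route as the paper: translated near-geodesic paths together with Lemma \ref{Z1} for the $\limsup$ bound, and Lempert discs made into a normal family via the uniform containment of the $D^j$ in the fixed intersection of half-spaces, followed by the maximum principle, for the $\liminf$ bound. The only (harmless) variation is at the very end, where you invoke the distance-decreasing property of the limit disc $\varphi:\Delta\to D_0$ directly, while the paper integrates $F^K_{D_0}$ along $\phi|_{[0,1]}$ using the metric convergence a second time; both yield the same competitor for $d_{D_0}(z^0,q^0)$.
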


\begin{proof} Let $ K \subset D_0 $ be compact and suppose that the
desired convergence does not occur. Then there exists a $ \epsilon
_0 > 0 $ and a sequence of points $ \{p^j \} \subset K $ which is
relatively compact in $D^j $ for $j$ large such that
\[
\big| d_{D^j} ( z^0, p^j ) - d_{D_0} (z^0, p^j ) \big| > \epsilon
_0
\]
for all $j$ large. By passing to a subsequence, we may assume that
$ p^j \rightarrow p^0 \in K $ as $ j \rightarrow \infty$. Since $
d_{D_0} ( z^0, \cdot ) $ is continuous, it follows that
\begin{equation}
\big| d_{D ^j} ( z^0, p^j ) - d_{D_0} (z^0, p^0 ) \big| > \epsilon
_0/2 \label{g5}
\end{equation}
for all $j$ large. Fix $ \epsilon > 0$ and let $ \gamma : [0,1]
\rightarrow D_0 $ be a path such that $ \gamma(0) = z^0, \gamma(1)
= p^0$ and
\[
\int_0 ^1 { F^K _{D_0} \big( \gamma(t), \dot{\gamma}(t) \big)} dt
< d_{D_0} (z^0, p^0 ) + \epsilon/2.
\]
Define $ \gamma ^j : [0,1] \rightarrow \mathbf{C}^n $ by
\[
\gamma^j(t) = \gamma(t) + ( p^j - p^0 ) t.
\]
Since the image $ \gamma([0,1])$ is compactly contained in $D_0$
and $ p^j \rightarrow p^0 \in K $ as $ j \rightarrow \infty$, it
follows that $ \gamma ^j : [0,1] \rightarrow D^j $ for $j$ large.
In addition, $ \gamma^j (0) = \gamma (0) = z^0 $ and $ \gamma^j
(1) = \gamma(1) + p^j - p^0 = p^j $.

\medskip

\noindent  Note that $ \gamma^j \rightarrow \gamma $ and $
\dot{\gamma}^j \rightarrow \dot{\gamma} $ uniformly on $ [0,1]$.
Further, it is already known from lemma \ref{Z1} that $ F^K_{D^j}
( \cdot, \cdot) \rightarrow F^K_{D_0} (\cdot, \cdot) $ uniformly
on compact sets of $ D_0 \times \mathbf{C}^n$ Therefore for $j$
large, we obtain
\[
\int_0 ^1 { F^K_{D^j} \big( \gamma^j(t), \dot{\gamma}^j(t)  \big)}
dt \leq \int_0 ^1 { F^K _{D_0} \big( \gamma(t), \dot{\gamma}(t)
\big)} dt + \epsilon/2 < d_{D_0} (z^0, p^0 ) + \epsilon.
\]
By definition of $ d_{D ^j} ( z^0, p^j )$ it follows that
\[
d_{D ^j} ( z^0, p^j ) \leq \int_0 ^1 { F^K_{D^j} \big(
\gamma^j(t), \dot{\gamma}^j(t) \big)} dt \leq d_{D_0} (z^0, p^0 )
+ \epsilon.
\]
Thus
\begin{eqnarray}
\displaystyle\limsup_{j \rightarrow \infty} d_{D ^j} ( z^0, p^j )
\leq d_{D_0} (z^0, p^0 ). \label{g6}
\end{eqnarray}

\medskip

\noindent Conversely, it follows by Lempert's work that there
exist $ m_j > 1$ and holomorphic mappings
\[
\phi^j : \Delta(0, m_j) \rightarrow D^j
\]
such that $ \phi^j(0) = z^0, \phi^j (1) = p^j $ and
\begin{eqnarray*}
d_{\Delta(0, m_j)} (0,1) & = & d_{D^j} \big(z^0, p^j \big) =
\int_0 ^1 { F^K _{D^j } \big( \phi^j(t), \dot{\phi}^j (t) \big) }
dt.
\end{eqnarray*}
Therefore,
\begin{eqnarray*}
\frac{1}{2} \log \left( \frac{m_j + 1} { m_j - 1} \right) =
d_{\Delta(0, m_j)} (0,1)  =   d_{D^j} \big( z^0, p^j \big)
\end{eqnarray*}
However from (\ref{g6}) we have that
\[
d_{D^j}( z^0, p^j) \leq d_{D_0} (z^0, p^0) + \epsilon < \infty
\]
and hence $ m_j > 1 + \delta $ for some uniform $ \delta > 0 $ for
all $ j $ large. Moreover, the domains $D^j$ are all contained in
the intersection of the half planes. Hence the family $ \phi^j
|_{\Delta (0, 1 + \delta) }: \Delta (0, 1+ \delta) \rightarrow D^j
$ is normal. Consequently, the limit map $ \phi : \Delta(0, 1 +
\delta) \rightarrow \overline{D}_{0} $. Note that by construction
$ \phi(0) = z^0 $ and $ \phi(1) = p^0 $. Using the maximum
principle, we conclude that $ \phi : \Delta(0, 1 + \delta)
\rightarrow D_0 $.

\medskip

\noindent Since $ \phi^j \rightarrow \phi $ and $ \dot{\phi}^j
\rightarrow \dot{\phi} $ uniformly on $ [0,1]$, again exploiting
the uniform convergence of $ F^K_{D^j}( \cdot, \cdot) \rightarrow
F^K_{D_0} (\cdot, \cdot)$ on compact sets of $ D_0 \times
\mathbf{C}^n$, we see that
\begin{eqnarray*}
\int_0 ^1 {F^K_{D_0} \big(\phi(t),  \dot{\phi}(t)\big) dt} \leq
\int_0 ^1{F^K_{D^j} \big(\phi^j(t), \dot{\phi}^j(t) \big) dt } +
\epsilon = d_{D^j}( z^0, p^j) + \epsilon
\end{eqnarray*}
for all $j$ large. Finally, observe that $ \phi |_{[0,1]}$ is a
differentiable path in $D_0$ joining $ z^0$ and $ p^0 $. Hence by
definition
\begin{eqnarray}
d_{D_0} (z^0, p^0 ) \leq \int_0 ^1 {F^K_{D_0} \big(\phi(t),
\dot{\phi}(t) \big) dt} \leq d_{D^j}( z^0, p^j) + \epsilon
\label{g7}
\end{eqnarray}
Combining (\ref{g6}) and (\ref{g7}) shows that
\[
\displaystyle\lim_{j \rightarrow \infty} d_{D ^j} ( z^0, p^j ) =
d_{D_0} (z^0, p^0 )
\]
which contradicts the assumption (\ref{g5}) and proves the
required result.
\end{proof}

\begin{lem} \label{G4} For all $ R > 0$, the sequence of domains
$ \big \{ B_{D^j} (z^0, R) \big \} $ converges in the Hausdorff
metric to $ B_{D_0}(z^0, R) $. Moreover, for any $ \epsilon
> 0 $ and for all $j$ large

\begin{itemize}

\item $B_{D_0}(z^0, R) \subset B_{D^j} (z^0, R + \epsilon)$,

\medskip

\item $ B_{D^j} (z^0, R - \epsilon)  \subset B_{D_0}(z^0,R). $

\end{itemize}

\end{lem}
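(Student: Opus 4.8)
The plan is to mirror the proof of lemma \ref{B2} (equivalently lemma \ref{E1}), the only inputs being the stability statement lemma \ref{G3} --- that $d_{D^j}(z^0,\cdot)\to d_{D_0}(z^0,\cdot)$ uniformly on compacta of $D_0$ --- together with the fact, recorded above, that $D_0$ is complete hyperbolic, so that the closure in $D_0$ of every Kobayashi ball $B_{D_0}(z^0,R)$ is compact. A preliminary remark that makes all the relative-compactness assertions legitimate even though the $D^j$ need not be bounded is that, for $j$ large, every $D^j$ sits inside the fixed domain $H_1\cap H_2\cap\cdots\cap H_n$, which is biholomorphic to a bounded domain; hence any sequence staying in a $d_{D^j}$-ball of fixed radius about $z^0$ stays in a compact subset of $H_1\cap\cdots\cap H_n$, and, combined with the Hausdorff convergence $D^j\to D_0$, such a sequence clusters only at points of $D_0$.

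For the Hausdorff convergence: let $K\subset B_{D_0}(z^0,R)$ be compact. Since $D^j\to D_0$ in the Hausdorff sense, $K$ is relatively compact in $D^j$ for all $j$ large, and there is $c=c(K)\in(0,R)$ with $d_{D_0}(z^0,z)<c$ for all $z\in K$. Fixing $\tilde c\in(c,R)$, lemma \ref{G3} gives $d_{D^j}(z^0,z)\le d_{D_0}(z^0,z)+(\tilde c-c)\le\tilde c<R$ on $K$ for $j$ large, i.e. $K$ is compactly contained in $B_{D^j}(z^0,R)$. Conversely, if $K\subset\mathbf{C}^n$ is compact and compactly contained in $B_{D^j}(z^0,R)$ for all large $j$, the preliminary remark forces $K$ to be relatively compact in $D_0$; picking $c=c(K)\in(0,R)$ with $d_{D^j}(z^0,z)\le c$ on $K$ for all large $j$ and $\tilde c\in(c,R)$, lemma \ref{G3} yields $d_{D_0}(z^0,z)\le d_{D^j}(z^0,z)+(\tilde c-c)\le\tilde c<R$, so $K$ is compactly contained in $B_{D_0}(z^0,R)$. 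Hence $\{B_{D^j}(z^0,R)\}$ converges to $B_{D_0}(z^0,R)$ in the Hausdorff metric.

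For the two refined inclusions, fix $\epsilon>0$. Since $D_0$ is complete hyperbolic, $\overline{B_{D_0}(z^0,R)}$ is a compact subset of $D_0$, so lemma \ref{G3} applies uniformly on it: for $j$ large, $d_{D^j}(z^0,z)\le d_{D_0}(z^0,z)+\epsilon\le R+\epsilon$ for every $z$ in that closure, which is precisely $B_{D_0}(z^0,R)\subset B_{D^j}(z^0,R+\epsilon)$. For the other inclusion suppose it fails: there are $\epsilon_0>0$ and points $a^l\in\partial B_{D_0}(z^0,R)$ with $a^l\in B_{D^l}(z^0,R-\epsilon_0)$. By compactness of $\partial B_{D_0}(z^0,R)$, pass to a subsequence with $a^l\to a\in\partial B_{D_0}(z^0,R)$; by lemma \ref{G3} and continuity of $d_{D_0}(z^0,\cdot)$ one gets $d_{D^l}(z^0,a^l)\to d_{D_0}(z^0,a)=R$, contradicting $d_{D^l}(z^0,a^l)\le R-\epsilon_0$. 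This proves $B_{D^j}(z^0,R-\epsilon)\subset B_{D_0}(z^0,R)$ for $j$ large.

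The only genuinely delicate point is the one isolated in the preliminary remark: because the scaled domains $D^j$ need not be bounded, the implication ``relatively compact in every $D^j$ $\Rightarrow$ relatively compact in $D_0$'' is not automatic, and it is here that one must invoke the uniform containment $D^j\subset H_1\cap\cdots\cap H_n$ established during the scaling construction. Everything else is a routine transcription of lemma \ref{B2}, replacing $D_\infty$, $d_{D_\infty}$ by $D_0$, $d_{D_0}$ and using lemma \ref{G3} in place of lemma \ref{D1}.
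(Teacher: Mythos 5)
Your proof is correct and follows exactly the route the paper intends: the paper's own proof of this lemma is the single sentence that it ``follows by making the relevant changes in the proof of lemma \ref{B2}'', and your argument is precisely that transcription, with lemma \ref{G3} replacing lemma \ref{D1} and the completeness of $D_0$ supplying the compactness of closed Kobayashi balls. Your preliminary remark invoking the uniform containment $D^j \subset H_1 \cap \cdots \cap H_n$ addresses a point the paper leaves implicit (the scaled domains are unbounded, so relative compactness of the balls is not automatic), and is a sensible supplement rather than a deviation.
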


\begin{proof} This follows by making the relevant changes in the proof of
lemma \ref{B2}.
\end{proof}

\noindent \textit{Proof of theorem \ref{G0}:} Using lemma \ref{G3}
and \ref{G4}, an argument similar to the one employed in theorem
\ref{C0} (ii) can be used to complete the proof of theorem
\ref{G0}. \qed

%%%%%%%%%%%%%%%%%%%%%%%%%%%%%%%%%%%%%%%%%%%%%%%%%%%%%%%%%%%%%%%%%%%%%%%%%%%%%%%%

\section{Behaviour of $h$ near the corners of a generic analytic
polyhedron}

\noindent To investigate the behaviour of $ h_P( \cdot, \Delta^n)$
near a singular boundary point of an analytic poyhedron, we use
the rescaling technique given in \cite{Kim&Pagano-2001}.

\begin{thm} \label{B4} Let $ D \subset \mathbf{C}^n$ be a bounded domain.
Let $ z^0 \in \partial D$ have a neighbourhood $U$ such that $ U
\cap \partial D  = \{ z \in \mathbf{C}^n : |f^i(z) | < 1, \
\mbox{for all} \ 1 \leq i \leq n \} $ and $ |f^i(z^0) |=1 \
\mbox{for all} \ 1 \leq i \leq n $ where $ f^i \in \mathcal{O}(U)
\ \mbox{for all} \ 1 \leq i \leq n $. Assume that $ df^1 \wedge
\ldots \wedge df^n \neq 0 $ at $ z^0$. Then $ h_D( z, \Delta^n)
\rightarrow  0 $ as $  {z \rightarrow z^0}$.

\end{thm}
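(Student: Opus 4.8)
The plan is to combine the localization of Fridman's invariant near peak points with a rescaling at the corner $z^0$, in the spirit of \cite{Kim&Pagano-2001}.

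First I would check that $z^0$ is a local holomorphic peak point for $D$. After shrinking $U$ we may assume the $f^i$ are holomorphic on a neighbourhood of $\ov{U\cap D}$ and that $F:=(e^{\iota\theta_1}f^1,\dots,e^{\iota\theta_n}f^n)$, with $\theta_i=-\arg f^i(z^0)$, is a biholomorphism of $U$ onto its image with $F(z^0)=(1,\dots,1)$ and $|F_i|\le 1$ on $\ov{U\cap D}$; since $U\cap D=\{z:|f^i(z)|<1,\ 1\le i\le n\}$ and $|F_i|=|f^i|$, this also gives $F(U\cap D)=F(U)\cap\Delta^n$. The function $g=\tfrac12+\tfrac1{2n}\sum_i F_i$ then lies in $\mathcal{A}(U\cap D)$, has $g(z^0)=1$, and satisfies $|g(z)|=\big|\tfrac12(1+\tfrac1n\sum_i F_i(z))\big|<1$ on $\ov{U\cap D}\sm\{z^0\}$, because $\tfrac1n\sum_i F_i(z)=1$ with all $|F_i(z)|\le 1$ forces every $F_i(z)=1$ and hence $z=z^0$. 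With this peak function in hand, the argument of Proposition \ref{A1} applies equally with $\Delta^n$ as the model — the only feature of the model it uses is transitivity of the automorphism group, which $\mathrm{Aut}(\Delta^n)$ enjoys — so there are a neighbourhood $V\subset U$ of $z^0$ and a $c>0$ with $c\,h_D(z,\Delta^n)\le h_{U\cap D}(z,\Delta^n)$ for all $z\in V\cap D$. Thus it suffices to prove $h_{U\cap D}(z,\Delta^n)\to 0$ as $z\to z^0$.

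Next I would pass to a Siegel‑type picture. Let $\Phi$ be the Cayley transform $w_i\mapsto (1-w_i)/(1+w_i)$ applied in each variable; shrinking $U$ further we may assume $\Phi$ is biholomorphic on $F(U)$, it maps $\Delta^n$ onto $\Pi:=\{\zeta\in\mbf C^n:\Re\zeta_i>0,\ 1\le i\le n\}$, and $\Phi(1,\dots,1)=0$. Since $h_X(\cdot,\Delta^n)$ depends only on the biholomorphism type of the model and $\Pi\cong\Delta^n$, biholomorphic invariance of $h$ gives, for any sequence $z^j\to z^0$ in $D$,
\[
h_{U\cap D}(z^j,\Delta^n)=h_{\widetilde\Omega}(\zeta^j,\Pi),\qquad \widetilde\Omega=\mathcal N\cap\Pi,\quad \mathcal N:=\Phi(F(U)),\quad \zeta^j:=\Phi(F(z^j))\to 0 .
\]
Now rescale: the maps $A_j(\zeta)_i=\zeta_i/\Re\zeta^j_i-\iota\,\Im\zeta^j_i/\Re\zeta^j_i$ are affine automorphisms of $\Pi$ with $A_j(\zeta^j)=(1,\dots,1)=:\mbf 1$, and $A_j^{-1}(\omega)_i=\Re\zeta^j_i\,\omega_i+\iota\,\Im\zeta^j_i$ tends to $0$ uniformly on compact sets as $j\to\infty$ because $\zeta^j\to0$. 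Setting $E^j:=A_j(\widetilde\Omega)=A_j(\mathcal N)\cap\Pi$ and using that $\mathcal N$ contains a ball about $0$, we conclude that $A_j(\mathcal N)$ eventually contains every compact set, hence every compact subset of $\Pi$ lies in $E^j$ for all large $j$.

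Finally I would show $h_{E^j}(\mbf 1,\Delta^n)\to 0$. Fix $R>0$. Since $\Pi\cong\Delta^n$ is homogeneous and complete hyperbolic, pick a biholomorphism $\Psi:\Delta^n\to\Pi$ with $\Psi(0)=\mbf 1$; then $B_\Pi(\mbf 1,R)$ is relatively compact in $\Pi$, so $\Psi^{-1}(B_\Pi(\mbf 1,R))\subset r\Delta^n$ for some $r<1$, giving $B_\Pi(\mbf 1,R)\subset\Psi(r\Delta^n)$ with $\ov{\Psi(r\Delta^n)}$ compact in $\Pi$. For $j$ large $\Psi(r\Delta^n)\subset E^j$, and since $E^j\subset\Pi$ we have $B_{E^j}(\mbf 1,R)\subset B_\Pi(\mbf 1,R)\subset\Psi(r\Delta^n)$; as $r\Delta^n$ is biholomorphic to $\Delta^n$, the imbedding $\Psi|_{r\Delta^n}:r\Delta^n\to E^j$ shows $h_{E^j}(\mbf 1,\Delta^n)\le 1/R$ for all large $j$. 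Letting $R\to\infty$ yields $h_{E^j}(\mbf 1,\Delta^n)\to 0$, hence $h_{\widetilde\Omega}(\zeta^j,\Pi)\to 0$ and so $h_D(z^j,\Delta^n)\to 0$; since the sequence was arbitrary, $h_D(z,\Delta^n)\to 0$ as $z\to z^0$. I expect the main obstacle to be the rescaling step: a priori $F(z^j)$ may approach the corner tangentially to one of the faces $\{|w_i|=1\}$, so an isotropic dilation would push the base points out to $\partial\Pi$; the anisotropic automorphisms $A_j$ — which rescale each coordinate by its own distance to $\partial\Pi$ and cancel the imaginary part — are designed precisely to keep the base point fixed at $\mbf 1$ while the rescaled domains still exhaust $\Pi$, and this is what makes the comparison in the last step go through.
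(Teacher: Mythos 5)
Your proposal is correct and follows essentially the same route as the paper: localize $h_D(\cdot,\Delta^n)$ to $U\cap D$ near the corner (the paper does this via Proposition \ref{A3}, you via an explicit peak function $\tfrac12+\tfrac1{2n}\sum F_i$ and the argument of Proposition \ref{A1}, which indeed transfers to the polydisc model), pass through the Cayley transform to a product of half-planes, and rescale by the anisotropic affine automorphisms $A_j$ so that the base points stay at an interior point while the images of $U\cap D$ exhaust the model. The remaining differences — right half-plane versus upper half-plane, and your direct compactness argument for $B_{E^j}(\mathbf 1,R)\subset B_\Pi(\mathbf 1,R)\subset\Psi(r\Delta^n)$ in place of the paper's appeal to Fridman's lemma 2.3 — are cosmetic.
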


\begin{proof} Let $ \{ z^k \} \subset D $ be a sequence converging
to $z^0$. It suffices to show that for any sequence $ h_{D}( z^k )
\rightarrow 0 $ as $ k \rightarrow \infty$. An application of the
implicit function theorem gives an open neighbourhood $U$ of $z^0$
in $ \mathbf{C}^n$ and real numbers $ \theta_i, i= 1, \ldots, n $
satisfying the following properties:

\begin{itemize}

\item $ F := ( e^{\iota \theta_1} f^1, \ldots, e^{\iota \theta_n}
f^n) : U \rightarrow \mathbf{C}^n $ is a biholomorphic imbedding,

\medskip

\item $ F(z^0) = (1, \ldots, 1) \in \mathbf{C}^n, $

\medskip

\item $ F( U \cap D ) = \Delta^n \cap F(U). $

\end{itemize}

\noindent Now, consider $ \phi : \Delta^n \rightarrow \mathbf{H}^n
= \mathbf{H} \times  \ldots \times \mathbf{H} $ defined by
\begin{eqnarray*}
\phi(z) & = & \left( \phi_1(z), \ldots, \phi_n(z) \right) \\
& = & \left( \iota \frac{ 1 - z_1} { 1 + z_1 }, \ldots, \iota \frac{
1 - z_n} { 1 + z_n } \right)
\end{eqnarray*}
where $ \mathbf{H} = \{ w \in \mathbf{C} : \Im w > 0 \}$. Then $
\phi \circ F(z^0) = (0, \ldots, 0) $ and $ \phi \circ F $ maps $U$
biholomorphically onto $ \mathbf{H}^n \cap \phi \circ F (U) $. For
$ l = 1, \ldots, n$ let
\begin{eqnarray*}
\tau_k^{(l)} =  \Re \ \phi_l \circ F ( z^k), \qquad
\lambda_k^{(l)} =  \Im \ \phi_l \circ F ( z^k).
\end{eqnarray*}
Then
\[
A^k(z) := \left( \frac{ z_1 - \tau_k^{(1)}} {
\lambda_k^{(1)}}, \ldots, \frac{ z_n - \tau_k^{(n)}} {
\lambda_k^{(n)}} \right)
\]
is an automorphism of $\mathbf{H}^n$. Consider
\[
\Lambda^k := \phi^{-1} \circ A^k \circ \phi \circ F : U \cap D
\rightarrow \Delta^n.
\]
Then $ \Lambda^k$ are biholomorphic imbeddings of $ U \cap D $
into $ \Delta^n$. Observe that
\[
\Lambda^k(z^k) = \phi^{-1} \circ A^k ( \tau_k^{(1)} + \iota
\lambda_k^{(1)}, \ldots, \tau_k^{(n)} + \iota \lambda_k^{(n)}) =
(0, \ldots, 0).
\]
Additionally, $ \{ \Lambda^k (U \cap D) \}$ is a sequence of
subdomains of $ \Delta^n$ that exhausts $ \Delta^n$. Indeed, let $
L $ be a compact subdomain of $ \Delta^n$. Exploiting the
continuity of the mapping $ \phi \circ F$, we see that $ \phi
\circ F (z^k) \rightarrow 0$ as $ k \rightarrow \infty$. In other
words,
\[
( \tau_k^{(1)} + \iota \lambda_k^{(1)}, \ldots, \tau_k^{(n)} + \iota
\lambda_k^{(n)}) \rightarrow (0, \ldots, 0)
\]
as $ k \rightarrow \infty $. This gives that for each $
l=1, 2, \ldots, n $, both $ \tau_k^{(l)} \rightarrow 0 $ and $
\lambda_k^{(l)} \rightarrow 0 $ as $ k \rightarrow \infty$. As a
result, $ (A^k)^{-1}$ maps $ \phi(L) $ onto a neighbourhood $ W
\subset \phi \circ F ( U \cap D) $ of $ (0, \ldots, 0) $ in $
\mathbf{H}^n$ for sufficiently large $k$. This is just the
assertion that $ L$ is compactly contained in $ \Lambda^k( U \cap
D) $ for all $k$ large. This finishes the proof of the claim. Now,
fix $ R
> 0$ and let $ \theta : \Delta^n \rightarrow B_{\Delta^n} ( 0, R)
$ be a biholomorphism. For $ \epsilon > 0 $ given, there exists a
$ \delta \in (0,1) $ such that
\begin{eqnarray}
B_{\Delta^n} ( 0, R - \epsilon ) \subset \theta(
\Delta^n(0,\delta) ) \subset B_{\Delta^n} ( 0, R). \label{3.4}
\end{eqnarray}
Using lemma 2.3 in \cite{Fridman-1983}, we get that
\[
B_{\Delta^n} ( 0, R) \subset B_{\Lambda^k( U \cap D)}(0, R +
\epsilon)
\]
for all $k$ large. Consequently,
\begin{eqnarray}
\theta( \Delta^n(0,\delta) ) \subset B_{\Lambda^k (U \cap D)} (0,
R + \epsilon) \subset \Lambda^k(U \cap D)  \label{3.5}
\end{eqnarray}
for all $k$ large. It is already known that $ \Lambda^k(U \cap D)
\subset \Delta^n$, therefore
\begin{eqnarray}
B_{\Lambda^k (U \cap D)} (0, r)  \subset B_{\Delta^n}(0, r)
\label{3.6}
\end{eqnarray}
for all $r > 0$ and all $k$. From (\ref{3.4}), (\ref{3.5}) and
(\ref{3.6}), we conclude that there exists a biholomorphic
imbedding $ \theta : \Delta^n(0, \delta) \rightarrow \Lambda^k (U
\cap D) $ such that
\[
B_{\Lambda^k (U \cap D)} (0, R - \epsilon) \subset \theta(
\Delta^n(0,\delta) )
\]
so that
\[
h_{\Lambda^k (U \cap D)}(0, \Delta ^n) \leq (R - \epsilon)^{-1}.
\]
Since $ R > 0 $ was arbitrary, we have $ \displaystyle\lim _{k
\rightarrow \infty } h_{ \Lambda^k( U \cap D) } ( 0, \Delta^n ) =
0 $. In addition,
\[
h_{ \Lambda^k (U \cap D) } (0, \Delta^n) = h_{ \Lambda^k (U \cap
D) } (\Lambda^k(z^k), \Delta^n) = h_{ U \cap D } (z^k, \Delta^n).
\]
It follows that
\[
h_{ U \cap D } ( z^k, \Delta^n )\rightarrow 0
\]
as $ k \rightarrow \infty $. Since $h$ can be localised near $z^0$
by proposition \ref{A3}, we have
\[
h_{ D } ( z^k, \Delta^n ) \rightarrow 0
\]
as $ k \rightarrow \infty $.
\end{proof}

%%%%%%%%%%%%%%%%%%%%%%%%%%%%%%%%%%%%%%%%%%%%%%%%%%%%%%%%%%%%%%%%%%%%%%%%%%%%%%

\section{Applications}

\noindent In this section we collect several consequences of the
results presented in the prequel. To begin with, we present an
alternate proof of the following theorem of Wong-Rosay
(\cite{Wong-1977}, \cite{Rosay-1979}) using theorem \ref{B0} and
characterise $ \Delta^n$ in the class of generic analytic
polyhedra. It must be mentioned that this is motivated by the main
theorem of Kim and Pagano from \cite{Kim&Pagano-2001} and in fact
provides an alternate proof of their result in case the orbit
accumulates at a singular boundary point. The question of
recovering their full theorem using Fridman's invariant seems
interesting.

\begin{thm}
Let $ D \subset \mathbf{C}^n $ be a bounded domain. Let $ z^0 \in
\partial D $ have a neighbourhood $U$ such that $ U \cap \partial
D $ is $C^2$-smooth strongly pseudoconvex . If $ z^0$ is an
orbit accumulation point for the action of $ \mbox{Aut} (D)$ on $
D$, then $D$ is biholomorphically equivalent to $\mathbf{B}^n$.
\end{thm}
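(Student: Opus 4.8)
The plan is to deduce that Fridman's invariant vanishes identically on $D$ and then quote Proposition \ref{A0}(i). Since $z^0$ is an orbit accumulation point, there are a point $q \in D$ and automorphisms $\phi^k \in \mathrm{Aut}(D)$ with $\phi^k(q) \to z^0$. Each $\phi^k$ preserves the Kobayashi balls of $D$, so $h_D(\cdot, \mathbf{B}^n)$ is $\mathrm{Aut}(D)$-invariant and $h_D(q, \mathbf{B}^n) = h_D(\phi^k(q), \mathbf{B}^n)$ for every $k$. Hence it suffices to show that $h_D(z, \mathbf{B}^n) \to 0$ as $D \ni z \to z^0$: this forces $h_D(q, \mathbf{B}^n) = 0$, and since $D$ is bounded (so Kobayashi hyperbolic) and $\mathbf{B}^n$ is homogeneous (so $\mathbf{B}^n/\mathrm{Aut}(\mathbf{B}^n)$ is compact), Proposition \ref{A0}(i) then gives $h_D(\cdot, \mathbf{B}^n) \equiv 0$ and $D \simeq \mathbf{B}^n$.

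For the boundary decay I would reduce to the global statement Theorem \ref{B0} by introducing an auxiliary domain. Because $U \cap \partial D$ is $C^2$-smooth and strongly pseudoconvex, near $z^0$ one has $D = \{\rho < 0\}$ with $\rho$ a $C^2$ strongly plurisubharmonic defining function, and capping this local piece off produces a bounded $C^2$-smooth strongly pseudoconvex domain $\Omega$ together with a neighbourhood $W \subset U$ of $z^0$ such that $\Omega \subset D$ and $\Omega \cap W = D \cap W$. Theorem \ref{B0} applied to $\Omega$ gives $h_\Omega(z, \mathbf{B}^n) \to 0$ as $z \to z^0 \in \partial \Omega$. Since a strongly pseudoconvex boundary point is a local holomorphic peak point, Proposition \ref{A1} applies to both $D$ and $\Omega$ with the neighbourhood $W$: shrinking a neighbourhood $V \subset W$ of $z^0$ if necessary, one gets $c\, h_D(z, \mathbf{B}^n) \le h_{W \cap D}(z, \mathbf{B}^n)$ for $z \in V \cap D$ and $h_{W \cap \Omega}(z, \mathbf{B}^n) \le h_\Omega(z, \mathbf{B}^n)$ for $z \in V \cap \Omega$. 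As $W \cap D = W \cap \Omega$, their Fridman invariants coincide, so
\[
c\, h_D(z, \mathbf{B}^n) \le h_{W \cap D}(z, \mathbf{B}^n) = h_{W \cap \Omega}(z, \mathbf{B}^n) \le h_\Omega(z, \mathbf{B}^n)
\]
for $z \in V \cap D$, and letting $z \to z^0$ yields $h_D(z, \mathbf{B}^n) \to 0$. Applying this along $z = \phi^k(q)$, which lies in $V \cap D$ (and in fact in $W \cap \Omega \subset \Omega$) once $k$ is large, completes the argument.

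The main obstacle, and the only nonroutine point, is precisely this passage from the purely local hypothesis to the global Theorem \ref{B0}: one needs to know that $h_D$ can be localised near $z^0$, which is the content of Proposition \ref{A1}, and one needs the capped-off strongly pseudoconvex domain $\Omega$ so that the localised invariants of $D$ and of $\Omega$ literally agree near $z^0$. Everything else — the $\mathrm{Aut}(D)$-invariance of $h_D$, and the concluding rigidity step — is immediate from the definition of $h_D$ and from Proposition \ref{A0}.
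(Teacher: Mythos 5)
Your proposal is correct and follows the paper's own route exactly: $\mathrm{Aut}(D)$-invariance of $h_D(\cdot,\mathbf{B}^n)$, boundary decay of $h_D$ at the strongly pseudoconvex point $z^0$, and then Proposition \ref{A0}(i) to conclude $D \simeq \mathbf{B}^n$. The only difference is that the paper simply cites the global Theorem \ref{B0} even though the hypothesis on $\partial D$ is local; your capping-off construction of $\Omega$ combined with Proposition \ref{A1} supplies precisely the localization step that the paper leaves implicit (and alludes to in the introduction).
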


\begin{proof} By hypothesis, there exists a sequence $ \{ F^k \} \subset
\mbox{Aut}(D)$ and $ p^0 \in D$ such that $ F^k(p^0) \rightarrow
z^0 \in \partial D$ as $ k \rightarrow \infty$. Then using theorem
\ref{B0}, we have
\[
h_{ D } \big( F^k(p^0) \big) \rightarrow 0
\]
as $ k \rightarrow \infty $. Since
\[
h_{ D } \big( F^k(p^0) \big) = h_{F^k( D) } \big( F^k(p^0) \big) =
h_D(p^0)
\]
for all $k$, it follows that $ h_D(p^0)= 0$. Applying proposition
\ref{A0}, we conclude that $ D$ is biholomorphic to $
\mathbf{B}^n$.
\end{proof}

\begin{thm}
Let $ D \subset \mathbf{C}^n $ be a bounded domain. Let $ z^0 \in
\partial D $ have a neighbourhood $U$ such that $ U \cap \partial D
= \big \{ z \in \mathbf{C}^n | \ |f^i(z)| < 1 \ \mbox{for all} \ 1
\leq i \leq n \big \}$ and $ \big |f^i(z^0) \big| = 1 \ \mbox{for
all} \ 1 \leq i \leq n  $ where $ f^i \in \mathcal{O}(U) \
\mbox{for all} \ 1 \leq i \leq n $. Assume that $ d f^1 \wedge \ldots
\wedge df^n \neq 0 $ at $ z^0$. If $ z^0$ is an orbit accumulation
point for the action of $ \mbox{Aut} (D)$ on $ D$, then $D$ is
biholomorphically equivalent to $\Delta^n$.
\end{thm}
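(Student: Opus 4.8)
The plan is to run exactly the normalization argument used above for the Wong--Rosay theorem, with the ball $\mathbf{B}^n$ replaced by the polydisc $\Delta^n$ and with Theorem \ref{B0} replaced by Theorem \ref{B4}. First, since $z^0$ is an orbit accumulation point for $\mathrm{Aut}(D)$, I would fix a point $p^0 \in D$ and a sequence $\{F^k\} \subset \mathrm{Aut}(D)$ with $F^k(p^0) \to z^0$ as $k \to \infty$. The boundary data near $z^0$ assumed here -- namely $U \cap \partial D = \{z : |f^i(z)| < 1,\ 1 \le i \le n\}$ with $f^i \in \mathcal{O}(U)$, $|f^i(z^0)| = 1$ for all $i$, and $df^1 \wedge \cdots \wedge df^n \neq 0$ at $z^0$ -- is precisely the hypothesis of Theorem \ref{B4}, so that theorem applies along the sequence $\{F^k(p^0)\}$ and gives $h_D(F^k(p^0), \Delta^n) \to 0$.

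Next I would use biholomorphic invariance of Fridman's invariant. For each $k$, the automorphism $F^k$ preserves the Kobayashi balls of $D$, hence preserves $h$, so that $h_D(F^k(p^0), \Delta^n) = h_{F^k(D)}(F^k(p^0), \Delta^n) = h_D(p^0, \Delta^n)$. Combining this with the previous step forces $h_D(p^0, \Delta^n) = 0$. Finally I would invoke Proposition \ref{A0}(i) with $X = D$ and $\Omega = \Delta^n$: this is legitimate because $D$, being bounded, is Kobayashi hyperbolic, while $\Delta^n$ is a hyperbolic domain on which $\mathrm{Aut}(\Delta^n)$ acts transitively, so $\Delta^n / \mathrm{Aut}(\Delta^n)$ is a single point and in particular compact. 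Since $h_D(p^0, \Delta^n) = 0$, Proposition \ref{A0}(i) yields that $D$ is biholomorphically equivalent to $\Delta^n$, which is the assertion.

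As for the main obstacle: there is essentially none at this stage, since all of the genuine work -- the rescaling near the corner in the spirit of \cite{Kim&Pagano-2001}, together with the localization of $h_P(\cdot, \Delta^n)$ near the Shilov boundary recorded in Proposition \ref{A3} -- has already been carried out inside the proof of Theorem \ref{B4}. The only points that require a moment's care are verifying that the hypotheses of Theorem \ref{B4} and of Proposition \ref{A0} are satisfied verbatim in the present setup (boundedness and hence hyperbolicity of $D$, transitivity of $\mathrm{Aut}(\Delta^n)$); once these are in place the argument is purely formal and mirrors the proof of the Wong--Rosay statement above line for line.
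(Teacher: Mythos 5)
Your proposal is correct and coincides with the paper's own proof: the authors likewise apply Theorem \ref{B4} along the orbit $\{F^k(p^0)\}$, use biholomorphic invariance of $h_D(\cdot,\Delta^n)$ to conclude $h_D(p^0,\Delta^n)=0$, and then invoke Proposition \ref{A0}(i) with $\Omega=\Delta^n$. Your explicit checks of the hypotheses (hyperbolicity of the bounded domain $D$ and compactness of $\Delta^n/\mathrm{Aut}(\Delta^n)$ via homogeneity) are exactly the points that make the formal argument legitimate.
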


\begin{proof} By hypothesis, there exists a sequence $ \{ F^k \} \subset
\mbox{Aut}(D)$ and $ p^0 \in D$ such that $ F^k(p^0) \rightarrow
z^0 \in \partial D$ as $ k \rightarrow \infty$. It follows from
theorem \ref{B4} that
\[
h_{ D } \big( F^k(p^0), \Delta^n \big) \rightarrow 0
\]
as $ k \rightarrow \infty $. Since
\[
h_{ D } \big( F^k(p^0), \Delta^n \big) = h_{F^k( D) } \big(
F^k(p^0), \Delta^n \big) = h_D \big(p^0, \Delta^n \big)
\]
for all $k$, we infer that $ h_D \big(p^0, \Delta^n \big)= 0$.
Applying proposition \ref{A0}, we conclude that $ D$ is
biholomorphic to $ \Delta^n$.
\end{proof}

\noindent As a consequence of theorem \ref{C0}, it is also
possible to recover parts of the main theorem of
\cite{CoupetPinchuk&Sukhov-1996}, the emphasis here being a
different approach - wherein there is no need apriori to establish
the boundary distance estimate.

\begin{thm} \label{T1} A strongly pseudoconvex domain with $
{C}^2$-smooth boundary cannot be mapped biholomorphically onto a
smoothly bounded weakly pseudoconvex domain of finite type in $\mathbf{C}^2$.
\end{thm}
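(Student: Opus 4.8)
The plan is to argue by contradiction, playing the opposite boundary behaviours of Fridman's invariant predicted by Theorems \ref{B0} and \ref{C0} against its biholomorphic invariance. Suppose that $\Phi : D_1 \to D_2$ is a biholomorphism, where $D_1$ is a bounded $C^2$-smooth strongly pseudoconvex domain and $D_2 \subset \mathbf{C}^2$ is a smoothly bounded weakly pseudoconvex domain of finite type. Since $D_2$ is not strongly pseudoconvex, there is a point $p^0 \in \partial D_2$ at which the Levi form degenerates; in $\mathbf{C}^2$ this forces the $1$-type of $\partial D_2$ at $p^0$ to be $2m$ with $m \ge 2$. Fix a sequence $\{p^j\} \subset D_2$ converging to $p^0$ normally, i.e.\ along the inner normal. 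By Theorem \ref{C0}, $h_{D_2}(p^j) \to h_{D_\infty}\big((0,-1)\big)$, where $D_\infty = \{(z_1,z_2) : 2\,\mathrm{Re}\,z_2 + H_{2m}(z_1,\bar z_1) < 0\}$ and $H_{2m}$ is the homogeneous subharmonic polynomial of degree exactly $2m$ (without harmonic terms) occurring in the normal form of $\partial D_2$ at $p^0$.

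Next I would observe that, because $2m > 2$, the model domain $D_\infty$ is \emph{not} biholomorphically equivalent to $\mathbf{B}^2$; consequently, applying Proposition \ref{A0}(i) with $\Omega = \mathbf{B}^2$ (whose quotient by $\mathrm{Aut}(\mathbf{B}^2)$ is trivially compact), one gets $c_0 := h_{D_\infty}\big((0,-1)\big) > 0$, and hence $h_{D_2}(p^j) \to c_0 > 0$. Now transport this back to $D_1$: put $q^j := \Phi^{-1}(p^j) \in D_1$, so that by the biholomorphic invariance of $h$ one has $h_{D_1}(q^j) = h_{D_2}(p^j) \to c_0 > 0$. Since $D_1$ is bounded, some subsequence $\{q^{j_k}\}$ converges to a point $q^* \in \overline{D_1}$. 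If $q^* \in D_1$, then continuity of $\Phi$ would give $p^{j_k} = \Phi(q^{j_k}) \to \Phi(q^*) \in D_2$, contradicting $p^{j_k} \to p^0 \in \partial D_2$; hence $q^* \in \partial D_1$. But then Theorem \ref{B0} forces $h_{D_1}(q^{j_k}) \to 0$, contradicting $h_{D_1}(q^{j_k}) \to c_0 > 0$. This contradiction shows that no such $\Phi$ exists.

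The only point that is not pure bookkeeping is the assertion that the model domain $D_\infty$ with $2m > 2$ fails to be biholomorphic to $\mathbf{B}^2$ (equivalently, that $h_{D_\infty}((0,-1)) > 0$), and I expect this to be the main step to spell out. It is a standard fact, which can be justified by the biholomorphic invariance of the D'Angelo $1$-type together with the smooth extendability of biholomorphisms between smoothly bounded pseudoconvex finite type domains in $\mathbf{C}^2$ (after localising $D_\infty$ near its finite boundary point $0$ via a Cayley-type change of coordinates): were $D_\infty$ biholomorphic to $\mathbf{B}^2$, the type at $0$ would have to equal $2$, which is absurd. Everything else reduces to combining this with the invariance of $h$ and the two boundary-behaviour theorems already proved.
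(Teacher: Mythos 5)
Your reduction is exactly the paper's: transport Fridman's invariant through the biholomorphism, use theorem \ref{B0} on the strongly pseudoconvex side and theorem \ref{C0} along a normal sequence at a Levi-degenerate point of $D_2$, and conclude via proposition \ref{A0} that everything hinges on the single assertion that the model $D_{\infty} = \{2\Re z_2 + H_{2m}(z_1,\bar z_1) < 0\}$ with $2m \ge 4$ is not biholomorphic to $\mathbf{B}^2$ (the paper phrases this as: $h_{D_{2,\infty}}((0,-1)) = 0$ would force $D_{2,\infty} \simeq \mathbf{B}^2$, which is then refuted). Up to this point your argument is fine, and your way of producing the boundary-accumulating pullback sequence (continuity instead of Kobayashi completeness) is an acceptable variant.

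The gap is in your justification of that key assertion. You propose to invoke the smooth extendability of biholomorphisms between \emph{smoothly bounded} pseudoconvex finite type domains in $\mathbf{C}^2$ together with the invariance of the D'Angelo $1$-type, "after localising $D_{\infty}$ near $0$ via a Cayley-type change of coordinates". But $D_{\infty}$ is an unbounded rigid domain, and for a general subharmonic homogeneous polynomial $H_{2m}$ no Cayley-type map produces a smoothly bounded realization: the fractional-power map that converts $\{2\Re z_2 + |z_1|^{2m} < 0\}$ into the egg $\{|z_1|^{2m} + |z_2|^2 < 1\}$ uses the identity $|\lambda|^{2m}\,|z_1|^{2m} = |\lambda z_1|^{2m}$, which fails for $H_{2m}$ unless $H_{2m} = c|z_1|^{2m}$, while the Cayley transform in $z_2$ alone leaves the domain unbounded (along rays where $H_{2m}$ vanishes, $|z_1|$ is unrestricted at bounded height). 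So the Bell--Catlin/condition-$R$ package you appeal to does not apply as stated, and a purely local extension would still require control of the cluster set of the map at the chosen boundary point, which is precisely the nontrivial issue. The paper's proof of theorem \ref{T1} does real work here: it passes to the Siegel model $\Sigma$, arranges that the cluster set of the biholomorphism at a finite boundary point $(0,\iota a^0)$ of $D_{2,\infty}$ contains a finite point of $\partial\Sigma$, applies the Coupet--Pinchuk extension theorem for weighted homogeneous rigid domains to extend the map holomorphically past that point, then extracts from the identity $\Re F_2 + |F_1|^2 = h\,(\Re z_2 + H_{2m})$ that $H_{2m}(z_1,\bar z_1) = |z_1|^{2m}$, and finally uses Cartan's theorem on circular domains (egg versus ball, both fixed at the origin) to force $2m = 2$, contradicting the Levi degeneracy at $q^0$. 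Without an argument of this kind (or a correct substitute), your step "$D_{\infty} \not\simeq \mathbf{B}^2$, hence $c_0 > 0$" is unproved, and the contradiction does not yet follow.
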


\begin{proof} Suppose there exists a biholomorphism $f$ from a strongly
pseudoconvex domain $ D_1$ onto a weakly pseudoconvex domain $
D_2$ of finite type in $ \mathbf{C}^2$. Let $ q^0 \in \partial
D_2$ be a weakly pseudoconvex boundary point. Then there exists a
local coordinate system in a neighbourhood of $ q^0 $ taking $q^0$
to the origin such that the domain $ D_2$ near origin can be
written as
\[
\big \{ (z_1,z_2) \in \mathbf{C}^2 : 2 \Re z_2 +
H_{2m}(z_1,\bar{z}_1 ) + o ( |z| ^{2m} + \Im z_2 )  < 0 \big \}
\]
where $H_{2m}$ is a homogeneous subharmonic polynomial of degree $
2m \geq 2 $ in $ z_1$ and $\bar{z}_1 $ which does not contain any
harmonic terms. Let $ \{ q^j \} $ be a sequence of points in $
D_2$ converging to $ q^0 $ normally, i.e.,
\[
q^j = q^0 - d(q^j, \partial D_2) n(q^0)
\]
where $ n(q^0) $ denotes the unit outward normal to $ \partial D_2
$ at $ q^0 $. Choose points $ \{ p^j \} \subset D_1$ such that $
f(p^j) = q^j$. Then for any $ x \in D_1$, we have
\[
d_{D_1}( x, p^j) = d_{D_2} \big( f(x), q^j \big).
\]
The above observation together with the completeness of $D_1 $ and
$D_2$ implies that the sequence $ \{ p^j \} $ must cluster on
$\partial D_1 $. Hence, we may assume that the sequence $ \{ p^j
\}$ converges to $ p^0 \in \partial D_1 $. It follows from the
biholomorphic invariance of the function $h$ that
\[
h_{D_2} ( q^j) = h_{D_1} (p^j).
\]
Applying theorem \ref{B0} and theorem \ref{C0}, we get after
letting $ j \rightarrow \infty$ that
\begin{equation}
h_{D_{2,\infty}} ((0,-1)) = 0. \label{g30}
\end{equation}
Here $ D_{2,\infty} $ is the limiting domain obtained by scaling
$D_2$ with respect to the base point $ q^0 $ and the sequence $ \{
q^j \}$. More concretely,
\[
D_{2,\infty} = \big \{ (z_1,z_2) \in \mathbf{C}^2 : 2 \Re z_2 +
H_{2m}(z_1,\bar{z}_1 ) < 0 \big \}.
\]
Now, using proposition \ref{A0}, we infer from (\ref{g30}) that $
D_{2,\infty}$ is biholomorphically equivalent to $ \mathbf{B}^2$.
Let $ \tilde{f}$ denote the biholomorphism from the domain $
D_{2,\infty}$ to $ \mathbf{B}^2$. Then for any point $ (0, \iota
a^0), a^0 \in \mathbf{R} $ and any sequence of points $ \{ z^j \}
\subset D_{2, \infty}$ converging to $ (0, \iota a) \in \partial
D_{2, \infty}$, the corresponding image sequence $ \{
\tilde{f}(z^j) \} $ clusters at some point of $ \partial
\mathbf{B}^2 $. Composing with an appropriate Cayley transform, we
may assume that there exists a biholomorphism $ F$ from $
D_{2,\infty}$ onto the unbounded realization of the unit ball,
namely to
\[
\Sigma = \big \{ z \in \mathbf{C}^2 : 2 \Re z_2 + | z_1 |^2 < 0
\big \}
\]
with the property that the cluster set of $F$ at the point $ (0,
\iota a^0) \in \partial D_{2, \infty}$ contains a boundary point
of $ \Sigma $ different from the point at infinity on $
\partial \Sigma $. Then it follows from theorem 2.1 of
\cite{Coupet&Pinchuk-2001} that $F$ extends holomorphically to a
neighbourhood of $ (0, \iota a^0)$. Since $ D_{2, \infty} $ is
invariant under translations in the imaginary $ z_2$- direction,
we can assume that $(0, \iota a^0) = (0,0) $. This shows that $F$
extends holomorphically to a neighbourhood of the origin. In
addition, by composing with an automorphism of $ \Sigma $, if
necessary we may assume that $ F$ takes the origin to the origin.
Now, we intend to use the arguments from
\cite{CoupetPinchuk&Sukhov-1996} to show that $ H_{2m} ( z_1,
\bar{z}_1) \equiv | z_1|^2 $. Since $F$ extends holomorphically
near the origin, we can write
\begin{equation}
\Re \big (F_2(z) \big) + |F_1(z)|^2 = h(z) \big( \Re z_2 +
H_{2m}(z_1, \bar{z}_1)\big) \label{Q1}
\end{equation}
in a neighbourhood of the origin where $h$ is a real analytic
function near the origin and $ h(0,0) \neq 0$. Now let
\[
F_1(z_1,0) = \displaystyle \sum_{\mu \geq s} b_{\mu} z_1^{\mu}
\]
where $ s \geq 1$. Also, $ F_2(z) = \beta z_2 + o(|z|) $ where $
\beta \in \mathbf{R} \setminus \{0\} $. Indeed, applying Hopf's
lemma to the function $ \Re F_2$, we see that $  \partial (\Re
F_2)/ { \partial x_2 } (0) > 0 $ where $ x_2 = \Re z_2 $.
Furthermore, since $ F : \big( \partial D_{2, \infty}, (0,0) \big)
\rightarrow \big( \partial \Sigma, (0,0) \big)$ and $F$ extends
holomorphically near the origin, it takes the complex normal to
$\partial D_{2, \infty}$ at the origin to the complex normal to $
\partial \Sigma $ at the origin. In particular, $
\partial  F_2/ { \partial z_1 } \equiv 0 $. Now, setting $ z_2 = 0 $ in (\ref{Q1}) yields
\[
\Re \big( o(|z_1|) \big) + b_s \bar{b}_s |z_1|^{2 s} + o(|z_1|^{2
s}) = h(z_1,0) H_{2m}( z_1, \bar{z}_1).
\]
Since $ H_{2m }$ does not contain any harmonic terms, we have
\[
b_s \bar{b}_s |z_1|^{2 s} + o(|z_1|^{2 s}) = h(0,0) H_{2m}( z_1,
\bar{z}_1).
\]
Since the polynomial $ H_{2m}$ is homogeneous of degree $2m$, we
conclude that $ 2m = 2 s $ and $ H_{2m}(z_1, \bar{z}_1) =
|z_1|^{2m} $. It follows that $ \mathbf{B}^2 \simeq D_{2, \infty}
\simeq \big \{ (z_1, z_2) \in \mathbf{C}^2 : 2 \Re z_2 +
|z_1|^{2m} < 0 \big \} \simeq \tilde{D} = \big \{ (z_1,z_2) \in
\mathbf{C}^2 : |z_1|^{2m} + |z_2|^2 < 1 \big \} $. Let $ G :
\mathbf{B}^2 \rightarrow \tilde{D} $ be the biholomorphism which
in addition may be assumed to presume the origin. Since $
\mathbf{B}^2$ and $ \tilde{D}$ are both circular domains, it
follows that $G$ is linear. This forces that $ 2m =2$ and hence $
H_{2m}(z_1, \bar{z}_1) = |z_1|^2 $.

\medskip

\noindent But this exactly means that there exists a local
coordinate system in a neighbourhood of $ q^0 $ which takes the
point $ q^0 $ to the origin and the domain $ D_2 $ near the origin
can be written as
\[
\big \{( z_1, z_2) \in \mathbf{C}^2 : 2 \Re z_2 + |z_1|^2 + o(
|z_1|^2 + \Im z_2 ) < 0 \big \}
\]
This contradicts the fact that $ q^0 = (0,0)$ is a weakly
pseudoconvex point and proves the theorem.
\end{proof}

\noindent The local version of the preceding theorem can also be
recovered similarly:

\begin{thm} Let $ D_1$ and $ D_2$ be two domains in
$ \mathbf{C}^2$. Assume that $ \partial D_1$ is $ {C}^2$-smooth
strongly pseudoconvex in a neighbourhood of a point $ p^0 \in
\partial D_1$. Suppose that $ \partial D_2$ is $
{C}^{\infty}$-smooth weakly pseudoconvex of finite type in a
neighbourhood of a point $ q^0 \in \partial D_2$. Then there
cannot be a biholomorphism $f$ from $ D_1$ onto $D_2$ with the
property that $ q^0 $ belongs to the cluster set of $f$ at $ p^0$.
\end{thm}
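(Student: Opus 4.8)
The plan is to mimic the proof of theorem \ref{T1}, inserting a localisation of Fridman's invariant and, crucially, a boundary extension argument that reduces matters to a normally approaching sequence. Suppose, for a contradiction, that there is a biholomorphism $f : D_1 \ra D_2$ for which $q^0$ lies in the cluster set of $f$ at $p^0$; thus there is a sequence $w^j \ra p^0$ in $D_1$ with $f(w^j) \ra q^0$. Setting $v^j := f(w^j)$ we see, since $f^{-1}(v^j) = w^j \ra p^0$, that $p^0$ lies in the cluster set of $f^{-1} : D_2 \ra D_1$ at $q^0$.

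First I would promote this to a genuine continuous extension of $f^{-1}$ near $q^0$. Since $q^0 \in \pa D_2$ is a smooth weakly pseudoconvex point of finite type in $\mbf C^2$, it admits a local holomorphic peak function and the Kobayashi metric on $D_2$ has the usual lower bounds near $q^0$, while $p^0 \in \pa D_1$, being $C^2$-smooth strongly pseudoconvex, is also a local holomorphic peak point. Under these hypotheses the classical boundary extension theorem for biholomorphisms (of Vormoor--Pinchuk type; cf. \cite{Pinchuk-1980}) applies to $f^{-1}$ and shows that the cluster set of $f^{-1}$ at $q^0$ reduces to $\{p^0\}$ and that $f^{-1}$ extends continuously to $q^0$ with $f^{-1}(q^0) = p^0$. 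I expect this extension to be the step requiring the most care; everything afterwards is a localisation of the argument already carried out for theorem \ref{T1}. (If $D_1, D_2$ are not bounded one works throughout with the germs of $\pa D_1, \pa D_2$ at $p^0, q^0$, replacing the domains by bounded ones agreeing with them near these points wherever theorems \ref{B0} and \ref{C0} are invoked.)

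With this in hand, choose a sequence $q^j \ra q^0$ approaching $q^0$ \emph{normally}, i.e.\ along the inner normal to $\pa D_2$ at $q^0$, and set $p^j := f^{-1}(q^j)$; then $p^j \ra p^0$. By the biholomorphic invariance of Fridman's invariant, $h_{D_1}(p^j) = h_{D_2}(q^j)$ for every $j$. Since $p^0$ and $q^0$ are local holomorphic peak points, proposition \ref{A1} localises $h$ near each of them; comparing with a bounded strongly pseudoconvex domain $\ti D_1$ agreeing with $D_1$ near $p^0$ and a smoothly bounded finite type domain $\ti D_2$ agreeing with $D_2$ near $q^0$, one obtains $h_{D_1}(p^j) \approx h_{\ti D_1}(p^j)$ and $h_{D_2}(q^j) \approx h_{\ti D_2}(q^j)$ for $j$ large. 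Theorem \ref{B0} then gives $h_{D_1}(p^j) \ra 0$, while theorem \ref{C0}, together with the normality of the approach of the $q^j$, gives $h_{D_2}(q^j) \ra h_{D_{\infty}}((0,-1))$, where $D_{\infty} = \{ z \in \mbf C^2 : 2\Re z_2 + H_{2m}(z_1, \ov z_1) < 0 \}$ and $H_{2m}$ is the homogeneous subharmonic polynomial, of degree $2m \ge 4$ and without harmonic terms, occurring in the local normal form of $\pa D_2$ at $q^0$. Hence $h_{D_{\infty}}((0,-1)) = 0$, so by proposition \ref{A0} the model domain $D_{\infty}$ is biholomorphic to $\mbf B^2$.

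Finally, $D_{\infty} \simeq \mbf B^2$ is precisely the situation disposed of in the proof of theorem \ref{T1}, and I would repeat that argument verbatim: after composing with Cayley transforms one obtains a biholomorphism $F : D_{\infty} \ra \Sigma = \{ z \in \mbf C^2 : 2\Re z_2 + |z_1|^2 < 0 \}$ whose cluster set at the origin contains a finite boundary point of $\Sigma$, so $F$ extends holomorphically past the origin by \cite{Coupet&Pinchuk-2001}; the Hopf lemma applied to $\Re F_2$, the absence of harmonic terms in $H_{2m}$, its homogeneity, and the circularity of the domains involved then force $H_{2m}(z_1, \ov z_1) = |z_1|^2$, whence $2m = 2$. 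This contradicts the assumption that $q^0$ is a weakly pseudoconvex boundary point of $D_2$, and the theorem follows.
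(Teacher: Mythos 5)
Your overall strategy coincides with the paper's: reduce to a sequence approaching $q^0$ \emph{normally}, transport Fridman's invariant by biholomorphic invariance, localise it near the two peak points via proposition \ref{A1}, feed in theorems \ref{B0} and \ref{C0} to get $h_{D_{\infty}}((0,-1))=0$, conclude $D_{\infty}\simeq \mathbf{B}^2$ from proposition \ref{A0}, and then repeat the rigidity computation of theorem \ref{T1}, where the normality of the approach is exactly what guarantees that the model polynomial is the homogeneous $H_{2m}$ and hence that the computation forces $2m=2$. That part of your argument is sound and is what the paper's (very brief) proof sketch intends.

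The genuine gap is your first step. You assert that a ``classical Vormoor--Pinchuk type'' theorem applied to $f^{-1}$ shows that the cluster set of $f^{-1}$ at $q^0$ collapses to $\{p^0\}$ and that $f^{-1}$ extends continuously there, citing \cite{Pinchuk-1980}. Those classical results are global statements for strictly pseudoconvex targets, and, more to the point, the collapse of the cluster set to a single point is precisely what must be proved in this purely local situation: knowing only that $p^0$ lies in the cluster set of $f^{-1}$ at $q^0$, nothing yet prevents preimages of other points near $q^0$ from clustering elsewhere on $\partial D_1$ or escaping the patch where $\partial D_1$ is strongly pseudoconvex. The standard mechanism for ruling this out (Hopf's lemma applied to a plurisubharmonic barrier composed with the map, together with Kobayashi metric estimates) can only be set in motion once one knows the image stays inside the coordinate neighbourhood where the barrier is defined, so invoking it for $f^{-1}$ as an off-the-shelf fact is circular as stated. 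This is why the paper runs the boundary step in the opposite direction: proposition 2.1 of \cite{CoupetPinchuk&Sukhov-1996} gives the continuous extension of $f$ near $p^0$, resting on the fact that $f$ decreases the distance to the boundary, and it is from that statement and the accompanying comparability of boundary distances that one extracts the control of $f^{-1}$ along the normal sequence at $q^0$ which your scaling requires. You should either quote that proposition (as the paper does) or prove its analogue for $f^{-1}$; the reference you give does not cover it. A smaller caveat: since $D_1,D_2$ are only assumed to be domains with good boundary germs, $h$ need not even be defined on them, so the identity $h_{D_1}(p^j)=h_{D_2}(q^j)$ must itself be replaced by a localised comparison (via proposition \ref{A1} and the boundary continuity of $f$ and $f^{-1}$) rather than used on the original domains; your parenthetical about bounded replacements does not quite reach this point.
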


\noindent If such a biholomorphism $f$ exists, the first step in
proving the above result is to establish that $f$ extends to
continuous mapping on a neighbourhood of $ p^0 $ in $
\overline{D}_1 $. This requires the fact that $f$ decreases the
distance to the boundary, which is a consequence of well known
estimates for the Kobayashi metric. This is the content of
proposition 2.1 of \cite{CoupetPinchuk&Sukhov-1996}. Scaling the
domain $ D_ 2 $ with respect a sequence converging normally to $
q^0 $ and proceeding as in the proof of theorem \ref{T1}, we
obtain that $ D_2 $ near the $ q^0 = (0,0) $ is given by
\[
\big \{( z_1, z_2) \in \mathbf{C}^2 : 2 \Re z_2 + |z_1|^2 + o(
|z_1|^2 + \Im z_2 ) < 0 \big \}
\]
which leads to a contradiction.

\medskip

\noindent Using similar ideas, the following theorem of
Coupet-Gaussier-Sukhov (\cite{CoupetGaussier&Sukhov-1999}) can
also be recovered:

\begin{thm} \label{T2} A strongly pseudoconvex domain with $ {C}^2$-smooth
boundary cannot be mapped biholomorphically onto a smoothly
bounded convex (but not strongly pseudoconvex) domain of finite
type in $ \mathbf{C}^n (n > 1)$.
\end{thm}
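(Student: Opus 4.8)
The plan is to run the strategy behind Theorem \ref{T1}, now fed by the convex finite type result of section 6. Suppose for contradiction that $f : D_1 \to D_2$ is a biholomorphism with $D_1 \subset \mathbf{C}^n$ bounded strongly pseudoconvex ($C^2$ boundary) and $D_2 \subset \mathbf{C}^n$ smoothly bounded convex of finite type and \emph{not} strongly pseudoconvex. First I would fix a weakly pseudoconvex boundary point $q^0 \in \partial D_2$ and a sequence $\{q^j\} \subset D_2$ converging to $q^0$ \emph{normally}, set $p^j = f^{-1}(q^j)$, and note that $\{p^j\}$ must cluster on $\partial D_1$: a subsequence of $\{p^j\}$ converges to some $p^* \in \overline{D}_1$, and if $p^* \in D_1$ then $q^j = f(p^j)$ would subconverge to $f(p^*) \in D_2$, contradicting $q^j \to q^0 \in \partial D_2$ (the completeness of the Kobayashi metrics on $D_1, D_2$ makes this automatic). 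So after passing to a subsequence $p^j \to p^0 \in \partial D_1$.

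Next I would use the biholomorphic invariance $h_{D_2}(q^j) = h_{D_1}(p^j)$ together with Theorem \ref{B0} (which gives $h_{D_1}(p^j) \to 0$) and Theorem \ref{G0} (which gives $h_{D_2}(q^j) \to h_{D_{2,\infty}}(('0,-1))$, where $D_{2,\infty}$ is the scaled limit along $\{q^j\}$) to conclude $h_{D_{2,\infty}}(('0,-1)) = 0$, hence by Proposition \ref{A0}(i) that $D_{2,\infty}$ is biholomorphic to $\mathbf{B}^n$. Since the $q^j$ approach $q^0$ normally, the scaling analysis of section 6 (the discussion leading to Lemma \ref{Z1}) identifies $D_{2,\infty}$, up to a $\mathbf{C}$-affine change of coordinates, with $\{ ('z,z_n) \in \mathbf{C}^n : 2\Re z_n + P_0('z) < 0\}$, where $P_0$ is the nondegenerate, weighted homogeneous, convex polynomial without pluriharmonic terms attached to the multitype $\mathcal{M}(\partial D_2, q^0)$; in particular the $1$-type of $\partial D_{2,\infty}$ at the origin equals that of $\partial D_2$ at $q^0$, and this is strictly larger than $2$ because $q^0$ is weakly pseudoconvex.

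The final step — contradicting $D_{2,\infty} \simeq \mathbf{B}^n$ — is where the real work lies, and I would carry it out exactly as at the end of the proof of Theorem \ref{T1}. Composing with a Cayley transform, get a biholomorphism $F$ of $D_{2,\infty}$ onto the unbounded realization $\Sigma = \{2\Re z_n + |'z|^2 < 0\}$ of the ball whose cluster set at some finite boundary point of $D_{2,\infty}$ meets $\partial\Sigma$ away from infinity; since $\partial D_{2,\infty}$ is real-analytic of finite type and $\Sigma$ is strongly pseudoconvex, boundary regularity (the $\mathbf{C}^n$ counterpart of theorem 2.1 of \cite{Coupet&Pinchuk-2001}) lets me translate that point to the origin and assume $F$ extends holomorphically past the origin with $F(0) = 0$, carrying complex normal to complex normal. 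Feeding this into the defining identity $\Re F_n(z) + |F'(z)|^2 = h(z)\,(\Re z_n + P_0('z))$ with $h$ real-analytic, $h(0) \neq 0$, and restricting to $\{z_n = 0\}$, the absence of pluriharmonic terms in $P_0$ should force $P_0('z)$ to be a positive multiple of a sum of squared moduli of holomorphic polynomials of half its weighted degree; rerunning the Cayley/circular-domains argument of Theorem \ref{T1} — so that $D_{2,\infty}$ becomes a bounded circular domain biholomorphic to $\mathbf{B}^n$, whence the linearizing map is linear — should then pin $P_0$ down to a positive-definite Hermitian form, i.e., make $\partial D_{2,\infty}$ strongly pseudoconvex at the origin, contradicting the $1$-type there being $>2$.

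The main obstacle is precisely this last step in $\mathbf{C}^n$. Unlike the $\mathbf{C}^2$ situation, where the model is automatically Reinhardt once the algebra is done and circular-domain rigidity closes the argument cleanly, here one must control the full polynomial $P_0('z)$ from the boundary identity and the convexity. The two delicate points are (a) boundary regularity of $F$ near a finite boundary point of the convex finite type model $D_{2,\infty}$, and (b) upgrading ``$P_0$ is a sum of holomorphic squares'' to ``$P_0$ is a quadratic Hermitian form'', for which both the convexity of $D_{2,\infty}$ and its equivalence with $\mathbf{B}^n$ have to be used. Everything else is routine and parallels the applications in section 8.
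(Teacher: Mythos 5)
Your setup coincides with the paper's: choose a weakly pseudoconvex $q^0\in\partial D_2$, pull back a normally approaching sequence, use completeness to force $p^j\to p^0\in\partial D_1$, combine $h_{D_2}(q^j)=h_{D_1}(p^j)$ with Theorems \ref{B0} and \ref{G0} and Proposition \ref{A0}(i) to get $D_{2,\infty}=\{2\Re z_n+P_0('z)<0\}\simeq \mathbf{B}^n$, and extend the resulting map $F:D_{2,\infty}\to\Sigma$ holomorphically past a finite boundary point via theorem 2.1 of \cite{Coupet&Pinchuk-2001}. Up to that point you are on the paper's track.

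The last step, which you yourself flag as ``where the real work lies,'' is a genuine gap, and the route you sketch (restrict the identity $\Re F_n+|F'|^2=h\cdot(\Re z_n+P_0)$ to $\{z_n=0\}$, deduce that $P_0$ is a sum of squared moduli, then rerun the circular-domain rigidity) is both incomplete and harder than necessary in $\mathbf{C}^n$: $P_0$ is only \emph{weighted} homogeneous, the components of $F'$ need not have a common weighted degree, and the bounded realization $\{\sum|g_i('z)|^2+|z_n|^2<1\}$ is not circular in the ordinary sense unless all the $g_i$ are homogeneous of the same degree, so the linearity argument from Theorem \ref{T1} does not transfer. The paper avoids all of this with a much shorter device: it first upgrades the holomorphic extension to a local \emph{biholomorphic} extension --- if $\mathrm{Jac}\,F$ vanished identically on $U\cap l$, where $l=\{('0,\iota a):a\in\mathbf{R}\}\subset\partial D_{2,\infty}$, then by the identity theorem it would vanish on the whole complex $z_n$-axis, which meets $D_{2,\infty}$, contradicting injectivity of $F$; so $F$ is a local biholomorphism at some point of $l$, which translation invariance in $\Im z_n$ lets you take to be the origin. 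Since a local biholomorphism between hypersurfaces preserves the Levi form and $\partial\Sigma$ is strongly pseudoconvex, the origin would be a strongly pseudoconvex point of $\partial D_{2,\infty}$, which is impossible because $P_0$ is weighted homogeneous of degree $1$ for weights with some $m_i>2$. You should replace your algebraic endgame with this Jacobian-plus-Levi-form argument (and note that the biholomorphic, not merely holomorphic, extension is the point you are currently missing).
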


\begin{proof} Suppose there exists a biholomorphism $f$
from a $ C^2 $-smooth strongly pseudoconvex domain $ D_1$ onto
a smoothly bounded convex but not strongly pseudoconvex domain $
D_2$ of finite type in $ \mathbf{C}^n$. Let $ q^0 \in
\partial D_2$. By \cite{Yu-1992} there exists a local coordinate
system in a neighbourhood of $ q^0 $ taking $q^0$ to the origin
such that the domain $ D_2$ near origin can be written as
\[
\{ ('z,z_n) \in \mathbf{C}^n : 2 \Re z_n + P_0('z) + R(z) < 0 \}
\]
where $P_0$ is a nondegenerate weighted homogeneous polynomial of
degree $1$ with respect to the weights $ \mathcal{M}(\partial D,
0)$ and $ R $ denotes terms of degree at least two. Scaling the
domain $D_2 $ with respect to a sequence converging normally to
the point $ q^0$ as in the proof of theorem \ref{T1}, we infer
that $ D_{2,\infty}$ is biholomorphically equivalent to $
\mathbf{B}^n$. Here $ D_{2,\infty}$ denotes the limiting domain
obtained by scaling $D_2$:
\[
D_{2,\infty} = \{ ('z,z_n) \in \mathbf{C}^n : 2 \Re z_n + P_0('z)
< 0 \}.
\]
Then as before, we may assume that for any point $ (0, \iota a^0),
a^0 \in \mathbf{R} $ there exists a biholomorphism $ F$ from $
D_{2,\infty}$ onto the unbounded realization of the unit ball,
namely to
\[
\Sigma = \{ z \in \mathbf{C}^n : 2 \Re z_n + | 'z |^2 < 0 \}
\]
with the property that the cluster set of $F$ at the point $ ('0,
\iota a^0) \in \partial D_{2, \infty}$ contains a point $ \zeta^0
\in \partial \Sigma $ where $ \zeta^0 $ is different from the
point at infinity on $ \partial \Sigma $. Applying theorem 2.1 of
\cite{Coupet&Pinchuk-2001}, we get that $F$ extends
holomorphically to a neighbourhood $ U $ of $ ('0, \iota a^0)$. We
now claim that $F$ extends biholomorphically near $ ('0, \iota
a^0)$. To see this, denote by $ l = \{ ('0, \iota a) \in
\mathbf{C}^n : a \in \mathbf{R} \} \subset \partial D_{2, \infty}
$. Assume on the contrary that the Jacobian of $F$ vanishes
identically on $ U \cap l $. In that case the Jacobian of $F$
vanishes on the entire $ z_n $ - axis which intersects the domain.
This violates the fact that $F$ is injective on $ D_{2, \infty} $.
This contradiction completes the proof of the claim. Now, pick $
('0, \iota \tilde{a} ) \in U \cap l $ such that the Jacobian of
$F$ does not vanish at $ ('0, \iota \tilde{a} )$. Consequently,
$F$ extends biholomorphically past $ ('0, \iota \tilde{a} ) \in
\partial D_{2, \infty}$. Since $ D_{2, \infty} $ is invariant under
translations in the imaginary $ z_n$- direction, we can assume
that $('0, \iota \tilde{a}) = ('0,0) $. This shows that $F$
extends biholomorphically to a neighbourhood of the origin. In
addition, by composing with an automorphism of $ \Sigma $, if
necessary we may assume that $ F$ maps the origin to the origin.
Since the Levi-form is preserved under local biholomorphisms
around a boundary point, this forces that $ ('0,0) \in \partial
D_{2, \infty} $ be strongly pseudoconvex. This contradiction
proves the theorem.
\end{proof}

\noindent A local version of this theorem also holds:

\begin{thm}
Let $ D_1$ and $ D_2$ be domains in $ \mathbf{C}^n$. Assume that $
\partial D_1$ is $ {C}^2$-smooth strongly pseudoconvex in a
neighbourhood of a point $ p^0 \in \partial D_1$ and that $
\partial D_2$ is $ {C}^{\infty}$-smooth convex (but not strongly
pseudoconvex) of finite type in a neighbourhood of a point $ q^0
\in \partial D_2$. Then there cannot be a biholomorphism $f$ from
$ D_1$ onto $D_2$ with the property that $ q^0 $ belongs to the
cluster set of $f$ at $ p^0$.
\end{thm}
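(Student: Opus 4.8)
The plan is to argue by contradiction, exactly parallel to the proof of Theorem \ref{T2} and to the local version of Theorem \ref{T1}, localising near $p^0$ and $q^0$ and then passing to the model domain by scaling. Suppose $f : D_1 \to D_2$ is a biholomorphism with $q^0$ in the cluster set of $f$ at $p^0$. The first step, and the one I expect to be the main obstacle, is to establish that $f$ extends to a continuous map on $\overline{D}_1 \cap W$ for some neighbourhood $W$ of $p^0$, with $f(p^0)=q^0$, and that $f^{-1}$ extends continuously near $q^0$ as well. As in the local version of Theorem \ref{T1}, this rests on the fact that $f$ and $f^{-1}$ distort the Euclidean distance to the boundary in a controlled, essentially H\"older, fashion: on the strongly pseudoconvex side one uses the standard two-sided Kobayashi estimate $F^K_{D_1}(z,v) \approx |v|/\mathrm{dist}(z,\partial D_1)^{1/2}$ along tangential and normal directions, and on the convex finite type side one uses the corresponding lower bound $F^K_{D_2}(w,v) \gtrsim |v|/\mathrm{dist}(w,\partial D_2)^{1/2m}$ near $\partial D_2$ (Proposition 4.1 of \cite{Coupet&Sukhov-1997}, already invoked in the proof of Lemma \ref{K2}), so that $\mathrm{dist}(f(z),\partial D_2)$ and $\mathrm{dist}(z,\partial D_1)$ become comparable up to fixed powers. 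This is precisely the boundary-continuity step carried out in \cite{CoupetGaussier&Sukhov-1999}; once it is in hand, the remaining steps are as in the cited proofs.

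After shrinking the neighbourhoods, $f$ restricts to a biholomorphism $U_1 \cap D_1 \to U_2 \cap D_2$ carrying $p^0$ to $q^0$, where $\partial(U_1 \cap D_1)$ is strongly pseudoconvex near $p^0$ and $\partial(U_2 \cap D_2)$ is convex of finite type near $q^0$. I would then pick a sequence $\{q^j\} \subset D_2$ converging normally to $q^0$ and set $p^j = f^{-1}(q^j)$, so that $p^j \to p^0$ by the continuous extension of $f^{-1}$. Using the biholomorphic invariance of Fridman's invariant one has $h_{U_1 \cap D_1}(p^j) = h_{U_2 \cap D_2}(q^j)$, and combining this with the localisation of $h$ near the peak points $p^0$, $q^0$ (Proposition \ref{A1}) together with the local formulations of Theorems \ref{B0} and \ref{G0}, one gets $h_{U_1 \cap D_1}(p^j) \to 0$ on the one hand and $h_{U_2 \cap D_2}(q^j) \to h_{D_{2,\infty}}(('0,-1))$ on the other, where $D_{2,\infty} = \{\, ('z,z_n) \in \mathbf{C}^n : 2 \Re z_n + P_0('z) < 0 \,\}$ is the model obtained by scaling $D_2$ normally at $q^0$ and $P_0$ is a nondegenerate weighted homogeneous polynomial of degree $1$ with respect to the multitype $\mathcal{M}(\partial D_2, q^0) = (m_1,\ldots,m_{n-1},1)$. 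Hence $h_{D_{2,\infty}}(('0,-1)) = 0$, and Proposition \ref{A0} forces $D_{2,\infty}$ to be biholomorphic to $\mathbf{B}^n$.

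It then remains to reproduce the concluding argument of Theorem \ref{T2}. Composing with a Cayley transform in the normal variable, one obtains a biholomorphism $F$ of $D_{2,\infty}$ onto $\Sigma = \{\, z \in \mathbf{C}^n : 2 \Re z_n + |'z|^2 < 0 \,\}$ whose cluster set at some boundary point $('0, \iota a^0) \in \partial D_{2,\infty}$ contains a point of $\partial \Sigma$ other than the point at infinity; by Theorem 2.1 of \cite{Coupet&Pinchuk-2001}, $F$ extends holomorphically past $('0, \iota a^0)$, and the argument that $\mathrm{Jac}\,F$ cannot vanish identically along the real segment of the $z_n$-axis in $\partial D_{2,\infty}$ (else it would vanish at an interior point of $D_{2,\infty}$, contradicting injectivity) upgrades this to a local biholomorphism sending $\partial D_{2,\infty}$ to $\partial \Sigma$ near the origin. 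Since the Levi form is a local biholomorphic invariant, $\partial D_{2,\infty}$ would be strongly pseudoconvex at the origin; but $q^0$ is weakly pseudoconvex, so $P_0$ is genuinely weighted homogeneous with a multitype entry exceeding $2$, the Levi form of $\partial D_{2,\infty}$ at the origin degenerates, and we reach the desired contradiction. The only genuinely new input beyond the results established here and the rigidity theorem of \cite{Coupet&Pinchuk-2001} is the boundary-continuity of $f$ in the first paragraph; after that the scaling machinery of Sections 4 and 6 does the rest.
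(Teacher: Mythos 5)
Your proposal follows the same route as the paper: establish continuous extension of $f$ near $p^0$ via Kobayashi-metric distance-to-boundary estimates (the paper cites proposition 2.1 of \cite{CoupetPinchuk&Sukhov-1996} for this), then scale $D_2$ along a normally converging sequence at $q^0$, use the biholomorphic invariance and localisation of Fridman's invariant together with theorems \ref{B0} and \ref{G0} to force $D_{2,\infty}\simeq\mathbf{B}^n$, and finally derive the contradiction exactly as in theorem \ref{T2} via the extension theorem of \cite{Coupet&Pinchuk-2001} and the invariance of the Levi form. The paper's own proof is only a two-sentence sketch deferring to theorem \ref{T2}, and your write-up is a faithful, correctly ordered expansion of that same argument.
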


\noindent Suppose that such a $f$ exists. Then proposition 2.1 of
\cite{CoupetPinchuk&Sukhov-1996} implies that $f$ extends
continuously near $ p^0 $. Now, scaling the domain $ D_2 $ with
respect to a sequence converging normally to $ q^0 $ and arguing
as in the proof of theorem \ref{T2}, we obtain that the domain $
D_2 $ is strongly psequdoconvex near $ q^0 $, thereby leading to a
contradiction.

\medskip

\noindent Next, we apply the invariance property of the function
$h$ to deduce the biholomorphic inequivalence of a bounded domain
with smooth and piecewise smooth boundaries.

\medskip

\noindent A domain $ D \subset \mathbf{C}^n $ is said to have
piecewise $ C^r $-smooth boundary if there are real-valued
functions $ \rho_i, i = 1, \ldots, m$, in a neighbourhood $V$ of the
closure of $D$  satisfying the following conditions:
\begin{itemize}

\item $ \partial D \subset \displaystyle\bigcup_{i=1}^m \{ z \in V
: \rho_i(z) = 0 \}$
\item For every subset $ \{ i_1, \ldots, i_k \} \subset \{1, \ldots, m\}$
the form $ \partial \rho_{i_1} \wedge \ldots \wedge \partial
\rho_{i_k} \neq 0$ on the intersection $ \displaystyle\bigcap
_{i=1}^m \{ z \in V : \rho_i(z) = 0 \}$

\end{itemize}
The latter condition means that $ \{ z \in V : \rho_i(z) = 0 \}$
is a $ {C}^r$-smooth hypersurface in general position.

\begin{thm} Let $ D_1$ and $ D_2$ be two bounded
domains in $\mathbf{C}^n (n > 1)$, $D_1$ having strongly
pseudoconvex boundary of class $ {C}^2$ and $ D_2$ having
piecewise $ {C}^2$-smooth, but not smooth boundary. Then $ D_1$
and $D_2$ are biholomorphically inequivalent.
\end{thm}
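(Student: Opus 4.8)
The plan is to derive a contradiction from a hypothetical biholomorphism $f : D_1 \ra D_2$, using the biholomorphic invariance of Fridman's invariant together with Theorem \ref{B0} on the $D_1$ side and a scaling analysis at a corner of $\pa D_2$. Since $\pa D_2$ is not smooth, there is a point $q^0 \in \pa D_2$ at which at least two of the hypersurfaces $\{\rho_i = 0\}$ meet; after relabelling, $\rho_1(q^0) = \cdots = \rho_k(q^0) = 0$ with $k \ge 2$, $\pa\rho_1 \wedge \cdots \wedge \pa\rho_k \neq 0$ at $q^0$, and there is a neighbourhood $U$ of $q^0$ with $U \cap D_2 = U \cap \{\rho_1 < 0\} \cap \cdots \cap \{\rho_k < 0\}$. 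First I would choose $q^j \in D_2$ with $q^j \to q^0$ (for definiteness along the inner bisector of this wedge), put $p^j = f^{-1}(q^j)$, and observe that since $f$ is a homeomorphism of $D_1$ onto $D_2$ and $q^j \to \pa D_2$, every cluster point of $\{p^j\}$ lies on $\pa D_1$. As $\pa D_1$ is $C^2$-smooth strongly pseudoconvex at every point, Theorem \ref{B0} gives $h_{D_1}(p^j, \mbf B^n) \to 0$, whence $h_{D_2}(q^j, \mbf B^n) = h_{D_1}(p^j, \mbf B^n) \to 0$ by the invariance of $h$.

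The second and main step is to see that this is impossible. I would scale $D_2$ at $q^0$: after a translation and a $\mbf C$-linear change of coordinates taking $q^0$ to $0$, write $q^j = \delta_j v$ with $\delta_j = |q^j| \to 0^+$ and $v$ fixed, and dilate by $\Lambda_j(z) = z/\delta_j$. The domains $D_2^j = \Lambda_j(D_2)$ converge in the Hausdorff sense to the convex cone
\[
W = \big\{ z \in \mbf C^n : \Re\, L_i(z) < 0,\ 1 \le i \le k \big\},
\]
where $L_i$ is the complex-linear functional with $\Re L_i = d\rho_i(q^0)$, and $\Lambda_j(q^j) = v \in W$ for all $j$ (the higher order terms of the $\rho_i$ scale away). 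The crux is the stability of the Kobayashi distance under this scaling, namely $d_{D_2^j}(v, \cdot) \to d_W(v, \cdot)$ uniformly on compacta of $W$: one inequality is immediate from the Hausdorff convergence, and for the reverse inequality one would argue as in Lemmas \ref{D1} and \ref{G3}, using the local convexity of $D_2$ near $q^0$ so that Lempert's theorem produces complex geodesics in a genuinely convex neighbourhood, together with the localisation of the Kobayashi distance near $\pa D_2$. This yields the Hausdorff convergence of the balls $B_{D_2^j}(v, R) \to B_W(v, R)$, and then, exactly as in the proof of Theorem \ref{B0}, $h_W(v, \mbf B^n) = \lim_j h_{D_2^j}(v, \mbf B^n) = \lim_j h_{D_2}(q^j, \mbf B^n) = 0$.

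It remains to see that $h_W(v,\mbf B^n)=0$ is absurd. If $W$ is hyperbolic, Proposition \ref{A0} forces $W \simeq \mbf B^n$; but $W$ is an intersection of $k \ge 2$ half-spaces, so $\pa W$ carries an edge through the origin and $W$ is never biholomorphic to $\mbf B^n$ (when the $L_i$ span $\mbf C^n$ one gets, for instance, a product structure such as $W \simeq \Delta^n$ for $k = n$). If $W$ is not hyperbolic --- which happens whenever $k < n$, since then $W$ contains a complex affine line through $v$ --- the stability statements above still hold with $d_W$ read as the Kobayashi pseudodistance (which is locally bounded on the convex domain $W$), so the extremal embeddings $\mbf B^n \ra D_2^j$ realising $h_{D_2^j}(v,\mbf B^n)\to 0$ have images exhausting $W$; passing to the limit via a normal families argument then forces a copy of $\mbf C$ into a biholomorph of $\mbf B^n$, contradicting the hyperbolicity of $\mbf B^n$. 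Either way we reach a contradiction, so $D_1$ and $D_2$ are biholomorphically inequivalent.

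The hard part is the stability of the integrated Kobayashi distance, and of the Kobayashi balls, under scaling at a \emph{corner}. In contrast to the strongly pseudoconvex and convex finite type settings treated above, there is no ambient taut --- or even hyperbolic --- domain containing all the dilates $D_2^j$, and the model $W$ is merely a closed convex cone whose boundary is neither smooth nor strongly convex; consequently the Lempert-disc comparison has to be run in a slightly enlarged genuinely convex neighbourhood, and the degenerate case in which $W$ is not hyperbolic must be disposed of by the separate limiting argument indicated above, where some care is needed to rule out the limit embedding degenerating.
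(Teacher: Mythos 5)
Your overall strategy is the same as the paper's: use Theorem \ref{B0} on the $D_1$ side, transport $h \to 0$ through the biholomorphism, scale $D_2$ at a non-smooth boundary point, and contradict the resulting statement that the scaled limit is biholomorphic to $\mathbf{B}^n$. But the execution of the scaling step has genuine gaps. First, you never use that $D_2$ is pseudoconvex (inherited from $D_1$ via $f$), and your claim that $D_2$ is locally convex near the corner $q^0$ is unjustified: the faces $\{\rho_i = 0\}$ are arbitrary $C^2$ hypersurfaces in general position, so there is no genuinely convex neighbourhood in which to run the Lempert-disc comparison, and with it goes your proof of the lower semicontinuity of $d_{D_2^j}$ and of the Hausdorff convergence of the Kobayashi balls. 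The paper's proof avoids this by invoking Pinchuk's structure theorem for pseudoconvex domains with piecewise smooth boundary: pseudoconvexity of $D_2$ (which must be recorded first) yields a corner point $q^0$ together with a strictly plurisubharmonic barrier $\rho = \sum_i \rho_i + A \sum_i \rho_i^2$ with $D_2 \cap U \subset \{\rho < 0\}$, and it is this barrier that makes the localization and scaling arguments work.

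Second, your isotropic dilation produces the wrong model. The paper follows Pinchuk's (anisotropic) scaling, whose limit is a product of balls $\mathbf{B}^{n_1} \times \cdots \times \mathbf{B}^{n_k}$ with $k \ge 2$ --- a complete hyperbolic domain, so the Fridman-invariant machinery (stability of $d$, of the balls, and Proposition \ref{A0}) applies verbatim and the contradiction is simply that $\mathbf{B}^n$ is not biholomorphic to a product. Your linear wedge $W$ is non-hyperbolic whenever $k < n$ (it contains complex affine lines), and precisely in that case the stability statements you need are not merely harder but break down: Kobayashi balls of the scaled domains need not stay in compact subsets of $W$, Proposition \ref{A0} and the ball-convergence lemmas all presuppose hyperbolicity and completeness, and the sketched normal-families argument (``the extremal embeddings exhaust $W$, hence a copy of $\mathbf{C}$ maps into a biholomorph of $\mathbf{B}^n$'') does not rule out degeneration of the limit embedding. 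As it stands, the degenerate case is asserted rather than proved, so the proposal does not yet constitute a proof; replacing the isotropic dilation by Pinchuk's scaling at a corner furnished with the plurisubharmonic barrier closes both gaps at once.
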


\begin{proof} Suppose there exists a biholomorphism $f : D_1 \rightarrow D_2 $.
Then the strong pseudoconvexity of $ D_1$ implies that $ D_2$ is
pseudoconvex. Then from the results of \cite{Pinchuk-1980} there
follows the existence of a point $ q^0 \in \partial D_2$ (for
definiteness we shall assume $ q^0$ is the origin) and $
{C}^2$-smooth real-valued functions $ \rho_i, i=1, \ldots, k \ ( 2
\leq k \leq n)$ defined in some neighbourhood $U$ of the origin
such that:

\begin{enumerate}

\item [(i)] $ ('0,0) \in \partial D_2 \cap \{z \in U : \rho_i(z) =
0, i=1, 2, \ldots, k \} $

\medskip

\item [(ii)] $ D_2 \cap U = \{ z \in U : \rho_i(z) < 0,
i=1, 2, \ldots, k \} $

\medskip

\item [(iii)] $ \bar{\partial} \rho_1 \wedge \ldots \wedge
\bar{\partial} \rho_k(z) \neq 0 $ for $ z \in U$,

\item [(iv)] for some $A > 0$ the function $ \rho= \displaystyle
\sum _{i=1}^k \rho_i + A \displaystyle \sum_{i=1}^k \rho_i^2$ is
strictly plurisubharmonic in $U$ and $D_2 \cap U \subset \{ z \in
U : \rho(z) < 0 \} $.

\end{enumerate}

\noindent Scaling the domain $D_2 $ as in \cite{Pinchuk-1982} with
respect to a sequence converging to the point $ q^0 $ and using
the Fridman's invariant as before, we infer that the limiting
domain $ D_{2, \infty} $ is biholomorphic to $ \mathbf{B}^n$. It
turns out (for details see \cite{Pinchuk-1982}) that $ D_{2,
\infty} $ is biholomorphic to a product of balls $
\mathbf{B}^{n_1} \times \ldots \times \mathbf{B}^{n_k} $ where $
\mathbf{B}^{n_j} = \big \{ z \in \mathbf{C}^{n_j} : |z| < 1 \big
\} $ and $ k \geq 2 $. This shows that $ \mathbf{B}^n $ is
biholomorphic to a product of balls which is a contradiction.
Hence the result follows.
\end{proof}

%%%%%%%%%%%%%%%%%%%
\medskip

\noindent In general, it is very difficult to provide an explicit
expression for the Kobayashi distance between two points of a
given domain. the most that can be done is to describe the
Kobayashi distance in terms of Euclidean parameters such as the
distance to the boundary. For instance, it is known that if $D$ is
a bounded strongly pseudoconvex domain with $ C^2 $-smooth
boundary, then for all $ R > 0 $, there exists positive constants
$ c_i$ and $C_i$ ($i=1, 2$) depending only on $ R$ and $D$ such that
for each $ q$ in $ D$ sufficiently close to $ \partial D$
\[
P \left( q; c_1 \ d(q, \partial D), c_2 \ {d(q, \partial D)}^{1/2}
\right) \subset B_D(q, r) \subset P \left( q; C_1 \ d(q, \partial
D), C_2 \ {d(q, \partial D)}^{1/2} \right)
\]
where $ P(q; r_1,r_2) $ denotes the polydisc centered at $q$ with
radius $ r_1$ in the complex normal direction and the radius $
r_2$ in each complex tangential direction.

\medskip

\noindent These estimates on the unit ball in $ \mathbf{C}^n$ can
be obtained by direct computation using the explicit formula for
the Kobayashi distance on $ \mathbf{B}^n$. For a strongly
pseudoconvex domain $D$, one has to work with suitable ellipsoids
tangent to $\partial D$ to get these results.

\medskip

\noindent This approach will not work for weakly pseudoconvex
domains of finite type in $ \mathbf{C}^2$ because of
non-availability of a suitable model domain. Instead, we make use
of the rescaling technique given in \cite{Catlin-1989}.

\begin{prop} Let $ D$ be a bounded weakly pseudoconvex
domain of finite type in $ \mathbf{C}^2 $ with $ C^{\infty}$-
smooth boundary. Then for all $ R > 0$, there exist positive
constants $ C_1$ and $C_2$ depending only on $ R$ and $ D $ such
that for each $ q$ in $ D $ sufficiently close to $ \partial D$
\[
Q \big( q, C_1 \ d(q, \partial D ) \big) \subset B_{D} (q, R)
\subset Q \big( q, C_2 \ d(q, \partial D) \big)
\]
where $ Q (q, r) $ denotes the Catlin's bidisc.
\end{prop}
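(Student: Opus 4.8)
The plan is to argue both inclusions by contradiction, transporting them via the scaling of section 5 to the scaled domains $ D^j $, where the stability results already proved take over. Fix $ R > 0 $. For $ q \in D $ near $ \partial D $ write $ \epsilon_q = d(q, \partial D) $, let $ \zeta_q \in \partial D $ be the (eventually unique) nearest boundary point, and set $ \Phi_q = \Delta_{\zeta_q}^{\epsilon_q} \circ \phi^{\zeta_q} $, so that $ D^q := \Phi_q(D) $, $ \Phi_q(q) =: z_q = (0, -1/d^0(\zeta_q)) $ with $ z_q \to z^0 := (0,-1) $ as $ q \to \partial D $, and $ \Phi_q(Q(\zeta_q, \epsilon_q)) = \Delta \times \Delta $. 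The key preliminary, and the step requiring the most care, is bookkeeping on how Catlin's bidiscs transform under the anisotropic dilations: using $ \epsilon_q \approx -\rho(q) $ and the engulfing property of Catlin's bidiscs (proposition 1.7 of \cite{Catlin-1989}) one verifies that, uniformly in $ q $, the set $ \Phi_q(Q(q, \lambda \epsilon_q)) $ is squeezed between bidiscs centred near $ z_q $ with polyradii comparable to $ (\tau(\zeta_q, \lambda \epsilon_q)/\tau(\zeta_q, \epsilon_q),\, \lambda) $ for $ \lambda \ge 1 $, and with polyradii of size $ (\lambda^{1/l_0},\, \lambda) $, some $ 2 \le l_0 \le 2m $, for $ 0 < \lambda \le 1 $. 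Since $ \tau(\zeta, \lambda\epsilon)/\tau(\zeta,\epsilon) \ge \lambda^{1/2m} $ when $ \lambda \ge 1 $, it follows that $ \Phi_q(Q(q, \lambda\epsilon_q)) $ exhausts $ \mathbf{C}^2 $ as $ \lambda \to \infty $ and shrinks to $ \{ z_q \} $ as $ \lambda \to 0^+ $, both uniformly in $ q $.

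For the upper inclusion, suppose no $ C_2 $ works: there are $ q^j \to \partial D $ and $ w^j \in B_D(q^j, R) \setminus Q(q^j, j\,\epsilon_{q^j}) $. Passing to a subsequence, $ \zeta^j := \zeta_{q^j} \to w^0 \in \partial D $ and the data defining $ \phi^{\zeta^j}, \Delta_{\zeta^j}^{\epsilon_j} $ converge, so $ D^j := \Phi_{q^j}(D) $ converges to a model domain $ D_\infty $ as in section 5, and this is precisely the situation of lemma \ref{K2}. Let $ \tilde w^j = \Phi_{q^j}(w^j) $ and $ z^j = \Phi_{q^j}(q^j) \to z^0 $. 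Since $ \Phi_{q^j} $ is a biholomorphism, $ \tilde w^j \in B_{D^j}(z^j, R) $, and lemma \ref{K2} together with its proof shows these balls all lie in a fixed compact subset of $ D_\infty $ for $ j $ large; in particular $ \{ \tilde w^j \} $ is bounded. But $ w^j \notin Q(q^j, j\,\epsilon_j) $ gives $ \tilde w^j \notin \Phi_{q^j}(Q(q^j, j\,\epsilon_j)) $, and by the preliminary the latter contains a bidisc of polyradius at least $ (j^{1/2m}, j) $, which eventually engulfs the bounded set carrying $ \{\tilde w^j\} $. This contradiction establishes $ B_D(q, R) \subset Q(q, C_2\, d(q,\partial D)) $ for a suitable $ C_2 $ and all $ q $ close enough to $ \partial D $.

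For the lower inclusion, suppose no $ C_1 $ works: there are $ q^j \to \partial D $ and $ w^j \in Q(q^j, \epsilon_j/j) $ with $ d_D(q^j, w^j) \ge R $. Scaling as above and passing to a subsequence so that $ D^j \to D_\infty $, put $ \tilde w^j = \Phi_{q^j}(w^j) $, $ z^j = \Phi_{q^j}(q^j) $. By the preliminary $ \tilde w^j $ lies in $ \Phi_{q^j}(Q(q^j, \epsilon_j/j)) $, a bidisc about $ z^j $ of polyradius tending to $ (0,0) $, so $ \tilde w^j \to z^0 = (0,-1) \in D_\infty $. Biholomorphic invariance gives $ d_{D^j}(z^j, \tilde w^j) = d_D(q^j, w^j) \ge R $. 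On the other hand, since $ D^j \to D_\infty $ a fixed Euclidean ball $ B(z^0, \delta) $ is contained in $ D^j $ for $ j $ large, and the distance-decreasing property yields $ d_{D^j}(z^j, \tilde w^j) \le d_{B(z^0,\delta)}(z^j, z^0) + d_{B(z^0,\delta)}(z^0, \tilde w^j) \lesssim |z^j - z^0| + |\tilde w^j - z^0| \to 0 $ (equivalently, apply lemma \ref{K3} and corollary \ref{K4}). This contradicts $ d_{D^j}(z^j, \tilde w^j) \ge R $ and finishes the proof.

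As noted, the only nonroutine point is the first paragraph --- tracking Catlin's bidiscs $ Q(q, t) $, defined through the polynomial normalisations $ \phi^\zeta $, under the dilations $ \Delta_\zeta^\epsilon $ for radii $ t $ both above and below $ \epsilon_q $, and checking the resulting comparisons hold uniformly as $ q $ ranges over points near $ \partial D $; granting that, the stability of the Kobayashi balls and of the Kobayashi distance under scaling (lemmas \ref{K2}, \ref{K3}, corollary \ref{K4}) supplies the rest.
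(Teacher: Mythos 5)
Your argument is correct and is essentially the paper's proof: both inclusions by contradiction, transported by the scaling maps $\Delta_{\zeta^j}^{\epsilon_j}\circ\phi^{\zeta^j}$ to the scaled domains, with the stability lemmas \ref{K2}, \ref{K3}, \ref{E1} supplying the contradiction (your use of a Euclidean ball $B(z^0,\delta)\subset D^j$ for the lower inclusion, and of lemma \ref{K2} plus engulfing for the upper one, are only minor reorganisations of the paper's route via lemma \ref{K3} and the blow-up of the first coordinate). The ``preliminary'' you defer --- how $Q(q,\lambda\epsilon_q)$ transforms under $\Delta_{\zeta^j}^{\epsilon_j}\circ\phi^{\zeta^j}\circ(\phi^{q^j})^{-1}$, with the shear terms $\beta^{j,l}w_1^l/\epsilon_j$ controlled by Catlin's estimates $|d^l(\zeta^j)-d^l(q^j)|\lesssim\epsilon_j\tau(\zeta^j,\epsilon_j)^{-l}$ and $\tau(q^j,\lambda\epsilon_j)\approx$ the stated powers of $\lambda$ --- is exactly the explicit computation the paper carries out, so your outline is complete once that is written down.
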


\begin{proof} Firstly, observe that for each $q$ in $ D $, we can always
find constants $ C_i (q, R), i = 1,2 $ such that
\[
Q \big( q, C_1 (q, R) \ d(q, \partial D) \big) \subset B_{D} (q,
R) \subset Q \big( q, C_2(q, R)  \ d(q, \partial D ) \big)
\]
by virtue of the following fact: the Kobayashi ball $ B_{D} (q,
R)$ and the pseudo-bidisc $ Q \big(q, d(q, \partial D) \big) $ are
both open sets. The above proposition asserts that the constants
can be chosen independent of $q$.

\noindent To establish that $ Q \big( q, C_1 \ d(q, \partial D)
\big) \subset B_{D} (q, R) $ for some uniform constant $ C_1 $,
suppose that this is not true. Then there exists a sequence of
points $ \{ q^j \} \subset D $ converging to some point $q^0 \in
\partial D $ and a sequence of positive real numbers $ C_j
\rightarrow 0 $ as $ j \rightarrow \infty $ such that
\[
Q \left( q^j, C_j \ d(q^j, \partial D ) \right) \nsubseteq B_{D}
(q^j, R).
\]
Assume that $ q^0 =(0,0) $ without loss of generality. Let $ D =
\{ \rho (z, \bar{z}) < 0 \} $ where $ \rho $ is a smooth defining
function for $ \partial D $ which has the form
\[
\rho(z, \bar{z}) = 2 \Re z_2 + H_{2m}(z_1,\bar{z}_1 ) + o \big(
|z| ^{2m} + \Im z_2 \big)
\]
near the origin, with $H_{2m}$ a homogeneous subharmonic
polynomial of degree $ 2m \geq 2 $ in $ z_1$ and $\bar{z}_1 $
which does not contain any harmonic terms. Pick points $ \zeta^j
\in \partial D$ be closest to $ q^j$. To be more precise, $
\zeta^j = q^j + (0, \epsilon_j), \ \epsilon_j > 0 $. Choose points
$ p^j \in Q \big( q^j, C_j \ d(q^j, \partial D) \big ) $ such that
$ d_{D} ( p^j, q^j ) = R $. We now scale the domain $ D $ with
respect to the base point $ q^0 = (0,0) $ and the sequence $ \{
q^j \} $. Recall that
\[
\Delta_{\zeta^j}^{\epsilon_j} \circ \phi^{\zeta^j} \Big ( Q \big(
q^j, C_j \ d(q^j, \partial D) \big) \Big ) =
\]
\[
\Delta_{\zeta^j}^{\epsilon_j} \circ \phi^{\zeta^j} \circ
(\phi^{q^j})^{-1} \left( \Delta \Big(0, \tau \big( q^j, C_j \
d(q^j,\partial D) \big) \Big) \times \Delta \big(0, C_j \ d(q^j,
\partial D) \big) \right) =
\]
\[
\left \lbrace w: |w_1| < \frac{ \tau \big( q^j, C_j \ d(q^j,
\partial D ) \big) } { \tau( \zeta^j, \epsilon_j) },
\left| {\epsilon_j}^{-1} \Big( \displaystyle\sum_{l=1}^m
\alpha^{l,j} w_1^l \Big) + d^0(\zeta^j) w_2 + 1 \right| <
\frac{C_j d(q^j, \partial D ) d^0(q^j) } { \epsilon_j } \right
\rbrace
\]
where
\[
\alpha^{l,j} = \left (d^l(\zeta^j) - d^l(q^j) \right ){
\tau(\zeta^j, \epsilon_j) }^l
\]
Among other things, the following claims were proved in
\cite{Catlin-1989}.
\begin{itemize}

\item $ \tau \big( q^j, C_j \  d(q^j,
\partial D ) \big)  \lesssim C_j \tau( \zeta^j, \epsilon_j),$

\medskip

\item $ \big| d^l(q^j) \big| \lesssim \epsilon^j \big( \tau (
\zeta^j, \epsilon_j) \big)^{-l}, $

\medskip

\item $ \big| d^l(\zeta^j) \big| \lesssim \epsilon^j \big( \tau(
\zeta^j, \epsilon_j) \big)^{-l}, $

\medskip

\item $ d^0 (\zeta^j) \approx 1 $ and $ d^0 (q^j) \approx 1. $

\medskip

\end{itemize}
Moreover, $ d( q^j, \partial D) \approx \epsilon_j $. These
estimates together with the explicit description of the set
\[
\Delta_{\zeta^j}^{\epsilon_j} \circ \phi^{\zeta^j} \Big( Q \big(
q^j, C_j \ d(q^j, \partial D) \big) \Big)
\]
show that the sequence $ \left \lbrace
\Delta_{\zeta^j}^{\epsilon_j} \circ \phi^{\zeta^j} (p^j) \right
\rbrace  $ is bounded. Since $ C_j \rightarrow 0$, it follows that
\[
\Delta_{\zeta^j}^{\epsilon_j} \circ \phi^{\zeta^j}  (p^j)
\rightarrow (0,-1)
\]
as $ j \rightarrow \infty $. The scaled maps $
\Delta_{\zeta^j}^{\epsilon_j} \circ \phi^{\zeta^j} $ are
biholomorphisms and hence Kobayashi isometries from $ D $ onto the
dilated domains $ D^j := \big \{ z \in \mathbf{C}^2 : \rho \circ
\big( \phi^{\zeta^j} \big)^{-1} \circ \big(
\Delta_{\zeta^j}^{\epsilon_j} \big)^{-1}(z) < 0 \big \} $.
Therefore,
\begin{equation*}
d_{D^j} \left ( \Delta_{\zeta^j}^{\epsilon_j} \circ \phi^{\zeta^j}
(p^j), \Delta_{\zeta^j}^{\epsilon_j} \circ \phi^{\zeta^j} (q^j)
\right) = R  \label{R1}
\end{equation*}
which exactly means that
\[
d_{D^j} \left ( \Delta_{\zeta^j}^{\epsilon_j} \circ \phi^{\zeta^j}
(p^j), \Big(0,{-1}/{ d^0(\zeta^j)} \Big) \right) = R.
\]
Note that $ \left (0, {-1}/{d^0(\zeta^j)} \right) \rightarrow
(0,-1) $ as $ j \rightarrow \infty$. Applying lemma \ref{K3}, we
get after letting $ j \rightarrow \infty $ that
\[
d_{D_{\infty}} \big( (0,-1), (0,-1) \big) = R
\]
where $ D_{\infty}$ is the limiting domain obtained by scaling $D$
with respect to the base point $q^0$ and the sequence $ \{ q^j
\}$. This contradiction proves one part of the desired estimate.

\medskip

\noindent We now show that $ B_{D} (q, R) \subset Q \big( q, C_2 \
d(q, \partial D ) \big) $ for some constant $ C_2 $ independent of
$q$. Suppose this is not true. Then there exist a sequence of
points $ \{ q^j \} \subset D $ converging to some point $q^0 \in
\partial D $ and a sequence of positive real numbers $ C_j
\rightarrow \infty $ as $ j \rightarrow \infty $ such that
\[
Q \big( q^j, C_j \ d(q^j, \partial D ) \big) \nsupseteq B_{D}
(q^j, R).
\]
Assume that $ q^0 =(0,0) $ without loss of generality. Choose $
\zeta^j \in \partial D$ be closest to $ q^j$ and points $ p^j $ in
the complement of the closure of $ Q \big( q^j, C_j \ d(q^j,
\partial D ) \big) $ such that $ p^j \in B_{D} (q^j,
R) $. Then for $ \epsilon_j, D^j$ and $ D_{\infty}$ defined
analogously, we have
\[
d_{D^j} \left ( \Delta_{\zeta^j}^{\epsilon_j} \circ \phi^{\zeta^j}
(p^j), \Delta_{\zeta^j}^{\epsilon_j} \circ \phi^{\zeta^j} (q^j)
\right) < R.
\]
Said differently,
\begin{equation*}
d_{D^j} \left ( \Delta_{\zeta^j}^{\epsilon_j} \circ \phi^{\zeta^j}
(p^j), \Big(0, {-1}/ {d^0(\zeta^j)} \Big) \right) < R. \label{R2}
\end{equation*}
Fix $ \epsilon > 0 $ arbitrarily small and write $ \big (0,
-1/d^0(\zeta^j) \big) = z^j $ and $ (0,-1) = z^0 $ for brevity. It
now follows that
\begin{eqnarray*}
d_{D^j} \left (\Delta_{\zeta^j}^{\epsilon_j} \circ \phi^{\zeta^j}
(p^j), z^0 \right) & \leq & d_{D^j}
\left(\Delta_{\zeta^j}^{\epsilon_j} \circ \phi^{\zeta^j} (p^j),
z^j \right) + d_{D^j} \left ( z^j, z^0 \right) \nonumber \\
& \leq & R  + \epsilon
\end{eqnarray*}
for all $j$ large, where the second inequality uses the following
consequence of lemma \ref{K3}
\[
\displaystyle\lim_{j \rightarrow \infty} d_{D^j} \left ( z^j, z^0
\right) = d_{D_{\infty}} (z^0,z^0) = 0.
\]
It now follows from lemma \ref{E1} that
\[
 \Delta_{\zeta^j}^{\epsilon_j} \circ \phi^{\zeta^j}
(p^j) \in B_{D^j}(z^0, R + \epsilon) \subset B_{D_{\infty}} (z^0,
R + 2 \epsilon)
\]
for all $j$ sufficiently large. As a consequence, for all $j$
large
\begin{equation}
d_{D_{\infty}} \left(  \Delta_{\zeta^j}^{\epsilon_j} \circ
\phi^{\zeta^j} (p^j), z^0 \right)  < R + 2 \epsilon  \label{k14}
\end{equation}
On the other hand,
\begin{equation*}
\left | \left ( \Delta_{\zeta^j}^{\epsilon_j} \circ \phi^{\zeta^j}
(p^j) \right)_1 \right| \geq  \frac{ \tau \big( q^j, C_j \ d(q^j,
\partial D) \big) } { \tau ( \zeta^j, \epsilon_j) } \gtrsim C_j.
\end{equation*}
Since $ C_j \rightarrow \infty $ as $ j \rightarrow \infty $, we
conclude that
\[
\left| \left ( \Delta_{\zeta^j}^{\epsilon_j} \circ \phi^{\zeta^j}
(p^j) \right)_1 \right| \rightarrow + \infty
\]
as $ j \rightarrow \infty $. Consequently,
\[
\left|  \Delta_{\zeta^j}^{\epsilon_j} \circ \phi^{\zeta^j} (p^j) -
(0,-1) \right| \rightarrow + \infty
\]
as $ j \rightarrow \infty $. This is not possible in view of $
(\ref{k14}) $. Hence the result.
\end{proof}

\noindent It turns out that proceeding exactly as in the weakly
pseudoconvex case using the scaling methods described in section
$6$ one can prove the following statement:

\begin{prop} Let $ D$ be a smoothly bounded convex domain of
finite type in $ \mathbf{C}^n $. Then for all $ R > 0$, there
exists positive constants $ C_1$ and $C_2$ depending only on $ R$
and $ D $ such that for each $ q$ in $ D $ sufficiently close to $
\partial D$
\[
P \big( q, C_1 \ d(q, \partial D ) \big) \subset B_{D} (q, R)
\subset P \big( q, C_2 \ d(q, \partial D) \big)
\]
where $ P (q, r) $ is as in (\ref{t3}).
\end{prop}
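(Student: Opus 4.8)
The plan is to follow the proof of the preceding proposition, with Catlin's bidiscs and the $\mathbf{C}^2$ scaling replaced by McNeal's polydiscs $P(q,\epsilon)$ of (\ref{t3}) and the scaling of section 6. Fix $R>0$. For $q\in D$ near $\partial D$ put $\epsilon_q=-\rho(q)$, so that $\epsilon_q\approx d(q,\partial D)$ uniformly, and recall that the maps $S_q:=\big(\Lambda^{\epsilon_q}_q\big)^{-1}\circ U^{q,\epsilon_q}\circ T^{q,\epsilon_q}$ carry $q$ to the origin $z^0=('0,0)$ and that, along any sequence $q^j\to q^0\in\partial D$, the dilated domains $D^j=S_{q^j}(D)$ converge to the limiting convex domain $D_0$ (a biholomorph of $D_\infty$), which is complete hyperbolic. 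Both inclusions are proved by contradiction using this scaling together with the stability statements already established: lemma \ref{G3} (the convergence $d_{D^j}(z^0,\cdot)\to d_{D_0}(z^0,\cdot)$ uniformly on compact subsets of $D_0$) and lemma \ref{G4} (Hausdorff convergence of the Kobayashi balls, so that $B_{D^j}(z^0,R)\subset B_{D_0}(z^0,R+\epsilon)$ for every $\epsilon>0$ and all large $j$).

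Suppose the inclusion $P(q,C_1\,d(q,\partial D))\subset B_D(q,R)$ fails for every $C_1>0$. Then there are $q^j\to q^0\in\partial D$ and $C_j\to 0$ with $P(q^j,C_j\,d(q^j,\partial D))\not\subset B_D(q^j,R)$; since this polydisc is connected, contains its center $q^j$, and is contained in $D$ once $C_j<1$, I can pick $p^j$ in it with $d_D(p^j,q^j)=R$. As $S^j:=S_{q^j}$ is a Kobayashi isometry and $S^j(q^j)=z^0$, we get $d_{D^j}(S^j(p^j),z^0)=R$ for all $j$. The crucial point is that $S^j\big(P(q^j,C_j\,d(q^j,\partial D))\big)$ shrinks to $\{z^0\}$: writing this set out explicitly by composing $S^j$ with the inverse of McNeal's normalization at scale $C_j\epsilon_{q^j}$, exactly as the scaled Catlin bidisc was written out in the preceding proof, one sees it is contained in the polydisc whose $i$-th polyradius is $\lesssim\tau_i(q^j,C_j\epsilon_{q^j})/\tau_i(q^j,\epsilon_{q^j})$, and each such ratio tends to $0$ since $C_j\to 0$ and McNeal's estimates in \cite{Mcneal-1992}, \cite{Mcneal-1994} give $\tau_i(q,c\epsilon)\lesssim c^{1/2m}\,\tau_i(q,\epsilon)$ for $0<c\le 1$. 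Thus $S^j(p^j)\to z^0$, so $S^j(p^j)$ eventually lies in a fixed compact subset of $D_0$, and lemma \ref{G3} together with the continuity of $d_{D_0}(z^0,\cdot)$ forces $R=\lim_j d_{D^j}(S^j(p^j),z^0)=d_{D_0}(z^0,z^0)=0$, a contradiction.

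Suppose next that the inclusion $B_D(q,R)\subset P(q,C_2\,d(q,\partial D))$ fails for every $C_2>0$. Then there are $q^j\to q^0\in\partial D$, $C_j\to\infty$, and points $p^j\in B_D(q^j,R)$ lying outside the closure of $P(q^j,C_j\,d(q^j,\partial D))$. Scaling as above, $d_{D^j}(S^j(p^j),z^0)<R$, so by lemma \ref{G4} we have $S^j(p^j)\in B_{D_0}(z^0,R+\epsilon)$ for all large $j$; since $D_0$ is complete hyperbolic this ball is relatively compact in $D_0$, so $\{|S^j(p^j)|\}$ is bounded. On the other hand $p^j$ violates at least one of the defining inequalities of $P(q^j,C_j\epsilon_{q^j})$, so, after the same explicit computation, some coordinate of $S^j(p^j)$ has modulus $\gtrsim\tau_i(q^j,C_j\epsilon_{q^j})/\tau_i(q^j,\epsilon_{q^j})\gtrsim C_j^{1/2m}\to\infty$; hence $|S^j(p^j)|\to\infty$, contradicting the previous boundedness. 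Therefore both inclusions hold, with constants $C_1,C_2$ depending only on $R$ and $D$.

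The main obstacle is the bookkeeping hidden in the two assertions about the scaled polydisc. McNeal's special coordinates depend on the scale $\epsilon$, so one must pass between the frame attached to $\epsilon_{q^j}$ (used to build $S^j$) and the frame attached to $C_j\epsilon_{q^j}$ (used to define $P(q^j,C_j\epsilon_{q^j})$) and verify, uniformly in $j$, that this costs only the factors $\tau_i(q^j,C_j\epsilon_{q^j})/\tau_i(q^j,\epsilon_{q^j})$, using the monotonicity and growth-rate estimates for the $\tau_i$'s from \cite{Mcneal-1992}, \cite{Mcneal-1994} and the relation $d(q^j,\partial D)\approx\epsilon_{q^j}$. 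Once this is in place, the remainder of the argument is a routine repetition of the weakly pseudoconvex case and invokes only lemmas \ref{G3} and \ref{G4}.
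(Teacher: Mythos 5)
Your proposal is correct and takes exactly the approach the paper intends: the paper's own "proof" consists of the single remark that one proceeds as in the weakly pseudoconvex case using the scaling of section 6, and you have carried that out, substituting McNeal's polydiscs and the polyradii estimates $\tau_i(q,c\epsilon)/\tau_i(q,\epsilon)\approx c^{\text{power}}$ for the Catlin quantities, and invoking lemmas \ref{G3} and \ref{G4} in place of \ref{K3} and \ref{E1}. The frame-change issue you flag (the McNeal coordinates at scale $C_j\epsilon_{q^j}$ versus scale $\epsilon_{q^j}$) is genuinely the only nontrivial bookkeeping, it is handled by the standard comparability estimates for the $\tau_i$'s in \cite{Mcneal-1992} and \cite{Mcneal-1994}, and your proposal supplies strictly more detail than the paper does.
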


%%%%%%%%%%%%%%%%%%%%%%%%%%%%%%%%%%%%%%%%%%%%%%%%%%%%%%%%%%%%%%%%%%%%%%%%%%%%%%%%%%%

\section{Isometries of the Kobayashi metric on strongly pseudoconvex
domains}

\noindent Let $ D$ be a bounded strongly pseudoconvex  domain in $
\mathbf{C}^n$ with a $ {C}^2$-smooth defining function $\rho^0$ 
defined on a neighbourhood $ U$ of the closure of $D$.
Choose $ \rho^k \in {C}^2(U; \mathbf{R}) $ such that $ \rho^k$
converges to $\rho^0$ in the $ {C}^2$-topology, i.e.,
\[
\| \rho^k - \rho^0  \| _{ C^2 (U)} = \displaystyle \sum _{j=1}^n
\displaystyle \sup _{ z \in U, | \alpha| \leq 2 } \Big| D^{\alpha}
\big( \rho^k_j(z) - \rho^0_j(z) \big) \Big| \rightarrow 0
\]
as $ k \rightarrow \infty $. Setting $ D^k = \big \{ z \in U :
\rho^k(z) < 0 \big \} $, observe that there exist uniform positive
constants $ C_1$ and $C_2$ such that for every $ z $ in $
N_{\epsilon} (\partial D)$, an $ \epsilon $-neighbourhood of $
\partial D $,
\[
C_1 d(z, \partial D) \leq d(z, \partial D^k) \leq C_2 d(z,
\partial D ).
\]
where $ d( \cdot, \partial D ) $ is the Euclidean distance to the
boundary.

\begin{prop} \label{F2} Let $ D $ and $ D^k$'s be as described above.
Let $ q^0$ and $ q^1$ be two distinct boundary points of $ D$.
Then for a suitable uniform constant $C$ and for all $k$ large,
\[
d_{D^k} (a,b) \geq  - (1/2) \log d(a, \partial D^k) - (1/2) \log
d(b, \partial D^k) - C
\]
whenever $ a, b \in D, a $ is near $q^0$ and $b$ is near $ q^1$.
\end{prop}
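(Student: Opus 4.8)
The plan is to run the argument of Forstneric and Rosay \cite{Forstneric&Rosay-1987} for a single strongly pseudoconvex domain and to make every constant occurring in it independent of $k$; the uniformity is exactly what the hypothesis $\rho^k\to\rho^0$ in $C^2(U)$ buys, and it is packaged here into Lemmas \ref{F8} and \ref{F18}.

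First I would fix neighbourhoods $U_0\ni q^0$ and $U_1\ni q^1$ with $\overline{U_0}\cap\overline{U_1}=\emptyset$, small enough that $\overline{U_0}$ and $\overline{U_1}$ lie in the region where $\partial D$, and hence $\partial D^k$ for all large $k$, is uniformly strongly pseudoconvex. On $U_0\cap D^k$ and $U_1\cap D^k$ one then has the $C^2$-stable version of the standard normal-direction lower bound for the Kobayashi metric near a strongly pseudoconvex boundary point, of the shape
\[
F^K_{D^k}(\zeta,v)\;\ge\;\bigl(1-o(1)\bigr)\,\frac{\bigl|\langle\partial\rho^k(\zeta),v\rangle\bigr|}{2\,d(\zeta,\partial D^k)\,|\nabla\rho^k(\zeta)|}
\]
as $\zeta\to\partial D^k$ inside $U_0$ (respectively $U_1$), where the $o(1)$ and the implicit comparability $d(\cdot,\partial D^k)\approx-\rho^k$ near $\partial D^k$ are uniform in $k$; this, together with the uniform localization of the Kobayashi metric near the peak points $q^0,q^1$, is precisely the content of Lemmas \ref{F8} and \ref{F18}.

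Next, given $a$ near $q^0$, $b$ near $q^1$ and an arbitrary piecewise $C^1$ curve $\gamma:[0,1]\to D^k$ with $\gamma(0)=a$, $\gamma(1)=b$, I would let $t_0$ be the first $t$ with $\gamma(t)\notin U_0$ and $t_1$ the last $t$ with $\gamma(t)\notin U_1$; since $U_0,U_1$ are disjoint one has $0<t_0\le t_1<1$, and
\[
L_{D^k}(\gamma)\;\ge\;\int_0^{t_0}F^K_{D^k}(\gamma,\dot\gamma)\,dt\;+\;\int_{t_1}^{1}F^K_{D^k}(\gamma,\dot\gamma)\,dt .
\]
On $[0,t_0]$ the curve stays in $\overline{U_0}\cap D^k$; inserting the estimate above, using $\Re\langle\partial\rho^k(\gamma),\dot\gamma\rangle=\tfrac12\frac{d}{dt}\rho^k(\gamma)$ and integrating the logarithmic derivative $\frac{d}{dt}\log(-\rho^k(\gamma))$ yields a bound of the form $\tfrac12\bigl(1-o(1)\bigr)\bigl(\log(-\rho^k(\gamma(t_0)))-\log(-\rho^k(a))\bigr)$, and symmetrically for the integral over $[t_1,1]$. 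Choosing $U_0,U_1$ appropriately — and, where the curve would otherwise exit $U_0$ or enter $U_1$ too near the boundary, invoking once more the localization of the metric at $q^0,q^1$ and the separation $q^0\ne q^1$ — keeps the terms $\log(-\rho^k(\gamma(t_0)))$ and $\log(-\rho^k(\gamma(t_1)))$ bounded below, so that
\[
L_{D^k}(\gamma)\;\ge\;-\tfrac12\log d(a,\partial D^k)-\tfrac12\log d(b,\partial D^k)-C
\]
with $C$ uniform in $k$; taking the infimum over $\gamma$ gives the proposition.

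The step I expect to be the main obstacle is the single word \emph{uniform}: one must check that the local straightening of $\partial D^k$ near $q^0$ and $q^1$, the associated supporting/peak functions, the localization inequality for the Kobayashi metric, and the complex-normal asymptotics of $F^K_{D^k}$ can all be arranged with constants independent of $k$ — which is exactly where $\rho^k\to\rho^0$ in $C^2$ enters, and is the role of Lemmas \ref{F8} and \ref{F18}. The secondary difficulty, already present in the non-parametrized Forstneric--Rosay estimate, is to arrange that the two logarithmic contributions genuinely localize near the two distinct boundary points, so that they add with the sharp coefficient $1/2$ each rather than being capped or halved.
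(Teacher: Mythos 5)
Your global architecture coincides with the paper's: localize near each of the two separated boundary points, bound from below the Kobayashi length of the portion of an arbitrary curve lying in each neighbourhood by $-\tfrac12\log d(\cdot,\partial D^k)-C$ with $C$ uniform in $k$, and add the two contributions. Indeed the paper's proof of the proposition is literally two applications of lemma \ref{F8}, which is exactly such a localized integrated bound, $d_{D^k}(z, D^k\setminus D^k_1)\ge -\tfrac12\log d(z,\partial D^k)-C'$. Where you differ is in how you propose to prove that localized bound, and this is where there is a genuine gap.

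You integrate only the complex-normal component of the metric, so your lower bound for $\int_0^{t_0}F^K_{D^k}(\gamma,\dot\gamma)\,dt$ is essentially $\tfrac12\log(-\rho^k(\gamma(t_0)))-\tfrac12\log(-\rho^k(a))$, and you need $\log(-\rho^k(\gamma(t_0)))$ bounded below at the first exit time from $U_0$. But nothing prevents $\gamma$ from leaving $U_0$ through a point arbitrarily close to $\partial D^k$ --- a curve sliding tangentially along the boundary out of $U_0$ does exactly this --- and then the normal-component estimate returns nothing. Invoking ``localization'' and ``$q^0\ne q^1$'' does not repair this: those facts say nothing about where on $\partial U_0$ the curve exits. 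To close the gap you must either (a) also use the tangential bound $F^K_{D^k}(z,v)\gtrsim |v|/\sqrt{d(z,\partial D^k)}$ and check that exiting $U_0$ at height $h$ costs at least $c/\sqrt{h}\ge\tfrac12\log(1/h)-C$, so the two regimes together always produce $-\tfrac12\log d(a,\partial D^k)-C$; or (b) replace $-\rho^k$ by the potential $1-|P_{B^k}|$ built from the uniform peak function of lemma \ref{F4}, which is bounded away from $0$ on the whole exit set $D^k\setminus D^k_3$ \emph{including near} $\partial D^k$ --- this is what the paper does in lemma \ref{F8} (pull back by $P_{B^k}$, estimate by the Poincar\'e metric of $\Delta$, and absorb the localization error via the Hadamard three-circles argument of lemma \ref{F7}). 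Two smaller points: your displayed infinitesimal estimate has a spurious factor $2$ relative to the sharp half-space bound $F^K_H(z,v)=|\langle\partial\rho,v\rangle|/(|\nabla\rho|\,\mathrm{dist}(z,\partial H))$, and combined with $|\langle\partial\rho^k,\dot\gamma\rangle|\ge\tfrac12|\tfrac{d}{dt}(\rho^k\circ\gamma)|$ it would only give coefficient $1/4$; and lemmas \ref{F8} and \ref{F18} are not ``precisely'' that infinitesimal estimate --- \ref{F8} is the integrated localized lower bound you are trying to prove, while \ref{F18} is an upper bound along the normal used for proposition \ref{F3}.
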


\noindent In case $ a, b $ are close to the same boundary point,
the upper estimate due to Forstneric and Rosay also remains stable
under small $ C^2 $ perturbations.

\begin{prop} \label{F3} Let $ D $ and $ D^k$'s be as described above
and let $ q^0 \in \partial D$. Then there exists a neighbourhood
$V = V (q^0)$ and a uniform constant $ C > 0$ such that
\begin{multline*}
d_{D^k}(a,b)  \leq  - (1/2) \log {d(a,\partial D^k)} +
 (1/2) \log \big( d(a,\partial D^k) + |a - b| \big) \\
 + (1/2) \log \big( d(b,\partial D^k) + |a - b| \big) - (1/2) \log {d(b,\partial D^k)} + C
\end{multline*}
for all $a, b \in   V \cap D$ and for all $ k$ large.
\end{prop}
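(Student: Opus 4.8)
The plan is to revisit the proof of the upper estimate of Forstneric and Rosay \cite{Forstneric&Rosay-1987} for the fixed domain $D$ and to verify that every quantity entering it is controlled by data depending continuously on the defining function in the $C^2$-topology, so that the constant $C$ can be chosen uniformly for the family $\{ D^k \}$. We work with $a,b\in V\cap D^k$ (for fixed $a,b\in V\cap D$ this holds once $k$ is large, and this is all that is needed below); bounding $d_{V\cap D^k}(a,b)$ from above suffices, since $V\cap D^k\subset D^k$ gives $d_{D^k}\le d_{V\cap D^k}$, and $d(a,\partial(V\cap D^k))=d(a,\partial D^k)$ when $a$ is near $q^0$. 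Since $\rho^0$ is strictly plurisubharmonic on a neighbourhood of $\partial D$ and $\rho^k\to\rho^0$ in $C^2(U)$, the functions $\rho^k$ are strictly plurisubharmonic on a fixed neighbourhood of $\partial D^k$ with Levi form bounded below independently of $k$. From this one extracts, for a neighbourhood $V'\subset V$ of $q^0$, a radius $r_0>0$ and an index $k_0$, all independent of $k\ge k_0$, the single uniform geometric fact on which the construction rests: for each $\zeta\in\partial D^k\cap V'$ the Euclidean ball $B_\zeta$ internally tangent to $\partial D^k$ at $\zeta$ of radius $r_0$ lies in $V\cap D^k$. (In particular, at depth $\tau$ along the inner normal $D^k$ then contains a transverse disc of radius $\approx(2r_0\tau)^{1/2}$, which is what is needed in the complex-tangential direction.) A convenient way to see this is to straighten $\partial D^k$ near $q^0$ by a polynomial biholomorphism $\Psi^k$ of degree $\le 2$ determined by the $2$-jet of $\rho^k$ at a boundary point of $D^k$ close to $q^0$; these converge to $\Psi^0$ in $C^2$, and in the new coordinates $D^k$ is uniformly close to the model paraboloid $\{2\,\Re z_n+|'z|^2<0\}$, which has the property near the origin.

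Granting this, the estimate follows from a competitor curve chosen exactly as in \cite{Forstneric&Rosay-1987}. Write $t=d(a,\partial D^k)$, $s=d(b,\partial D^k)$, $\delta=|a-b|$, and let $\nu_a,\nu_b$ be the inner unit normals at the feet of $a$ and $b$. The curve $\gamma$ from $a$ to $b$ runs in three phases: (1) push $a$ inward along $\nu_a$ until the depth reaches a level comparable to $\min(t+\delta,\,r_0/2)$; (2) move, staying at depth $\gtrsim$ that level, to the point lying over the foot of $b$ at the same depth; (3) push out along $-\nu_b$ to $b$. On phase (1) the point at parameter $u$ lies at depth $\approx t+u$, the velocity is essentially radial toward the centre of $B_{\zeta_a}$ ($\zeta_a$ the foot of $a$), and inclusion into $B_{\zeta_a}$ bounds the infinitesimal metric by $F^K_{V\cap D^k}\lesssim 1/\bigl(2(t+u)\bigr)$, so the length of phase (1) is at most $(1/2)\log\bigl((t+\delta)/t\bigr)+C$; phase (3) contributes $(1/2)\log\bigl((s+\delta)/s\bigr)+C$ in the same way. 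On phase (2) one moves a Euclidean distance $\lesssim\delta$ at a depth $\gtrsim\delta$, hence (taking $V'$ small so that $\delta\lesssim r_0$) within a transverse disc of radius $\approx(2r_0(t+\delta))^{1/2}\gtrsim\delta$ contained in $D^k$, so the tangent-ball comparison bounds its length by a constant. Every implied constant comes only from $r_0$, from the $C^2$-data controlled by $\|\rho^k-\rho^0\|_{C^2(U)}\to 0$, and from the comparison $C_1\,d(z,\partial D)\le d(z,\partial D^k)\le C_2\,d(z,\partial D)$ recorded just before proposition \ref{F2}; hence $C$ is uniform in $k\ge k_0$. Summing and writing $\log\bigl((t+\delta)/t\bigr)=\log(t+\delta)-\log t$ gives exactly the asserted inequality, after enlarging $C$ if necessary to absorb the routine adjustments in the regimes where $t$, $s$ or $\delta$ fails to be small.

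The main obstacle is the uniformity asserted in the first paragraph. One has to arrange the normalising coordinate change to depend only on finitely many derivatives of $\rho^k$ at a boundary point, check that these depend continuously on $\rho^k$ in $C^2(U)$, and conclude that the remainder in the straightened defining function is $o(|z|^2)$ uniformly in $k$ on a fixed ball, so that $r_0$ can be taken independent of $k$. This is precisely the $C^2$-stability content of lemmas \ref{F8} and \ref{F18} alluded to in the introduction; once they are in place the rest is the Forstneric-Rosay construction run verbatim with constants that no longer refer to $k$, and the case $D^k\equiv D$ recovers their original estimate.
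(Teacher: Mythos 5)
Your proof is correct and follows essentially the same route as the paper: both arguments push $a$ and $b$ inward along the normals to depth comparable to $|a-b|$ (each normal segment contributing the two $\tfrac{1}{2}\log$ terms, exactly as in lemma \ref{F18}) and then join the pushed-in points $a^k, b^k$ at bounded Kobayashi cost, with all constants uniform in $k$ because $\rho^k \to \rho^0$ in the $C^2$-topology controls the normal geometry uniformly. The only cosmetic difference is the middle segment, where the paper uses the explicit affine disc $\phi^k(\lambda) = a^k + \lambda(b^k - a^k)$ on $\Delta(0,3/5) \cup \Delta(1,3/5)$ instead of your interior-tangent-ball integration, but both devices give the same uniform constant.
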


\noindent The proof of proposition \ref{F3} will be given after
lemma \ref{F18} and does not use any of the arguments presented
till then. We proceed with the proof of proposition \ref{F2}
first. This will need several steps. To begin with, we need to
localise the Kobayashi metric. The following assertion about local
peak functions will be useful.

\begin{lem} \label{F4} There exist uniform positive constants $
C_1,C_2$ and $r$ such that for every $ A^k \in \partial D^k$, if $
\zeta \in B(A^k,r) \cap \partial D^k$ then there exists a local
peak function $ P_{\zeta}$ at $ \zeta, P_{\zeta} \in
\mathcal{O}(D^k_1) \cap {C}(\overline{D^k_1})$ where $ D^k_1 =
B(A^k ,r) \cap D^k$ is a neighbourhood of $ A^k$ of uniform size
with the property that
\[
C_1 | 1- P_{\zeta}(z)| \leq |z- \zeta| \leq C_2 \sqrt{|1-
P_{\zeta}(z)|}
\]
for all $z \in D^k_1$.
\end{lem}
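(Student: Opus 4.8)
The plan is to run the classical Levi--polynomial construction of a holomorphic peak function at a strongly pseudoconvex boundary point, but with every constant tracked so that it depends only on $D$ and the fixed neighbourhood $U$, not on $k$ or $\zeta$. The first step is to extract the uniform input from the hypothesis $\rho^k\to\rho^0$ in $C^2(U)$. Shrinking $U$ if necessary we may assume $\rho^0$ is strongly plurisubharmonic on $N_\epsilon(\partial D)$; then for all large $k$ the complex Hessian of $\rho^k$ is bounded below on $N_\epsilon(\partial D)$ by a fixed positive multiple of the identity, the first and second derivatives of $\rho^k$ are uniformly bounded there, and --- since the second derivatives of $\rho^k$ converge uniformly to those of $\rho^0$, which are uniformly continuous on $\overline U$ --- the families $\{D^\alpha\rho^k:|\alpha|=2\}_k$ are equicontinuous, i.e.\ admit a common modulus of continuity.

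Next, for $\zeta\in\partial D^k\cap N_\epsilon(\partial D)$ I would introduce the Levi polynomial
\[
F^k_\zeta(z)=\sum_{j=1}^n\frac{\partial\rho^k}{\partial z_j}(\zeta)\,(z_j-\zeta_j)+\frac12\sum_{j,l=1}^n\frac{\partial^2\rho^k}{\partial z_j\,\partial z_l}(\zeta)\,(z_j-\zeta_j)(z_l-\zeta_l),
\]
an entire polynomial with uniformly bounded coefficients. Taylor-expanding $\rho^k$ about $\zeta$, writing the quadratic part as $2\Re F^k_\zeta$ plus the Levi form of $\rho^k$ at $\zeta$, and using the uniform lower bound on the complex Hessian together with the common modulus of continuity to dominate the remainder, one obtains uniform constants $r_0>0$ and $\gamma>0$, independent of $k$ and $\zeta$, with
\[
\rho^k(z)\ \ge\ 2\,\Re F^k_\zeta(z)+\gamma\,|z-\zeta|^2\qquad\text{for }z\in B(\zeta,2r_0).
\]
Since $\rho^k\le0$ on $\overline{D^k}$, this gives $\Re F^k_\zeta(z)\le-\tfrac{\gamma}{2}|z-\zeta|^2$ on $\overline{D^k}\cap B(\zeta,2r_0)$, with equality only at $z=\zeta$.

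Finally, fixing a uniform $r\le r_0$ and a uniform $\epsilon_0>0$, I would set $P_\zeta(z)=\exp\bigl(\epsilon_0 F^k_\zeta(z)\bigr)$ and $D^k_1=B(A^k,r)\cap D^k$; note $D^k_1\subset B(\zeta,2r)$ when $|A^k-\zeta|<r$, and $\zeta\in\overline{D^k_1}$. Then $P_\zeta$ is entire (hence in $\mathcal O(D^k_1)\cap C(\overline{D^k_1})$), $P_\zeta(\zeta)=1$, and $|P_\zeta(z)|=\exp(\epsilon_0\Re F^k_\zeta(z))<1$ on $\overline{D^k_1}\setminus\{\zeta\}$, so it is a local peak function at $\zeta$ of the stated kind. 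For the two-sided estimate put $w=\epsilon_0 F^k_\zeta(z)$: since $\Re w\le0$ on $\overline{D^k_1}$ one has $|1-e^{w}|\le|w|$, and $|w|\lesssim|z-\zeta|$ by the uniform bound on the coefficients of $F^k_\zeta$, which yields $|1-P_\zeta(z)|\lesssim|z-\zeta|$; on the other hand $|1-e^{w}|\ge1-e^{\Re w}\ge\tfrac12(-\Re w)\ge\tfrac{\epsilon_0\gamma}{4}|z-\zeta|^2$ provided $r$ is small enough that $-\Re w$ stays bounded, which gives $|z-\zeta|\lesssim\sqrt{|1-P_\zeta(z)|}$. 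Absorbing the implicit (uniform) constants into $C_1$ and $C_2$ finishes the argument.

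The only delicate point is the uniformity of $r_0$ and $\gamma$ in the inequality $\rho^k(z)\ge2\Re F^k_\zeta(z)+\gamma|z-\zeta|^2$; this is precisely where one uses compactness of $\partial D$ together with the equicontinuity of the complex Hessians of the $\rho^k$ furnished by $C^2$-convergence, so that a single radius works simultaneously for all $k$ and all $\zeta$. Everything else is the standard strongly pseudoconvex peak-function construction carried out with uniform bookkeeping.
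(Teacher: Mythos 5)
Your proposal is correct and is essentially the paper's own argument: the paper simply writes down the Levi polynomial at the boundary point and observes that, because $\rho^k \to \rho^0$ in the $C^2$-topology, the neighbourhoods $D^k_1$ and all constants in the Forstneric--Rosay peak-function computation can be chosen uniformly in $k$ and $\zeta$ --- which is exactly the uniform bookkeeping you carry out explicitly (via a common Hessian lower bound and a common modulus of continuity for the second derivatives), followed by the standard exponentiation and the elementary estimates $|1-e^w|\le |w|$ and $1-e^{\Re w}\gtrsim -\Re w$. The only small point worth noting is your ``we may assume $\rho^0$ is strongly plurisubharmonic near $\partial D$'': an arbitrary $C^2$ defining function need not have this property, but the reduction is legitimate after replacing $\rho^k$ and $\rho^0$ by $e^{L\rho^k}-1$ and $e^{L\rho^0}-1$ for a large uniform $L$, which preserves the domains and the $C^2$-convergence.
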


\begin{proof} Note that the local peak function at $ A^k \in
\partial D^k$ is given by
\[
P_{A^k}(z)= \displaystyle \sum_{i=1}^n \frac{\partial
\rho^k}{\partial z_i} (A^k) ( z_i -A^k_i) + \frac{1}{2}
\displaystyle \sum_{i,j=1}^n  \frac{\partial^2 \rho^k}{\partial
z_i
\partial z_j} (A^k)( z_i -A^k_i)( z_j -A^k_j).
\]
Since $ \rho^k $ converges to $ \rho^0 $ in the $ C^2 $-topology, 
the neighbourhoods $ D^k_1$ can be chosen uniformly and
the computations in \cite{Forstneric&Rosay-1987} remain valid.
\end{proof}

\noindent The following simple consequence of the Schwarz lemma on
the unit disc will be needed:

\begin{lem}\label{F5} For each $ \lambda > 0$ there exists a positive
constant $ C_{\lambda} $ such that for every $ \epsilon > 0$ and
every holomorphic disc $ g: \Delta \rightarrow \Delta $ satisfying
$ | 1-g(0) | \leq \epsilon $ we have
\[
| 1-g(\zeta)| \leq \lambda
\]
whenever $ |\zeta| \leq 1-C_{\lambda} \epsilon $.
\end{lem}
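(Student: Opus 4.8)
The plan is to obtain the conclusion directly from the Schwarz--Pick lemma together with an elementary estimate on the location of a pseudo-hyperbolic ball whose centre lies near the point $1$. First I would dispose of a degenerate case: if $\epsilon \ge \lambda/2$, then the choice $C_{\lambda} = 2/\lambda$ gives $1 - C_{\lambda}\epsilon \le 0$, so there is no $\zeta \in \Delta$ with $|\zeta| \le 1 - C_{\lambda}\epsilon$ and the assertion holds vacuously. Hence I may assume $\epsilon < \lambda/2$; in particular $\epsilon$ is small and $1 - C_{\lambda}\epsilon > 0$.

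Next, write $a = g(0)$ and, for a fixed $\zeta \in \Delta$, put $w = g(\zeta)$. Since $g \colon \Delta \rightarrow \Delta$ is holomorphic, the Schwarz--Pick lemma yields
\[
\left| \frac{w - a}{1 - \bar{a} w} \right| \le |\zeta|,
\]
and hence $|w - a| \le |\zeta|\,|1 - \bar{a} w|$. The key manipulation is the splitting $1 - \bar{a} w = (1 - \bar{a}) + \bar{a}(1 - w)$, which gives $|1 - \bar{a} w| \le |1 - \bar{a}| + |1 - w| \le \epsilon + |1 - w|$, using $|1 - \bar{a}| = |1 - a| \le \epsilon$ and $|a| \le 1$. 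Substituting this back,
\[
|1 - w| \le |1 - a| + |w - a| \le \epsilon + |\zeta|\bigl(\epsilon + |1 - w|\bigr),
\]
so that $(1 - |\zeta|)\,|1 - w| \le \epsilon(1 + |\zeta|) \le 2\epsilon$, and therefore $|1 - w| \le 2\epsilon / (1 - |\zeta|)$.

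Finally I set $C_{\lambda} = 2/\lambda$. If $|\zeta| \le 1 - C_{\lambda}\epsilon$, then $1 - |\zeta| \ge C_{\lambda}\epsilon$, and the previous estimate gives $|1 - g(\zeta)| = |1 - w| \le 2\epsilon/(C_{\lambda}\epsilon) = \lambda$, which is exactly the claim. There is essentially no hard step here: the argument is a short computation once one observes the splitting $1 - \bar{a} w = (1 - \bar{a}) + \bar{a}(1 - w)$. The only points that require a little care are the reduction to small $\epsilon$ (so that $1 - |\zeta|$ is genuinely comparable to $C_{\lambda}\epsilon$) and the recognition that the rate $C_{\lambda}\epsilon$ is precisely what is needed to keep the Euclidean image of the subdisc $\{|\zeta| \le 1 - C_{\lambda}\epsilon\}$ within distance $\lambda$ of $1$.
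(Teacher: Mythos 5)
Your proof is correct, and it follows exactly the route the paper indicates (the paper states the lemma without proof, calling it a ``simple consequence of the Schwarz lemma''): the Schwarz--Pick inequality $|w-a| \le |\zeta|\,|1-\bar a w|$ combined with the splitting $1-\bar a w = (1-\bar a)+\bar a(1-w)$ yields $|1-g(\zeta)| \le 2\epsilon/(1-|\zeta|)$, from which the claim follows with $C_{\lambda}=2/\lambda$. The only microscopic slip is in the degenerate case $\epsilon=\lambda/2$, where $\zeta=0$ is still admissible, but there the conclusion holds trivially since $|1-g(0)|\le\epsilon\le\lambda$.
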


\noindent Using this it is possible to control the behaviour of
analytic discs in $ D^k_1 $ near $ A^k$.

\begin{lem}\label{F6} Let $ D, D^k $ and $ D^k_1$ be
as described before. For $\eta > 0$, there exist uniform positive
constants $C$ and $\delta$ such that for every holomorphic disc $
h : \Delta \rightarrow D^k_1 $ satisfying $ d( h^k(0), \partial
D^k) < \delta $,
\[
\big| h^k(z) - h^k(0) \big| < \eta
\]
whenever $ | z | \leq 1- C d \big( h^k(0), \partial D^k \big) $.
\end{lem}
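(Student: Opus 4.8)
The plan is to reduce the assertion to the one-variable Schwarz lemma estimate recorded in lemma \ref{F5}, using the uniform local peak functions of lemma \ref{F4}. Fix $\eta>0$ and let $C_1,C_2,r$ be the uniform constants furnished by lemma \ref{F4}. Put
\[
\lambda=\Big(\frac{\eta}{2C_2}\Big)^2,
\]
let $C_\lambda$ be the constant produced by lemma \ref{F5} for this value of $\lambda$, and set $C=C_\lambda/C_1$. Finally choose $\delta\in(0,\eta/2)$ small enough that $C\delta<1$ and that, whenever $z\in D^k_1$ with $d(z,\partial D^k)<\delta$, the nearest boundary point of $D^k$ to $z$ lies in the part of $\partial D^k$ on which lemma \ref{F4} provides peak functions; this last requirement is harmless and only forces a routine shrinking of the neighbourhoods $D^k_1$.

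Now I would argue as follows. Let $h^k\colon\Delta\to D^k_1$ be holomorphic with $d_k:=d\big(h^k(0),\partial D^k\big)<\delta$, and let $\zeta\in\partial D^k$ be the point closest to $h^k(0)$, so that $|h^k(0)-\zeta|=d_k$. By the choice of $\delta$, lemma \ref{F4} supplies a peak function $P_\zeta\in\mathcal{O}(D^k_1)\cap\mathcal{C}(\overline{D^k_1})$ with $P_\zeta(\zeta)=1$, $|P_\zeta|<1$ on $\overline{D^k_1}\setminus\{\zeta\}$, and
\[
C_1\,|1-P_\zeta(z)|\le|z-\zeta|\le C_2\,\sqrt{|1-P_\zeta(z)|}\qquad(z\in D^k_1).
\]
Then $g:=P_\zeta\circ h^k\colon\Delta\to\Delta$ is holomorphic, and the left inequality at $z=h^k(0)$ gives $|1-g(0)|=|1-P_\zeta(h^k(0))|\le d_k/C_1$. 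Applying lemma \ref{F5} to $g$ with $\epsilon=d_k/C_1$ yields $|1-g(z)|\le\lambda$ for every $z$ with $|z|\le 1-C_\lambda d_k/C_1=1-C\,d_k$, and for such $z$ the right inequality above, evaluated at $h^k(z)\in D^k_1$, gives
\[
|h^k(z)-\zeta|\le C_2\sqrt{|1-P_\zeta(h^k(z))|}=C_2\sqrt{|1-g(z)|}\le C_2\sqrt{\lambda}=\frac{\eta}{2}.
\]
Combining this with $|h^k(0)-\zeta|=d_k<\delta<\eta/2$ and the triangle inequality, we obtain $|h^k(z)-h^k(0)|<\eta$ whenever $|z|\le 1-C\,d_k$, which is exactly the claimed estimate.

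The argument is short and mostly formal; the one point genuinely deserving attention is the uniformity of all constants in $k$. This is not in fact an obstacle, since lemma \ref{F5} is independent of $k$ and the peak function estimates of lemma \ref{F4} are uniform in $k$ precisely because $\rho^k\to\rho^0$ in the $C^2$ topology, which allows the neighbourhoods $D^k_1$ and the constants $C_1,C_2,r$ to be fixed once and for all. The remaining technicality --- that the nearest boundary point $\zeta$ to $h^k(0)$ should lie inside the set on which lemma \ref{F4} is available --- is handled by taking $\delta$ small, so that $h^k(0)$ and hence $\zeta$ is close to $\partial D^k$, together with a minor a priori shrinkage of the discs' target $D^k_1$ relative to the scale at which the peak functions of lemma \ref{F4} are defined.
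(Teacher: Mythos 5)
Your argument is correct and follows essentially the same route as the paper's own proof: both reduce to lemma \ref{F5} applied to $P_{\zeta}\circ h^k$ where $P_{\zeta}$ is the uniform peak function of lemma \ref{F4} at the boundary point nearest to $h^k(0)$, with the same choices $\lambda=(\eta/2C_2)^2$, $C=C_\lambda/C_1$, and $\delta$ of order $\eta/2$, followed by the same triangle inequality. The only cosmetic difference is that the paper phrases the nearest point as lying on $\partial D^k\cap\partial D^k_1$, which is exactly the point you address in your closing remark.
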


\begin{proof} For every holomorphic mapping $ h^k : \Delta \rightarrow
D^k_1$, choose points $ B^k$ on $ \partial D^k \cap \partial
D^k_1$ closest to $ h^k(0)$. It follows from lemma \ref{F4} there
exist uniform positive constants $ C_1 $ and $ C_2 $ such that the
local peak function at $B^k$ satisfies
\begin{eqnarray*}
\big| 1- P_{B^k} \big(h^k(0) \big) \big| & \leq & | h^k(0) - B^k |
/ { C_1}, \ \big| h^k(z) - B^k \big| \leq C_2 \sqrt{ \big| 1-
P_{B^k} \big(h^k(z) \big) \big|}
\end{eqnarray*}
for all $ z \in \Delta $. Now, applying lemma \ref{F5} to the
holomorphic discs $ P_{B^k} \circ h^k : \Delta \rightarrow \Delta
$ for $ \lambda = \big( { \eta}/{2 C_2} \big)^2$ yields
\[
\big| 1- P_{B^k} \big(h^k(z) \big) \big| \leq \lambda
\]
whenever $ |z| \leq 1 -  C_{\lambda}/ {C_1} { \big| h^k(0) - B^k
\big|} $. Thus, for all such $z$
\begin{eqnarray*}
\big | h^k(z) - h^k(0) \big| & \leq & \big| h^k(z) - B^k  \big| +
\big| B^k - h^k(0) \big| \\
& \leq & C_2 \sqrt{ \big| 1- P_{B^k} \big(h^k(z) \big) \big|} +
\big| h^k(0) - B^k \big| \\
& \leq & C_2 \sqrt{\lambda} + \big | h^k(0) - B^k \big| < \eta
\end{eqnarray*}
provided $ \big | h^k(0) - B^k \big| < {\eta}/{2} $ for all $k$.
Choosing $ \delta= {\eta}/{2} $ and $ C= C_{\lambda}/{C_1}$, we
get the required result.
\end{proof}

\begin{lem}\label{F7} For each fixed boundary point $A^k$ of $
D^k$, there exists a uniform positive constant $C$ and a
neighbourhood of $A^k$ of uniform size $ D^k_3 $ which is
compactly contained in $ D^k_1 $ such that
\[
F^K_{D^k} (z, X) \geq \big( 1- C d(z, \partial D^k) \big)
F^K_{D^k_1} (z,X)
\]
for every $ z$ in $ D^k_3$ and $ X$ a tangent vector at $z$.
\end{lem}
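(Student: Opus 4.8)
The plan is to produce, from a near‑extremal holomorphic disc into $D^k$ through $z$, a competing disc into $D^k_1$ with essentially the same derivative, and then take the infimum defining the Kobayashi metric. Since $D^k$ is bounded, a normal families argument gives an extremal disc $f:\Delta\to D^k$ with $f(0)=z$, $f'(0)=aX$ and $1/a=F^K_{D^k}(z,X)$. The crucial claim is that there are a uniform constant $C$ and a neighbourhood $D^k_3$ of $A^k$ of uniform size compactly contained in $D^k_1$ (concretely $D^k_3=B(A^k,r/2)\cap D^k$ with $r$ as in Lemma~\ref{F4}) such that, writing $d:=d(z,\partial D^k)$, one has $f(\Delta(0,1-Cd))\subset D^k_1$ whenever $z\in D^k_3$. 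Granting this, the rescaled disc $g(\zeta)=f((1-Cd)\zeta)$ maps $\Delta$ into $D^k_1$, with $g(0)=z$ and $g'(0)=(1-Cd)aX$, so $F^K_{D^k_1}(z,X)\le((1-Cd)a)^{-1}=(1-Cd)^{-1}F^K_{D^k}(z,X)$, which is exactly the inequality of the lemma.

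To prove that $f$ stays inside $D^k_1$ on the disc $\Delta(0,1-Cd)$ I would follow the scheme of Lemma~\ref{F6} and of \cite{Forstneric&Rosay-1987}. Let $B^k\in\partial D^k$ be a point nearest to $z$; for $z\in D^k_3$ this $B^k$ lies on $\partial D^k\cap\partial D^k_1$ and $|z-B^k|=d$. Let $P_{B^k}$ be the local peak function of Lemma~\ref{F4}, so that $C_1|1-P_{B^k}(w)|\le|w-B^k|\le C_2|1-P_{B^k}(w)|^{1/2}$ on $D^k_1$, whence $|1-P_{B^k}(z)|\le d/C_1$. Let $\rho_*\in(0,1]$ be the supremum of those $r_0$ with $f(\Delta(0,r_0))\subset D^k_1$. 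On $\Delta(0,\rho_*)$ the composition $P_{B^k}\circ f$ is a holomorphic disc into $\Delta$ (its image avoids $B^k$ since $f$ takes values in the open set $D^k$); rescaling it and applying Lemma~\ref{F5} with $\lambda$ small gives $|1-P_{B^k}(f(w))|\le\lambda$, hence $|f(w)-B^k|\le C_2\lambda^{1/2}$, for $|w|\le\rho_*(1-C_\lambda d/C_1)$, so that $f(w)$ lies well inside $D^k_1$ for all such $w$ once $C_2\lambda^{1/2}$ is a fixed fraction of $r$. It then remains to see $\rho_*\ge 1-Cd$, and this is where the extremality of $f$ enters: the standard strongly pseudoconvex lower bound $F^K_{D^k}(z,X)\ge c_0|X|\,d^{-1/2}$, uniform in $k$ because $\rho^k\to\rho^0$ in $C^2$, forces $|f'(0)|\lesssim d^{1/2}|X|$, so that extremal discs through points tending to $\partial D^k$ near $A^k$ subconverge to a constant map by the peak‑point property of Lemma~\ref{F4}; this already gives $f(\Delta(0,1/2))\subset D^k_1$ for $d$ below a uniform threshold, and the sharp linear radius $1-Cd$ is then extracted by feeding this back into Lemma~\ref{F6} applied to $\zeta\mapsto f(\rho_*\zeta)$.

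The main obstacle is precisely this last point. The peak function $P_{B^k}$ controls $f$ only on the part of $\Delta$ that $f$ already maps into $D^k_1$, while $f$ a priori takes values in the larger domain $D^k$; a naive Schwarz‑lemma bootstrap propagates control from a disc of radius $\rho_*$ only to a slightly smaller disc and never forces $\rho_*$ itself toward $1$. Getting the \emph{linear} rate $1-Cd$ rather than a merely fixed radius is the heart of the Forstneric--Rosay localization, and it genuinely uses the efficiency of the extremal disc together with the quantitative peak estimates of Lemmas~\ref{F4} and \ref{F5}. The one thing that is automatic here is uniformity in $k$ of every constant involved — the size of $D^k_1$ and $D^k_3$, the peak functions $P_{B^k}$, and the lower bound for $F^K_{D^k}$ — since $\|\rho^k-\rho^0\|_{C^2(U)}\to 0$, which is the standing hypothesis of this section.
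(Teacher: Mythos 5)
You have the right overall architecture --- reduce the lemma to showing that every competitor disc $h:\Delta\to D^k$ with $h(0)=z\in D^k_3$ satisfies $h\big(\Delta(0,1-C\,d(z,\partial D^k))\big)\subset D^k_1$, and then rescale the disc to compare the two infinitesimal metrics --- and you have correctly diagnosed where the difficulty lies. But the proposal does not close that difficulty: the step ``the sharp linear radius $1-Cd$ is then extracted by feeding this back into Lemma \ref{F6} applied to $\zeta\mapsto f(\rho_*\zeta)$'' is exactly the circular bootstrap you criticize in your own final paragraph. Lemmas \ref{F5} and \ref{F6} control the disc only on a radius strictly smaller than the radius on which you already know it lands in $D^k_1$; iterating this never pushes $\rho_*$ up to $1-Cd$. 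The appeal to extremality of $f$ and to the lower bound $F^K_{D^k}(z,X)\gtrsim |X|\,d^{-1/2}$ is also a red herring: the argument the paper runs (following Forstneric--Rosay) applies to \emph{all} holomorphic discs into $D^k$ centered near $A^k$ and close to the boundary, and no extremality is used anywhere.

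The missing idea is Hadamard's three-circles theorem. Normalize so that the diameter of $D^k$ is at most $1$, choose $d\in(0,1)$ and a uniform neighbourhood $D^k_2$ of $A^k$, relatively compact in $D^k_1$, with $B(z,d)\cap D^k\subset D^k_1$ for $z\in D^k_2$, and for $\epsilon>0$ let $\alpha^k(\epsilon)$ be the largest radius such that \emph{every} disc $h:\Delta\to D^k$ with $h(0)\in D^k_2$ and $d(h(0),\partial D^k)\le\epsilon$ satisfies $|h(\zeta)-h(0)|\le d$ for $|\zeta|\le\alpha^k$; the Schwarz lemma gives $\alpha^k\ge d$, and on $\Delta(0,\alpha^k)$ every such $h$ maps into $B(h(0),d)\cap D^k\subset D^k_1$. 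Now Lemma \ref{F6} applies to $h|_{\Delta(0,\alpha^k)}$ and improves the oscillation bound from $d$ to $d/2$ on the radius $\alpha^k-\tilde C\epsilon$. The convexity of $r\mapsto\log\sup_{|\zeta|=r}|h(\zeta)-h(0)|$ in $\log r$, together with the trivial bound by $1$ as $r\to 1$, converts this gain --- oscillation halved at a radius only $O(\epsilon)$ smaller --- into the quantitative conclusion $\alpha^k\ge 1-C\epsilon$ with $C$ uniform in $k$. That is the step your bootstrap cannot reach; without it one only gets $F^K_{D^k}(z,X)\ge c\,F^K_{D^k_1}(z,X)$ for some fixed $c<1$, which is not enough for the logarithmic estimate of Lemma \ref{F8}, where the factor $1-Cd(z,\partial D^k)$ must tend to $1$ at a linear rate as $z$ approaches the boundary.
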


\begin{proof} Since $ D$ is bounded, we may assume that the diameter of
$D^k$ is at most one for all $k$. Fix $ A^k \in \partial D^k$.
Choose $ d \in (0,1) $ and a neighbourhood $ D^k_2 $ of $A^k$ of
uniform size which is relatively compact in $ D^k_1$ such that
\[
B(z,d) \cap D^k \subset D^k_1 \qquad
\]
whenever $ z \in D^k_2 $. Given $ \epsilon > 0 $ sufficiently
small, let $ \alpha^k = \alpha^k(\epsilon) $ be the largest number
in $[0,1]$ such that $ \big| h^k(z) - h^k(0) \big| \leq d$
whenever $ h^k : \Delta \rightarrow D^k$ is a holomorphic mapping
with $ h^k(0) \in D^k_2$, $ d \big( h^k(0), \partial D^k \big)
\leq \epsilon $ and $ |z| \leq \alpha^k $. Observe that the
Schwarz lemma implies that $ d \leq \alpha^k$. Further, note that
$ h^k \big( \Delta(0, \alpha^k) \big) \subset  B \big( h^k(0),d
\big) \cap D^k $ and $ B \big( h^k(0), d \big) \cap D^k  \subset
D^k_1 $ for all $k$. Now, applying lemma \ref{F6} to the mapping $
h^k|_{\Delta(0, \alpha^k)}$ yields uniform positive constants $
\tilde{C}$ and $\delta$ such that if $ d( h^k(0), \partial D^k) <
\delta$ then $ \big| h^k(z) - h^k(0) \big | \leq d/2 $ whenever $
|z| \leq \alpha^k \big( 1- \tilde{C} d(h^k(0), \partial D^k) \big)
$. In particular,
\[
\big| h^k(z) - h^k(0) \big| \leq d/2 
\]
for $|z| \leq \alpha^k - \tilde{C} \epsilon$. Let
\[
d^k = \sup \big \lbrace \big| h^k(z) - h^k(0) \big| : |z|=
\alpha^k, h^k \in \mathcal{O} \left( \Delta(0, \alpha^k);D^k_1
\right), h^k(0) \in D^k_2 \ \mbox{and} \ d \big(h^k(0),
\partial D^k \big) < \delta \big \rbrace.
\]
Observe that $ d^k \leq d$ for all $k$. Furthermore, by Hadamard's
three circle lemma, the function
\[ \log \displaystyle\sup_{|z| = r} \big| h^k(z) - h^k(0) \big|
\]
is a convex function of $\log r$. Therefore,
\begin{eqnarray*}
\frac{ \log d/2} { \log ( \alpha^k - \tilde{C} \epsilon ) } \leq
\frac{ \log d^k} { \log \alpha^k } \leq \frac{ \log d } { \log
\alpha^k }
\end{eqnarray*}
which implies that
\[
\left| \frac{\log \alpha^k}{ \log d} \right| \leq \left| \frac{
\log( \alpha^k - \tilde{C} \epsilon) }{ \log d/2 } \right|.
\]
Since
\[
\displaystyle \lim_{\epsilon \rightarrow 0} \big (1/ {\epsilon}
\big) \log \big( 1- \tilde{C} \epsilon / {\alpha^k}  \big)  = -
\tilde{C}/{ \alpha^k} \geq - \tilde{C}/d,
\]
it follows that
\[
1- \alpha^k \leq | \log \alpha^k | \leq { 2 \tilde{C} |\log d|
\epsilon} /(d \log 2).
\]
This shows that for all $k$
\[
\alpha^k \geq 1- { 2 \tilde{C} | \log d | \epsilon }/ (d \log 2).
\]
Set $ C = { 2 \tilde{C} |\log d| } /(d \log 2)$ and $ D^k_3 = B
\big( A^k, \delta \big) \cap D^k_2$. To summarise, we have proved
the following: There exists a uniform positive constant $C$ and
uniform neighbourhoods $ D^k_3 $ compactly contained in $ D^k_1$
such that for every holomorphic mapping $ h^k : \Delta \rightarrow
D^k$ satisfying $ h^k(0) \in D^k_3$,
\[
h^k \big( \Delta \left(0, 1- C \epsilon \right) \big) \subset
D^k_1.
\]
The required result then follows from the definition of the
infinitesimal Kobayashi metric.
\end{proof}

\begin{lem}\label{F8} Let $ D, D^k , D^k_1$ and $ D^k_3$ be
as described before. For each $ A^k \in \partial D^k$, there exist
neighbourhoods $ D^k_4 $ compactly contained in $ D^k_3$ such that
for a suitable uniform constant $C' > 0$,
\[
d_{D^k} \big( z, D^k \setminus D^k_1 \big) \geq - (1/2) \log d(z,
\partial D^k) - C'
\]
for all $ z$ in $ D^k_4$.
\end{lem}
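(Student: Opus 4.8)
The plan is to estimate $d_{D^k}(z, D^k \setminus D^k_1)$ from below by combining two ingredients: the localisation inequality from lemma \ref{F7}, which compares $F^K_{D^k}$ with $F^K_{D^k_1}$ near $A^k$, and an explicit lower bound for the integrated Kobayashi distance in the small piece $D^k_1$ coming from the peak function supplied by lemma \ref{F4}. First I would fix $A^k \in \partial D^k$, let $D^k_3$ be the uniform neighbourhood provided by lemma \ref{F7}, and choose $D^k_4 \Subset D^k_3$ of uniform size (say a ball $B(A^k, r')$ with $r'$ a fixed fraction of the radius defining $D^k_3$). Take $z \in D^k_4$ and let $\gamma:[0,1]\to D^k$ be any piecewise $C^1$ path with $\gamma(0)=z$ and $\gamma(1)\in D^k\setminus D^k_1$. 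Since $z\in D^k_4\Subset D^k_3\subset D^k_1$ and the endpoint lies outside $D^k_1$, there is a first time $t_0$ at which $\gamma$ exits $D^k_3$; on $[0,t_0]$ the path stays in $D^k_3$, so by lemma \ref{F7},
\[
L_{D^k}(\gamma) \ge \int_0^{t_0} F^K_{D^k}(\gamma(t),\dot\gamma(t))\,dt \ge \bigl(1 - C\,\mathrm{diam}(D^k_3)\bigr)\int_0^{t_0} F^K_{D^k_1}(\gamma(t),\dot\gamma(t))\,dt,
\]
and since the diameters are uniformly small (they can be shrunk once and for all), the factor $1 - C\,\mathrm{diam}(D^k_3)$ is bounded below by a uniform positive constant, say $1/2$. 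Hence $L_{D^k}(\gamma) \ge \tfrac12\, d_{D^k_1}(z, \gamma(t_0)) \ge \tfrac12\, d_{D^k_1}\bigl(z, \partial D^k_3 \cap \overline{D^k_1}\bigr)$, reducing the problem to a lower bound for the distance in $D^k_1$ from $z$ to a fixed-size "collar" away from $A^k$.

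Next I would produce that lower bound using the peak function. By lemma \ref{F4}, choosing $\zeta$ the nearest point of $\partial D^k$ to $A^k$ (or simply $\zeta = $ the nearest boundary point to $z$; either works after adjusting constants), there is a peak function $P_\zeta \in \mathcal O(D^k_1)\cap C(\overline{D^k_1})$ with $|P_\zeta|<1$ on $\overline{D^k_1}\setminus\{\zeta\}$ and $C_1|1-P_\zeta(w)| \le |w-\zeta| \le C_2\sqrt{|1-P_\zeta(w)|}$ for all $w\in D^k_1$. Since $P_\zeta : D^k_1 \to \Delta$ is holomorphic, it is distance-decreasing, so
\[
d_{D^k_1}(z, w) \ge d_{hyp}\bigl(P_\zeta(z), P_\zeta(w)\bigr)
\]
for all $w$. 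Now for $z$ near $A^k$ we have $|1-P_\zeta(z)| \le |z-\zeta|/C_1 \lesssim d(z,\partial D^k)/C_1$ (using that $z$ is comparable to the nearest boundary point, up to uniform constants), so $P_\zeta(z)$ is within $O(d(z,\partial D^k))$ of $1\in\partial\Delta$; while for $w$ on the collar $\partial D^k_3\cap\overline{D^k_1}$ one has $|w-\zeta|\gtrsim r - r'$, a fixed positive amount, hence $|1-P_\zeta(w)| \ge (|w-\zeta|/C_2)^2 \gtrsim (r-r')^2/C_2^2$, so $P_\zeta(w)$ stays a fixed hyperbolic-relevant distance from $\partial\Delta$. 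A direct computation with the explicit hyperbolic distance on $\Delta$ — using that for $a$ within $\epsilon$ of the boundary and $b$ bounded away, $d_{hyp}(a,b) \ge -\tfrac12\log\epsilon - C$ — then gives $d_{hyp}(P_\zeta(z), P_\zeta(w)) \ge -\tfrac12\log d(z,\partial D^k) - C''$ with $C''$ uniform. Taking the infimum over $\gamma$ and absorbing $\tfrac12$ and all additive constants into a single uniform $C'$ yields the claimed inequality.

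The main obstacle, and the place that needs care, is tracking the uniformity of every constant across the family $\{D^k\}$: the radius $r$ and the constants $C_1, C_2$ in lemma \ref{F4} are uniform by hypothesis, the localisation constant $C$ in lemma \ref{F7} is uniform, and the comparability $C_1 d(z,\partial D)\le d(z,\partial D^k)\le C_2 d(z,\partial D)$ near $\partial D$ is given in the setup; one must check that the geometric quantities $r - r'$ and $\mathrm{diam}(D^k_3)$ can be fixed independently of $k$, which follows since all these neighbourhoods are prescribed to have uniform size. A secondary technical point is the passage from "$|1-P_\zeta(z)|$ small" to "$d_{hyp}(P_\zeta(z), \cdot)$ has the right logarithmic lower bound": this is an elementary estimate on $\Delta$ but must be done so that the coefficient of $\log d(z,\partial D^k)$ comes out exactly $1/2$ (the square-root in the right inequality of lemma \ref{F4} would spoil the constant if used on the wrong side, which is why one uses the \emph{left} inequality $C_1|1-P_\zeta(z)|\le|z-\zeta|$ to bound $|1-P_\zeta(z)|$ from above). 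With these points handled, defining $D^k_4 = B(A^k, r')\cap D^k_3$ for a suitable uniform $r'$ completes the argument.
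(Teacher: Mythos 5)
Your overall architecture is the same as the paper's: cut the competing curve at its first exit from the localisation neighbourhood, use lemma \ref{F7} to pass from $F^K_{D^k}$ to $F^K_{D^k_1}$, and then push forward by the peak function of lemma \ref{F4} into $\Delta$ to extract the logarithm. But there is a genuine gap at the point where you handle the localisation factor. Lemma \ref{F7} gives the pointwise inequality $F^K_{D^k}(\gamma(t),\dot\gamma(t)) \geq \big(1 - C\,d(\gamma(t),\partial D^k)\big)F^K_{D^k_1}(\gamma(t),\dot\gamma(t))$, and you replace the factor by the uniform constant $1 - C\,\mathrm{diam}(D^k_3) \geq 1/2$ and pull it outside the integral. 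A multiplicative constant $\alpha<1$ in front of the whole lower bound cannot be "absorbed into $C'$": you end up with $L_{D^k}(\gamma) \geq \alpha\big(-\tfrac12\log d(z,\partial D^k) - C''\big) = -\tfrac{\alpha}{2}\log d(z,\partial D^k) - \alpha C''$, and since $-\log d(z,\partial D^k)\to+\infty$ this is strictly weaker than the claimed estimate for every fixed $\alpha<1$; shrinking $D^k_3$ only pushes $\alpha$ toward $1$ at the cost of other constants, it never reaches $1$. The coefficient $1/2$ is not cosmetic here: it is matched against the upper bound of lemma \ref{F18} in Step VII of theorem \ref{G1} to get $d(f^k(x),\partial D^k_2)\approx d(x,\partial D^k_1)$, and a degraded exponent would ruin that step.

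The paper's proof avoids this by keeping the factor inside the integral and exploiting that it degenerates only where the integrand is large, in an integrable way. Concretely, with $\xi^k(t) = |P_{B^k}(\gamma^k(t))|$ one has $F^K_{D^k_1} \geq \dot\xi^k/(2(1-\xi^k))$, while lemma \ref{F4} gives $d(\gamma^k(t),\partial D^k) \leq |\gamma^k(t)-B^k| \leq C_2\sqrt{1-\xi^k(t)}$; hence the error term is controlled by
\[
\int_0^1 C C_2\sqrt{1-\xi^k(t)}\;\frac{\dot\xi^k(t)}{2(1-\xi^k(t))}\,dt \;\leq\; \frac{CC_2}{2}\int_0^1\frac{dx}{\sqrt{x}} \;<\;\infty,
\]
an additive constant rather than a multiplicative loss, and the main term integrates exactly to $-\tfrac12\log(1-\xi^k(1)) + \tfrac12\log(1-\xi^k(0))$, which lemma \ref{F4} converts into $-\tfrac12\log d(z,\partial D^k) - C'$. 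Your second half (the hyperbolic-distance estimate in $\Delta$, using the left inequality of lemma \ref{F4} at $z$ and the right inequality on the collar) is essentially sound, but you need to replace the "factor out $1/2$" step by this pointwise treatment for the lemma to come out with the correct coefficient.
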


\begin{proof} Fix $ A^k \in \partial D^k$. Choose a sufficiently
small neighbourhood $ D^k_4 $ of $A^k$ of uniform size, $ D^k_4 $
compactly contained in $ D^k_3$ such that for each point $ z \in
D^k_4 \cap D^k$, the point $ A^z \in \partial D^k_4 \cap
\partial D^k$ closest to $z$ satisfies
\[
\displaystyle \lim_{ z' \in D^k \setminus D^k_3 } \big| A^z - z'
\big| \geq \delta_0 > 0
\]
where $ \delta_0 $ can be chosen to be independent of the point $
z $ and the index $k$. This is possible since $ D^k_3$ and $
D^k_1$ are of uniform size. Pick $ z \in D^k_4 $ and $ w^k \in D^k
\setminus D^k_1$. Let $ \sigma^k$ be any arbitrary piecewise $
C^1$-smooth curve in $ D^k$ joining $ w^k$ and $z$. As we
travel along $ \sigma^k$ there is a first point $ \tilde{z}^k$ on
the curve with $ \tilde{z}^k \in \partial D^k_3 \cap D^k $. Let $
\gamma^k$ be the subcurve of $ \sigma^k$ joining $ \tilde{z}^k$
and $z$. Choose points $ B^k \in \partial D^k_4 \cap \partial D^k$
closest to the point $z$. It is already known that the local peak
function $P_{B^k}$ at $B^k$ is holomorphic on $ D^k_1$. Hence,
\begin{eqnarray*}
\lambda^k(t) = P_{B^k}(\gamma^k(t))
\end{eqnarray*}
is well-defined. Set $ \xi^k(t) = | \lambda^k(t)|$. The explicit
expression for $ P_{B^k} $ shows that it has no zeroes inside $
\overline{D^k_1}$. Hence, $ \xi^k$ is $ {C}^1$-smooth. It
follows from lemma \ref{F7} that
\begin{eqnarray*}
\int_0^1 F^K_{D^k} \big( \gamma^k(t), \dot{\gamma}^k(t) \big) dt
\geq \int_0^1 \left( 1- C d ( \gamma^k(t), \partial D^k) \right)
F^K_{D^k_1} \big( \gamma^k(t), \dot{\gamma}^k(t) \big) dt
\end{eqnarray*}
Since holomorphic maps decrease the Kobayashi metric, we have
\begin{eqnarray*}
\int_0^1 \left( 1- C d ( \gamma^k(t), \partial D^k) \right)
F^K_{D^k_1} \big( \gamma^k(t), \dot{\gamma}^k(t) \big) dt & \geq &
\int_0^1 \left( 1- C d ( \gamma^k(t), \partial D^k) \right)
F^K_{\Delta} \big( \lambda^k(t), \dot{\lambda}^k(t) \big) dt \\
& = & \int_0^1 \left( 1- C d ( \gamma^k(t), \partial D^k)
\right) \frac{ | \dot{\lambda}^k(t) |}{ 1- | \lambda^k(t) |^2 } dt \\
& = & \int_0^1 \left( 1- C d ( \gamma^k(t), \partial D^k)
\right) \frac{ \dot{\xi}^k(t) }{ 1- {\xi^k(t)} ^2} dt \\
& \geq & \int_0^1 \left( 1- C d ( \gamma^k(t), \partial D^k)
\right) \frac{ \dot{\xi}^k(t) }{ 2 ( 1- \xi^k(t)) } dt
\end{eqnarray*}
Also, by lemma \ref{F4}, for every $ t \in [0,1]$
\begin{eqnarray*}
1- C d \big( \gamma^k(t), \partial D^k \big) & \geq & 1 - C \big |
\gamma^k(t) - B^k \big | \\
& \geq & 1 - C C_2 \sqrt{ 1 - \big |P_{B^k}(\gamma^k(t)) \big| } \\
& = & 1- C C_2 \sqrt{1- \xi^k(t)}
\end{eqnarray*}
Therefore,
\begin{eqnarray*}
\int_0^1 F^K_{D^k} ( \gamma^k(t), \dot{\gamma}^k(t) ) dt  & \geq &
\int_0^1 \Big( 1- C C_2 \sqrt{1- \xi^k(t)} \Big)
\frac{ \dot{\xi}^k (t) }{ 2 \big( 1- \xi^k(t) \big) } dt \\
& \geq & - (1/2) \log \left( {1- \xi^k(1)} \right) +  (1/2) \log
\left( {1- \xi^k(0)} \right) + \big ( C C_2 /2 \big) \int_0^1
\frac{dx}{\sqrt{x}}
\end{eqnarray*}
Observe that
\begin{eqnarray*}
1- \xi^k(1) & = & 1- | P_{B^k} (\gamma^k(1))| \leq |\gamma^k(1)
- B^k| /{C_1} =  d \big(\gamma^k(1), \partial D^k \big)/ {C_1} \\
1- \xi^k(0) & = & 1- | P_{B^k} (\gamma^k(0))| \geq {| \gamma^k(0)
- B^k|}^2 \geq {\delta_0}^2
\end{eqnarray*}
This implies that
\begin{eqnarray*}
\int_0^1 F^K_{D^k} ( \gamma^k(t), \dot{\gamma}^k(t) ) dt & \geq &
\frac{1}{2}\log \left( \frac{C_1}{d(\gamma^k(1), \partial D^k)}
\right) - \frac{1}{2}\log \left( \frac{1}{{\delta_0}^2}
\right) + \frac{C C_2}{2} \int_0^1 \frac{dx}{\sqrt{x}} \\
& \geq & - (1/2) \log d(z, \partial D^k) - C'
\end{eqnarray*}
where
\[
C' = (1/2) \log C_1 + \log( \delta_0 ) + \big( C C_2 /2 \big)
\int_0^1 \frac{dx}{\sqrt{x}}.
\]
This shows that
\[
\int_0^1 F^K_{D^k} ( \sigma^k(t), \dot{\sigma}^k(t) ) dt \geq -
(1/2) \log d(z, \partial D^k) - C'.
\]
Taking the infimum over all admissible curves, we get the required
result.
\end{proof}

\noindent \textit{Proof of Proposition \ref{F2}:} For each $k \ge 1$ 
choose points $X^k$ and $Y^k$ on $ \partial D^k$ closest
to $a$ and $b$ respectively. Each path in $ D^k$ joining $a $ and
$b$ must exit from neighbourhoods of $ X^k $ and $ Y^k$. Hence the
result is immediate from lemma \ref{F8}. \qed

\medskip

\begin{lem} \label{F18} Let $ D$ and $ D^k$ be as described before. There exists a
uniform positive constant $ C$ such that
\[
d_{D^k}(x,x^k) \leq  \frac{1}{2} \log \left( \frac{ d(x^k,
\partial D^k)} { d(x, \partial D^k)} \right)+ C
\]
where $ x \in D$ is sufficiently close to $ \partial D$ and the
points $  x^k  \in D^k $ lie on the same normal to $ D^k $ as $x$.
\end{lem}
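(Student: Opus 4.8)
The plan is to bound $d_{D^k}(x,x^k)$ from above by replacing $D^k$ with an inscribed Euclidean ball and then carrying out a one-variable hyperbolic computation. The first step is to extract the uniform geometry of $\partial D^k$ from the hypothesis $\rho^k\to\rho^0$ in $C^2(U)$. Since $\nabla\rho^0\neq 0$ near $\partial D$ and the principal curvatures of $\{\rho^k=0\}$ are a universal expression in $\nabla\rho^k$ and $\nabla^2\rho^k$, there are a uniform radius $r_0>0$ and, for all large $k$, a uniform tubular neighbourhood $N$ of $\partial D^k$ on which the nearest-point projection $\pi^k:N\to\partial D^k$ is well defined, $d(z,\partial D^k)=|z-\pi^k(z)|$, and for each $\zeta\in\partial D^k$ the ball $B\big(\zeta+r_0\nu^k(\zeta),\,r_0\big)$, with $\nu^k(\zeta)$ the inner unit normal at $\zeta$, lies in $D^k$ and is tangent to $\partial D^k$ at $\zeta$. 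As $x$ is close to $\partial D$, it lies in $N$ for $k$ large; put $B^k=\pi^k(x)$ and $\nu^k=\nu^k(B^k)$. By hypothesis $x$ and $x^k$ lie on the inner normal ray from $B^k$, say $x=B^k+\delta\nu^k$ and $x^k=B^k+\delta'\nu^k$ with $\delta=d(x,\partial D^k)$ and, in the case relevant to the applications, $\delta'=d(x^k,\partial D^k)$ with $x^k$ also in the uniform tube $N$ (if $x^k$ lies deeper than $r_0$ one first peels off the bounded initial subsegment reaching depth $r_0/2$, whose Kobayashi length is $O(1)$ uniformly in $k$).

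Next, set $\Omega^k:=B\big(B^k+r_0\nu^k,\,r_0\big)\subset D^k$; the distance-decreasing property gives $d_{D^k}(x,x^k)\le d_{\Omega^k}(x,x^k)$. Both points lie on the round disc $\Omega^k\cap L$ of radius $r_0$ cut out by the complex line $L$ through $B^k$ in the direction $\nu^k$, and since the inclusion $\Omega^k\cap L\hookrightarrow\Omega^k$ is holomorphic, $d_{\Omega^k}(x,x^k)\le d_{\Omega^k\cap L}(x,x^k)$. An affine identification of $\Omega^k\cap L$ with $\Delta(0,r_0)$ carrying $B^k$ to $-r_0$ and $\nu^k$ to the positive real direction sends $x\mapsto -r_0+\delta$ and $x^k\mapsto -r_0+\delta'$, so
\[
d_{D^k}(x,x^k)\;\le\; d_{hyp}\!\Big(-1+\frac{\delta}{r_0},\;-1+\frac{\delta'}{r_0}\Big).
\]
A M\"obius normalization sending $-1+t$ to the origin shows that for $0<s\le t<1$,
\[
d_{hyp}(-1+s,\,-1+t)=\frac12\log\frac{t(2-s)}{s(2-t)}\;\le\;\frac12\log\frac{t}{s}+\frac12\log 2 ,
\]
using $2-s<2$ and $2-t>1$; taking $s=\delta/r_0$ and $t=\delta'/r_0$ yields exactly
\[
d_{D^k}(x,x^k)\;\le\;\frac12\log\frac{d(x^k,\partial D^k)}{d(x,\partial D^k)}+C
\]
with $C=\tfrac12\log 2$, uniform in $k$, which is the assertion.

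I expect the only genuinely delicate point to be the first paragraph: forcing all the relevant constants — the interior-ball radius $r_0$, the width of the tubular neighbourhood, the implied constant in the peeling step — to depend on $D$ alone and not on $k$, which rests entirely on the $C^2$-convergence $\rho^k\to\rho^0$ together with $\nabla\rho^0\neq0$ near $\partial D$. Once those uniformities are secured the argument is just the classical Forstneric--Rosay estimate in the normal direction transplanted onto the fixed model disc $\Delta(0,r_0)$, and no strong pseudoconvexity beyond the interior-ball condition is used.
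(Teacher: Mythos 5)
Your argument is correct. The paper's own proof is a one-liner -- ``integrate the infinitesimal metric along the straight line path in $D^k$ joining $x$ and $x^k$'' -- and to produce the crucial factor $\tfrac{1}{2}$ (rather than $1$) that integration must itself invoke an interior tangent ball or disc to bound $F^K_{D^k}$ in the normal direction by roughly $|v|/(2\,d(z,\partial D^k))$. You package the same geometric input globally instead of infinitesimally: the uniform interior ball $\Omega^k$ of radius $r_0$, the restriction to the complex normal line, and the explicit hyperbolic distance on $\Delta(0,r_0)$, which yields the constant $\tfrac{1}{2}\log 2$ outright. The two routes are essentially equivalent and rest on the identical uniformity statement, namely that the $C^2$-convergence $\rho^k\to\rho^0$ together with $\nabla\rho^0\neq 0$ near $\partial D$ gives an interior ball radius $r_0$ and a tubular neighbourhood width independent of $k$; your version has the minor advantage of making the source of the $\tfrac{1}{2}$ and of the additive constant completely explicit, and your peeling step for the case $d(x^k,\partial D^k)\geq r_0$ is handled correctly since the discarded subsegment has uniformly bounded Kobayashi length and the logarithm on the right-hand side only increases.
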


\begin{proof} This follows by integrating the infinitesimal metric along the straight line path
in $ D^k$ joining $ x $ and $ x^k $.
\end{proof}

\noindent \textit{Proof of Proposition \ref{F3}:} For each $ k$,
we denote by $ n^k (q)$ the unit outward normal to $ \partial D^k$
at $ q \in \partial D^k$. Fix $ C^k \in \partial D^k$ temporarily.
Since $ \rho^k$ converges to $ \rho^0$ in ${C}^2$-topology,
there exists a $0 < R \ll 1 $ such that

\begin{enumerate}

\item[(i)] $ \big| n^k(q) - n^k(C^k) \big| < 1/8 \ \mbox{for} \ q
\in \partial D^k \cap B( C^k, R), $

\medskip

\item[(ii)] $ z - \delta n^k(q) \in D^k$ and $ d \big( z - \delta
n^k(q), \partial D^k \big) > 3 \delta/4 $ \ whenever $ z \in D^k
\cap B( C^k, R), q \in \partial D^k \cap B ( C^k, 8 R)$ and $
\delta \leq 2 R $.

\end{enumerate}

\noindent Note that $R$ is independent of $C^k$ and $k$ for $k$
large. It is already known that there exist uniform positive
constants $ C_1$ and $C_2$ such that for every $ z $ in $
N_{\epsilon} (\partial D)$
\[
C_1 d(z, \partial D) \leq d(z, \partial D^k) \leq
C_2 d(z, \partial D )
\]
for all $k$ large. Now, choose $ r \in (0, R/{4 C_2} )$ and fix
two points $ a, b \in D \cap B(q^0,r)$. Let $ A^k, B^k \in
\partial D^k$ be the uniquely determined points closest to $a $ and
$b$ respectively. Next, find $ \tilde{C}^k \in \partial D^k$ such
that $ a, b \in B( \tilde{C}^k, R)$ for all $k$ large. Setting
\begin{eqnarray*}
a^k  =  a - | a-b| n^k(A^k)\ \mbox{and} \ b^k  =  b - | a-b|
n^k(B^k),
\end{eqnarray*}
note that
\[
\big| A^k - \tilde{C}^k \big| \leq \big|A^k - a \big| + \big| a -
\tilde{C}^k \big| < R/4 + R  < 2 R.
\]
Similarly, $ B^k \in B( \tilde{C}^k, 2 R)$. It follows from (ii)
that $ a^k, b^k \in D^k$, $ d(a^k, \partial D^k) > (3/4) |a-b | $
and $ d(b^k, \partial D^k) > (3/4) |a-b | $. The triangle
inequality gives the following upper estimate:
\begin{eqnarray*}
d_{D^k}(a,b) & \leq &  d_{D^k}(a,a^k) + d_{D^k}(a^k,b^k) +
d_{D^k}(b^k,b)
\end{eqnarray*}
By lemma \ref{F18}, we conclude that there exists a uniform
positive constant $C$ such that
\begin{eqnarray*}
d_{D^k}(a, a^k ) \leq (1/2) \log\left( \frac{d(a^k,
\partial D^k)}{ d(a, \partial D^k)} \right) + C \\
d_{D^k}(b, b^k ) \leq (1/2) \log\left( \frac{d(b^k,
\partial D^k)}{ d(b, \partial D^k)} \right) + C
\end{eqnarray*}
Also, by construction we have
\begin{eqnarray*}
d(a^k, \partial D^k) = d(a, \partial D^k) + | a-b | , \ d(b^k,
\partial D^k) = d(b, \partial D^k) + | a-b |.
\end{eqnarray*}
It remains to estimate the term $ d_{D^k}(a^k,b^k)$. First, it
follows from (i) that $ | a^k -b^k | \leq (5/4) |a-b |$. Now,
consider the analytic discs $ \phi^k : \mathbf{C} \rightarrow
\mathbf{C}^n$ defined by $ \phi^k( \lambda)= a^k + \lambda ( b^k -
a^k )$. If $ |\lambda| < 3/5 $ (respectively $ | \lambda - 1| <
3/5 )$, we obtain
\begin{eqnarray*}
\big| \phi^k(\lambda) - a^k \big| & < & (3/5)(5/4) |a-b|= (3/4)
|a-b| < d( a^k, \partial D^k) \\
\big( \mbox{respectively}\ \big| \phi^k(\lambda) - b^k \big| & < &
(3/5)(5/4) |a-b| = (3/4) |a-b| < d( b^k,\partial D^k) \big)
\end{eqnarray*}
i.e., if $\Omega = \Delta(0, 3/5) \cup \Delta(1, 3/5) $, then each
$ \phi^k $ is a holomorphic mapping of $ \Omega $ into $ D^k$.
Since holomorphic maps decrease the Kobayashi metric, it follows
that for all $k$
\[
 d_{D^k}(a^k,b^k) \leq d_{\Omega}(0,1).
\]
This completes the proof of the proposition. \qed

\medskip

\begin{thm} \label{G1}
Let $D_1, D_2$ be two bounded strongly pseudoconvex domains in
$\mbf C^n$ with $C^2$-smooth boundaries. Let $D_1^k, D_2^k $
for $ k \geq 1$ be two sequences of domains that converge to $D_1,
D_2$ respectively in the $C^2$ topology. Suppose that $f^k :
(D^k_1, d_{D^k_1}) \rightarrow (D^k_2, d_{D^k_2})$ is a $C^0$-smooth 
isometry for each $k \geq 1$ and that there is a point $p^1
\in D_1$ such that some subsequence $\{f^{k_j}(p^1)\}$ converges
to a point $p^2 \in D_2$. Then the sequence $ \{f^k\}$ admits a
subsequence that converges uniformly on compact sets of $ D_1$ to
a continuous mapping $ f : D_1 \rightarrow D_2$. Moreover, $f :
(D_1, d_{D_1}) \rightarrow (D_2, d_{D_2}) $ is a ${C}^0$-isometry. 
Further, assuming that each $f^k \in {C}^1 (D^k_1)$,
there is a uniform constant $C > 0$ with the property that:
\[
\vert f^{k_j}(p) - f^{k_j}(q) \vert \le C \vert p - q \vert^{1/2}
\]
for all $p, q \in D_1$.
\end{thm}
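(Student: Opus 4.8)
\noindent The plan is to prove the three assertions in order: the first two by a normal families argument, and the third by converting the infinitesimal isometry hypothesis into a uniform gradient bound which is then integrated.

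\noindent First I would pass to the subsequence along which $f^{k_j}(p^1)\to p^2\in D_2$ and show that $\{f^{k_j}\}$ is a normal family on $D_1$. Fix a compact $K\subset D_1$. The $C^2$-stability of the integrated Kobayashi distance (contained in Propositions \ref{F2} and \ref{F3} near $\partial D_i$ together with the standard interior estimates) gives $d_{D^{k_j}_1}(\cdot,\cdot)\to d_{D_1}(\cdot,\cdot)$ uniformly on compacta, so $M_K:=\sup_{p\in K,\,j}d_{D^{k_j}_1}(p,p^1)<\infty$. Since each $f^{k_j}$ is distance preserving, $d_{D^{k_j}_2}(f^{k_j}(p),f^{k_j}(p^1))\le M_K$; by Lemma \ref{F8} (applied at each of finitely many uniform boundary neighbourhoods covering $\partial D_2$) the Kobayashi ball of radius $M_K$ about $f^{k_j}(p^1)$ cannot cluster at $\partial D_2$, so $f^{k_j}(K)\subset L$ for a fixed compact $L\subset D_2$ and all large $j$. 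Majorising the Kobayashi metric of $D^{k_j}_2$ by that of a fixed ball containing all of them gives $|f^{k_j}(p)-f^{k_j}(q)|\lesssim d_{D^{k_j}_1}(p,q)\lesssim |p-q|$ on $K$ with $j$-independent constants, so $\{f^{k_j}\}$ is uniformly bounded and equicontinuous on every compact set. By Arzel\`a--Ascoli and a diagonal argument, a further subsequence (still denoted $f^{k_j}$) converges uniformly on compacta to a continuous $f:D_1\to D_2$, and letting $j\to\infty$ in $d_{D^{k_j}_1}(p,q)=d_{D^{k_j}_2}(f^{k_j}(p),f^{k_j}(q))$ shows $f$ preserves $d_{D_1}$. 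Since $d_{D^{k_j}_1}\big((f^{k_j})^{-1}(p^2),p^1\big)=d_{D^{k_j}_2}(p^2,f^{k_j}(p^1))\to 0$, the inverses satisfy $(f^{k_j})^{-1}(p^2)\to p^1$; running the same argument on $\{(f^{k_j})^{-1}\}$ produces, along a common subsequence, a distance-preserving $g:D_2\to D_1$, and passing to the limit in $(f^{k_j})^{-1}\circ f^{k_j}=\mathrm{id}$ and $f^{k_j}\circ (f^{k_j})^{-1}=\mathrm{id}$ gives $g\circ f=\mathrm{id}_{D_1}$, $f\circ g=\mathrm{id}_{D_2}$. Hence $f$ is a $C^0$-isometry.

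\noindent Now assume $f^{k_j}\in C^1(D^{k_j}_1)$, so that $(f^{k_j})^*F^K_{D^{k_j}_2}=F^K_{D^{k_j}_1}$. The main step — and the step I expect to be the principal obstacle — is to show that $f^{k_j}$ preserves the distance to the boundary up to uniform constants:
\[
d\big(f^{k_j}(z),\partial D^{k_j}_2\big)\approx d\big(z,\partial D^{k_j}_1\big),\qquad z\in D^{k_j}_1.
\]
In the holomorphic case this is Hopf's lemma applied to the pull-back of a strictly plurisubharmonic defining function; an isometry offers no such function, so one instead plays distance preservation against the two-sided boundary estimates for the integrated Kobayashi distance in their $C^2$-stable forms (Propositions \ref{F2}, \ref{F3} and Lemmas \ref{F8}, \ref{F18}). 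Fix the interior point $a_0=p^1$, whose images $f^{k_j}(a_0)$ lie in a fixed compact subset of $D_2$ by the previous paragraph. For $z$ close to $\partial D^{k_j}_1$, Lemma \ref{F8} gives $d_{D^{k_j}_1}(a_0,z)\ge-\tfrac12\log d(z,\partial D^{k_j}_1)-C$, and since $f^{k_j}(z)$ is then close to $\partial D^{k_j}_2$ by the trivial part of the comparison, applying Lemma \ref{F18} in $D^{k_j}_2$ (push $f^{k_j}(z)$ along its normal to a point at a fixed interior distance from $\partial D^{k_j}_2$) bounds $d_{D^{k_j}_2}(f^{k_j}(a_0),f^{k_j}(z))\le-\tfrac12\log d(f^{k_j}(z),\partial D^{k_j}_2)+C$; together with $d_{D^{k_j}_1}(a_0,z)=d_{D^{k_j}_2}(f^{k_j}(a_0),f^{k_j}(z))$ this yields $d(f^{k_j}(z),\partial D^{k_j}_2)\le C\,d(z,\partial D^{k_j}_1)$. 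The symmetric argument (lower bound via Lemmas \ref{F8}/\ref{F2} on the target side, upper bound via Lemma \ref{F18} on the source side) gives the reverse inequality. Finitely many boundary neighbourhoods cover $\partial D_1$ and $\partial D_2$, so all constants are uniform in $j$; when $z$ or $f^{k_j}(z)$ stays away from the boundary the comparison is immediate from the uniform Kobayashi-distance bounds.

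\noindent With the boundary comparison in hand, the gradient estimate \eqref{t8} follows directly. For $z\in D^{k_j}_1$ and $v\in\mathbf{C}^n$,
\[
\big|df^{k_j}(z)v\big|\le C_1\,d\big(f^{k_j}(z),\partial D^{k_j}_2\big)^{1/2}\,F^K_{D^{k_j}_2}\big(f^{k_j}(z),df^{k_j}(z)v\big)=C_1\,d\big(f^{k_j}(z),\partial D^{k_j}_2\big)^{1/2}\,F^K_{D^{k_j}_1}(z,v),
\]
where the inequality is the lower estimate $F^K_D(x,w)\ge c\,|w|\,d(x,\partial D)^{-1/2}$ for the strongly pseudoconvex target $D^{k_j}_2$, with constants stable under $C^2$-perturbation, and the equality is the infinitesimal isometry property. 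Using the universal bound $F^K_{D^{k_j}_1}(z,v)\le |v|\,d(z,\partial D^{k_j}_1)^{-1}$ (majorise by the Kobayashi metric of the largest inscribed ball) together with $d(f^{k_j}(z),\partial D^{k_j}_2)\lesssim d(z,\partial D^{k_j}_1)$, we obtain
\[
\big|df^{k_j}(z)v\big|\le C\,\frac{|v|}{d(z,\partial D^{k_j}_1)^{1/2}}
\]
with $C$ independent of $j$, $z$ and $v$. A Hardy--Littlewood type integration of this bound — integrating $|df^{k_j}|$ along the segment from $p$ to $q$ when $d(p,\partial D^{k_j}_1)\gtrsim|p-q|$, and otherwise first moving $p$ and $q$ inward along normals to points at distance $\asymp|p-q|$ from the boundary, where $\int_0^{c|p-q|}t^{-1/2}\,dt\lesssim|p-q|^{1/2}$ — combined with the uniform $C^2$ character of the $\partial D^{k_j}_1$, yields $|f^{k_j}(p)-f^{k_j}(q)|\le C\,|p-q|^{1/2}$ for all $p,q\in D_1$ (which lie in $D^{k_j}_1$ for $j$ large), with $C$ uniform. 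This completes the proof.
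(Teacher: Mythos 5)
Your proposal is correct and follows essentially the same route as the paper's proof: a normal-families argument together with the $C^2$-stability of the Kobayashi distance for the convergence to a limiting $C^0$-isometry, then the uniform Forstneric--Rosay type boundary estimates (lemmas \ref{F8} and \ref{F18}) played against the isometry identity to obtain $d\big(f^k(z),\partial D^k_2\big)\approx d\big(z,\partial D^k_1\big)$, the uniform lower bound $F^K_{D^k_2}(q,v)\gtrsim |v|\,d(q,\partial D^k_2)^{-1/2}$ to get the gradient estimate, and a Hardy--Littlewood integration along normal segments. The two ingredients you assert rather than verify --- the locally uniform convergence $d_{D^k_i}\to d_{D_i}$ (which rests on the stability of the infinitesimal metric and lemma \ref{D2} rather than on propositions \ref{F2}--\ref{F3} alone) and the $C^2$-stability of the Sibony-type lower bound on the target --- are exactly the paper's Steps II--III and VI and are routine to supply.
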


\noindent Our purpose is to prove that the family $ \{ f^{k_j} \}$
is `uniformly H\"{o}lderian' up to the boundary, i.e., every
mapping extends as a H\"{o}lder continuous one up to the boundary
with the H\"{o}lder constant \textit{independent} of $k$. In
particular, this implies the normality of this family on $
\overline{D}_1 $.

\begin{proof} The proof involves several steps.

\medskip

\noindent \textbf{Step I:} Let $ \{ K_{\nu} \}_{\nu =1}^{\infty}$
be an increasing sequence of relatively compact subsets of $ D_1$
that exhausts $ D_1$. Fix a pair $ K_{\nu_0} $ compactly contained
in $ K_{\nu_0 + 1}$ such that $ p^1 \in K_1 $ and write $ K_1 =
K_{\nu_0}$ and $ K_2 = K_{\nu_0 + 1}$ for brevity. Let $
\omega(K_1)$ be a neighbourhood of $ K_1$ such that $ \omega(K_1)
\subset K_2$. Since $ \{ D^k_1 \} $ converges to $ D_1$, it
follows that $ K_1 \subset \omega(K_1) \subset K_2 $ which in turn
is relatively compact in $ D^k_1 $ for all $k$ large. We show that
the sequence $ \{ f^k \}$ is equicontinuous at each point of $
\omega(K_1)$.

\medskip

\noindent For each $ x^1 \in \omega(K_1)$ fixed, there exists a $r
> 0$ such that $ B( x^1,r) $ is compactly contained in $ \omega(K_1)$.
The distance decreasing property of the Kobayashi metric together
with its explicit form on $ B(x^1, r) $ gives
\begin{eqnarray}
d_{D^k_2 } \big( f^k(x^1), f^k(y^1) \big) = d_{D^k_1}(x^1,y^1)
\leq d_{B( x^1,r)} (x^1,y^1) \leq  | x^1 -y^1 | /c \label{10.3}
\end{eqnarray}
for all $k$ large, $ y^1 \in B( x^1, r) $ and a uniform constant $
c> 0 $. Since $ D_2$ is bounded, $ D^k_2 $ are compactly contained
in $ B(0,R) $ for some $ R > 0 $ and for all $k$ large .
Consequently,
\begin{eqnarray*}
d_{B(0,R)} \big( f^k(x^1), f^k(y^1) \big) \leq d_{D^k_2 } \big(
f^k(x^1), f^k(y^1) \big)
\end{eqnarray*}
for all $k$ large. Again using the explicit form of the metric on
$ B(0, R) $ gives
\[
\big | f^k(x^1) - f^k(y^1) \big| \lesssim | x^1 - y^1 |
\]
for $ y^1 \in B(x^1, r) $. This shows that $ \{ f^k \} $ is
equicontinuous at each point if $ \omega(K_1) $. The diagonal
subsequence still denoted by the same symbols then converges
uniformly on compact subsets of $ D_ 1 $ to a limit mapping $ f :
D_1 \rightarrow \overline{D}_2 $ which is continuous.

\medskip

\noindent \textbf{Step II:} The proof of lemma \ref{K1} shows that
for $ i =1,2$,
\begin{equation*}
F^K_{D^k_i} ( \cdot, \cdot) \rightarrow F^K_{D_i} (\cdot, \cdot)
\end{equation*}
uniformly on compact sets of $ D_i \times \mathbf{C}^n$ as $ k
\rightarrow \infty $.

\medskip

\noindent \textbf{Step III:} Consequently,
\begin{equation}
d_{D^k_i} ( \cdot, \cdot) \rightarrow d_{D_i}( \cdot, \cdot)
\label{t4}
\end{equation}
as $ k \rightarrow \infty $.

\medskip

\noindent To show this, for $ p, q \in D_1 $, firstly note that
\begin{equation}
\displaystyle\limsup_{k \rightarrow \infty} d_{D^k_1} (p,q) \leq
d_{D_1} (p,q)  \label{11.1}
\end{equation}
which follows exactly as in lemma \ref{D1}. For the converse, 
pick $ R \gg d_{D_1}(p,q)$ and consider $ B_{D^k_1}(p, R)$. 
Note that $ q \in B_{D^k_1}(p, R)$
and $ B_{D^k_1}(p, R) $ is compactly contained in $ D^k_1 $ for
all $k$ large. We claim that $ B_{D^k_1}(p, R) \subset D_1$ for
all $k$ large. Let $ w^k \in B_{D^k_1}(p, R)$. Observe that the
only case to be investigated is when $w^k$ is very close to the
boundary of $ D^k_1$. In that case, it follows from lemma \ref{F8}
that
\begin{eqnarray*}
R \gg d_{D_1}(p,q) + \epsilon \geq d_{D^k_1} (p,q) \geq - (1/2)
\log d(w^k, \partial D^k_1) - C
\end{eqnarray*}
for some uniform positive constant $ C $ and $ \epsilon > 0 $
given. As a result,
\begin{equation}
d( w^k, \partial D^k_1) > 1/ {e^{ 2(C+R)}} > 0 \label{11.2}
\end{equation}
Let $ w^k $ converge to some point $ w \in N_{\epsilon} ( \partial
D_1 ) $. In fact, since $ D^k_1$ converge to $D_1$ in the $ C^2$-topology, 
$ w \in \overline{D}_1 $. Further, it follows from
(\ref{11.2}) that $ d(w,
\partial D_1) > 0 $ and consequently that $ w^k \in D_1$ for all
$k $ large. Since the points $ w ^k$ were arbitrarily chosen, this
completes the proof of the claim. Now, it follows from the
distance-decreasing property of the Kobayashi metric that
\begin{eqnarray*}
d_{D_1} (p,q) \leq d_{B_{D^k_1}(p, R)} (p,q)
\end{eqnarray*}
Applying lemma \ref{D2} to the domain $ D^k_1$ with $ B_{D^k_1}
(p, R)$ as the subdomain $ D'$ yields
\begin{equation*}
d_{B_{D^k_1} (p, R)} (p,q) \leq \frac{d_{D^k_1} (p,q)} { \tanh
\left( R/2 - d_{D^k_1} (p,q ) \right)}.
\end{equation*}
Using (\ref{11.1}), we have
\begin{equation*}
d_{D_1} (p,q) \leq \frac{d_{D^k_1} ( p, q)} { \tanh \big( R/2 -
d_{D_1}(p,q) -\epsilon \big) }.
\end{equation*}
Letting $ R \rightarrow \infty$ yields
\begin{equation}
d_{D_1}(p,q) \leq d_{D^k_1} ( p,q) + \epsilon.  \label{11.4}
\end{equation}
for all $k $ large. Putting together (\ref{11.1}) and
(\ref{11.4}), we get
\begin{equation*}
\displaystyle \lim_{k \rightarrow \infty}d_{ D^k_1 }(p,q) =
d_{D_1}(p,q).
\end{equation*}
The same proof works for the domain $ D_2 $ and is therefore
omitted.

\medskip

\noindent \textbf{Step IV:} We now show that $ d_{D_1} (x^1,y^1) =
d_{D_2} \big( f(x^1),f(y^1) \big)$ for all $ x^1, y^1 \in
\Omega_1$ where $ \Omega_1 = \big \{ q^1 \in D_1 : f(q^1) \in D_2
\big \}$. Note that $ \Omega_1$ is non-empty since $ p^1 \in D_1$.
Let $ x^1, y^1 \in \Omega_1$. It is known that
\begin{equation*}
d_{D^k_1} (x^1,y^1) = d_{D^k_2} \big( f^k(x^1),f^k(y^1) \big).
\end{equation*}
for all $k$ and thus by (\ref{t4}), it remains to show that the
right side above converges to $ d_{D_2} \big( f(x^1),f(y^1) \big)$
as $ k \rightarrow \infty $. For this note that
\[
\big | d_{D^k_2} \big(f^k(x^1),f^k(y^1) \big) - d_{D^k_2} \big
(f(x^1),f(y^1) \big) \big| \leq   d_{D^k_2} \big( f^k(x^1),f(x^1)
\big) + d_{D^k_2} \big( f(y^1),f^k(y^1) \big)
\]
by the triangle inequality. Since $ f^k(x^1) \rightarrow f(x^1) $
and the domains $ D^k_2 $ converge to $ D_2 $, it follows that
there is a small ball $ B \big( f(x^1), r \big) $ around $ f(x^1)
$ which contains $ f^k(x^1) $ for all large $k $ and which is
contained in $ D^k_2 $ for all large $k$, where $ r > 0 $ is
independent of $k$. Thus
\[
d_{D^k_2} \big( f^k(x^1),f(x^1) \big) \leq C \big| f^k(x^1) -
f(x^1) \big |
\]
for some uniform constant $ C $. The same argument works for
showing that $ d_{D^k_2} \big( f(y^1),f^k(y^1) \big) $ is small.
So to verify the claim, it is enough to prove that $ d_{D^k_2}
\big (f(x^1),f(y^1) \big) $ converges to $ d_{D_2} \big(
f(x^1),f(y^1) \big) $. But this is immediate from (\ref{t4}).

\medskip

\noindent \textbf{Step V:} The limit map $ f $ is a surjection
onto $ D_2 $. Firstly, we need to show that $ f(D_1) \subset D_2$.
Indeed, $ \Omega_1= D_1$. If $ q^0 \in \partial \Omega_1 \cap
D_1$, choose a sequence $ q^j \in \Omega_1$ that converges to $
q^0$. It follows from Step IV that
\begin{equation*}
d_{D_1}(q^j, p^1) = d_{D_2} \big( f(q^j), f(p^1) \big)
\end{equation*}
for all $j$. Since $ q^0 \in \partial \Omega_1$, the sequence $ \{
f(q^j) \} $ converges to a point on $ \partial D_2$ and as $D_2$
is complete in the Kobayashi distance, the right hand side above
becomes unbounded. However, the left hand side remains bounded
again because of completeness of $ D_1$. This contradiction shows
that $ \Omega_1= D_1$ which exactly means that $ f(D_1) \subset
D_2.$ The above observation coupled with Step IV forces that
\begin{equation*}
d_{D_1} (x^1,y^1) = d_{D_2} \big( f(x^1),f(y^1) \big)
\end{equation*}
for all $ x^1, y^1 \in D_1$. To establish the surjectivity of $f$,
consider any point $ q^0 \in \partial \big( f( D_1) \big) \cap
D_2$ and choose a sequence $ q^j \in f(D_1)$ that converges to
$q^0$. Let $ \{ p^j \} $ be sequence of points in $ D_1$ be such
that $ f(p^j)= q^j$. Then for all $j$ and for all $ x^1 \in D_1$,
\begin{equation}
d_{D_1}(x^1, p^j) = d_{D_2} \big( f(x^1), f(p^j) \big)
\label{11.6}
\end{equation}
There are two cases to be considered. After passing to a
subsequence, if needed,

\begin{enumerate}

\item[(i)] $  p^j \rightarrow x^0 \in \partial D_1$,

\item [(ii)] $ p^j \rightarrow  x^2 \in D_1$ as $ j \rightarrow
\infty $.

\end{enumerate}

\medskip

\noindent In case (i), observe that the right hand side remains
bounded because of the completeness of $ D_2$. Moreover, since $
D_1$ is complete in the Kobayashi metric, the left hand side in
(\ref{11.6}) becomes unbounded. This contradiction shows that $
f(D_1) = D_2$.

\medskip

\noindent For (ii), firstly, the continuity of the mapping $f$
implies that the sequence $ \{ f(p^j) \} $ converges to the point
$ f(x^2)$. Therefore, we must have $ f(x^2) = q^0$. Consider the
mappings $ (f^k)^{-1} : D^k_2 \rightarrow D^k_1$. Arguing as
before, we infer that the sequence $ \{ (f^k)^{-1} \}$ admits a
subsequence that converges uniformly on compact sets of $D_2$ to a
continuous mapping $ g: D_2 \rightarrow \overline{D}_1$. Then $ g
\circ f \equiv id_{D_1}$. Therefore,
\[
x^2 = g \circ f(x^2) = g(q^0) = \displaystyle \lim_{k \rightarrow
\infty} (f^k)^{-1} (q^0).
\]
Hence the sequence $ \{ (f^k)^{-1} (q^0) \} $ is compactly
contained in $ D_1$. Now, repeating the earlier argument for $
\{(f^k)^{-1}\}$, it follows that $g : D_2 \rightarrow D_1$ and $ f
\circ g \equiv id_{D_2} $. In particular, $f$ is surjective.

\medskip

\noindent This also shows that the limit mapping $f$ is a $ {C}^0$-isometry. 
We note that so far we did not need the isometries to
be $ {C}^1$-smooth. Now, assume that each $f^k \in {C}^1
(D^k_1)$.

\medskip

\noindent The following assertion gives a `uniform local
hyperbolicity' of the family $ \{ D^k_2 \}$ near $ \partial D_2$.

\medskip

\noindent \textbf{Step VI:} There exists an $ \epsilon
> 0$ and a uniform positive constant $C$ such that for any $k \ge 1$ and 
$q^2 \in D^k_2 \cap N_{\epsilon} (\partial D_2)$ and $v \in
\mathbf{C}^n$ we have
\[
F^K_{D^k_2} (q^2, v) \geq C \frac{ | v| }{\sqrt{d(q^2, \partial
D^k_2)}}.
\]
\noindent Fix a point $ q^2$ near $ \partial D_2 $ and let $v \in
\mathbf{C}^n \setminus \{0\}$. Let $ R^k $ be a sequence of
positive real numbers and $ \psi^k $ a sequence of holomorphic
mappings, $ \psi^k \in \mathcal{O} ( \Delta; D^k_2) \cap {C}^0(
\overline{\Delta}; \mathbf{C}^n ) $ satisfying $ \psi^k(0) = q^2$
and $ (\psi^k)'(0) = R_k v $. Choose $ \zeta^k \in \partial D^k_2
$ with $ | q^2 - \zeta^k | = d ( q^2,
\partial D^k_2) $ and let $ \rho^k_2$ be a ${C}^2$ -- smooth defining
function for $ D^k_2$ which is strictly plurisubharmonic in a
neighbourhood of $ \overline{ D^k_2}$. Then, for $ \eta > 0$
sufficiently small,
\[
\tilde{\rho}_2^k (z) = \rho_2^k (z) - \eta | z - \zeta^k | ^2
\]
is a strictly plurisubharmonic on a neighbourhood of closure of $
D^k_2$. Therefore,
\begin{eqnarray*}
\tilde{\rho}_2^k (q^2) = \tilde{\rho}_2^k \circ \psi^k(0) \leq
\frac{1}{2 \pi} \int_0^{2 \pi} \rho^k_2 \big( \psi^k( e ^{ \iota
t} ) \big) - \eta \left | \psi^k(e ^{\iota t}) - \zeta^k \right
|^2 dt \leq - \frac{\eta}{2 \pi} \int_0^{2 \pi} \left | \psi^k(e
^{\iota t}) - \zeta^k \right |^2 dt.
\end{eqnarray*}
Applying the Cauchy integral formula it follows that
\begin{eqnarray*}
\big | (\psi^k)'(0) \big |^2 & = &  \displaystyle\sum_{j=1}^n
\left |\frac{1}{2 \pi \iota} \int_{0}^{2 \pi}
\frac{\psi^k_j(e^{\iota t}) - \zeta^k_j}{e^{2 \iota t}} dt \right
|^2   \leq \frac{1}{2 \pi} \int_0^{2 \pi} \left | \psi^k(e^{\iota
t}) - \zeta^k \right |^2 \; dt.
\end{eqnarray*}
As a consequence,
\[
\big| (\psi^k)'(0) \big| ^2 \leq | q - \zeta^k |^2 - \rho^k_2
(q^2) / {\eta}.
\]
Now, since $ \rho^k_2 \rightarrow \rho^0_2$ in the $ {C}^2$-topology, 
it follows that there exists a uniform constant $ M > 0$ such that
\[
| \rho^k_2(z) | \leq M d(z, \partial D^k_2)
\]
for all $k$ large and for all $ z$ sufficiently close to $
\partial D^k_2$. Then
\[
\big|(\psi^k)'(0) \big|^2 \leq \big( 1 + M/{\eta} \big) d(q^2,
\partial D^k_2).
\]
The required result then follows from the definition of the
infinitesimal metric.

\medskip

\noindent \textbf{Step VII:} There exist uniform positive constants
$A$ and $B$ such that
\[
A \ d(x, \partial D^k_1) \leq d( f^k(x), \partial D^k_2) \leq B \
d(x, \partial D^k_1)
\]
for all $x$ sufficiently close to $ \partial D_1$.

\medskip

\noindent To see this, fix $ y \in D_1 $ sufficiently close to $
\partial D_1 $ and $ x^0 \in \partial D_1 $. Let $ \{x^j \}$ be a sequence of points in $ D_1$
converging to $ x^0$. Then for each $j $ large there is an index $
k_0 $ such that $ x^j \in D^k_1 $ for $ k \geq k_0 $. For such a
fixed large $j$, apply lemma \ref{F8} and lemma \ref{F18} to get
uniform positive constants $ C_1, C_2 $ such that
\begin{equation} \label{P}
-(1/2) \log  {d(x^j, \partial D^k_1)}  - C_2 \leq d_{D^k_1}(x^j,y)
\leq -(1/2) \log  {d(x^j, \partial D^k_1)} + C_1
\end{equation}
for all $ k \geq k_0 $. Note that $ \{ f^k(y) \} $ is compactly
contained in $ D_2$ from Step V. From \cite{Seshadri&Verma-2006}
it follows that each $f^k$ is continuous up to $ \overline{
D^k_1}$ and $ f^k( \partial D^k_1) \subset \partial D^k_2$. Since
$ D^k_2$ converges to $D_2$, it follows that $ \{ x^j \} \subset
N_{\epsilon} (\partial D^k_1)$ for all $k$ large. Again using
lemma \ref{F8} and lemma \ref{F18} yields
\begin{eqnarray} \label{Q}
-(1/2) \log {d \big(f^k(x^j), \partial D^k_2 \big)} - C_3 & \leq &
d_{D^k_2} \big(f^k(y),f^k(x^j) \big) \\ & \leq & -(1/2) log \Big(
{d \big(f^k(x^j), \partial D^k_2 \big)} d \big( f^k(y),
\partial D^k_2 \big) \Big) + C_4  \nonumber
\end{eqnarray}
for $ k \geq k_0 $ and $ C_3, C_4 > 0 $ uniform in $k$. Since $
d_{D^k_1}(y, x^j)= d_{D^k_2} \big(f^k(y),f^k(x^j) \big) $, it
follows from (\ref{P}) and (\ref{Q}) that
\[
A \ d(x, \partial D^k_1) \leq d( f^k(x), \partial D^k_2) \leq B \
d(x, \partial D^k_1)
\]
for some uniform $ A, B > 0 $ and $x$ sufficiently close to $
\partial D_1$.

\medskip

\noindent \textbf{Step VIII:} For $ \zeta^0 \in \partial D_1$
fixed, there exists a neighbourhood $V$ of $ \zeta^0$ such that
for all $ p, q \in V \cap D_1$ and for all $k$ large
\[
\big| f^k(p) - f^k(q) \big| \leq C | p-q | ^{1/2}
\]
where $C$ is a constant independent of $p, q \in V \cap D_1$.

\medskip

\noindent Firstly, observe that given $\eta > 0$ there exist  $
\eta', \eta'' > 0$ such that
\begin{itemize}

\item $ f \big( \{ z \in D_1 : d(z, \partial D_1) < \eta \} \big)
\subset \{ w \in D_2 : d(w, \partial D_2) < \eta' \} $

\medskip

\item $ f^{-1} \big(\{ w \in D_2 : d(w, \partial D_2) < \eta\}
\big) \subset \{ z \in D_1 : d(z,\partial D_1) < \eta'' \}$

\medskip

\item $ \displaystyle \lim_{ \eta \rightarrow 0} \eta'= 0 $ and $
\displaystyle \lim_{ \eta \rightarrow 0} \eta''= 0 $.

\end{itemize}

\noindent These are consequences of the completeness of the
Kobayashi distance on strongly pseudoconvex domains. Now, from
Step VI there exists a uniform positive constant $C$ such that
\begin{eqnarray*}
F^K_{D^k_2} \Big( f^k(x), df^k(x)v \Big) \geq C \frac{ \big
|df^k(x)v \big|}{\sqrt{d \big(f^k(x), \partial D^k_2 \big)}}
\end{eqnarray*}
for all $k$ large and for all $x$ sufficiently close to $
\partial D_1$. Using the fact that each $f^k$ is an isometry
we get
\begin{eqnarray*}
C \frac{ \big|df^k(x)v \big|}{\sqrt{d \big(f^k(x), \partial D^k_2
\big)}} \leq F^K_{D^k_2} \big( f^k(x), df^k(x)v \big) =
F^K_{D^k_1}(x,v) \leq \frac{|v|}{d(x, \partial D^k_1)}
\end{eqnarray*}
for all tangent vectors $v$ at $x$. This implies that
\begin{eqnarray*}
\big| df^k(x)v \big| \leq C \frac{|v| \sqrt{d \big(f^k(x),
\partial D^k_2 \big)}}{d(x, \partial D^k_1)}
\end{eqnarray*}
By Step VII, we have
\begin{eqnarray}
\big|df^k(x)v \big| \leq C \frac{|v|}{\sqrt{d(x, \partial D^k_1)}}
\label{12.3}
\end{eqnarray}
Next, observe that since $ \partial D_1$ is $ {C}^2$-smooth,
there exists a $0 < R \ll 1$ such that
\[
z - \delta n(\zeta) \in D_1 \ \mbox{and} \ d ( z - \delta
n(\zeta), \partial D_1) > 3 \delta/4
\]
for all $ z \in D_1 \cap B( \zeta^0, R), \zeta \in
\partial D_1 \cap B( \zeta^0,8R)$ and $ \delta \leq 2R $.
Here, $n(\zeta)$ denotes the unit outward normal to $
\partial D_1$ at $ \zeta$.

\medskip

\noindent Now, fix two points $ p, q \in D_1 \cap B( \zeta^0, R)$.
Choose $ p^0, q^0 \in \partial D_1$ closest to $p $ and $q$
respectively. Set $ p' = p - |p-q| n(p^0)$ and $ q'= q - |p-q|
n(q^0)$. Let $ \gamma $ be the union of three segments: the first
one being the straight line path joining $p$ and $p'$ along the
inward normal to $p^0$, the second one being a straight line path
joining $p'$ and $q'$ and finally the third path is taken to be
the straight line path joining $ q'$ and $q^0$ along the inward
normal to the point $q^0$. Integrating (\ref{12.3}) along this
polygonal path, we get for all $k$ large
\[
\big| f^k(p) - f^k(q) \big| \leq C | p-q | ^{1/2}
\]
uniformly for all $p, q \in D_1 \cap B(\zeta^0, R)$. This
completes Step VIII.

\medskip

\noindent Since Step VIII together with the $ C^1 $-smoothness
assumption on the isometries immediately gives the result of
theorem \ref{G1}, we may conclude.
\end{proof}

\noindent As a corollary of theorem \ref{G1}, we obtain:

\begin{cor} Let $ D_1$ and $D_2$ be two bounded strongly pseudoconvex
domains in $ \mathbf{C}^n$ with $ {C}^2$-smooth boundaries. Let
$ f^k : (D_1, d_{D_1}) \rightarrow (D_2, d_{D_2}) $ be a sequence
of $ C^0$-isometries between $ D_1$ and $ D_2$. Suppose that
there exists a point $ p^1 \in D_1$ such that some subsequence of
$ \{ f^k(p^1) \}$ converges to a point of $ D_2$, then the
sequence $ \{f^k \}$ admits a subsequence $ \{ f^{k_j} \}$ that
converges uniformly on compact sets of $ D_1 $ to a continuous
mapping $ f : D_1 \rightarrow D_2$. Moreover, $f : (D_1, d_{D_1})
\rightarrow (D_2, d_{D_2}) $ is a ${C}^0$-isometry. Further,
assuming that each $f^k \in {C}^1 (D^k_1)$, there is a uniform
constant $C > 0$ with the property that:
\[
\vert f^{k_j}(p) - f^{k_j}(q) \vert \le C \vert p - q \vert^{1/2}
\]
for all $p, q \in D_1$.
\end{cor}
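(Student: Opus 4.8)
The plan is to obtain this as an immediate specialisation of Theorem \ref{G1}. First I would take the constant sequences $D_1^k = D_1$ and $D_2^k = D_2$ for all $k \geq 1$; with a fixed choice of $C^2$-smooth defining functions these trivially converge to $D_1$ and $D_2$ in the $C^2$ topology, so the standing hypotheses of Theorem \ref{G1} on the domains are met. Next I would note that in this situation each map $f^k : (D_1^k, d_{D_1^k}) \rightarrow (D_2^k, d_{D_2^k})$ is literally the given $C^0$-isometry $f^k : (D_1, d_{D_1}) \rightarrow (D_2, d_{D_2})$, and that the hypothesis that some subsequence $\{f^{k_j}(p^1)\}$ converges to a point of $D_2$ transfers verbatim.

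Then I would simply invoke Theorem \ref{G1}: it yields a subsequence $\{f^{k_j}\}$ converging uniformly on compact subsets of $D_1$ to a continuous map $f : D_1 \rightarrow D_2$ that is again a $C^0$-isometry, and --- under the additional assumption that each $f^k \in C^1(D_1)$ --- the uniform estimate $|f^{k_j}(p) - f^{k_j}(q)| \leq C|p-q|^{1/2}$ for all $p, q \in D_1$ with $C$ independent of $j$. This is precisely the assertion of the corollary, so nothing further is required.

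There is no genuine obstacle here; the only point worth checking is that the proof of Theorem \ref{G1} nowhere exploits that the domains vary nontrivially with $k$. Inspection confirms this: the stability ingredients invoked there --- the $C^2$-stable Forstneric--Rosay lower and upper bounds (propositions \ref{F2} and \ref{F3}, resting on lemmas \ref{F8} and \ref{F18}), the convergences $F^K_{D_i^k} \rightarrow F^K_{D_i}$ and $d_{D_i^k} \rightarrow d_{D_i}$, the uniform local hyperbolicity near $\partial D_2$, and the uniform comparison of boundary distances --- are all stated uniformly in $k$ and hold a fortiori for a constant family. Hence the corollary follows formally.
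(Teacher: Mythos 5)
Your proposal is correct and is exactly the argument the paper intends: the corollary is stated immediately after Theorem \ref{G1} with no separate proof, precisely because it is the specialisation to the constant families $D_1^k = D_1$ and $D_2^k = D_2$, for which the $C^2$-convergence hypothesis is trivially satisfied. Your additional check that the proof of Theorem \ref{G1} uses the variation in $k$ only through uniform-in-$k$ estimates is a sensible sanity check but not required.
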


%%%%%%%%%%%%%%%%%%%%%%%%%%%%%%%%%%%%%%%%%%%%%%%%%%%

\end{document}